\documentclass[11pt]{amsart}
\usepackage[margin=1.25in]{geometry}
\usepackage{amsmath}
\usepackage{amssymb,amsfonts,latexsym, mathtools}

\usepackage{tikz-cd}
\usepackage{verbatim}
\usepackage[all]{xy}
\usepackage{xcolor}
\usepackage{comment}
\usepackage{mathrsfs}
\usepackage{url}
\usepackage{hyperref}
\usepackage{cleveref}

\newtheorem{thm}{Theorem}[section]
\newtheorem{cor}[thm]{Corollary}
\newtheorem{lem}[thm]{Lemma}
\newtheorem{prop}[thm]{Proposition}
\theoremstyle{definition}

\newtheorem{defn}[thm]{Definition}
\newtheorem{setup}[thm]{Setup}
\newtheorem{assumption}[thm]{Assumption}

\theoremstyle{remark}
\newtheorem{rem}[thm]{Remark}

\newtheorem{exam}[thm]{Example}

\numberwithin{equation}{section}

\usepackage{chngcntr}
\counterwithin{table}{section}

\newcommand\Pref[1]{{Proposition~\ref{#1}}}

\newcommand{\et}{{\operatorname{\acute{e}t}}}

\newcommand{\eps}{\varepsilon}

\newcommand{\mQ}{\mathbb{Q}}
\newcommand{\mC}{\mathbb{C}}
\newcommand{\Z}{\mathbb{Z}}

\newcommand{\mP}{\mathbb{P}}

\newcommand{\calS}{\mathcal{S}}
\newcommand{\calA}{\mathcal{A}}
\newcommand{\calU}{\mathcal{U}}
\newcommand{\calX}{\mathcal{X}}
\newcommand{\calP}{\mathcal{P}}
\newcommand{\kbar}{\bar{k}}

\DeclareMathOperator\Spec{Spec}
\DeclareMathOperator\Span{Span}
\DeclareMathOperator\Pic{Pic}

\DeclareMathOperator\Gal{Gal}

\newcommand\oline[1] {{\overline{#1}}}
\newcommand\uline[1] {{\underline{#1}}}

\newcommand\ind{{\operatorname{ind}}}

\newcommand\orb{{\operatorname{orb}}}

\newcommand\sub{{\,\subset\,}}
\renewcommand\Im{{\operatorname{Im}}}

\newcommand\GL[1][n] {{\operatorname{GL}_{#1}}}

\newcommand\gon{{\operatorname{gon}}}

\DeclareMathOperator\Mon{Mon}

\DeclareMathOperator\Sym{Sym}

\title{Hilbert Irreducibility for algebraic points}
\author{Borys Kadets}
\address{Borys Kadets, Einstein Institute of Mathematics\\ Hebrew University of Jerusalem\\
Jerusalem, Israel}
\email{kadets.math@gmail.com}
\urladdr{\url{http://bkadets.github.io}}

\author{Danny Neftin}
\address{Danny Neftin, Department of Mathematics\\ Technion -- Israel Institute of Technology\\ Haifa, Israel}
\email{dneftin@technion.ac.il}
\urladdr{\url{https://neftin.net.technion.ac.il/}}
\date{}

\begin{document} 
\begin{abstract}
      We study the following problem: given a covering of curves $\phi\colon X \to X_0$ over a number field $k$, and an integer $d$, when is the set \[\{p \in X_0(\overline{k})|\ \deg p = d, \text{ and the fiber } \phi^{-1}(p) \text{ is reducible over } k(p)\}\] finite? In case $X$ itself admits infinitely many degree $d$ points, we consider the modified problem where the images of degree $d$ points on $X$ are removed from the set. We prove a number of theorems ensuring a positive answer. As a consequence we show that for a fixed curve $X$ and all sufficiently high-degree indecomposable rational functions $\phi:X \to \mathbb{P}^1$ with $b$ branch points, the set of reducible fibers above degree $d<b/7-2$ points,  not containing a degree $d$ point from $X$, is finite.    
\end{abstract}
\maketitle
\section{Introduction} 

The classical Hilbert irreducibility theorem (\cite{hilbert1892ueber}, \cite{serre1989lectures}) asserts that given an irreducible polynomial $P(t,x) \in k[t,x]$ over a number field $k$, viewed as a family of polynomials in $x$ parameterized by $t$,  
the specialized polynomial $P(t_0, x)$ is irreducible for a density-one set of values $t_0 \in k$. In particular, given a polynomial $\phi(x)\in k[x]$, Hilbert's irreducibility implies that for most $\alpha \in k$ the polynomial $\phi(x)-\alpha$ is irreducible. Note that in this case there are also infinitely many $\alpha \in k$ for which $\phi(x)-\alpha$ is reducible: all $\alpha$ in the image $\phi(k)$ are such. Our goal is to investigate various versions of Hilbert's irreducibility for fibers of coverings of curves above algebraic points.

Let $k$ denote a finitely generated field of characteristic $0$ and $\oline k$ denote its algebraic closure. In this simpler setting of polynomial maps, our theorems imply the following:

\begin{thm}[\Cref{cor:hilb-for-poly}]\label{prop:intro}
    Suppose $\phi(x) \in k[x]$ is an indecomposable polynomial with $b$ branch points. Then for all but finitely many  $\alpha \in \overline{k}$ of degree $[k(\alpha):k] < b/6-1$, the polynomial $\phi(x)-\alpha$ is either irreducible or splits into a product of two irreducible factors one of which is linear. In particular, for such $\alpha$, the degree of an algebraic number $\beta \in \phi^{-1}(\alpha)$ over $k(\alpha)$ is either $\deg \phi,$ $\deg \phi -1,$ or $1$.
\end{thm}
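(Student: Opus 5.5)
We plan to reduce the statement to a finiteness result for low-degree points on the curves that parametrize reducible fibers, and then use a gonality bound. Let $\phi\colon \mP^1 \to \mP^1$ be the map $x\mapsto\phi(x)$, of degree $n=\deg\phi$, and let $G=\Mon(\phi)\le S_n$ be its geometric monodromy group. Since $\phi$ is indecomposable, $G$ is primitive. The fiber $\phi^{-1}(\alpha)$ is reducible over $k(\alpha)$ exactly when the decomposition group at $\alpha$ is intransitive. In that case $\alpha$ lifts to a $k(\alpha)$-point, hence a degree $d=[k(\alpha):k]$ point, on a curve $X_H=\tilde X/H$ for some intransitive subgroup $H<G$. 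Here $\tilde X$ is the Galois closure, and $H$ is maximal among intransitive subgroups, so $H$ is the stabilizer of an $m$-subset with $1\le m\le n/2$ (after passing to the arithmetic monodromy group and a finite extension of $k$, which is harmless). The excluded case $m=1$ is exactly $\phi(x)-\alpha$ having a linear factor with an irreducible complementary factor. Even there we must allow $m=1$ combined with further splitting, so the cases to rule out are: the orbit decomposition of the decomposition group is not of type $(1,n-1)$ or $(n)$. We may therefore take $H$ to be the stabilizer either of an $m$-set with $2\le m\le n/2$, or of a point together with a partition of the remaining $n-1$ points into two nonempty parts.

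The key step is to show that each such $X_H$ has only finitely many points of degree $< b/6-1$ over $k$. Here we invoke the paper's main criterion, presumably the result behind \Cref{cor:hilb-for-poly}. By Frey's theorem combined with Faltings, a curve $Y$ over $k$ has finitely many points of degree $d$ provided $\gon_{\kbar}(Y) > 2d$; the precise form we expect is "$\gon(Y)/2 > d$", modulo the Abramovich–Harris/Debarre–Fahlaoui subtleties that the paper's earlier results handle. The remaining task is thus to bound $\gon(X_H)$ from below in terms of $b$. We would do this by Riemann–Hurwitz together with Castelnuovo–Severi: the map $X_H\to\mP^1$ is branched over the $b$ branch points of $\phi$. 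Every branch point has at least one non-identity inertia element $g$, and $g$ has many nontrivial orbits on the coset space $G/H$ because $H$ is a set-stabilizer of size $\ge 2$. Each branch point should contribute ramification of order comparable to $\deg(X_H\to\mP^1)$, so the genus grows linearly in $b$ times that degree. A map $X_H\to\mP^1$ of degree $\gamma$ that does not factor through $X_H\to\mP^1_t$ then forces $\gamma\gtrsim b/c$ by Castelnuovo–Severi. If instead it factors, $\gamma$ is at least $\deg(X_H\to\mP^1)\ge \binom{n}{2}$, which is large.

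We expect the main obstacle to be extracting the explicit constant $6$ uniformly over all primitive $G$ containing an $n$-cycle, since $\phi$ is a polynomial and infinity is totally ramified. For this we would use the classification of primitive groups with an $n$-cycle (Feit/Jones): cyclic or dihedral of prime degree, $\mathrm{PGL}$-type groups, a few sporadic cases, or $A_n,S_n$. The exceptional small groups (cyclic or dihedral, the Chebyshev and power maps) have $b\le 3$, so the hypothesis $d<b/6-1$ is vacuous for them. For $A_n,S_n$ we would count fixed points of inertia elements on $m$-sets to show that each finite branch point contributes at least roughly a $1/3$ fraction of the degree to the ramification. This yields $g(X_H)-1\ge c\,b\deg$, and then $\gon>2d+2$ whenever $d<b/6-1$. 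The final statement about the degree of $\beta$ follows immediately, since the factor degrees are $n$, or $n-1$ and $1$.
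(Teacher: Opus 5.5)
There is a genuine gap in the key step. You bound $\gon(X_H)$ by applying Castelnuovo--Severi directly to $X_H\to\mP^1$. The premise is that each finite branch point contributes a fixed fraction (about $1/3$) of $\deg(X_H\to\mP^1)$ to the ramification, so that $g(X_H)$ grows like $b\cdot\deg$. For $S_n$ acting on $m$-sets this is false, and it fails precisely for polynomials.

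Since $\infty$ is totally ramified, Riemann--Hurwitz forces $\sum_{p\neq\infty}\ind(\gamma_p)=n-1$. So when there are many finite branch points, most branch cycles fix almost all of $\{1,\dots,n\}$; when $b=n-1$ they are all transpositions. A transposition has index $\binom{n-2}{m-1}$ on $m$-sets, a fraction $m(n-m)/(n(n-1))$ of $\binom nm$, which is about $2/n$ when $m=2$. More generally, every moved $2$-set meets the support of $\gamma_p$. Hence the finite branch points contribute at most $2(n-1)\sum_p\ind(\gamma_p)=2(n-1)^2$ to the ramification of $X_2\to\mP^1$. Thus $g(X_2)=O(\deg)$ independently of $b$, and the direct Castelnuovo--Severi bound never exceeds about $5/2$. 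This is exactly the obstruction described in the sketch in the introduction.

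The missing idea is the tower of ordered-tuple curves $Y_m=\tilde X/(G\cap S_{n-m})$. Castelnuovo--Severi is applied inductively to the indecomposable maps $Y_m\to Y_{m-1}$, which have small degree $n-m+1$. Above an $\epsilon$-ramified point, each of the at least $\lceil(1-\epsilon)n\rceil^{\underline{m-1}}$ unramified points of $Y_{m-1}$ is ramified in $Y_m$. This gives $\gon(Y_m)\geqslant \frac{N}{2(n-m)}\lceil(1-\epsilon)n\rceil^{\underline{m-1}}$ (\Cref{thm:S-HIT_geom}), and one then descends via $\gon(X_m)\geqslant\gon(Y_m)/m!$. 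In this argument the abundance of small branch cycles for polynomials is an asset: the totally ramified point at $\infty$ gives at least $N\geqslant b-2/\epsilon$ $\epsilon$-ramified points, and the paper takes $\epsilon=\sqrt{2/b}$.

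The constants also do not close as you plan. Frey's inequality is $\gon\leqslant 2\min\delta$, so reaching $d<b/6-1$ that way would need $\gon(X_H)>b/3$. Your claimed contribution of $\tfrac13\deg$ per branch point would give only $\gon\gtrsim b/6$, hence $d<b/12$. Even the tower gives only $\gon(X_2)\approx N(1-\epsilon)/4\approx b/4$, so Frey alone yields about $b/8$.

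The paper instead obtains $\min\delta(X_2)\geqslant N(1-\epsilon)/4$ in \Cref{lem:min-delta-X2}. It applies the Kadets--Vogt refinement (\Cref{prop:arithm-gonality}(2)) to $Y_2$. It then excludes each possible low-genus quotient $Y_2\to Z$ by Castelnuovo--Severi against the indecomposable map $Y_2\to X$. Frey's bound suffices only for $m\geqslant 3$, where the gonality bounds grow with $m$. This is what produces $b/4-\sqrt{b/2}+1/2\geqslant b/6-1$.

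Finally, in the monodromy step you dismiss only the cyclic and dihedral cases from the list of primitive groups containing an $n$-cycle. The projective and sporadic cases must also be excluded. The paper does this by citing Guralnick--Shareshian, or the elementary Bochert argument of \Cref{rem:Bochert}, to get $G\in\{A_n,S_n\}$ once $b\geqslant 12$.
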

\Cref{prop:intro} says that if a polynomial is indecomposable and sufficiently complicated (measured by the number of branch points), then Hilbert's irreducibility can be significantly strengthened: firstly, irreducibility of ``most'' fibers can be replaced by (almost) irreducibility of all but finitely many, and secondly, the fibers can be considered above all algebraic points of low degree, rather than points belonging to a specific number field. Extensions of the theorem of the first type have been much investigated, and it is the passage to low-degree points that is the principal novelty of this work. Since Hilbert's irreducibility holds for every fixed degree $d$ extension $K/k$, the results of this paper can be viewed as studying uniformity in the Hilbert irreducibility theorem for a varying field $K$.

In  geometric language, Hilbert's irreducibility asserts the irreducibility of many fibers of a rational function $\phi:X \to \mP^1_k$ above points in $\mP^1(k)$. For points of higher degree, one might as well consider coverings $\phi:X \to X_0$ with a general base curve $X_0$; for simplicity, we restrict to the case $X_0=\mP^1$ in the introduction, and treat   general curves $X_0$ afterwards.

A trivial, possibly infinite, source of reducible fibers of a map $\phi:X \to \mP^1$ is the value set $\phi(X(k))$. The \emph{finiteness problem} for Hilbert's irreducibility asks whether, under some conditions on $\phi$, one can prove that the fibers $\phi^{-1}(t_0)$ of $\phi$ are irreducible for all but finitely many $t_0 \in \mP^1(k)\setminus \phi(X(k))$. In the case of polynomial maps and $t_0\in \Z$, this problem is sometimes referred to as the Hilbert--Siegel problem (the name seems to originate from \cite{fried86}; see \cite[Section 5]{debes1999integral}, and \cite{konig2024reducible,BKN} for a more up-to-date account). For a decomposable map $\phi$, that is, when $\phi=\phi_1\circ\phi_2$ for maps $\phi_1,\phi_2$ of degree $>1$,  the fibers could be reducible over larger infinite sets  $\phi_1(k)$. Most of the literature, as suggested  in \cite[\S 4]{fried86}, concerns indecomposable maps $\phi$, and we shall restrict to such as well. 
Moreover, we first consider degree $\ell$ maps $\phi:X \to \mP^1$ of typical monodromy group $G$:   $G=A_\ell$ or $G=S_\ell$; other monodromy groups $G$ are discussed afterward.

For maps with few branch points, the finiteness problem might have a negative answer for fibers above low-degree points; see Examples \ref{exam:few-branch-points}, \ref{exam:weaker-vesionts}. Similarly, for us, the complexity of a covering is measured by the number $b$ of branch points rather than the degree. 

Our first main theorem roughly says that for high-degree indecomposable rational functions on a fixed curve $X$, finiteness in Hilbert's irreducibility holds for points of degree  at most $b/6$. 
We use $|X|_d$ to denote the set of degree $d$ points on a curve $X$. 

\begin{thm}[\Cref{cor:irred-rat-funct}]\label{thm:intro:S-HIT}
    Suppose $\phi:X \to \mP^1_k$ is a genus $g$ covering of  degree $\ell \geqslant \max\{2g,6\}$,   monodromy $A_\ell$ or $S_\ell$, and $b$  branch points. Then the fiber of $\phi$ is irreducible over all but finitely many points  in $\mP^1_k$ of degree $d< \min\{b/6-3,(\ell-2)/3\}$  outside $\phi(|X|_d)$.
 \end{thm}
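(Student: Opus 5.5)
Reducible fibers over degree $d$ points will be turned into rational points on auxiliary curves of bounded degree, and those curves will be shown to have no infinite families of degree-$d$ points by a gonality bound coming from Riemann--Hurwitz.

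\textbf{Reduction to symmetric-power curves.} Suppose $p\in|\mP^1_k|_d$ has $\phi^{-1}(p)$ reducible over $k(p)$. Then some $k(p)$-irreducible component has degree $j$ with $1\le j\le \ell/2$ over $k(p)$. Such a component gives a $k(p)$-point on the quotient curve $X_j = \widetilde X/\mathrm{Stab}(\text{$j$-set})$, where $\widetilde X$ is the Galois closure. Equivalently, it gives a closed point of degree $\le d$ on $X_j$ whose image in $\mP^1$ is $p$. Because the monodromy is $A_\ell$ or $S_\ell$, $X_j$ is geometrically irreducible; in the $A_\ell$ case one may need to pass to a double cover, which is harmless. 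The curve $X_j$ maps to $\mP^1$ with degree $\binom{\ell}{j}$.

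The case $j=1$ means $p$ lies in $\phi(|X|_{d'})$ for some $d'\mid d$ with $d'\le d$. Points whose preimage has degree exactly $d$ are excluded by hypothesis. Points with preimage of smaller degree $d'<d$ force $p$ to have degree $<d$, which is a contradiction. The case $j=\ell-1$ is symmetric to $j=1$.

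\textbf{Gonality bound.} So it suffices to show, for $2\le j\le \ell/2$, that $X_j$ has only finitely many points of degree $\le d$ mapping to degree $d$ points of $\mP^1$. By Faltings' theorem on subvarieties of abelian varieties (Frey's criterion), infinitely many degree $\le d$ points force either a map $X_j\to\mP^1$ of degree $\le 2d$, or a positive-dimensional abelian subvariety in $W_d$. I would use the earlier general theorem of the paper, which presumably handles exactly this via the gonality of $X_j$ exceeding $2d$. The main work is therefore a lower bound $\gon(X_j)>2d$. For this I would use the Castelnuovo--Riemann inequality against the map $X_j\to\mP^1$ of degree $\binom{\ell}{j}$. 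Any low-degree map $X_j\to\mP^1$ either factors compatibly, which is excluded because the point stabilizer of a $j$-set is maximal in $S_\ell$ for $j<\ell/2$, or satisfies $g(X_j)\le (\gon-1)(\binom{\ell}{j}-1)$. Meanwhile, Riemann--Hurwitz gives $g(X_j)$ large relative to $b\binom{\ell}{j}$, since each branch point contributes ramification of size at least a fixed fraction of $\binom{\ell}{j}$ for $2\le j\le\ell-2$. These two bounds combine to roughly $\gon(X_j)\gtrsim b/3$, so $2d<b/3-6$ suffices. This matches $d<b/6-3$.

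\textbf{The remaining constraint and the obstacle.} The condition $d<(\ell-2)/3$ together with $\ell\ge 2g$ is used for $j=1$-type phenomena. For example, it rules out $X$ itself having extra degree-$d$ points whose images have degree $<d$. It also ensures that $X\to\mP^1$ is the only low-degree map, which is where $\ell\ge 2g$ enters via gonality/Castelnuovo for $X$.

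The hardest step is the uniform ramification estimate on $X_j$ for all $j$. Branch points of $\phi$ whose inertia is a transposition move few $j$-sets when $j$ is small, giving only about $j\binom{\ell-1}{j-1}\approx \binom{\ell}{j}\cdot j(\ell-j)/\binom{\ell}{2}$ of ramification. I would therefore first try the crude bound that each branch point moves at least a $2j(\ell-j)/(\ell(\ell-1))$ fraction of the $j$-sets, and hope that the earlier general theorem is phrased in terms of the fraction of moved points, so that this suffices. If it does not, I would treat $j=2$ separately, where the fraction is about $4/\ell$ per branch point.
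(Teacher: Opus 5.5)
Your reduction to the resolvent curves $X_j$ ($2\leqslant j\leqslant \ell/2$), and from there to a lower bound on their gonality, is the right frame. The gonality step, however, has a genuine gap.

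Castelnuovo--Severi applied directly to the degree-$\binom{\ell}{j}$ map $X_j\to\mP^1$ only gives $\gon(X_j)\gtrsim \deg R/(2\binom{\ell}{j})$. As you notice, a branch cycle can have index as small as $\binom{\ell-2}{j-1}$ on $j$-sets (a transposition). So the best this route yields is $\gon(X_j)\gtrsim b\,j(\ell-j)/(2\ell(\ell-1))$:
\begin{itemize}
\item for $j\approx \ell/2$ this is at most about $b/8$, not $b/3$;
\item for $j=2$ and a generic cover ($b=2\ell+2g-2$ simple branch points) it is exactly $b/(\ell+1)<3$, a constant.
\end{itemize}
Neither the ``crude fraction'' fallback nor ``treating $j=2$ separately'' with the same tool can fix this. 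That index bound is precisely what gives only $d<b/(2\ell)$ in Theorem~\ref{thm:vojta-new}, and the introduction's sketch points out exactly this failure.

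The missing idea is to bound gonality along the tower of ordered-tuple curves $Y_m=\tilde X/(G\cap S_{\ell-m})$ rather than on $X_j$ itself. You apply Castelnuovo--Severi to each indecomposable step $\psi_m\colon Y_m\to Y_{m-1}$, which has small degree $\ell-m+1$ but large ramification. Use only the $\eps$-ramified branch points, whose cycles fix at least $(1-\eps)\ell$ points. Above each of them, $Y_{m-1}$ has at least $\lceil(1-\eps)\ell\rceil^{\underline{m-1}}$ unramified points, and every one of these is a branch point of $\psi_m$. Hence $\deg R_{\psi_m}\geqslant N\lceil(1-\eps)\ell\rceil^{\underline{m-1}}$. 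Induction gives $\gon(Y_m)\geqslant \frac{N}{2(\ell-m)}\lceil(1-\eps)\ell\rceil^{\underline{m-1}}$, and then $\gon(X_m)\geqslant \gon(Y_m)/m!$.

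This also shows the hypotheses enter differently from your guess:
\begin{itemize}
\item $\ell\geqslant 2g$ is used only to get $(g-1)/\ell<1/2$ in Lemma~\ref{lem:mystery}, so that with $\eps=1/3$ there are $N\geqslant b-18$ such points.
\item The bound $(\ell-2)/3$ comes from the cap $N\leqslant 2(\ell-2)$. This cap is forced because the competing term in Castelnuovo--Severi for $Y_2\to X$ is only $(\ell-1)\gon(X)\geqslant \ell-1$.
\end{itemize}

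Finally, even with the tower, $\gon(X_2)$ is only bounded below by about $N(1-\eps)/4$. Frey's criterion $\gon\leqslant 2\min\delta$ would then give roughly $d<b/12$. Reaching $b/6-3$ requires the refined Kadets--Vogt statement (Proposition~\ref{prop:arithm-gonality}(2)) applied to $Y_2$, together with Castelnuovo--Severi against $Y_2\to X$, as in Lemma~\ref{lem:min-delta-X2}.

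Two minor points. A degree-one component over $k(p)$ is a point of $X$ of degree exactly $d$, so your $d'<d$ subcase for $j=1$ cannot occur. For $j=\ell/2$ the stabilizer is not maximal, so $X_{\ell/2}\to\mP^1$ is decomposable and your indecomposability argument does not cover it.
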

 Note that on a fixed curve, indecomposable rational functions of high degree $\ell$ and at least $7$ branch points always have monodromy $A_\ell$ or $S_\ell$ by \cite[Theorem  1.1]{neftin2024monodromy}. 
The claim in the abstract then follows from Theorem \ref{thm:intro:S-HIT} since $b/7<(\ell-2)/3$ for a fixed genus $g$ and $\ell \gg 0$; see Remark \ref{b/7-abstract}. 

Often, by Hilbert's irreducibility theorem, the following (stronger) statement is meant. Given a covering $\phi: X \to \mP^1$ of monodromy group $G$, for most $p\in\mP^1(k)$ the image of the  Galois action on the preimages $\phi^{-1}(p)$ is also $G$; that is, $\Gal(k(\phi^{-1}(p))/k\cong G$. 
It is natural to consider the finiteness problem for the preservation of Galois groups; for this problem we obtain the following analogue of Theorem \ref{thm:intro:S-HIT}.

\begin{thm}[\Cref{cor:preserving-Galois-groups}]\label{thm:Galois-groups}     
    Let $\phi:X\to \mP^1_k$ be a genus $g$ covering of degree $\ell\geqslant 3g$, monodromy $A_\ell$ or $S_\ell$, and $b\geqslant 26$ branch points. 
    Then $\Gal(k(\phi^{-1}(P))/k(P))\cong A_\ell$ or $S_\ell$ for all but finitely many  points $P$ of degree $d\leqslant \min\{b/16,(\ell-4)/3\}$  outside $\phi(|X|_d)$. If, moreover, $\ell\geqslant 9$, then the Galois group of the fiber is $A_{\ell-1}$ or $S_{\ell-1}$  for all but finitely many points $P \in \phi(|X|_d)$ of degree $d\leqslant \min\{b/16, (\ell-4)/3\}$.
\end{thm}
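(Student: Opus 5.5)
The plan is to translate the failure of the Galois group to be $A_\ell$ or $S_\ell$ into the existence of low-degree points on auxiliary quotient curves, and then to rule out infinitely many such points by a gonality bound. Let $\tilde X\to\mP^1$ be the Galois closure, with group $G\in\{A_\ell,S_\ell\}$. For a point $P$ of degree $d$ with residue field $k(P)$, the decomposition group $D_P=\Gal(k(\phi^{-1}(P))/k(P))$ is, up to conjugacy, a subgroup of $G$. If $D_P$ does not contain $A_\ell$, then $D_P$ lies in a maximal subgroup $H<G$ other than $A_\ell$ (in the $S_\ell$ case). The point $P$ then lifts to a $k(P)$-rational point of the quotient $\tilde X/H$, and hence to a point of degree $\le d$ on $X_H:=\tilde X/H$.

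So the statement reduces to the following claim. For every maximal $H$ not containing $A_\ell$, and every $H$ other than the point stabilizer $S_{\ell-1}$ (or $A_{\ell-1}$), the curve $X_H$ has only finitely many points of degree $\le d$ whose images have degree exactly $d$. For this I would use the Frey / Abramovich–Harris type criterion, presumably proved earlier in the paper: if $\gon(X_H)>2d$, or more generally the relevant $\Sym^d$ images contain no positive-rank abelian varieties or $\mP^1$'s, then $|X_H|_{\le d}$ mapping to degree-$d$ points is finite. Gonality of $X_H$ is controlled by Castelnuovo–Riemann–Hurwitz. The map $X_H\to\mP^1$ has degree $[G:H]$, and its genus is at least roughly $\frac{b}{2}\cdot\frac{[G:H]-\#\text{orbits of an inertia generator}}{1}$. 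Using the fixed-point-ratio bounds for primitive actions of $S_\ell$ (the intransitive subgroups $S_j\times S_{\ell-j}$ and the imprimitive or primitive ones), I expect every branch point to contribute a definite fraction of $[G:H]$ to the ramification. Then the Castelnuovo–Severi inequality, applied to a putative low-degree map $X_H\to\mP^1$ together with $X_H\to\mP^1$ of degree $[G:H]$, forces $\gon(X_H)$ to be greater than roughly $b/8$, which exceeds $2d$ once $d\le b/16$. The hypothesis $\ell\ge 3g$ and $d\le(\ell-4)/3$ handles the intransitive subgroups $H=S_j\times S_{\ell-j}$ with $j\le d$. Here $X_H$ is a component of the symmetric power $X^{(j)}$ modulo the diagonal, and a degree-$d$ point on it corresponds to a factor of the fiber of small degree. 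Such factors come from degree $\le d$ divisors on $X$, and the bound $d<(\ell-4)/3$ together with Riemann–Roch on $X$ (using $\ell\ge 3g$) shows they are either points of $|X|_d$ (in the case $j=1$) or are finite in number.

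For the second statement, with $P\in\phi(|X|_d)$: the fiber contains a $k(P)$-rational point, so $D_P\le S_{\ell-1}$. I would repeat the argument with $G$ replaced by the point stabilizer acting on the remaining $\ell-1$ points. Maximal subgroups of $S_{\ell-1}$ correspond to quotients of the fiber product $X\times_{\mP^1}X$ minus the diagonal. The same gonality estimate applies with $b$ essentially unchanged, and $\ell\ge 9$ ensures $S_{\ell-1}$ has no exceptional small-degree behaviour.

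The main obstacle is the uniform genus/gonality lower bound for $X_H$ over all maximal $H$, especially for the intransitive and imprimitive subgroups. There the fixed-point ratios are weakest, and one has to use the bound $b\ge26$ to beat $2d$.
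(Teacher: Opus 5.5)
There is a genuine gap, and it sits exactly where the paper does its main work: the intransitive subgroups $H=G\cap(S_j\times S_{\ell-j})$ with $2\le j\le\ell/2$. Your handling of transitive $H$ matches the paper: Burness--Guralnick index bounds, Castelnuovo--Severi against $X_H\to\mP^1$, then $\gon\le 2\min\delta$. For intransitive $H$, however, you use only $\ell\ge 3g$, $d\le(\ell-4)/3$ and Riemann--Roch on $X$, never the branch points, and no such argument can work. A degree-$d$ point of $X_j$ is a $k(P)$-rational degree-$j$ part of the fiber, i.e.\ an effective divisor of degree $jd$ (not $\le d$) on $X$. Whether there are infinitely many is a property of $X_j$, controlled by the branching of $\phi$, not of $X$. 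In \Cref{exam:few-branch-points} ($\phi=x^a(x-1)^b$, $g=0$, three branch points) one has $X_2\cong\mP^1$. There, for every $d$, there are infinitely many degree-$d$ points $P\notin\phi(|X|_d)$ with Galois group $S_2\times S_{\ell-2}$, although $\ell\ge 3g$ and $d\le(\ell-4)/3$ hold.

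Your transitive-case estimate does not transfer either. A transposition has index only $\binom{\ell-2}{j-1}$ on $j$-sets, a fraction $j(\ell-j)/(\ell(\ell-1))$ of $\deg(X_j\to\mP^1)$. So Castelnuovo--Severi for $X_2\to\mP^1$ yields only a gonality bound of order $b/\ell$, which is about $2$ when $b\approx 2\ell$.

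The missing idea is the tower argument of \Cref{thm:S-HIT_geom}. Work with the ordered-tuple curves $Y_j=\tilde X/(G\cap S_{\ell-j})$. Each step $Y_j\to Y_{j-1}$ is indecomposable of degree $\ell-j+1$, with ramification at least $\lceil(1-\eps)\ell\rceil^{\underline{j-1}}$ above every $\eps$-ramified branch point. Apply Castelnuovo--Severi step by step, then descend using $\gon(X_j)\ge\gon(Y_j)/j!$. For $j=2$, use the Kadets--Vogt refinement (\Cref{prop:arithm-gonality}(2), \Cref{lem:min-delta-X2}) to get $\min\delta(X_2)\ge N(1-\eps)/4$ without losing a factor $2$.

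This is also where the hypotheses really enter. The condition $\ell\ge 3g$ feeds \Cref{lem:mystery} with $\eps=1/3$, giving $N=\min\{b-16,2(\ell-4)\}$ branch points that are $1/3$-ramified. The bound $(\ell-4)/3$ is $N(1-\eps)/4$ at the cap $N=2(\ell-4)$, and $b\ge 26$ makes $b/16<b/6-8/3$.

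A second, smaller gap is in your first reduction. For $G=S_\ell$, a subgroup not containing $A_\ell$ need not lie in a maximal subgroup of $S_\ell$ other than $A_\ell$. A maximal subgroup $M<A_\ell$ that is self-normalizing in $S_\ell$ (e.g.\ $\mathrm{PSL}_3(2)<A_7$ or $M_{24}<A_{24}$) lies in no maximal subgroup of $S_\ell$ except $A_\ell$. Then $\tilde X/M\to\mP^1$ factors through the double cover $\tilde X/A_\ell$, so the indecomposable Castelnuovo--Severi bound you invoke is unavailable. The paper treats this case separately via \Cref{rem:Cast-generalized}, getting $\deg R\ge bn/4$ by splitting the branch points:
\begin{itemize}
\item those with odd branch cycle have all ramification indices even, hence index $\ge n/2$;
\item those with even branch cycle have two preimages on $\tilde X/A_\ell$, each contributing index $\ge n/8$ by Burness--Guralnick for $A_\ell$.
\end{itemize}

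Your second part, passing to $Y_2\to X$ with monodromy $A_{\ell-1}$ or $S_{\ell-1}$, is structurally the paper's reduction, but it inherits both gaps. Moreover, the relevant input there is not $b$ essentially unchanged, as you suggest. It is the at least $\lceil(1-\eps)\ell\rceil N$ branch points of $Y_2\to X$ that are $\eps\ell/(\ell-1)$-ramified. The points whose fiber contains two rational points are then removed using $\min\delta(Y_2)\ge\min\delta(X_2)\ge N(1-\eps)/4$.
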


The proof of the theorem applies bounds of Burness--Guralnick \cite{BG} on the index of primitive groups, which in turn rely on  the classification of finite simple groups. We also give a classification-free proof of Theorem \ref{thm:other-actions} when $\ell\geqslant 7900$.

We now turn to the case of general monodromy groups $G$. We will focus on seeking an analogue of \Cref{thm:Galois-groups}. In this case, it is natural for the results to be stated in terms of the parameters of a Galois $G$-covering $\tilde{X}\to X_0$. A few effects that already play a role in the $G=S_\ell$ case become more prominent for general groups.   First, the bounds of \Cref{thm:Galois-groups} involve the integer $\ell$, which will have to be replaced by a combinatorial parameter of the group $G$. Second, one cannot  distinguish between $A_\ell$ and $S_\ell$ fibers: indeed it is easy to give examples of $S_\ell$-covers for which infinitely many fibers over rational points have monodromy group $A_\ell$; see Example \ref{ex:Al-Sl}. For general groups, all normal subgroups $N\lhd G$ become suspect. With these two difficulties in mind, we state the following theorem, which is a consequence of the aforementioned group-theoretic results of Burness--Guralnick. We use $S_\ell \wr S_t$ to denote the {\it wreath product}, $S_\ell \wr S_t=S_\ell^t\rtimes S_t$, where $S_t$ acts by permuting the $t$ coordinates. 

\begin{thm}[\Cref{prop:monodromy}]\label{prop:intro-monodromy}
        Let $\tilde \phi:\tilde X\to X_0$ be a Galois covering of curves with monodromy group $G$ and $b$ branch points.  If there is an  infinite set $\mathcal S$ of points $P$ of degree $d<3b/28$ with $\Gal(k(\tilde\phi^{-1}(P))/k)\lneq G$, then one of the following holds: \\ 
    1) there is a normal subgroup  $1\neq N\lhd G$ for which the induced map $\tilde\psi:\tilde X/N\to X_0$ satisfies: there are  infinitely many points $P\in \mathcal S$ such that $\Gal(k(\tilde\psi^{-1}(P))/k)\lneq G/N$; \\
    2)  $A_\ell^t\leqslant G\leqslant S_\ell\wr S_t$ for some $t\geq1$, $1\leqslant k\leqslant \ell/2$, and $\Gal(k(\tilde \phi^{-1}(P))/k)$ is contained in $(S_k\times S_{\ell-k})\wr S_t$. 
\end{thm}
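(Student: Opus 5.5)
Let $\mathcal S$ be infinite with $\Gal(k(\tilde\phi^{-1}(P))/k)\lneq G$ for all $P\in\mathcal S$. Passing to an infinite subset, I may fix a maximal subgroup $H<G$ containing (a conjugate of) the decomposition group at every $P\in\mathcal S$. Then $P$ lifts to a point of the intermediate curve $Y_H:=\tilde X/H$ of degree $d$ over $k$, i.e.\ a degree-one point over $k(P)$ in the fiber of $Y_H\to X_0$. The plan has two stages: a geometric stage bounding $[G:H]$ or the structure of $H$ via Riemann--Hurwitz, and a group-theoretic stage invoking Burness--Guralnick.

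\textbf{Geometric step.} The curve $Y_H$ carries infinitely many points of degree $d$. By Faltings' theorem applied to $\Sym^d Y_H$ (the Abramovich--Harris / Frey style argument, presumably proved earlier in the paper), this forces the gonality of $Y_H$ to satisfy roughly $\gon(Y_H)\leqslant 2d$. Via Castelnuovo--Severi / Riemann--Hurwitz this yields a genus bound. On the other hand, Riemann--Hurwitz for $Y_H\to X_0$ gives
\[
2g(Y_H)-2 \geqslant [G:H](2g_0-2)+\sum_{i=1}^b \operatorname{ind}(\sigma_i|_{G/H}),
\]
where $\sigma_i$ are the branch-cycle generators and $\operatorname{ind}$ is the index (degree minus number of orbits) on the coset action. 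Comparing the two, I aim for an inequality of the form $\sum_i \operatorname{ind}(\sigma_i|_{G/H})\leqslant c\, d\,[G:H]$. Averaging over the $b$ branch points produces a nonidentity element $x$ of $G$ acting on $G/H$ with index at most $c\,d\,[G:H]/b$. Since $d<3b/28$, the ratio $\operatorname{ind}(x)/[G:H]$ falls below some explicit threshold (about $3/14$ times the constant). Getting the constant right requires care with $\gon$ versus degree-$d$ points, using the precise earlier result of the paper. The $\sigma_i$ are nontrivial on $G/H$ unless they lie in the core $N$ of $H$.

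\textbf{Group-theoretic step.} Set $N=\mathrm{core}_G(H)$. If $N\neq1$, then $H/N$ is a proper subgroup of $G/N$ containing the decomposition groups for the quotient cover $\tilde X/N\to X_0$. This gives case 1) directly. Branch points whose inertia lies in $N$ would disappear in this quotient, but case 1) makes no demand on branch points, so this is harmless. If $N=1$, then $G$ acts faithfully and primitively on $G/H$ (since $H$ is maximal), and it has an element of small fixed-point-free ratio, i.e.\ small minimal degree. I then apply the Burness--Guralnick classification of primitive groups with an element of index below the threshold (essentially below a fixed proportion of the degree). The outcome should be that $G$ is a product-action or wreath-type group $A_\ell^t\leqslant G\leqslant S_\ell\wr S_t$ whose point stabilizer is $(S_k\times S_{\ell-k})\wr S_t\cap G$, i.e.\ the action on $t$-tuples of $k$-subsets. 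This is case 2), since $\Gal\leqslant H$.

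\textbf{Main obstacle.} The main obstacle is matching the numerical threshold $3b/28$ with the exact form of the Burness--Guralnick bound. I need to check that every exceptional family besides the subset actions of $S_\ell\wr S_t$ is excluded by the index inequality, including small affine or almost simple cases. I expect to handle these by the explicit lists of primitive groups with minimal index ratio below the threshold in their work, and I would try to verify the constant first for the affine and diagonal cases.
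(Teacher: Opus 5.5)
\textbf{There is a genuine gap.} It sits exactly where the constant $3b/28$ is decided: your geometric step goes through gonality, and that route cannot reach $3b/28$.

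From $\#|Y_H|_d=\infty$ you get $\gon(Y_H)\leqslant 2d$ (\Cref{prop:arithm-gonality}). Applying \Cref{Castelnuovo} to the indecomposable map $\phi_H\colon Y_H\to X_0$ of degree $n=[G:H]$ then only tells you that either $n\gon(X_0)\leqslant 2d$, or $\deg R_{\phi_H}\leqslant 4d(n-1)$.

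\emph{The second alternative.} Here your constant is $c=4$. Averaging then produces a branch cycle of index below $4\cdot\frac{3b}{28}\cdot\frac{n}{b}=\frac{3n}{7}$. That is twice the Burness--Guralnick threshold $3n/14$, so there is no contradiction unless $d<3b/56$.

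\emph{The first alternative.} This one is worse. Take a covering of small degree with many branch points, e.g.\ a hyperelliptic map $\tilde X\to\mP^1$ with $G=\Z/2$ and $H=1$. The statement asserts irreducibility of almost all fibers over points of degree $<3b/28$. But $\gon(Y_H)\leqslant n\leqslant 2d$ holds trivially, so the gonality gives no information at all. Indeed, for even $d$ the set $|Y_H|_d$ is infinite anyway, via pullbacks.

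The underlying problem is that you only use $\#|Y_H|_d=\infty$. You discard the fact that the lifts $Q\in|Y_H|$ of the points $P\in\mathcal S$ satisfy $\deg Q=\deg\phi_H(Q)=d$. At best your route proves $d<\min\{3b/56,\,n\gon(X_0)/2\}$, which is the weaker \Cref{cor:BG}.

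\textbf{The missing ingredient} is the Song--Tucker extension of Vojta's inequality, \Cref{song-tucker}. It is tailored to exactly such points: it gives finiteness as soon as $n\chi(X_0)-\chi(Y_H)>dn$, i.e.\ $d<\deg R_{\phi_H}/(2n)$. There is no factor-$2$ loss and no ``gonal map factors through $\phi_H$'' alternative. When $H$ is core-free, every branch point of $\tilde\phi$ is a branch point of $\phi_H$. Outside the product-type actions on $j$-sets, \cite[Theorem 7]{BG} then gives $\deg R_{\phi_H}\geqslant b\,\ind(G)\geqslant 3bn/14$, which yields exactly $d<3b/28$; this is \Cref{cor:BGST}.

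The paper's proof is your reduction followed by this corollary. A nontrivial core of the maximal $H$ gives case 1); a trivial core gives a faithful primitive action, and \Cref{cor:BGST} either yields a contradiction or puts you in case 2). So your group-theoretic step is fine.

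The obstacle you flag is not one. Theorem 7 of \cite{BG} gives $\ind(G)\geqslant 3n/14$ uniformly for every primitive group not of the excluded product type. No separate check of affine, diagonal, or small almost simple cases is needed. No averaging is needed either, since every nontrivial element satisfies the bound.
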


For example, consider the case of a Galois covering $\tilde\phi\colon \tilde X \to X_0$ with monodromy $G=\mathrm{PGL}_2(q)$. Assuming $q$ is odd, the only nontrivial normal subgroup of $G$ is $\mathrm{PSL}_2(q)$ of index $2$. In this case \Cref{prop:intro-monodromy} says that the fibers $\tilde\phi^{-1}(P)$ will have Galois group $\mathrm{PGL}_2(q)$ or $\mathrm{PSL}_2(q)$ above all but finitely many points $P$ of degree less than $(3/28)  \min\{b, b'\}$, where $b'$ is the number of branch points in the associated $\mathrm{PSL}_2(q)$-covering. 

In view of \Cref{prop:intro-monodromy}, the key case left to consider is that of groups of the so-called product type $A_\ell^t\leqslant G\leqslant S_\ell\wr S_t$.  The case $t=1$ is already taken care of in \Cref{thm:Galois-groups}. 
For $t\geqslant2$, consider the quotient map $\pi:G\to S_2\wr S_t$ modulo the minimal normal subgroup $A_\ell^t$ of $G$. The following theorem states that, under some assumptions, if infinitely many fibers of a map with monodromy $G$ have Galois group $H\neq G$, then $\pi(H)\neq \pi(G)$. This statement can then be applied inductively: for a given subgroup $H$, one applies the theorem with the base curve replaced with $\tilde{X}/(A_\ell^t\cdot H)$.

\begin{thm}[\Cref{cor:product-type-arithmetic}\label{thm:product-main} for $\eps=1/3$]
Suppose $\phi: X \to \mP^1_k$ is an indecomposable genus $g$ covering with $b$ branch points and monodromy group  $A_\ell^t\leqslant G\leqslant S_\ell\wr S_t$ acting on $t$-tuples from $\{1,\ldots,\ell\}$, for $\ell\geqslant5$ and $t\geqslant2$ (and so  $\deg \phi=\ell^t$). 
Let $\pi:S_\ell\wr S_t\to S_2\wr S_t$ be the natural projection modulo $A_\ell^t$. 
Then for all but finitely many  points $P$  of degree $d<\min\{b/12 -g/\ell^t,3b/56,\ell^t/(2t)\}$ 
such that $\pi(\Gal(k(\phi^{-1}(P))/k(P)))=\pi(G)$, one has  $\Gal(k(\phi^{-1}(P))/k(P))=G$.  
\end{thm}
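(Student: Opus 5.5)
Here is how I would argue. Take $P$ of degree $d$ with $H:=\Gal(k(\phi^{-1}(P))/k(P))\neq G$ and $\pi(H)=\pi(G)$. The aim is to show that every such $P$ lies in the image of low-degree points on one of finitely many auxiliary curves, and that these curves have genus too large to carry infinitely many points of degree $\leqslant d\cdot[G:M]$.

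The standard mechanism is as follows. If $H\leqslant M<G$ with $M$ maximal, then $P$ lifts to a $k(P)$-rational point on $\tilde X/M$, where $\tilde X\to\mP^1$ is the Galois closure. Hence $P$ is the image of a point of degree $\leqslant d$ on $\tilde X/M$. That curve maps to $\mP^1$ with degree $[G:M]$. By Riemann--Hurwitz, its genus is governed by the fixed-point ratios of inertia generators on $G/M$. If
\[
  d<\frac{\gon(\tilde X/M)}{2}
  \quad\text{(or the analogous Abramovich--Harris / Frey bound, e.g.\ } \gon>2d\text{)},
\]
then $\tilde X/M$ has only finitely many points of degree $\leqslant d$, and we get finiteness. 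I will use the earlier results of the paper that translate a lower bound on the number of non-fixed points (index/``fixed point ratio'' bounds) into a gonality bound.

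The group-theoretic reduction comes next. Since $\pi(H)=\pi(G)$ and $H\neq G$, the intersection $H\cap A_\ell^t$ is a proper subgroup of $A_\ell^t$, or at least $H A_\ell^t=G$ with $H$ not containing $A_\ell^t$. I choose $M\geqslant H$ maximal with $MA_\ell^t=G$, that is, $M$ a maximal subgroup supplementing the socle. By the structure of maximal subgroups of product-type groups (O'Nan--Scott/Aschbacher style), such $M$ is one of the following:
\begin{enumerate}
\item[(a)] $M$ normalizes $K^t$-type subgroups $N_{S_\ell}(K)\wr S_t\cap G$ with $K<A_\ell$ maximal, which is the product-action analogue of the $t=1$ case;
\item[(b)] $M$ is a diagonal-type subgroup, with $M\cap A_\ell^t$ a diagonal or partial diagonal copy of $A_\ell$.
\end{enumerate}
Case (b) has huge index, at least $|A_\ell|^{t-1}$. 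In case (a), the index is at least $\binom{\ell}{k}^{t}$-ish, or at least $\ell$ per coordinate, and the intransitive case $K=S_k\times S_{\ell-k}$ is the one to watch.

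For each type I need a lower bound on the fraction of moved points of elements of $G$ acting on $G/M$. Equivalently, I need a bound of the form
\[
  g(\tilde X/M)\gtrsim \Bigl(\tfrac{b}{\text{const}}\Bigr)[G:M]-[G:M],
\]
where the constants $1/12$ and $3/56$ in the theorem arise: $3/28$ is the Burness--Guralnick constant from \Cref{prop:intro-monodromy}, halved. For $P\in\phi(|X|_d)$-type coincidences, the intransitive $k=1$ case corresponds to $M$ being the point stabilizer of $X$ itself. This is excluded, because then $\pi(H)$ would still equal $\pi(G)$ but $P$ has a rational preimage. Handling this is where the term $g/\ell^t$ and the bound $d<\ell^t/(2t)$ enter: points of $X$ of degree $d$ over $P$ must be controlled via the genus $g$ of $X$, and the condition $d<\ell^t/(2t)$ ensures that a factor of degree $\leqslant d$ over $k(P)$ would force the orbit of $H$ to be small. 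That would contradict $\pi(H)=\pi(G)$, since $\pi(H)$ transitive on the $t$ blocks gives orbits of size a multiple of $t$.

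The main obstacle is the fixed-point-ratio estimate for the intransitive-in-each-coordinate subgroups $(S_k\times S_{\ell-k})\wr S_t\cap G$ with small $k$. There, elements of $A_\ell^t$ such as a $3$-cycle in one coordinate fix a large proportion of cosets. I would first try to bound the number of fixed points of a branch-cycle $x$ via its projection $\pi(x)$: when $\pi(x)$ moves blocks, the fixed points are tiny; when $\pi(x)$ is trivial, I apply the $t=1$ bound coordinatewise. I would then sum over branch points to get the required genus inequality.
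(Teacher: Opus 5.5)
\textbf{There is a genuine gap: your method for the one remaining case cannot give bounds independent of $\ell$.} Your reduction is in line with the paper. Taking $M\geqslant H$ maximal with $MA_\ell^t=G$ and invoking Burness--Guralnick (via \Cref{prop:monodromy}) leaves only the case where $M$ is the stabilizer of a $t$-tuple of $j$-sets. Your plan for that case has three steps: count fixed points of branch cycles, sum them into a Riemann--Hurwitz (genus) inequality for $\tilde X/M$, and convert this into a gonality bound by one Castelnuovo--Severi step for $\tilde X/M\to\mP^1$, i.e.\ \Cref{prop:min-ind}. Consider a transposition in a single coordinate. It has index $\binom{\ell-2}{j-1}\binom{\ell}{j}^{t-1}$, a fraction $j(\ell-j)/(\ell(\ell-1))$ of $n=[G:M]$, which equals $1/\ell$ for $j=1$. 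If most branch cycles are of this kind, then $\deg R\approx bn/\ell$, and the best such an argument gives is $\gon(\tilde X/M)\gtrsim b/(2\ell)$. So the inequality $g(\tilde X/M)\gtrsim (b/c)[G:M]$ you aim for cannot hold with $c$ independent of $\ell$. These single-coordinate cycles are exactly the $\eps$-ramified points the argument is built on, so they cannot be avoided.

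\textbf{The missing idea is the tower argument of \Cref{prop:product-type}.} One factors the cover through intermediate curves and applies Castelnuovo--Severi at every step:
\begin{itemize}
\item the ordered-tuple curves $Y_j\to Y_{j-1}$;
\item for $j=1$, the curves $Z_i$ attached to the orbits of the block stabilizer $H_1$ on $\{1,\dots,t\}$.
\end{itemize}
The inputs are that $\eps$-ramified branch cycles lie in $S_\ell^t$ (\Cref{lem:eps-prod}) and fix at least $(1-\eps)\ell^{\#S}$ points on every sub-product (\Cref{rem:subsets}). Hence each step is ramified above many points that are unramified below. This yields $\gon(X_1)\geqslant\min\{\ell^t/t,\,(t-1)(1-\eps)N/(2t)\}$ (\Cref{lem:gon(X_1)b}), and comparable bounds for $j\geqslant 2$, all linear in $N$ and uniform in $\ell$. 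Your phrase ``apply the $t=1$ bound coordinatewise'' points the right way. However, the $t=1$ bound is itself such a tower gonality argument, not a fixed-point count that can be summed into a genus inequality.

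\textbf{Your treatment of $j=1$ is also wrong.} The point stabilizer $M=G\cap(S_{\ell-1}\wr S_t)$ satisfies $\pi(M)=\pi(G)$: $A_\ell^t$ is transitive on $\{1,\dots,\ell\}^t$, so $G=A_\ell^tM$. Hence this case is not excluded by the hypothesis. The theorem in fact asserts that only finitely many such $P$ lie in $\phi(|X|_d)$, and this needs a lower bound on $\gon(X)$ itself.

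Your argument that transitivity of $\pi(H)$ on blocks forces $H$-orbits of size divisible by $t$ is false. Diagonal tuples $(a,\dots,a)$ are fixed by elements of $G\cap(S_{\ell-1}\wr S_t)$ that permute the blocks transitively.

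The two terms in the bound also do not arise the way you describe; neither comes from counting points of $X$ via its genus.
\begin{itemize}
\item $\ell^t/(2t)$ is half (via \Cref{prop:arithm-gonality}) of the alternative $\ell^t\gon(X_0)/t$ in the minimum of \Cref{lem:gon(X_1)b}.
\item The $b/12-g/\ell^t$ term comes from $(1-\eps)N/8=N/12$ at $\eps=1/3$. Here $N$ is the number of $\eps$-ramified branch points, bounded below in terms of $b$ and $g$ by Riemann--Hurwitz (\Cref{lem:mystery}).
\end{itemize}
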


Finally, we discuss some improved bounds, which can be obtained for coverings whose monodromy group $G$ is small compared to the number of branch points (and thus to the genus of the covering curve). The proofs rely on the work of Vojta \cite{vojta1992generalization} and Song--Tucker \cite{song2001arithmetic} on arithmetic discriminants.
\begin{thm}[\Cref{thm:large-branch-main}]\label{thm:large-branch}
    Suppose $\phi:X \to X_0$ is a covering of curves with monodromy group $G$ and $b$ branch points.  Let $n_{\max}$ denote the maximal index of a subgroup $H\leqslant G$, which itself is maximal among the subgroups with trivial core (in particular $n_{\max}\leqslant  |G|$). Then for all but finitely many degree $d<b/(2n_{\max})$ points $P$ of $X_0$, the  Galois group $\Gal(k(\phi^{-1}(P))/k(P))$  contains a nontrivial normal subgroup $1 \neq N \lhd G$. \end{thm}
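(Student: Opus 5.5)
Let $\tilde X \to X_0$ be the Galois closure of $\phi$, with group $G$. The plan is to argue by contradiction, translating the failure of the conclusion into the existence of infinitely many low-degree points on an auxiliary curve, and then ruling these out with the Vojta / Song--Tucker inequality on arithmetic discriminants.

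First, suppose there are infinitely many points $P\in X_0$ of degree $d<b/(2n_{\max})$ for which $D_P:=\Gal(k(\phi^{-1}(P))/k(P))$ contains no nontrivial normal subgroup of $G$. Up to conjugacy, $D_P$ is a subgroup of $G$ (the decomposition group at a point of $\tilde X$ above $P$), and its core is trivial. Enlarge $D_P$ to a subgroup $H\supseteq D_P$ that is maximal among core-free subgroups. Since $G$ has finitely many subgroups, after passing to an infinite subset we may fix one such $H$. Note $[G:H]\le n_{\max}$, and $H$ is core-free.

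Second, consider the intermediate cover $Y:=\tilde X/H\to X_0$, of degree $n:=[G:H]\le n_{\max}$. Because $D_P\le H$ (up to conjugation), the point $P$ lifts to a point $Q\in Y$ with $k(Q)=k(P)$, so $\deg Q=d$. We therefore get infinitely many degree-$d$ points on $Y$, each mapping isomorphically onto its image in $X_0$. Since $H$ is core-free, $G$ acts faithfully on $G/H$, so $Y\to X_0$ has monodromy $G$. Consequently each of the $b$ branch points of $\phi$ is a branch point of $Y\to X_0$, because inertia groups are nontrivial and act nontrivially on $G/H$.

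Third, we apply the arithmetic discriminant inequality of Vojta and Song--Tucker to the points $Q$ on $Y$, comparing heights through $\psi:Y\to X_0$. For points $Q$ of degree $d$ on $Y$ outside a finite set (possibly after excluding those of lower degree over $\psi$-images), it gives
\[ d(Q)\le (2g_Y-2+\epsilon)\,h(Q)+O(1),\]
where $d(Q)$ is the logarithmic discriminant, with the left side of order $(2d-2)h_K$. On the other hand, $d(Q)=d(P)$ since $k(Q)=k(P)$. Riemann--Hurwitz gives $2g_Y-2=n(2g_0-2)+\sum(\text{ramification})$. To obtain a contradiction, the plan is to pull everything back to $X_0$: either apply Vojta's inequality on $X_0$ with a divisor supported on branch points, or use the fact that the $n$ conjugate lifts of $P$ give $n$ points of $Y$ and then use Chevalley--Weil-type discriminant growth. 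If needed I would instead use the Song--Tucker version, where ramification enters through the discriminant of $k(\phi^{-1}(P))$. The expected outcome is that finiteness follows whenever $2d<b/n$, which gives exactly the bound $d<b/(2n_{\max})$. Concretely, writing $h(Q)=h(P)/n$ in normalized terms, the ramification contribution (at least $b$, since every branch point has ramification index $\ge 2$ on $Y$) dominates $2dn$.

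The main obstacle is this third step: extracting precisely the constant $b/(2n)$. This means correctly normalizing heights along $\psi$, checking that each branch point contributes at least $1$ to the ramification divisor of $Y\to X_0$, and handling the exceptional set in Vojta's theorem (points of degree $d$ on $Y$ that come from $X_0$ are precisely ours, so the relative version over $X_0$ is needed). I would first try Song--Tucker's relative inequality for the cover $Y\to X_0$, with $d_{X_0}$ fixed.
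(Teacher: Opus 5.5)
Your argument is the paper's argument. You enlarge the fiber group to a subgroup $H$ that is maximal among core-free subgroups, lift each such $P$ to a point $Q$ of $Y=\tilde X/H$ with $k(Q)=k(P)$, and use core-freeness to see that all $b$ branch points of $\phi$ remain branch points of $\psi\colon Y\to X_0$.

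The step you flag as the main obstacle is in fact immediate and needs no height bookkeeping. You should also drop the Vojta route as you wrote it: your displayed inequality is reversed, since Vojta bounds the discriminant from below. Song--Tucker's Proposition 2.3 gives finiteness of $\{Q\in|Y|:\deg Q=\deg\psi(Q)=d\}$ as soon as $n\chi(X_0)-\chi(Y)>dn$. By Riemann--Hurwitz the left side equals $\deg R_\psi/2\geqslant b/2$, so it suffices that $d<b/(2n)$, and this follows from $d<b/(2n_{\max})$ because $n\leqslant n_{\max}$.
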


For the problem of irreducibility of fibers of $S_\ell$-coverings, this approach gives the following.

\begin{thm}[\Cref{thm:vojta-new}]\label{intro:vojta}
    Suppose $\phi:X \to X_0$ is a covering of degree $\ell$, monodromy $A_\ell$ or $S_\ell$, and $b$ branch points. Then the fiber of $\phi$ is irreducible over all but finitely many points of degree $d<b/(2\ell).$    
\end{thm}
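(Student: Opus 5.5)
We deduce the statement from \Cref{thm:large-branch} (the main Vojta--Song--Tucker input) together with an elementary analysis of the subgroups of $A_\ell$ and $S_\ell$ acting on $\{1,\dots,\ell\}$.

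\textbf{Step 1: reformulation.} Let $\tilde\phi\colon\tilde X\to X_0$ be the Galois closure of $\phi$, with group $G\in\{A_\ell,S_\ell\}$ acting on $\ell$ letters. For a point $P$ of $X_0$ outside the branch locus, the fiber $\phi^{-1}(P)$ is irreducible over $k(P)$ exactly when the decomposition group
\[
D_P=\Gal\bigl(k(\tilde\phi^{-1}(P))/k(P)\bigr)\leqslant G
\]
acts transitively on $\{1,\dots,\ell\}$. The branch points form a finite set, so we may discard them. It therefore suffices to show that $D_P$ is transitive for all but finitely many $P$ of degree $d<b/(2\ell)$.

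\textbf{Step 2: applying \Cref{thm:large-branch}.} We want to apply it with $n_{\max}$ replaced by $\ell$. A direct application gives: for all but finitely many $P$ of degree $d<b/(2n_{\max})$, the group $D_P$ contains a nontrivial normal subgroup of $G$. For $\ell\geqslant 5$, the nontrivial normal subgroups of $A_\ell$ and $S_\ell$ all contain $A_\ell$, which is transitive, so $D_P$ would be transitive.

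The problem is that $n_{\max}$ for $S_\ell$ is much larger than $\ell$, since core-free maximal subgroups can have huge index. So we cannot quote the bound of \Cref{thm:large-branch} directly. Instead we go back into its proof. That proof presumably runs as follows: if infinitely many $P$ have $D_P$ contained in a fixed core-free subgroup $H$, then these $P$ lift to points of degree $d$ on $\tilde X/H$. One then compares the arithmetic discriminant of those points (Vojta, Song--Tucker) with the Riemann--Hurwitz genus of $\tilde X/H$. Ramification above each branch point contributes roughly $[G:H]/2$ to the genus, while the discriminant bound forces a linear relation between the genus and $d\cdot[G:H]$; together these contradict $d<b/(2[G:H])$.

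In our situation we apply this argument only to the intransitive subgroups $H=S_m\times S_{\ell-m}$ (intersected with $G$), for $1\leqslant m\leqslant \ell/2$. If $D_P$ is intransitive, then $P$ lifts to a point of degree $\leqslant md$ on $X$ itself: take an orbit of size $m$. More precisely, it yields a degree-$m$ divisor over $k(P)$ on the fiber. We therefore work with the curve $X^{(m)}$, or equivalently with the Galois closure quotient $\tilde X/(S_m\times S_{\ell-m})$, but measure the relevant index by $\ell$ in place of $\binom{\ell}{m}$.

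\textbf{Step 3: the Vojta estimate.} The cleanest route is to work on $X$ itself. If $D_P$ has an orbit of size $m<\ell$, we obtain a point $Q\in X$ with $[k(Q):k]=md<\ell d$ lying over $P$. Applying Vojta's inequality to $Q$ viewed as a point of $X$ of degree $\leqslant \ell d$, relative to the map $X\to X_0$, the arithmetic discriminant of $k(Q)$ is bounded below by the ramification. Since the fiber is only partially split, each branch point contributes at least about $1/2$ per unit of ramification. This yields $d(\cdot)\geqslant b/(2\ell)$, up to bounded error, for all but finitely many such $Q$.

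\textbf{Main obstacle.} The difficult step is exactly this discriminant bookkeeping. We must show that an intransitive decomposition group forces a discriminant defect proportional to $b$ per point, with the constant $1/(2\ell)$ rather than one depending on $\binom{\ell}{m}$. I would first try to bound the ramification of $k(Q)/k(P)$ at the places of $k(P)$ lying over each branch point, using that the inertia elements there are nontrivial, so at least two letters are moved.
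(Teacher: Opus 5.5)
There is a genuine gap, and it is in Step~3. Proposition~\ref{song-tucker} (and the Vojta inequality behind it) only controls points $Q$ with $\deg Q=\deg\phi(Q)$, i.e.\ $k(Q)=k(\phi(Q))$. That equality is what lets one compare the discriminants of $Q$ and $\phi(Q)$.

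For $m\geqslant 2$, a point $Q\in X$ of degree $md$ over a degree-$d$ point $P$ has $[k(Q):k(P)]=m$. Its discriminant then picks up the relative discriminant of $k(Q)/k(P)$, which nothing in your argument bounds, so no contradiction arises. Your plan to bound the ramification of $k(Q)/k(P)$ ``at places over branch points'' conflates geometric branch points of $\phi$ with arithmetic ramification, and has no rigorous counterpart. Only the case $m=1$ goes through: apply Proposition~\ref{song-tucker} to $\phi$, using $\deg R_\phi\geqslant b$.

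Your summary of the proof of \Cref{thm:large-branch-main} is also off. That proof uses only $\deg R_{\phi_H}\geqslant b$, one unit per branch point since $H$ is core-free, not about $[G:H]/2$ per branch point. This is exactly why it yields only $b/(2[G:H])$.

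The correct route is the one you glimpse in Step~2 and then abandon, and it needs no reopening of any proof. If $\Im\varphi_P$ is intransitive, it lies in $G\cap(S_m\times S_{\ell-m})$ for some $1\leqslant m\leqslant \ell/2$. So the resolvent curve $X_m$ has a $k(P)$-rational point $Q$ over $P$, i.e.\ $\deg Q=\deg\phi_m(Q)=d$. This is exactly the hypothesis of Proposition~\ref{song-tucker}, applied to $\phi_m\colon X_m\to X_0$ of degree $\binom{\ell}{m}$.

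Your instruction to ``measure the relevant index by $\ell$ in place of $\binom{\ell}{m}$'' is unjustified: the degree stays $\binom{\ell}{m}$, and the gain must come from the ramification. The missing ingredient is the combinatorial bound \cite[Lemma 2.0.19]{GS}: every nontrivial permutation has index at least $\binom{\ell-2}{m-1}$ in the action on $m$-sets, with equality for a transposition. Hence $\deg R_{\phi_m}\geqslant b\binom{\ell-2}{m-1}$. Song--Tucker then gives finiteness for all $d$ below $\deg R_{\phi_m}/(2\binom{\ell}{m})$, and
\[\frac{\deg R_{\phi_m}}{2\binom{\ell}{m}}\geqslant\frac{b\binom{\ell-2}{m-1}}{2\binom{\ell}{m}}=\frac{b\,m(\ell-m)}{2\ell(\ell-1)}\geqslant\frac{b}{2\ell}.\]
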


We note that the kinds of Hilbert's irreducibility theorems for a covering $\phi:X \to X_0$ that we are considering can only hold for points of degrees bounded in terms of $\phi$; in \Cref{appendix} we prove that any Galois type of a fiber can be achieved above infinitely many points of sufficiently high degree, even for covers of higher-dimensional varieties; see \Cref{main theorem}.

While all of the theorems above are of arithmetic nature, the proofs are geometric. 
The existence of degree $d$ points on curves $X$ is closely related to the structure of low-degree rational functions on $X$. Accordingly, our main results concern the \emph{gonality} of \emph{resolvent curves} $X_H=\tilde{X}/H$, that is, the minimal degree of rational function $X_H\to \mP^1_k$. In other words, given a $G$-covering $\tilde{X} \to \mP^1_k$,  
we show that the  curves $X_H$ cannot admit low-degree maps to $\mP^1_k$.

Many classical problems are connected to the finiteness problem for Hilbert's irreducibility; see the discussion following Remark 1.3 in \cite{neftin2024monodromy} for a large collection of such. Most of these admit natural extensions to higher-degree points, which have not been explored. 
As a specific example, for which  $f,g\in k[x]$ is the intersection $f\left(|\mP^1_k|_2\right) \cap g\left(|\mP^1_k|_2\right)$ of their value-sets on quadratic points finite?  We expect that the technique introduced in this paper will be useful for the study of this and many other questions.

\subsection{Sketch of the main argument}
Consider a degree $\ell$ rational function $\phi:\mP^1_k \to \mP^1_k$. Assume for simplicity that $\phi$ is ``generic'': the monodromy group of $\phi$ is $S_\ell$ and every ramified fiber of $\phi$ consists of $\ell-2$ unramified points and a single double point. We now describe how one can prove that  the fiber $\phi^{-1}(P)$ is either irreducible over $k(P)$, or contains a $k(P)$-rational point, for all but finitely many points $P\in \mP^1_k$ of small degree $d$.

Let $\tilde{X} \to \mP^1_k$ denote the Galois closure of $\phi$. Suppose the fiber $\phi^{-1}(P)$ over $P\in \mP^1_k$ is reducible and does not contain a rational point. Then the Galois group $\Gal(k(\phi^{-1}(P))/k(P)) \subset S_\ell$ is contained in $S_m \times S_{\ell-m}$ for some $m\geqslant2$, and so the curve $X_m\coloneq \tilde{X}/(S_m\times S_{\ell-m})$ has a $k(P)$-rational point above $P$.\footnote{This is a standard argument often used to reduce determining the exceptional set in Hilbert's irreducibility theorem to finding $X_m(k)$.}  We will show that the curves $X_m$ have only finitely many points of small degree, thus completing the proof.

One simple reason for a curve $X$ to have infinitely many points of degree $d$ is for $X$ to have a rational function $X \to \mP^1_k$ of degree $d$: the classical Hilbert irreducibility theorem then shows that $X$ admits infinitely many degree $d$ points. It turns out, as was discovered by Abramovich--Harris \cite{abramovich1991abelian} and Frey \cite{frey1994curves}, that a partial converse holds: if a curve admits an infinite number of degree $d$ points, then it admits a rational function of degree at most $2d$ (we will use a more refined version of this statement; see \Cref{prop:arithm-gonality}). Thus all we need to do is to show that the curves $X_m$ do not admit low-degree maps to $\mP^1_k$. We emphasize here that this problem is  geometric, rather than topological, which is a key difference between the cases $d=1$ and $d>1$: in the case $d=1$, Faltings' theorem  reduces the problem to showing that the genus of $X_m$ is bigger than $1$, which completely transforms the problem into a topological (or combinatorial) one. 

A direct approach to bounding the \emph{gonality} $\gon (X_m)$ --- the minimal degree of a rational function on $X_m$ --- is the following. The curve $X_m$ already admits a rational function $\pi_m: X_m \to \mP^1_k$ of degree $[S_\ell:S_m\times S_{\ell-m}]=\binom{\ell}{m}$. If $X_m$ were to admit a different rational function $\gamma: X_m \to \mP^1_k$ of degree $d$, then the map $(\pi_m, \gamma): X_m \to \mP^1_k \times \mP^1_k$ would realize $X_m$ as a curve of bidegree $(\binom{\ell}{m}, d)$ on the quadratic surface $\mP^1\times \mP^1$. Such a curve has genus at most $d \binom{\ell}{m}$. To calculate the genus $g_m$ of $X_m$ we simply apply the Riemann--Hurwitz formula to find that  
$g_m=1- \binom{\ell}{m}+(\ell-1)\binom{\ell}{m-1}$. Substituting into our bound $\gon (X_m)\geqslant g_m/\binom{\ell}{m}$ does not give a good bound on the gonality; for example, for $m=2$ the right hand side is less than $2$.

To fix this argument we consider the curves $Y_m\coloneq \tilde{X}/S_{\ell-m}$, which are degree $m!$ covers of $X_m$'s. The curves $Y_m$ form a tower $Y_m\to Y_{m-1}\to Y_{m-2}\to\dots\to Y_0=\mP^1$ corresponding to the nesting of groups $S_{\ell-m}\subset S_{\ell-m+1} \subset \dots \subset S_{\ell}$. Suppose $\gamma: Y_m \to \mP^1_k$ is a gonal map, that is, a rational function of degree $\gon(Y_m)$. Then using $\gamma$ we can map $Y_m$ into the algebraic surface $Y_{m-1}\times \mP^1$. A theorem of Castelnuovo--Severi can be used to estimate the genus of the curve on such a product surface; see \Cref{Castelnuovo}. Concretely, the Castelnuovo--Severi inequality shows that $\gon(Y_m) \geqslant \deg R/2(\ell-m+1)$, where $R$ is the ramification divisor of the covering $Y_m \to Y_{m-1}$; the quantity $\deg R$ appears in the Riemann--Hurwitz formula, $\deg R=\sum_{P\in Y_{m}}(e_P-1)$, where $e_P$ is the ramification index. A standard calculation using this formula gives $\deg R \geqslant (2\ell-2)\left((\ell-2)(\ell-3)\dots (\ell-m)\right)$, and so $\gon(Y_m)\geqslant ((\ell-2)\dots (\ell-m))(\ell-1)/(\ell-m+1)$.
Finally, since any degree $e$ rational function on $X_m$ gives, by composition, a degree $m!e$ rational function on $Y_m$, we conclude
\[\gon(X_m)\geqslant \gon(Y_m)/m! \geqslant \frac{(\ell-1)\binom{\ell-2}{m-1}}{m(\ell-m+1)}.\]
With this new bound, it is easy to see that $\min_{\ell/2\geqslant m\geqslant2}\gon(X_m)\geqslant (\ell-2)/2 $. Thus finiteness in Hilbert's irreducibility theorem holds for points of degree at most $(\ell-2)/4$ in this case.

Philosophically, the key to this approach is to study the gonality as a function on the diagram of curves $\tilde{X}/H$ for various subgroups $H$, rather than working directly with the curves $X_m$. Such a study is necessary for proving results like \Cref{thm:Galois-groups}, as points on $X_H$ parametrize fibers whose Galois group is contained in $H$. Note also that even in this nice special case the resulting bound $(\ell-2)/4$ is linear in the number of branch points $b=2\ell-2$ (compare with \Cref{thm:intro:S-HIT}). 

This core argument will be combined with the following additional methods. On the arithmetic side, we will replace the simple gonality inequality of \cite{abramovich1991abelian} and \cite{frey1994curves} with a more sophisticated recent version from \cite{kadets2025subspace}. We will also use the work of Vojta \cite{vojta1992generalization} and Song--Tucker \cite{song2001arithmetic} to sometimes avoid considering gonality altogether. On the group-theoretic side, the main challenge is to deal with the possibly complicated behavior of branch cycles. Here we will use the striking recent work of Burness--Guralnick \cite{BG} on indices of primitive permutation groups, which are not of product type. Combining their results with the Castelnuovo--Severi inequality to bound gonalities of resolvent curves leads to \Cref{prop:intro-monodromy}.
\subsection{Related work}
The original Hilbert--Siegel problem concerns  fibers of polynomial maps over points in $\mathbb Z$. An almost complete solution was announced recently \cite{BKN}. For  degree-$n$ maps $f:X\to\mP^1_\mQ$  with $g_X>0$ and general monodromy   $\Mon_\mQ(f)=S_n$, the fibers of $f$ are reducible only over finitely many  $a\in \Z$; see \cite{Mul4}, and \cite{muller2002finiteness} for similar results under other assumptions. 
For indecomposable $f\in \mQ[X]$ with $\deg(f)>20$,  there are only finitely many  rational $a\in\mQ\setminus f(\mQ)$ with reducible fibers under $f$,  by the combination of theorems of M\"uller \cite{Mul2} and Guralnick--Shareshian \cite{GS}; see \cite[Thm.\ 5.4]{konig2024reducible}. Similar results were shown for indecomposable rational functions $f\in\mQ(x)$ of sufficiently large degree in \cite{MN,PHDTali}, using \cite{neftin2024monodromy}, and \cite[\S 4]{bilu2026values}. 
Algebraic points of fixed degree $d$ in fibers of maps $f:X\to\mP^1_\mQ$ over rational points were also recently  considered  by Derickx--Rawson \cite{DR25}.

 In contrast, the analogues of Hilbert's irreducibility for higher-degree points have not been explored much. Tucker \cite{tucker2002irreducibility} has discovered that even a weak version of Hilbert's irreducibility cannot hold for all coverings:  for any $d \geqslant 4$ there is a covering of curves $\phi:X \to X_0$ such that for any finite extension $L/k$, there are at most finitely many points $P$ of degree $d$ on $(X_0)_L$ for which $\phi^{-1}(P)$ is irreducible. This uses the curves constructed by Debarre and Fahlaoui \cite{debarre1993abelian}; see Examples \ref{exam:weaker-vesionts}, \ref{exam:tucker}. Thus one can only hope to obtain results with restrictions on the types of coverings being considered. Bary-Soroker, Fehm, and Wiese have proved that the compositum of all degree $d$ extensions of $k$ is Hilbertian \cite{bary2016hilbertian}, though this is not directly connected to our results, as most elements in such a compositum are of arbitrarily high degree. Finally, the observation that the gonalities of curves which belong to a diagram should be studied simultaneously, as a function on the diagram, appeared in the work of Ellenberg, Hall, and Kowalski \cite{ellenberg2012expander}, who considered a very different setup: infinite towers of curves, with monodromy similar to that of modular curves. 

More broadly, problems which concern the arithmetic of higher-degree points on curves have a long history, and continue to be actively studied; see \cite{viray2024isolated} for a survey. 
\subsection{Structure of the paper}
In \Cref{sec:preliminaries} we introduce the geometric category of curves and their coverings, as well as discuss the main arithmetic invariant of curves --- the \emph{minimum density degree} --- and its connections to geometric features of curves, like gonality. In addition to classical tools like the Castelnuovo--Severi inequality, we also describe a few recent developments upon which we rely. 
In \Cref{Section:irreducibility-Sn} we study the irreducibility of fibers of coverings $\phi:X \to X_0$ with monodromy group $A_\ell$ or $S_\ell$, giving the proof of \Cref{thm:intro:S-HIT}. 
In \Cref{sec:Vojta} we consider the same problem as in \Cref{Section:irreducibility-Sn}, but in a different regime of parameters: we study coverings for which the number of branch points is higher than the degree, proving Theorems \ref{thm:large-branch} and \ref{intro:vojta}. 
In \Cref{sec:Galois-groups} we study the structure of Galois groups of fibers of an $A_\ell$ or $S_\ell$-covering, proving \Cref{thm:Galois-groups}. 
Finally, in \Cref{section:general-groups} we study indecomposable coverings with general monodromy group $G$, proving Theorems \ref{prop:intro-monodromy}, \ref{thm:product-main}. 

\subsection*{Acknowledgments} 
We thank Arno Fehm, Robert Guralnick, and Joachim K\"onig for providing helpful comments and references. 
The second author was funded by the Israel Science Foundation, grant no.~353/21. 
\section{Preliminaries}\label{sec:preliminaries} 
\subsection{Notation} 
Throughout we will work over a finitely generated field $k$ of characteristic zero. 
For an integer $m\geqslant1$, let $\binom{x}{m}$ denote the polynomial $x(x-1)\cdots (x-m+1)/m!$ and let $x^{\uline{m}}$ denote the polynomial $x(x-1)\cdots (x-m+1)$. 
\subsection{Curves}
 We will use the word \emph{curve} to denote a smooth integral projective variety $X/k$ of dimension $1$. Note that we do not require $X$ to be geometrically irreducible. All morphisms are considered over the field $k$. A {\it covering} is a finite morphism of curves. We will use certain notions (e.g., gonality) and results which are usually only introduced for geometrically irreducible $X$, and so in this section we briefly recall basic properties of this, more inclusive, class of curves. For a more comprehensive exposition of the basic theory of curves in this generality, see, for example, \cite[\href{https://stacks.math.columbia.edu/tag/0BRV}{Tag 0BRV}]{stacks-project}. 
 
 In the function field language, curves correspond to finitely generated field extensions $k(X)/k$ of transcendence degree $1$.
For any curve $X$, the field of constants  $L=k(X)\cap \overline{k}$ coincides with the field $H^0(X, \mathcal{O}_X)$. Then $L/k$ is a finite extension, and the structure morphism $X \to \Spec k$ factors as $X \to \Spec L \to \Spec k$. We write $X/L$ to refer to $X$ as a curve over $\Spec L$ in this fashion; this notation should not be confused with the base change $X_L$.  In particular, the base change $X_\mC$ of a curve to the complex numbers is a union of $[L:k]$ copies of $(X/L)_\mC$. The basic connections between $X$ and $X/L$ are summarized in the following proposition. Here the \emph{gonality} $\gon_k X$ of a curve $X/k$ is the minimal degree of a finite morphism to $\mP^1_k$. We use the notation $\chi(X)$ to denote the Euler characteristic $\chi(X)=\chi(X, \mathcal{O}_X)=\dim H^0(X, \mathcal{O}_X)-\dim H^1(X, \mathcal{O}_X)$; for a geometrically integral curve $\chi(X)=1-g$, where $g$ is the genus of the Riemann surface $X(\mathbb{C})$. 

\begin{prop}\label{prop:curve-trivialities}
Suppose $X/k$ is a curve and $L=H^0(X, \mathcal{O}_X)$. Then the following hold:
\begin{enumerate}
    \item\label{item:prop:eulerch} $\chi(X)=[L:k] \chi(X/L)$;
    \item\label{item:prop:pointdeg} Let $x$ be a closed point of $X$ and $\tilde{x}$ be the corresponding point on $X/L$. Then $\deg x = [L:k]\deg \tilde{x}$;
    \item\label{item:prop:curvecat} There is an equivalence of categories between curves (and finite morphisms, over $k$) and finitely generated field extensions $K/k$ of transcendence degree $1$;
    \item\label{item:prop:RH} If $\phi\colon X \to Y$ is a finite covering, then the Riemann--Hurwitz formula holds
\[2\chi(X)=2\deg(\phi) \chi(Y)-\deg R_\phi,\]
where $R_\phi$ is the ramification divisor: $\mathrm{supp} R_\phi=\mathrm{supp}\ \omega_X/\phi^{*}\omega_Y$\footnote{here $\omega_X$ is the sheaf of differential one-forms, $\omega_X=\Omega^1_X$.}, and the multiplicity at a point $P \in \mathrm{supp}\ R_\phi$ is $\mathrm{length}_P(\omega_X/\phi^*\omega_Y)$;
    \item\label{item:prop:gonality} The gonalities of $X$ and $X/L$ satisfy
    \[\gon_k (X) = [L:K]\gon_L(X/L).\]
\end{enumerate}
\end{prop}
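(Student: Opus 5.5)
The plan is to reduce everything to the geometrically integral curve $X/L$ and the field extension $L/k$, proving the five items in the order (3), (2), (1), (5), (4). Throughout, a curve is the same as a regular projective model of its function field, with $L=k(X)\cap\oline k=H^0(X,\mathcal O_X)$. Note that $X$ is normal and proper over $k$, so $H^0(X,\mathcal O_X)$ is a finite field extension of $k$ that is algebraically closed in $k(X)$. Hence $X/L$ is geometrically integral; in characteristic zero, regularity also gives smoothness of $X/L$ over $L$.

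For (3) I would cite the standard equivalence (Stacks 0BY1/0BXX). A finitely generated extension $K/k$ of transcendence degree $1$ has a unique regular projective model, obtained by normalizing $\mP^1_k$ in $K$. Nonconstant morphisms then correspond to $k$-embeddings of function fields, and every such morphism is automatically finite. For (2), I would observe that $X$ and $X/L$ are the same scheme. A closed point $x$ therefore has a single residue field $\kappa(x)$, which contains $L$. Its degree over $k$ is $[\kappa(x):k]=[\kappa(x):L][L:k]$, which gives (2). For (1), the cohomology groups $H^i(X,\mathcal O_X)$ are the same abelian groups whether we view $X$ over $k$ or over $L$. Only the dimension count changes: $\dim_k=[L:k]\dim_L$, and (1) follows.

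For (5), I take a finite morphism $f\colon X\to\mP^1_k$. Its pullback $f^*t$ is a nonconstant element of $k(X)$, so it defines a morphism $X/L\to\mP^1_L$. The degree over $k$ is $[k(X):k(t)]$ and the degree over $L$ is $[k(X):L(t)]$. These differ by the factor $[L(t):k(t)]=[L:k]$, because $L/k$ is algebraic and $t$ is transcendental. Conversely, any $L$-morphism $X/L\to\mP^1_L$ can be composed with the projection $\mP^1_L\to\mP^1_k$. So the two degree sets are identified by a multiplicative factor of $[L:k]$, and taking minima gives (5). The factor in the statement is written $[L:K]$, which I read as a typo for $[L:k]$.

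For (4), the Riemann--Hurwitz formula, I would pass to the geometric situation. Since $\phi$ is a $k$-morphism, we have $L_Y\subset L_X$. Base-changing to $\oline k$ turns $X$ into $[L_X:k]$ disjoint copies of $(X/L_X)_{\oline k}$, and similarly for $Y$. Each copy of $X$ maps onto some copy of $Y$. Summing the classical Riemann--Hurwitz formula over all copies gives the identity $2\chi=2\deg\chi-\deg R$ after base change. This uses two facts:
\begin{itemize}
\item $\chi$ is invariant under flat base change, and the copies contribute additively.
\item The degree of $\phi$ is preserved, and $\omega$ and $R_\phi$ are compatible with base change in characteristic $0$, where everything is separable, so the length of $\omega_X/\phi^*\omega_Y$ summed over points with residue degrees is preserved.
\end{itemize}
Alternatively, one can argue directly over $k$ with the exact sequence $0\to\phi^*\omega_Y\to\omega_X\to\omega_X/\phi^*\omega_Y\to0$, which is injective because $\phi$ is generically étale. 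One then computes $\chi(\omega_X)=-\chi(\mathcal O_X)$ by Serre duality over $k$, and $\chi(\phi^*\omega_Y)=\deg\phi\,\chi(\omega_Y)$ by pushing forward $\phi_*\phi^*\omega_Y=\omega_Y\otimes\phi_*\mathcal O_X$ and using Riemann--Roch on $Y$. Here $\deg R_\phi$ is measured over $k$, i.e.\ $\dim_k$ of the torsion quotient.

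The only step I expect to need real care is this degree bookkeeping in (4): checking that $\deg_k$ of line bundles and of the ramification divisor transforms consistently with the field of constants. I would handle it via the second, intrinsic argument, which avoids choosing components.
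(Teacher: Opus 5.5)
\paragraph{Items (1), (2), (3), (5).} These match the paper. The paper argues (1) and (2) by counting dimensions, $\dim_k=[L:k]\dim_L$, and cites the standard equivalence for (3). For (5) it uses the universal property of $\mP^1_L=\mP^1_k\times_k\Spec L$. Your bookkeeping $[L(t):k(t)]=[L:k]$ just makes that degree factor explicit, and you are right that $[L:K]$ is a typo for $[L:k]$.

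\paragraph{The gap in (4).} For (4) the paper only cites Stacks Tag 0C1B. The intrinsic argument you say you would use contains a false step: the identity $\chi(\phi^*\omega_Y)=\deg\phi\,\chi(\omega_Y)$ does not hold.
\begin{itemize}
\item Substituting it into the exact sequence gives $-\chi(X)=-\deg\phi\,\chi(Y)+\deg R_\phi$. This is off by a factor of two on the ramification term.
\item A concrete counterexample: for $\phi\colon\mP^1\to\mP^1$, $z\mapsto z^2$, one has $\chi(\phi^*\omega_{\mP^1})=\chi(\mathcal O(-4))=-3$, whereas $\deg\phi\,\chi(\omega_{\mP^1})=-2$.
\item The error is in the pushforward step. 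Riemann--Roch on $Y$ for the rank-$\deg\phi$ bundle $\omega_Y\otimes\phi_*\mathcal O_X$ gives
\[\chi(Y,\omega_Y\otimes\phi_*\mathcal O_X)=\deg\phi\,\chi(\omega_Y)+\deg(\phi_*\mathcal O_X).\]
\item Here $\deg(\phi_*\mathcal O_X)=\chi(X)-\deg\phi\,\chi(Y)$, which equals $-\deg R_\phi/2\neq 0$ whenever $\phi$ is ramified. In the example, $\phi_*\mathcal O=\mathcal O\oplus\mathcal O(-1)$. You implicitly treated $\phi_*\mathcal O_X$ as a degree-zero bundle.
\end{itemize}

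\paragraph{How to repair it.} Compute $\chi(\phi^*\omega_Y)$ by Riemann--Roch on $X$ instead: $\chi(\phi^*\omega_Y)=\deg\phi\cdot\deg_k\omega_Y+\chi(X)$. Use $\deg_k\omega_Y=-2\chi(Y)$, which follows from Riemann--Roch and Serre duality on $Y$; both hold over $k$ without geometric integrality. The exact sequence $0\to\phi^*\omega_Y\to\omega_X\to\omega_X/\phi^*\omega_Y\to0$ and $\chi(\omega_X)=-\chi(X)$ then give $-\chi(X)=-2\deg\phi\,\chi(Y)+\chi(X)+\deg R_\phi$. This is exactly $2\chi(X)=2\deg\phi\,\chi(Y)-\deg R_\phi$.

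Alternatively, your base-change route already works, provided you state the bookkeeping. Each component of $X_{\oline k}$ maps to its image component with degree $\deg\phi\,[L_Y:k]/[L_X:k]$. Each component of $Y_{\oline k}$ receives $[L_X:k]/[L_Y:k]$ of them. Summing the classical formula therefore gives total degree $\deg\phi$ over every component of $Y_{\oline k}$, and hence the stated identity.
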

\begin{proof}
Claim \eqref{item:prop:eulerch} (resp.\ \eqref{item:prop:pointdeg}) comes from the equality $\dim_k V = [L:k]\dim_L V$ for the $L$-vector spaces $V=H^i(X,\mathcal O_X)$, $i=0,1$ (respectively for the residue fields of $x$ over $k$ and $L$). Statements \eqref{item:prop:curvecat} and \eqref{item:prop:RH} are part of the general theory of curves (over perfect fields); see  \cite[\href{https://stacks.math.columbia.edu/tag/0BRV}{Tag 0BRV}]{stacks-project}, and more specifically \cite[\href{https://stacks.math.columbia.edu/tag/0C1B}{Tag 0C1B}]{stacks-project} for part (4). Claim \eqref{item:prop:gonality} follows from the universal property of the fiber product $\mP^1_L=\mP^1_k \times_k\mathrm{Spec}\ L$: since $X$ has a structure morphism $X\to\Spec L$, the morphisms $X\to \mP^1_k$ over $k$ are in bijection with the morphisms $X \to \mP^1_L$ over $L$.
\end{proof}
\begin{rem}
    Part of the expression for the Riemann--Hurwitz formula is a formula for $\deg R_\phi$ as a sum of local terms; we will come back to this form of the Riemann--Hurwitz formula in Section \ref{Section:permutations-and-RH}.
\end{rem}

\subsection{Gonality and resolvent curves}\label{sec:resolvent}
We begin by describing some well-known generalities on interpreting the finiteness in the Hilbert irreducibility theorem as a question on arithmetic of the ``resolvent'' curves. Suppose $\phi: X \to Y$ is a covering of curves of degree $\ell$ with (permutation) Galois group $G \subset S_\ell$, and let $\tilde{X} \to Y$ be its Galois closure. In the language of \'etale fundamental groups, $G$ is the image of the morphism $\pi_1^{\et}(Y\setminus B_\phi, \bar{y})\to S_\ell$ which corresponds to the covering $\phi$, where $B_\phi$ is the branch locus; this image is also known as the \emph{monodromy group} of $\phi$. For every subgroup $H \subset G$, let $X_H\coloneqq \tilde{X}/H$ denote the $H$-resolvent curve; it is a degree $[G:H]$ covering of $Y$. The isomorphism class of $\phi_H: X_H \to Y$ depends only on the conjugacy class of $H$. If $P$ is a closed point of $Y$ which is not in the branch locus, then the fiber $\phi^{-1}(P)$ is \'{e}tale over $P=\Spec k(P)$, and so defines a permutation action $\Gal_{k(P)} \to S_\ell$; this action factors through $\varphi_P: \Gal_{k(P)} \to G$, so that $\Gal(k(\phi^{-1}(P))/k(P))=\Im \varphi_P$. We shall repeatedly use the identification between the orbits of $\Gal_{k(P)}$ on $G/H$, acting via $\varphi_P$, and the irreducible components of the fiber $\phi_{H}^{-1}(P)$. In particular, the fiber $\phi^{-1}(P)$ is irreducible if and only if the action $\varphi_P$ is transitive, and it has full Galois group if and only if $\varphi_P$ is surjective. So the irreducibility of the fiber $\phi^{-1}(P)$ over $k$, which is equivalent to the transitivity of the action $\varphi_P$, is in turn equivalent to the action of $\Gal_{k(P)}$ on $m$-sets having no fixed points for all $m$. For any permutation group $G \subset S_\ell$ we will make no distinction between the action of $G$ on $m$-sets and the action on $G/(G\cap (S_m\times S_{\ell-m}))$.

The key property of the resolvent curves $X_H$ is that $\mathrm{im}(\varphi_P)$ is contained in a conjugate of $H$ if and only if the fiber $\phi_H^{-1}(P)$ contains a $k(P)$-rational point. We will show that, under some conditions on the covering, the curves $X_H$ have only finitely many points of small degree. First we summarize some known results on the finiteness of the set $|X|_d$ of points of degree $d$ (over $k$)  on a curve $X$.

Suppose $k$ is a finitely generated field of characteristic zero, and $X/k$ is a curve. Following \cite{kadets2025subspace} and \cite{viray2024isolated} we define an integer called the \emph{minimum density degree} of $X$ by: 
\[\min \delta(X)\coloneqq \min\{d\colon \#|X|_d=\infty\}.\]

Intuitively, one should think of $\min \delta(X)$ as analogous and closely related to the \emph{gonality} $\gon_k X$ of the curve $X$. The main results of this paper can be understood as asserting lower bounds for $\min \delta(X_H)$, where $X_H$ are resolvent curves of a rational function $X \to \mP^1_k$ (or a more general covering of curves). The resolvent curves are naturally equipped with a rational function of degree $[G:H]$, but usually have no ``natural'' sources of lower-degree rational functions, suggesting that the gonality and $\min\delta(X_H)$ should grow quickly in the diagram of resolvent curves. The following simple lemma will be often used to trade studying $\min\delta(X_H)$ for $\min\delta(X_{H'})$, where $H, H'$ are close in the subgroup lattice.

\begin{lem}\label{lem:gon-and-airr}
Suppose $\phi: X \to Y$ is a covering of curves. Then 
\begin{align*}
    \min\delta(Y)\leqslant   \min\delta(X) \leqslant &  \deg \phi\cdot \min\delta(Y), \text{ and }\\
    \gon_k Y \leqslant  \gon_k X \leqslant  & \deg \phi \cdot \gon_k Y.
\end{align*}
\end{lem}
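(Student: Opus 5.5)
We prove the four inequalities separately. Throughout, $\phi\colon X\to Y$ is a finite morphism of curves over $k$ of degree $n=\deg\phi$.

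\emph{Gonality.} For the upper bound, take a gonal map $\gamma\colon Y\to\mP^1_k$ of degree $\gon_k Y$. The composite $\gamma\circ\phi\colon X\to\mP^1_k$ is finite of degree $n\cdot\gon_k Y$, so $\gon_k X\leqslant n\gon_k Y$.

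For the lower bound, take $f\colon X\to\mP^1_k$ of degree $\gon_k X$ and pass to function fields. Here $k(Y)\subset k(X)$ with $[k(X):k(Y)]=n$, and $f$ corresponds to an element $t\in k(X)$ transcendental over $k$. Let $N(t)=N_{k(X)/k(Y)}(t)\in k(Y)$.
\begin{itemize}
\item[(i)] If $N(t)$ is transcendental, then $[k(Y):k(N(t))]\leqslant\deg f$. This is the standard fact that the degree of the norm (as a function) is at most the degree of $t$: the pole divisor of $N(t)$ is $\phi_*$ of the pole divisor of $t$, so its degree is $\deg (t)_\infty$.
\item[(ii)] If $N(t)$ is constant, we work instead with the characteristic polynomial of $t$ over $k(Y)$. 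Its coefficients are elementary symmetric functions in the conjugates of $t$. Each coefficient $c_i$ has pole divisor bounded by $\phi_*$ of $i$ times the pole divisor of $t$, and pushforward preserves degree. Not all $c_i$ are constant, since $t\notin\bar k$. Any nonconstant $c_i$ therefore has degree at most $i\cdot\deg f\leqslant n\deg f$, which does not give the bound we want.
\end{itemize}

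Case (ii) is the main obstacle, and we handle it by a better choice of function rather than a sharper estimate. Consider the fiber product, or equivalently the map $(\phi,f)\colon X\to Y\times\mP^1$ followed by projection. Alternatively, replace $t$ by $t-a$ for a generic constant $a\in k$: the function $N(t-a)$ is the characteristic polynomial evaluated at $a$, which is nonconstant for all but finitely many $a$, since $k$ is infinite and the $c_i$ are not all constant. Its pole divisor equals $\phi_*(t-a)_\infty=\phi_*(t)_\infty$, which has degree $\deg f$. Hence $\gon_k Y\leqslant\deg f=\gon_k X$.

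\emph{Minimum density degree.} For the lower bound: if $|X|_d$ is infinite, then $\phi$ sends a degree $d$ point $x$ to a point $\phi(x)$ whose degree divides $d$. Each fiber is finite, so the image set $\phi(|X|_d)$ is infinite and lies in $\bigcup_{e\mid d}|Y|_e$. Some $|Y|_e$ with $e\leqslant d$ is therefore infinite, giving $\min\delta(Y)\leqslant\min\delta(X)$.

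For the upper bound, let $d=\min\delta(Y)$, so $|Y|_d$ is infinite. For each $y\in|Y|_d$, every point of the fiber $\phi^{-1}(y)$ has degree $d\cdot e$ for some $e\leqslant n$, since the residue degrees in the fiber sum to at most $n$. Infinitely many $y$ give infinitely many points of $X$ with degree in $\{d,2d,\dots,nd\}$. By pigeonhole, some $|X|_{ed}$ with $e\leqslant n$ is infinite, so $\min\delta(X)\leqslant n\min\delta(Y)$.
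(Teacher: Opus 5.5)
Your proof is correct. For the two $\min\delta$ inequalities and for $\gon_k X\leqslant\deg\phi\cdot\gon_k Y$ it is the paper's argument: pushforward and pullback of points, and composition with a gonal map.

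For $\gon_k Y\leqslant\gon_k X$ you take a genuinely different route.
\begin{itemize}
\item The paper first reduces to $X$ and $Y$ geometrically integral. It enlarges $k$ to $H^0(Y,\mathcal O_Y)$ and compares $X$ with $Y_L$, where $L=H^0(X,\mathcal O_X)$, via a norm. It then argues geometrically: a gonal pencil gives a rational curve in $\Sym^\gamma X$. Its image in $\Sym^\gamma Y$ is a rational curve contracted by the Abel--Jacobi map, so $Y$ carries two distinct linearly equivalent effective divisors of degree $\gamma$.
\item You stay in function fields. The identity $\mathrm{div}\,N_{k(X)/k(Y)}(g)=\phi_*\mathrm{div}(g)$ gives $\deg N(t-a)\leqslant\deg_k\phi_*\bigl((t)_\infty\bigr)=\deg f$. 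Choosing $a\in k$ generic makes $N(t-a)$ transcendental, using Cayley--Hamilton to see that not all $c_i$ are algebraic over $k$.
\end{itemize}
Your route is shorter and purely algebraic. It also needs no reduction to the geometrically integral case, since the norm/pushforward formalism and $[k(Y):k(h)]=\deg_k(h)_\infty$ hold for any regular curve over $k$. The paper's route instead fits the $\Sym^d$/Abel--Jacobi viewpoint it uses elsewhere.

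Your translation trick is also exactly what the paper's own reduction step needs. There, the norm of a gonal map on $Y_L$ down to $k(Y)$ can be constant: for example, $(x-\sqrt2)/(x+\sqrt2)$ on $\mP^1_{\mQ(\sqrt2)}$ has norm $1$. Replacing $f$ by $f-a$ repairs this.

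A few small points in your write-up:
\begin{itemize}
\item The pole divisor of $N(t-a)$ is in general only bounded above by $\phi_*\bigl((t)_\infty\bigr)$, since zeros and poles over the same point of $Y$ may cancel. This bound is all you actually use.
\item The estimate in (ii) is not sharp. Every monomial in the conjugates $t_j$ has $w$-valuation at least $\sum_j\min(0,w(t_j))$, which gives $(c_i)_\infty\leqslant\phi_*\bigl((t)_\infty\bigr)$ for every $i$. So any nonconstant coefficient would already have finished the argument without translating.
\item ``Constant'' must mean algebraic over $k$, i.e.\ lying in $H^0(Y,\mathcal O_Y)$, which may be larger than $k$. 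Your claim that only finitely many $a$ are bad holds with this meaning: view $\sum_i(-1)^i c_i a^{n-i}$ as a nonzero polynomial in $a$ with coefficients in the $k$-vector space $k(Y)/H^0(Y,\mathcal O_Y)$. It has at most $n$ roots in $k$.
\item The sentence about the fiber product and $(\phi,f)\colon X\to Y\times\mP^1$ is a leftover and can be deleted.
\end{itemize}
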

\begin{proof}
The first line of inequalities follows by observing that a pushforward of a degree $d$ point has degree at most $d$, and a pullback has degree at most $d \cdot\deg \phi$.

In the second line, the inequality $\gon_k X \leqslant \deg \phi \cdot \gon_k Y$ follows by considering the composition of $\phi$ with the gonal map of $Y$. What is left to show is the well-known, but not obvious inequality 
\[\gon_k X \geqslant \gon_k Y.\]

We first observe that by enlarging $k$ to be $H^0(Y,\mathcal{O}_Y)$, we can, without loss of generality, assume that $Y$ is geometrically integral. Let $L$ denote the field $H^0(X, \mathcal{O}_X)$. Note that $\gon_L Y_L \geqslant [L:k]^{-1}\gon_k Y$: if $f \in L(Y)^{\times}$ is a gonal map, then $\mathrm{Nm}_{L(Y)/k(Y)}(f) \in k(Y)^\times$ is a rational function on $Y/k$ of degree at most $[L:k]\deg f$. Therefore, by taking a base change to $L$, and using part \eqref{item:prop:gonality} of Proposition \ref{prop:curve-trivialities}, we  reduce to the case of both $X$ and $Y$ being geometrically integral over $k$. In this case the conclusion is standard. We give a sketch of a geometric proof below, and refer to \cite[Proposition A.1.vii]{poonen2007gonality} for a function-field treatment.

Since $X$ has gonality $\gamma\coloneqq \gon_k X$, there is a family of degree $\gamma$ effective divisors on $X$ parameterized by the projective line; in other words there exists a nonconstant morphism $\mP^1_k \to \mathrm{Sym}^{\gamma} X$. The map $\phi$ induces the quasifinite ``pushforward of divisors'' map $\mathrm{Sym}^{\gamma} X \to \mathrm{Sym}^{\gamma} Y$. Therefore $\mathrm{Sym}^{\gamma} Y$ contains a rational curve $S$.  The Abel--Jacobi map $\mathrm{Sym}^{\gamma} Y\to \mathrm{Pic}^{\gamma}_Y$ is necessarily trivial when restricted to $S$, since it is a map from a rational curve to  an abelian variety. If $s_1, s_2 \in S(k)$ are two distinct points, then the degree $\gamma$ effective divisors they represent are linearly equivalent, implying that there exists a map $Y \to \mP^1_{k}$ of degree at most $\gamma$.
\end{proof}

We need simple geometric conditions on curves $X$ that enforce finiteness of the sets of degree $d$ points $|X|_d$ on $X$ (over $k$), similar to the condition $g\geqslant 2$, which by Faltings' theorem guarantees that $|X|_1=X(k)$ is finite. Unfortunately, no complete analogue of Faltings' theorem for higher-degree points is currently known. However, we will get by with the following proposition.
\begin{prop}\label{prop:arithm-gonality}
    Suppose $X$ is a curve over $k$. Then the following hold:
    \begin{enumerate}
        \item The \emph{gonality} $\gon_k X$ of $X$ is at most $2\min \delta (X)$;
        \item If $d=\min \delta(X)$, then for some integer $e|d$, the curve $X$ admits a degree $e$ covering $\phi:X \to Y$ to a curve $Y$ satisfying $\min \delta(Y)=d/e$ and
        \[-\chi(Y) \leqslant \frac{3}{4} \left(d/e\right)^2 + d/e-1\]
    \end{enumerate}
\end{prop}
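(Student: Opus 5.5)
The plan is to work on the symmetric powers and the Jacobian, and to use Faltings' theorem on rational points of subvarieties of abelian varieties. By \Cref{prop:curve-trivialities} we may first replace $k$ by $L=H^0(X,\mathcal O_X)$. This rescales degrees of points, gonality and $\chi$ compatibly, so we may assume $X$ is geometrically integral.

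Put $d=\min\delta(X)$. The infinitely many degree $d$ points of $X$ give infinitely many $k$-points on $\Sym^d X$. Push these forward by the Abel--Jacobi map $\Sym^d X\to \Pic^d_X$. The fibres of this map are the complete linear systems $|D|$, which are projective spaces.

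First suppose some fibre containing one of our points is positive dimensional. Then $h^0(D)\geqslant 2$ for an effective divisor $D$ of degree $d$, so $\gon_k X\leqslant d$ and we are done. Otherwise the image is an infinite set of $k$-points of $W_d=\Im(\Sym^d X\to\Pic^d_X)$. By Faltings, the Zariski closure of these points is a finite union of translates of abelian subvarieties, so $W_d\supseteq x+A$ with $\dim A\geqslant 1$ and $x\in W_d(k)$. For part (1) I then use the Abramovich--Harris symmetrization trick. For $a\in A$, both $x+a$ and $x-a$ lie in $W_d$, and they sum to $2x\in\Pic^{2d}$. Letting $a$ vary gives a positive-dimensional family of effective divisors in the single class $2x$. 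Hence $h^0(2x)\geqslant 2$, so $\gon_k X\leqslant 2d$. There is a minor rationality point: the pencil must be defined over $k$. This holds because the class $2x$ is $k$-rational and the linear system of a $k$-rational class with $h^0\geqslant 2$ contains a $k$-rational pencil, as $k$-rational points on $X$ are not needed when $x$ comes from a $k$-rational effective divisor.

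For part (2) I would follow the strategy of \cite{kadets2025subspace}. Among all factorizations $X\to Y$ such that infinitely many of the degree $d$ points of $X$ are pullbacks of points of $Y$, choose $e=\deg(X\to Y)$ maximal. Pullback multiplies degrees by $e$, so $e\mid d$. By \Cref{lem:gon-and-airr}, $\min\delta(Y)\leqslant d/e$; conversely, a point of $Y$ of smaller degree would pull back to points of degree less than $d$ on $X$, giving $\min\delta(Y)= d/e$. On $Y$, the infinitely many points of degree $d'=d/e$ are now \emph{primitive}: they are not pulled back from any nontrivial cover of $Y$. We run the Faltings argument again on $Y$ to get $W_{d'}(Y)\supseteq y+A$, or else a $g^1_{d'}$ on $Y$.

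The remaining task is to bound the genus of $Y$. The constraint $\min\delta(Y)=d'$, together with primitivity, should force the configuration to be ``generated'' by $Y$ itself rather than by a quotient. I would then combine two inputs. The first is the Debarre--Fahlaoui bound $\dim A\leqslant d'/2$. The second is a Castelnuovo-type (or Castelnuovo--Severi, \Cref{Castelnuovo}) estimate applied to the linear series spanned by the translates $y+a$. Together these should give a bound on $g(Y)$ that is quadratic in $d'$, namely $-\chi(Y)=g(Y)-1\leqslant \tfrac34 d'^2+d'-1$. In the pencil case the analogous bound should come from Castelnuovo's bound on curves admitting two independent low-degree maps, one of which is primitive.

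The main obstacle is this last step: extracting the precise quadratic genus bound from primitivity. I expect to cite the subspace-configuration theorem of \cite{kadets2025subspace} directly for it, rather than reprove it.
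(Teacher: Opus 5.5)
Your part (1) is the Abramovich--Harris/Frey argument: the Abel--Jacobi map, Faltings' theorem on $W_d$, and the symmetrization $(x+a)+(x-a)=2x$. The paper simply cites this result, and your version is fine. Take the base point $x$ to be the class of one of your $k$-rational degree $d$ divisors; this is possible because they are Zariski dense in each component of the closure. Also use Faltings' theorem in its Mordell--Lang form over finitely generated fields of characteristic $0$.

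For part (2) the paper also just cites \cite{kadets2025subspace}, and that is the right move. But in your proposal the link between the citation and your setup is missing. The mechanism you sketch is not what makes the citation work. The Debarre--Fahlaoui bound plus a Castelnuovo estimate will not by itself produce the constant $3/4$, and ``primitivity should force the configuration'' is not an argument.

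What \cite[Theorem 1.3]{kadets2025subspace} gives is a dichotomy. A geometrically integral curve $Y$ with $\min\delta(Y)=u$ either has $g(Y)\leqslant f(u)$, where $f(u)=\max\bigl(u(u-1)/2+1,\,3m(m-1)+m\varepsilon\bigr)$, or it admits a covering $Y\to Z$ of degree $e'\geqslant 2$ with $\min\delta(Y)=e'\min\delta(Z)$. The paper iterates this along a tower until the first alternative holds.

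Your maximal-$e$ choice can replace the iteration, but you must check that the second alternative contradicts maximality. It does. The curve $Z$ has infinitely many points of degree $d'/e'$, and their pullbacks to $X$ are effective divisors of degree $d$. Since $\min\delta(X)=d$, all but finitely many of these pullbacks are single points of degree $d$. Otherwise you would get infinitely many distinct points of degree $<d$, since they lie over distinct points of $Z$. So infinitely many degree $d$ points of $X$ are pulled back from $Z$ along a map of degree $ee'>e$, contradicting maximality. You then still need the numerical check $f(u)\leqslant \tfrac34u^2+u$ to conclude $-\chi(Y)=g(Y)-1\leqslant \tfrac34(d/e)^2+d/e-1$.

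Two smaller corrections:
\begin{enumerate}
\item The bound $\min\delta(Y)\leqslant d/e$ comes directly from the images of the pulled-back points. It does not come from \Cref{lem:gon-and-airr}, which only gives $\min\delta(Y)\leqslant d$.
\item In your pencil case, maximality already forces $Y\cong\mP^1_k$, so no Castelnuovo argument is needed there. A degree $d'\geqslant 2$ map $Y\to\mP^1_k$ would, by Hilbert irreducibility applied to $X\to\mP^1_k$, give a factorization of degree $ed'>e$.
\end{enumerate}
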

\begin{proof}

Let $L=H^0(X, \mathcal{O}_X)$. By parts \ref{item:prop:pointdeg} and \ref{item:prop:gonality} of Proposition \ref{prop:curve-trivialities}, viewing $X$ as a curve over $L$, we see that $\gon_L X=[L:k]^{-1} \gon_k X$ and $\min \delta(X/L)=[L:k]^{-1}\min \delta(X)$. Therefore, to prove the first claim, it is enough to consider the case of geometrically irreducible $X$. In this case the first claim is implied (indirectly) by the results of Abramovich and Harris \cite{abramovich1991abelian}, and is explicitly stated by Frey in \cite[Proposition 2]{frey1994curves}.

Similarly, the second claim reduces to the case of geometrically integral $X$. It then follows from \cite[Theorem 1.3]{kadets2025subspace} by repeatedly applying the theorem, as we now explain. The theorem states that for any geometrically integral curve $X_1$ with $\min \delta(X_1)=d_1$ either the genus of $X_1$ is bounded, $g(X_1)\leqslant f(d_1)$ (for an explicit function $f$ which we give below), or there exists a covering $\phi_{12}\colon X_1 \to X_2$ such that $\min \delta(X_2)=\deg \phi_{12} \min\delta(X_1)$. If the latter holds, we can apply the theorem to the curve $X_2$, and continue in the similar manner, obtaining a tower of covers $X_1 \to X_2 \to \cdots X_n$ such that, if $\phi_{1,n}:X_1 \to X_n$ is the composite map, then $\min \delta(X_1)=\deg \phi_{1,n}\min \delta(X_n).$ Such a tower has to eventually terminate, and so we can assume that the genus of $X_n$ is bounded $g(X_n)\leqslant f(d_1/\deg \phi_{1,n})$. In our case we take $X=X_1$, $Y=X_n$, $\phi=\phi_{1,n}$. Then all that is left to check is that for any positive $u$, $f(u)\leqslant \frac{3}{4} u^2 + u$ (recall that $-\chi(Y)=g(Y)-1$.) This is readily seen from the explicit formula for $f$, namely 
\[f(u)=\max \left(\frac{u(u-1)}{2}+1, 3m(m-1)+m\epsilon \right),\]
where $m\coloneqq \lceil u/2\rceil-1$ and $\varepsilon \coloneqq 3u-1-6m<6.$

We note here that the results of \cite{kadets2025subspace} are only stated over number fields. However the proofs only use the Mordell--Weil theorem,  Hilbert's irreducibility theorem, and  the characteristic zero assumption. It therefore  carries over without change to a finitely generated field of characteristic zero using  the Lang--N\'eron theorem \cite{lang1959rational}; see also \cite{conrad2006chow} for a modern exposition. These results have also been recently extended to other function fields, with some caveats concerning constant curves, in \cite{van2026points}.
\end{proof}

To estimate gonality we will use the following corollary of the Castelnuovo--Severi inequality. Recall that a covering of curves $\phi:X \to Y$ is called \emph{indecomposable} if any factorization $X \to Z \to Y$ of $\phi$ through another curve $Z$ satisfies $X=Z$ or $Z=Y$.

\begin{prop}\label{Castelnuovo}
    Suppose $\phi: X \to Y$ is an indecomposable covering of curves. Then 
    \[\gon(X) \geqslant \min\left\{\deg \phi \cdot \gon (Y), \frac{\deg \phi\  \chi(Y)-\chi(X)}{\deg \phi - 1}\right\}.\]
\end{prop}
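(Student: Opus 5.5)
Let $\gamma\colon X\to \mP^1_k$ be a gonal map, of degree $\gamma_0=\gon(X)$, and consider the product morphism $\psi=(\phi,\gamma)\colon X\to Y\times \mP^1_k$. The plan is to split into two cases, according to whether $\psi$ is birational onto its image $C$.

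\emph{Case 1: $\psi$ is not birational onto $C$.} Let $Z$ be the normalization of $C$, so $\psi$ factors as $X\to Z\to Y\times\mP^1_k$. Then $\phi$ factors as $X\to Z\to Y$, and indecomposability forces $Z=X$ or $Z=Y$. Since $X\to Z$ has degree $>1$, we must have $Z=Y$, so $X\to Z$ is $\phi$ itself. Consequently $\gamma$ factors through $\phi$, i.e.\ $\gamma=\gamma'\circ\phi$ for some $\gamma'\colon Y\to\mP^1_k$. Hence $\gon(X)=\deg\phi\cdot\deg\gamma'\geqslant \deg\phi\cdot\gon(Y)$, which is the first term of the minimum.

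\emph{Case 2: $\psi$ is birational onto $C$.} Here we apply the Castelnuovo--Severi inequality in its Euler-characteristic form. If $X\to Y_1$ and $X\to Y_2$ have degrees $n_1,n_2$ and together generate the function field of $X$, then
\[-\chi(X)\leqslant -n_1\chi(Y_1)-n_2\chi(Y_2)+(n_1-1)(n_2-1).\]
We take $Y_1=Y$, $n_1=\deg\phi$, and $Y_2=\mP^1_k$, $n_2=\gamma_0$, with $\chi(\mP^1)=1$. This gives
\[-\chi(X)\leqslant -\deg\phi\,\chi(Y)-\gamma_0+(\deg\phi-1)(\gamma_0-1)=-\deg\phi\,\chi(Y)+\gamma_0(\deg\phi-2)-\deg\phi+1.\]
Rearranging yields
\[\gamma_0(\deg\phi-2)\geqslant \deg\phi\,\chi(Y)-\chi(X)+\deg\phi-1,\]
and hence $\gamma_0\geqslant (\deg\phi\,\chi(Y)-\chi(X))/(\deg\phi-1)$.

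This last step needs checking. We want $\gamma_0(\deg\phi-1)\geqslant \deg\phi\chi(Y)-\chi(X)$, and what we have is $\gamma_0(\deg\phi-1)\geqslant \deg\phi\chi(Y)-\chi(X)+\deg\phi-1+\gamma_0$. This is stronger, so the claim follows. (If $\deg\phi=1$ the statement is vacuous or interpreted trivially.)

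The main obstacle is justifying Castelnuovo--Severi for curves that need not be geometrically integral. I would reduce to that case as in the proofs of \Cref{lem:gon-and-airr} and \Cref{prop:arithm-gonality}. Since $\gon(Y)\leqslant\gon(X)$ allows an extension of constants, the first attempt is to set $L=H^0(X,\mathcal O_X)$ and note that $\phi$ is $L'$-linear for $L'=H^0(Y,\mathcal O_Y)\subseteq L$. If $L\neq L'$, then $X\to Y_L\to Y$ is a factorization of $\phi$, so indecomposability forces $X=Y_L$ or $\deg\phi=[L:L']$. In the latter case $X=Y_L$ and $\gon_k(X)$ can be compared directly using \Cref{prop:curve-trivialities}. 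Otherwise we may work over $L$, where everything is geometrically integral. There, the Euler characteristics and gonalities scale by $[L:k]$ via \Cref{prop:curve-trivialities}, and both sides of the inequality scale compatibly. The classical Castelnuovo--Severi inequality is applied over $\oline k$ to the base changes, and birationality of $\psi$ is preserved under base change.
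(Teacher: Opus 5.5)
Your case split, the use of indecomposability in Case 1, and the reduction to geometrically integral curves are sound and follow the paper's proof. Your variant of the reduction also works: indecomposability forces $H^0(X,\mathcal O_X)=H^0(Y,\mathcal O_Y)$ unless $X\cong Y\times_{L'}\Spec L$. In that exceptional case $\chi(X)=\deg\phi\,\chi(Y)$, so the second term of the minimum is $0$ and the claim is trivial. The paper instead keeps the constant field $L$ of $X$ and uses $\deg\phi/[L:k]-1\leqslant\deg\phi-1$.

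\textbf{The gap is in Case 2: the Euler-characteristic form of Castelnuovo--Severi you state is false.} Substituting $g=1-\chi$ into $g(X)\leqslant n_1g(Y_1)+n_2g(Y_2)+(n_1-1)(n_2-1)$ gives
\[-\chi(X)\leqslant -n_1\chi(Y_1)-n_2\chi(Y_2)+n_1n_2,\]
not $+(n_1-1)(n_2-1)$; you lost the constant $n_1+n_2-1$. A counterexample to your version: take a smooth curve $X\subset\mP^1\times\mP^1$ whose projections have degrees $n_1,n_2$. Then $-\chi(X)=n_1n_2-n_1-n_2$. The correct inequality is an equality there, while your right-hand side $n_1n_2-2n_1-2n_2+1$ is strictly smaller.

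Consequently, the intermediate bound $\gamma_0(\deg\phi-1)\geqslant\deg\phi\,\chi(Y)-\chi(X)+\deg\phi-1+\gamma_0$, which you called ``stronger'', is false. It fails for the curves above when $\gamma$ is the second projection. Getting slack for free should have been a warning sign.

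The fix is one line. With the correct form, $Y_2=\mP^1$ and $\chi(\mP^1)=1$, you get $-\chi(X)\leqslant-\deg\phi\,\chi(Y)+\gamma_0(\deg\phi-1)$. This is exactly the inequality $-\chi(\tilde X)\leqslant e(-\chi(\tilde Y))+f(e-1)$ used in the paper. It rearranges to $\gamma_0(\deg\phi-1)\geqslant\deg\phi\,\chi(Y)-\chi(X)$, which is precisely the second term of the minimum, with no room to spare.
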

\begin{proof}
    First note that by \Cref{prop:curve-trivialities}, without loss of generality, we can assume that $Y$ is geometrically irreducible.
    Suppose $\gamma: X \to \mP^1$ is the gonal map, and consider the morphism $\psi=(\phi,\gamma): X \to Y \times \mP^1$. The image of $\psi$ is birational to a curve $Z$ such that $\phi$ factors as $X \to Z \to Y$. Since $\phi$ is indecomposable, either $Z=Y$ or $Z=X$. If $Z=Y$, then $\gamma$ factors through $\phi$, giving $\gon (X) \geqslant \deg \phi\cdot \gon (Y)$. If $Z=X$, then $\psi$ is birational onto its image. In this case we can appeal to the Castelnuovo--Severi inequality \cite{kani1984castelnuovo}. The inequality states that for any one-dimensional geometrically irreducible variety $\tilde{X}$ on the product surface $\tilde{Y}\times \mP^1$, such that the degrees of the projections of $\tilde{X}$ on $\tilde{Y}$ and $\mP^1$ are $e$ and $f$ respectively, the geometric genus of $\tilde{X}$ satisfies\footnote{This is usually written in terms of the genera of $\tilde{X}, \tilde{Y}$ and $g(\mP^1)=0$ as $g(\tilde{X})\leqslant eg(\tilde{Y})+fg(\mP^1)+(e-1)(f-1)$.} 
  \[-\chi(\tilde{X})\leqslant e(-\chi(\tilde{Y}))+ f(e-1).\]
Now let $L=H^0(X, \mathcal{O}_X)$ and consider the base change $X_\mC$ which is a union of $[L:K]$ irreducible curves,  each isomorphic to $(X/L)_\mC$. Applying the Castelnuovo--Severi to the base-changed curves $\tilde{X}=(X/L)_\mC$ and $\tilde{Y}=Y_\mC$, gives

\[-\chi((X/L)_\mC)\leqslant \frac{\deg \phi}{[L:k]}(-\chi(Y)) + \frac{\gon (X)}{[L:k]} (\deg \phi/[L:k] - 1).\]

Since $-\chi((X/L)_\mC)=-\chi (X/L)=-[L:k]^{-1}\chi(X/k)$ by  \Cref{prop:curve-trivialities}, we have

\[\gon (X) \geqslant \frac{\deg \phi\cdot \chi(Y)-\chi(X)}{(\deg \phi/[L:k]) - 1}\geqslant \frac{\deg \phi\cdot \chi(Y)-\chi(X)}{\deg \phi - 1},\]
as claimed.    
\end{proof}
\begin{rem}
    Note that the second term in the minimum of Proposition \ref{Castelnuovo} is exactly $\deg R_\phi/(2\deg \phi - 2)$; this shows that gonality has to grow in sufficiently ramified covers.
\end{rem}

The same argument can be applied to give the following bound for maps which are not required to be indecomposable.
\begin{lem}\label{rem:Cast-generalized}
Let $\phi:X\to Y$ be a covering of curves and $e=\min\deg \phi_2$ be the minimal degree of a right composition factor $\phi_2$ of $\phi=\phi_1\circ\phi_2$. Then $$\gon(X)\geqslant \min\left\{e\cdot\gon(Y),\frac{\deg R_\phi}{2\deg \phi-2}\right\}.$$
\end{lem}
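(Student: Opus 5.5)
We follow the proof of \Cref{Castelnuovo} verbatim, replacing indecomposability with the minimality of $e$. First, by \Cref{prop:curve-trivialities} we may assume $Y$ is geometrically irreducible. Let $\gamma\colon X\to\mP^1_k$ be a gonal map and consider $\psi=(\phi,\gamma)\colon X\to Y\times\mP^1$. The normalization $Z$ of the image of $\psi$ is a curve through which both $\phi$ and $\gamma$ factor. Write $\phi=\phi_1\circ\phi_2$ with $\phi_2\colon X\to Z$ and $\phi_1\colon Z\to Y$, so that also $\gamma=\gamma'\circ\phi_2$ for a map $\gamma'\colon Z\to\mP^1$ induced by the second projection. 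We split into two cases according to whether $\phi_2$ is an isomorphism.

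\emph{Case $\deg\phi_2>1$.} Then $\phi_2$ is a nontrivial right composition factor of $\phi$, so $\deg\phi_2\geqslant e$. Hence
\[\gon(X)=\deg\gamma=\deg\phi_2\cdot\deg\gamma'\geqslant e\cdot\gon(Z)\geqslant e\cdot\gon(Y),\]
where the last inequality uses $\gon Z\geqslant\gon Y$ from \Cref{lem:gon-and-airr} applied to $\phi_1\colon Z\to Y$.

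\emph{Case $\deg\phi_2=1$.} Then $\psi$ is birational onto its image, and we apply the Castelnuovo--Severi inequality exactly as in the proof of \Cref{Castelnuovo}. Base change to $\mC$ and work with one component $(X/L)_\mC$. That component lies on $Y_\mC\times\mP^1$ with projection degrees $\deg\phi/[L:k]$ and $\gon(X)/[L:k]$. Using $\chi(X)=[L:k]\chi(X/L)$, this yields
\[\gon(X)\geqslant\frac{\deg\phi\,\chi(Y)-\chi(X)}{\deg\phi-1}.\]
By Riemann--Hurwitz (\Cref{prop:curve-trivialities}\eqref{item:prop:RH}), the numerator equals $\deg R_\phi/2$, which gives the bound $\deg R_\phi/(2\deg\phi-2)$.

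Combining the two cases gives the minimum. The only point needing care is the first case: we must check that $Z$ is a genuine curve in the paper's sense (smooth, integral, hence the normalization) and that $\gamma$ really factors through $\phi_2$. This holds since $k(Z)$ is the subfield of $k(X)$ generated by $k(Y)$ and $\gamma$, and the curve--field equivalence of \Cref{prop:curve-trivialities}\eqref{item:prop:curvecat} gives the factorization. If $\phi$ is trivial ($\deg\phi=1$) the statement is vacuous or degenerate, and we exclude that case.
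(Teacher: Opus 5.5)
Your proposal is correct and follows essentially the same route as the paper. You split according to whether $\psi=(\phi,\gamma)$ is birational onto its image; in the non-birational case you use $\deg(X\to Z)\geqslant e$ together with $\gon(Z)\geqslant\gon(Y)$ from \Cref{lem:gon-and-airr}, and in the birational case you rerun the Castelnuovo--Severi computation of \Cref{Castelnuovo}, with your function-field justification of the factorization through $Z$ spelling out what the paper leaves implicit.
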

\begin{proof}
    Suppose $\gamma:X \to \mP^1$ is the gonal map, and consider the map $\psi=(\phi, \gamma)\colon X \to Y \times \mP^1$. If $\psi$ is not birational onto its image, let $Z$ be the normalization of the image of $\psi$. Since $Z$ is a covering of $Y$, we have $\gon(Z) \geqslant \gon(Y)$ by \Cref{lem:gon-and-airr}. Therefore $\deg \gamma \geqslant  \deg \psi\ \gon(Z) \geqslant e\cdot \gon(Y)$, since both $\phi$ and $\gamma$ factor through $\psi.$ If $\psi$ is birational onto its image, then we can apply the Castelnuovo--Severi inequality to $\psi(X)$. This gives, by the same algebra as in \Cref{Castelnuovo}, the inequality 
    \[\gon(X)\geqslant  \frac{\deg \phi\ \chi(Y)-\chi(X)}{\deg \phi - 1}=\frac{\deg R_\phi}{2\deg \phi -2}. \qedhere\]
\end{proof}

\subsection{Coverings, permutation groups, and the Riemann--Hurwitz formula}\label{Section:permutations-and-RH}
Our main goal will be to estimate the genus, gonality, and minimum density degree of various curves $X$ covering a geometrically integral curve $X_0$. The tools we have introduced in the previous section can be interpreted from a group-theoretic point of view in the language of permutation groups and branch cycles. We will use the following common notation for combinatorics of permutations.

\begin{defn}
    Suppose $x \in S_\ell$ is a permutation. Let $f(x)$, $\orb(x)$, $\ind(x)$ denote the number of fixed points, number of orbits, and the \emph{index} $\ind(x)=\ell-\orb(x)$, respectively. 
        For $G\leq S_\ell$, the \emph{minimal index} $\ind(G)$  is the minimum of $\ind(x)$ over $x\in G\setminus \{1\}$.
\end{defn}

We will use known group-theoretic inequalities on the indices, specifically for permutations which arise as a branch cycle of a covering of curves --- a notion that we now explain.
Suppose $\phi:X\to X_0$ is a covering of curves of degree $\ell$ with monodromy group $G \subset S_\ell$, and suppose $X_0$ is geometrically irreducible. For any closed point $P$ of $X_0$, the fraction field of the completed local ring at $P$ gives a map $\mathrm{Spec}\ k(P) (\!(t)\!) \to X_0$; taking the fiber under $\phi$ gives a morphism $\hat{\Z}=\Gal_{\overline{k(P)}(\!(t)\!)} \to G$, defined up to conjugation. A generator of the image of this morphism is called a \emph{branch cycle} $\gamma_P$ of $P$. A point $P$ is called a \emph{branch point} if $\gamma_P \neq id$.  Geometrically, the branch cycle describes the topological action of a loop around $P$ on the fiber of $\phi$ at a point near $P$. If $X$ is not geometrically irreducible, the base change $X_\mC$ is a union of $[L:k]$ isomorphic components, and the monodromy action on each of the components is the same. This allows us to state the Riemann--Hurwitz formula, from the theory of Riemann surfaces, in the more general case as follows: 
\begin{equation}\label{equ:RH}2(\deg \phi\ \chi(X_0)-\chi(X)) = \deg R_\phi = \sum_{P\in |X_0|} \deg P\ \ind(\gamma_P).\end{equation}
Note that if $\gamma_P$ acts with orbits of sizes $e_1, \dots, e_s$, then $$\ind \gamma_P=n-s=\sum_ie_i - s =\sum_i(e_i-1);$$ the latter expression is the more familiar Riemann--Hurwitz formula in terms of the ramification indices. 
The expression on the right hand side of \eqref{equ:RH}, equal to $\deg R_\phi$, is called the \emph{Riemann--Hurwitz contribution} (as it measures the contribution of ramification to the genus growth from $X_0$ to $X$).
It is more convenient for us to sum over points $P\in X_0(\mC)$ rather than over closed points; similarly when we refer to the ``number of branch points'' this should be interpreted as a count of branch points in $X_0(\mC)$, rather than the number of closed points in the branch locus. 
\begin{rem}\label{rem:identify}
Given a transitive action of $G$ on a set $\Delta$ with stabilizer $H$,
and a covering of curves $\phi$ of monodromy $\Mon(\phi)=G$ and stabilizer $H$, we shall often use the (noncanonical) identification between the action of $\Mon(\phi)$ on a fiber of $\phi$ with its action on $\Delta\cong G/H$. In particular, we shall  often  deduce  the ramification indices over a branch point $P$  from  the lengths of orbits of $\gamma_P$ on $\Delta$, and vice versa.
\end{rem}

The following is a combination of Proposition \ref{Castelnuovo} and the Riemann--Hurwitz formula. 
\begin{prop}\label{prop:min-ind}
    Let $\phi:X\to X_0$ be an indecomposable covering of degree $n$, monodromy group $G\subset S_n$, with $b$ branch points, and minimal index $\ind(G)$. Then \[\gon(X)\geqslant\min\left\{n\cdot\gon(X_0),\frac{b\cdot \ind(G)}{2(n-1)}\right\}.\] 
\end{prop}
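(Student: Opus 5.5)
The plan is to combine \Cref{Castelnuovo} with the Riemann--Hurwitz formula \eqref{equ:RH}. Since $\phi$ is indecomposable, \Cref{Castelnuovo} gives
\[\gon(X)\geqslant \min\left\{n\cdot \gon(X_0), \frac{n\,\chi(X_0)-\chi(X)}{n-1}\right\}.\]
The first term already matches the first term of the claimed bound. It therefore suffices to show that the second term is at least $b\cdot\ind(G)/(2(n-1))$.

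To bound the second term, apply \eqref{equ:RH}:
\[2\bigl(n\,\chi(X_0)-\chi(X)\bigr)=\deg R_\phi=\sum_{P\in X_0(\mC)}\ind(\gamma_P).\]
Following the paper's convention, the sum runs over geometric points $P\in X_0(\mC)$ rather than over closed points. This is equivalent to the closed-point sum weighted by $\deg P$, because the branch cycles at conjugate geometric points are conjugate in $G$ and hence have equal index.

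Each branch point $P$ has $\gamma_P\neq 1$ and $\gamma_P\in G$, so $\ind(\gamma_P)\geqslant \ind(G)$ by the definition of the minimal index. Unramified points contribute $0$. Hence
\[\deg R_\phi\geqslant b\cdot\ind(G),\]
and dividing by $2(n-1)$ gives
\[\frac{n\,\chi(X_0)-\chi(X)}{n-1}=\frac{\deg R_\phi}{2(n-1)}\geqslant\frac{b\cdot\ind(G)}{2(n-1)}.\]
Substituting this into the minimum from \Cref{Castelnuovo} yields the stated inequality.

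The only point requiring care is that \eqref{equ:RH} holds when $X$ is not geometrically irreducible, and that $\ind(\gamma_P)$ is computed for the action on $n$ points. The paper addresses this before \eqref{equ:RH}: the base change $X_\mC$ is a union of isomorphic components with identical monodromy, and $\chi$ scales by $[L:k]$ by \Cref{prop:curve-trivialities}. So I expect no real obstacle; the proof is a direct substitution.
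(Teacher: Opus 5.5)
Your proposal is correct and is the paper's own argument: \eqref{equ:RH} gives $\deg R_\phi\geqslant b\cdot\ind(G)$, and this is inserted into the second term $\deg R_\phi/(2(n-1))$ of the Castelnuovo--Severi bound of \Cref{Castelnuovo}. The paper's proof cites \Cref{prop:arithm-gonality} at that step, but \Cref{Castelnuovo}, as you use it, is evidently what is intended.
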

\begin{proof}
    Let $\deg R_\phi$ denote the total Riemann--Hurwitz contribution for $\phi$. 
    Since each branch point contributes at least $\ind(G)$ to $\deg R_\phi$, we have $\deg R_\phi\geqslant b\cdot\ind(G)$. 
    By Proposition~\ref{prop:arithm-gonality}, 
    $$\gon (X)\geqslant \min\left\{n\cdot \gon(X_0),\frac{\deg R_\phi}{2(n-1)}\right\}\geqslant \min\left\{n\cdot \gon(X_0),\frac{b\cdot \ind(G)}{2(n-1)}\right\}. \qedhere$$
\end{proof}
   
Indecomposable coverings admit primitive monodromy groups $G$. Recall that a group $G$ acting on a set $\Delta$ is {\it primitive} if it does not act on any proper nontrivial partition of $\Delta$.

Given a primitive group $G$ acting on a set $\Delta$ and a transitive group $H\leqslant S_t$, let $G \wr H=G^t\rtimes H$ denote their  wreath product, where the semidirect action permutes the $t$ copies of $G$. 
    The product-type action of $S_\Delta \wr S_t$ is its standard degree $\ell^t$ action on the set $\Delta^t$. 
    Burness--Guralnick \cite{BG} provide good bounds on indices of elements for primitive groups with the exception of groups  $A_\ell^t\leqslant G\leqslant S_\ell\wr S_t$, $\ell\geqslant 5$, $t\geqslant 1$ of product-type action with respect to the $S_\ell$-action on $m$-sets, that is,  on the set $\Delta$ of cardinality-$m$ subsets of $\{1,\ldots,\ell\}$ for some $1\leqslant m\leqslant \ell/2$. 
For $t>1$, such groups $G$ are called  non-basic in the terminology of the Aschbacher--O'Nan--Scott theorem \cite[\S4.3]{Cameron}, and  non-elemental in that of \cite{VdW}. 
\begin{rem}\label{rem:indec} 
The primitivity of $G$ implies that its image in $S_t$ is transitive \cite[Lemma 2.7A]{dixon1996permutation}. Conversely for $t\geqslant 2$ and $\ell\geqslant 3$,  \cite[Lemma 2.11]{NZ2}\footnote{The assumption $\ell\geqslant 5$ is made in \cite[Lemma 2.11]{NZ2} but is  not used in the proof of this direction.} implies: If $G\leqslant S_\ell\wr S_t$ contains $A_\ell^t$ and the image of $G$ in $S_t$ is transitive, then $G$ is primitive. 
\end{rem}

Applying the index bounds of  \cite{BG} to Proposition \ref{prop:min-ind}, we get the following consequence. 
As it uses \cite[Thm.\ 7]{BG}, it relies on the classification of finite simple groups. 
\begin{cor}\label{cor:BG}
    Let $\phi:X\to X_0$ be an indecomposable cover of degree $n$,  monodromy group $G$, and $b$ branch points. Then either
    $\gon(X)\geqslant \min\{n\cdot\gon(X_0),3b/28\}$ and $\min\delta(X)\geqslant \min\{\gon(X_0)\cdot n/2,3b/56\}$, or 
    $A_\ell^t\leqslant G\leqslant S_\ell\wr S_t$, $\ell\geqslant 5$, $t\geqslant 1$ is of product type with respect to the $S_\ell$-action  on $m$-sets for   $1\leqslant m\leqslant \ell/2$. 

    If, moreover, $G=A_\ell$ or $S_\ell$ in an action that is different from its action on $m$-sets, then $\gon(X)\geqslant \min\{n\cdot \gon(X_0),b/8\}$ and $\min\delta(X)\geqslant \min\{\gon(X_0)\cdot n/2, b/16\}$. 
\end{cor}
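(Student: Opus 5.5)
The plan is to feed the Burness--Guralnick minimal-index bounds into \Cref{prop:min-ind}, and then to pass from gonality to $\min\delta$ using \Cref{prop:arithm-gonality}(1).

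First, since $\phi$ is indecomposable, $G$ is a primitive subgroup of $S_n$. Assume $G$ is not of the excluded product type with respect to the action on $m$-sets. Then \cite[Thm.~7]{BG} should give a uniform lower bound on the minimal index, namely
\[\ind(x)\geqslant \tfrac{3}{14}(n-1)\quad\text{for every } x\in G\setminus\{1\}.\]
I take the constant $3/14$ because it is exactly what produces $3b/28$ below. If \cite{BG} states the bound as a proportion of moved points, or in terms of $f(x)\leqslant (11/14)n$ (say), I will translate it, using $\ind(x)\geqslant (n-f(x))/2$ since every nontrivial orbit has length at least $2$. I will also check that the factor $n$ versus $n-1$ causes no loss, possibly by handling small $n$ separately.

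In the finer case where $G=A_\ell$ or $S_\ell$ acts other than on $m$-sets, the relevant Burness--Guralnick bound should be stronger, giving $\ind(x)\geqslant (n-1)/4$.

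Next I plug these into \Cref{prop:min-ind}:
\[\gon(X)\geqslant \min\Bigl\{n\gon(X_0),\ \frac{b\cdot \ind(G)}{2(n-1)}\Bigr\}.\]
In the general case this gives $\gon(X)\geqslant\min\{n\gon(X_0),3b/28\}$, and in the $A_\ell/S_\ell$ case it gives $\min\{n\gon(X_0),b/8\}$.

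For $\min\delta$, \Cref{prop:arithm-gonality}(1) gives $\gon(X)\leqslant 2\min\delta(X)$, so $\min\delta(X)\geqslant \gon(X)/2$. Halving the bounds yields $\min\{n\gon(X_0)/2,3b/56\}$ and $\min\{n\gon(X_0)/2,b/16\}$, exactly as stated.

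The main obstacle is the first step: extracting from \cite{BG} a bound of precisely the form $\ind(x)\geqslant c(n-1)$ with $c=3/14$ (respectively $1/4$). This means checking their list of exceptions against the product-type family and verifying that no small-degree exception slips through. I will first look for their statement on fixed-point ratios or minimal degree, and if needed treat the finitely many small exceptional groups by direct inspection.
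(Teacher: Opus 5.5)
Your plan is the paper's proof: \cite[Theorem~7]{BG} gives $\ind(G)\geqslant 3n/14$ (resp.\ $\ind(G)\geqslant n/4$ in the $A_\ell,S_\ell$ case), which is already stronger than your $(n-1)$-version, so $b\cdot\ind(G)/(2(n-1))\geqslant 3b/28$ (resp.\ $b/8$) holds outright with no small-$n$ analysis, and \Cref{prop:arithm-gonality}(1) then halves these bounds to give the $\min\delta$ statements. The only slip is in your hedge: to reach $3/14$ through $\ind(x)\geqslant (n-f(x))/2$ you need $f(x)\leqslant 4n/7$, whereas your sample bound $f(x)\leqslant 11n/14$ would give only $\ind(x)\geqslant 3n/28$.
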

\begin{proof}
Assume the action of $G$ is not one of the above product-type actions. 
Then  $\ind(G)\geqslant 3n/14$, and even $\ind(G)\geqslant n/4$ if $G=A_\ell$ or $S_\ell$, in an action different from that on $m$-sets, 
by \cite[Theorem 7]{BG}.  
  Proposition \ref{prop:min-ind} then gives:
  \begin{align*}
      \gon(X)&\geqslant \min\left\{n\cdot \gon(X_0), \frac{3}{28}b\right\},\text{   and even }\\
     \gon(X)& \geqslant \min\left\{n\cdot \gon(X_0), \frac{b}{8}\right\} \text{ for }  G \in \{A_\ell, S_\ell\}.
  \end{align*}

Furthermore, as $\min\delta(X)\geqslant \gon(X)/2$
by Proposition \ref{prop:arithm-gonality}.(1), we have: 
  \begin{align*}
      \min \delta(X)&\geqslant \min\left\{\frac{n}{2}\cdot \gon(X_0), \frac{3}{56}b\right\}, \text{ and even }\\
     \min \delta(X)& \geqslant \min\left\{\frac{n}{2}\cdot \gon(X_0), \frac{b}{16}\right\} \text{ for }  G \in \{A_\ell, S_\ell\}.\qedhere
  \end{align*}

\end{proof}
\Cref{cor:BG} will be used to reduce the analysis of Hilbert irreducibility property for arbitrary indecomposable covers $\phi:X\to X_0$, to an analysis of covering with monodromy groups of special type. 
\subsection{Examples}\label{sec:examples}

In this section we collect examples of situations in which various versions of Hilbert's irreducibility fail.

\begin{exam}[Covers with few branch points]\label{exam:few-branch-points}
    Consider the polynomial $\phi(x)=x^a(x-1)^b\in\mQ[x]$ for coprime $a,b\geqslant 1$ as a map $\phi:X_1\to X_0$ of degree $\ell=a+b\geqslant 5$, where $X_i= \mP^1_\mQ$, $i=0,1$. This map has three branch points $0, a/(a+b), \infty$ which is less than the minimal number required in Theorem \ref{prop:intro} and in the more general Corollary \ref{cor:hilb-for-poly} below. We claim that the fibers are reducible over infinitely many degree $d$ points $P\notin \phi(|X_1|_d)$ for all $   d\geqslant 1$. 

Let $\tilde X$ denote the Galois closure of $\phi$. Then a direct calculation shows that $\Mon_\mC(\phi)=\Mon_\mQ(\phi)=S_\ell$, and 
the quotient $X_2\coloneqq \tilde X/(S_{\ell-2}\times S_2)$ by a two-point stabilizer is of genus $0$; see\ \cite[\S4]{neftin2024monodromy}. Moreover, $X_2(\mQ)\neq\emptyset$ (the unordered pair $\{0,1\}\in X_2(\mC)$ corresponds to a rational preimage of $0\in \mP^1(\mQ)$), and hence $X_2\cong \mP^1_\mQ$. Let $\phi_2:X_2 \to X_0$ denote the natural degree ${\ell\choose 2}$ map $\tilde X/(S_{\ell-2}\times S_2)\to \tilde X/S_\ell$. 

To see the assertion consider the curve $Y_2=\tilde X/S_{\ell-2}$. It is well-known that\footnote{To see this, one may construct a Hilbert subset of $\Sym^dX_2(\mQ)$ satisfying the desired property as the intersection $U\cap V\cap (\cap_{i=1}^{d-1}W_i)$ of the following Hilbert subsets:  $U$ is the Hilbert subset of points with irreducible fiber under the map $\Sym^d\tilde X\to \Sym^d X_2$ induced by $\tilde X\to X_2$; the open subset $V$ is the preimage under $\Sym^dX_2\to\Sym^dX_0$ of the complement $x_i\neq x_j$, $1\leqslant i<j\leqslant d$ of the fat diagonal in $\Sym^d X_0$; and $W_i$ is the complement of the image of $(\Sym^{i}X_2\times\Sym^{d-i}X_2)(\mQ)\to \Sym^dX_2(\mQ)$ for $i=2,\ldots,d-1$.  } 
there are infinitely many points $Q\in |X_2|_d$ for which the fiber of the map $\tilde X\to X_2$ above $Q$ is irreducible and such that $\phi_2(Q)$ is still of degree $d$. 
For such $Q$, we get that $P=\phi_2(Q)\in \phi_2(|X_2|_d)$ is unramified and the fiber above $P$ has Galois group $S_{\ell-2}\times S_2\leqslant \Mon(\phi)$. The fiber $\phi^{-1}(P)$ then corresponds to the two orbits of $S_{\ell-2}\times S_2$ on $S_\ell/S_{\ell-1}\cong \{1,\ldots,\ell\}$. Moreover, since the orbits are of length $2,\ell-2$, these two preimages $P_1,P_2$ are of residue degrees $2,\ell-2$ and hence $P\notin \phi(|X_1|_d)$. 

Similar examples can be obtained from each of the examples in \cite[Table 4.1]{neftin2024monodromy} and for groups of product type $S_\ell\wr S_2$ from \cite[Theorem 1.2]{NZ2}. 
\end{exam}

\begin{exam}[Weaker versions of Hilbert's irreducibility]\label{exam:weaker-vesionts} 
   Suppose $\phi\colon X\to \mP^1_k$ is a covering. Then it is easy to prove that there exists infinitely many irreducible fibers of $\phi$ above degree $d$ points for any $d$. Indeed, $\phi$ induces a covering $\phi^{(d)}\colon\mathrm{Sym}^d X \to \mathrm{Sym}^d \mP^1=\mP^d$ of symmetric powers. Outside of a thin subset, rational points in the target $\mP^d$ correspond to degree $d$ points on $\mP^1_k$. This allows us to conclude by the usual Hilbert's irreducibility theorem.

    On the other hand, for a covering of curves $\phi:X \to X_0$ in general such a result does not always hold. The basic example here is a suitable isogeny between a pair of elliptic curves. For a concrete example, let $E, E'$ be the two elliptic curves in the isogeny class \cite[\href{https://www.lmfdb.org/EllipticCurve/Q/121/b/}{121.b}]{lmfdb}. These curves are connected by an $11$-isogeny $\phi$, and a direct calculation shows that $\phi: \Z=E(\mQ)\to E'(\mQ)=\Z$ is an isomorphism. Therefore, every fiber of $\phi$ is reducible. This type of examples is the reason behind the introduction of the weak Hilbert property by Corvaja and Zannier \cite{corvaja2017hilbert}; see also \cite{corvaja2022distribution} and the recent survey \cite{fehm2025hilbert}.
\end{exam}
\begin{exam}[Tucker's construction]\label{exam:tucker}
    The previous example of isogenies of elliptic curves is not stable under extensions of the base field. Tucker \cite[Theorem 3.3]{tucker2002irreducibility} gave a remarkable example, which shows that, even potentially, Hilbert's irreducibility for degree $d$ points cannot hold without additional assumptions. In other words, there exists a covering of curves $X\to X_0$ such that for every finite extension $L/k$ there are at most finitely many degree $d$ points on $(X_0)_L$ with irreducible fibers. This uses the curves constructed by Debarre and Fahlaoui in \cite{debarre1993abelian}.
\end{exam}

\begin{exam}[$A_\ell$-fibers of $S_\ell$-coverings]\label{ex:Al-Sl}
    \Cref{thm:Galois-groups} does not guarantee that an $S_\ell$-covering will have many $S_\ell$-fibers; rather, it only shows that many fibers must contain $A_\ell$. The reason for this comes from the following type of examples. Consider a degree $\ell$ covering of $\mP^1$ which has $n$ branch points whose branch cycle is a $3$-cycle, and two ramified fibers each containing a simple double point. Using Riemann's existence theorem, it is easy to make such a covering with monodromy $S_\ell$, simply by specifying a suitable surjection $\pi_1(\mP^1_\mathbb{C}\setminus\{b_1, \dots, b_{n+2}\}) \to S_\ell$ and descending to a finitely generated field $k$. If $\tilde{X}\to \mP^1$ is the corresponding $S_\ell$-Galois covering, then $\tilde{X}/A_\ell\to \mP^1_k$ is a degree $2$ covering with only two branch points. Thus $\tilde{X}/A_\ell$ is of genus zero, which means that there are infinitely many points in $\mP^1_k$ of degree $d$, for which the Galois group of the fiber is $A_\ell$.  
\end{exam} 

\section{Irreducibility and gonality in the action on sets}\label{Section:irreducibility-Sn}

In this section we analyze the arithmetic of fibers of a covering $\phi: X \to X_0$ of degree $\ell$ and monodromy $G$ equal to $A_\ell$ or $S_\ell$. Our main result, \Cref{thm:S-HIT}, shows that if $\phi$ has sufficiently many branch points, then for all but finitely many degree $d$ points $P \in |X_0|$ the fiber $\phi^{-1}(P)$ is irreducible. The results of this section apply, in particular, in the numerical range which corresponds to high-degree rational functions on a fixed curve $X$.

The irreducibility of the fibers is established by controlling $\min \delta(X_m)$ for the curves $X_m$ corresponding to the subgroup $G \cap (S_m \times S_{\ell-m})$ via the Galois correspondence (as in \Cref{sec:resolvent}). In terms of permutation actions, the covering $X_m$ corresponds to the action of $G$ on  $m$-sets of points in the fiber of $\phi$, as in \Cref{rem:identify}. Similarly, the subgroup $G \cap S_{\ell-m}$ produces a curve $Y_m$ that corresponds to the action of $G$ on ordered $m$-tuples. There is a natural covering $Y_m \to X_m$ of monodromy group $A_m$ or $S_m$. 

To summarize, in this section we will assume the following.

\begin{setup}\label{main-setup}
Suppose $\phi: X \to X_0$ is a degree $\ell$ covering of curves with monodromy group $G=A_\ell$ or $S_\ell$. Let $X_m$ denote the curve corresponding to the subgroup $G\cap (S_m \times S_{\ell-m})$ and $Y_m$ denote the curve corresponding to the subgroup $G \cap S_{\ell-m}$.
\end{setup}

We will need to compute the genus of $Y_m$, and for this purpose it is easier to work with branch points of $\phi$ with particularly simple ramification patterns. 

\begin{defn}
    Suppose $\phi: X \to X_0$ is a degree $\ell$ covering of curves, and $p \in X_0$ is a point. Given $\epsilon>0$ we say that $p$ is \emph{$\epsilon$-ramified} if it is a branch point and at least $(1-\epsilon)\ell$ points in $\phi^{-1}(p)$ are unramified.
\end{defn}
\begin{rem}
    Note that if $\epsilon$ is less than $2/\ell$, then $\epsilon$-ramified points are in fact unramified. For this reason the condition $\epsilon\geqslant 2/\ell$ appears in many of the theorems. Also, the definition implies that a fiber above an $\epsilon$-ramified point contains at least $\lceil (1-\epsilon)\ell \rceil$ unramified points.
\end{rem}

The following lemma shows that if a covering has many branch points, then it also has many $\epsilon$-ramified points.
\begin{lem}\label{lem:mystery}
    Suppose $\phi:X \to X_0$ is a degree $\ell$ covering of curves. Suppose that the number $b$ of branch points of $\phi$ satisfies 
    \[b\geqslant \frac{4}{\epsilon} \left(\frac{-\chi(X)}{\ell}+\chi(X_0)\right) + N\text{ for some }N\in\mathbb N.\] 
    Then the number of $\epsilon$-ramified branch points $p \in X_0$ is at least $N$.
\end{lem}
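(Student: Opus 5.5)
The plan is a Riemann--Hurwitz counting argument. By \eqref{equ:RH},
\[\deg R_\phi = 2\bigl(\ell\,\chi(X_0)-\chi(X)\bigr) = 2\ell\left(\frac{-\chi(X)}{\ell}+\chi(X_0)\right),\]
and $\deg R_\phi=\sum_{p}\ind(\gamma_p)$, where $p$ runs over branch points in $X_0(\mC)$. As in the paper's convention, I would work over $\mC$ and count geometric branch points; if $X$ is not geometrically irreducible, I would pass to $X/L$ via \Cref{prop:curve-trivialities}, where the per-component monodromy is the same. The idea is that a branch point which is \emph{not} $\epsilon$-ramified must contribute a lot to $\deg R_\phi$, so there can be only few of them.

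\emph{Key local estimate.} Let $p$ be a branch point that is not $\epsilon$-ramified. Then more than $\epsilon\ell$ points of the fiber $\phi^{-1}(p)$ are ramified, so the branch cycle $\gamma_p$ has more than $\epsilon\ell$ points lying in orbits of length $e_i\geqslant 2$. Since $e_i-1\geqslant e_i/2$ whenever $e_i\geqslant 2$,
\[\ind(\gamma_p)=\sum_i (e_i-1)\geqslant \tfrac12\sum_{e_i\geqslant2} e_i>\tfrac{\epsilon\ell}{2}.\]

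\emph{Counting.} Let $b'$ be the number of branch points that are not $\epsilon$-ramified. Every branch point contributes a nonnegative amount to $\deg R_\phi$, so
\[b'\cdot\frac{\epsilon\ell}{2}\leqslant \deg R_\phi = 2\ell\left(\frac{-\chi(X)}{\ell}+\chi(X_0)\right),\]
which gives $b'\leqslant \frac{4}{\epsilon}\bigl(\frac{-\chi(X)}{\ell}+\chi(X_0)\bigr)$. The number of $\epsilon$-ramified branch points is $b-b'$, and by the hypothesis on $b$ this is at least $N$.

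The only delicate point is bookkeeping rather than mathematics: making sure the Euler characteristics in the statement are normalized consistently with \eqref{equ:RH} when $X$ is not geometrically integral. In that case the fiber over a geometric point splits among $[L:k]$ components with identical monodromy, and I would check that the factor $[L:k]$ cancels on both sides. Everything else is the elementary inequality $e-1\geqslant e/2$.
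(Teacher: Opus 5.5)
Your proposal is correct and follows essentially the same argument as the paper. Each branch point that is not $\epsilon$-ramified contributes at least $\epsilon\ell/2$ to $\deg R_\phi = 2(\ell\chi(X_0)-\chi(X))$, which bounds their number by $\frac{4}{\epsilon}\bigl(\frac{-\chi(X)}{\ell}+\chi(X_0)\bigr)$ and leaves at least $N$ $\epsilon$-ramified ones. Your explicit local estimate $e_i-1\geqslant e_i/2$ and the normalization remark for non-geometrically-integral $X$ just spell out what the paper leaves implicit.
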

\begin{proof}
    This is a direct corollary of the Riemann--Hurwitz formula, as we now show. If $R_\phi$ is the ramification divisor on $X$, then each fiber with at least $\epsilon \ell$ ramified points (counting multiplicity) contributes at least $\epsilon \ell / 2$ to $\deg R_\phi$. Let $m$ be the number of such fibers. Since 
    \[-2\chi(X)= -2\ell\chi(X_0) + \deg R_\phi\]
    and $\deg R_\phi \geqslant m \epsilon \ell /2$ we conclude that
    \[m \leqslant \frac{4}{\epsilon} \left(\frac{-
    \chi(X)}{\ell}+\chi(X_0)\right).\]
    So the number of $\epsilon$-ramified points is at least $b-m \geqslant N$, as claimed.
\end{proof}

It is convenient to state the results that follow in terms of the quantity $N$ directly, and use Lemma \ref{lem:mystery} or similar results to later connect $N$ to more commonly used invariants of coverings. The parameters $\epsilon, \ell$ cannot be entirely arbitrary, and we want to choose them so that $\epsilon$-ramified branch points exist, but are such that at least half of the fiber above them is unramified. For this we introduce the following numerical assumption.

\begin{assumption}\label{assmpt:lepsilon}
    The integer $\ell$ and the real number $\epsilon$ satisfy 
    \[\lceil (1-\epsilon)\ell \rceil \geqslant \ell/2 \text{ and } \epsilon \geqslant 2/\ell.\]
    If $\ell \geqslant 6$ assume in addition $\lceil (1-\epsilon)\ell \rceil\geqslant \ell/2  + 1$.
\end{assumption}
\begin{rem}
    Note that Assumption \ref{assmpt:lepsilon} automatically implies $\ell \geqslant 4$. On the other hand,  taking $\epsilon=1/2$ for $\ell=4,5$ and $\epsilon =1/3$ for $\ell \geqslant 6$ shows that a suitable $\epsilon$ can be chosen to satisfy \Cref{assmpt:lepsilon} for every $\ell\geqslant 4$.
\end{rem}

We are now ready to prove the main geometric result of this section.

\begin{thm}\label{thm:S-HIT_geom}  Suppose we are in Setup \ref{main-setup}, and Assumption \ref{assmpt:lepsilon} holds. Suppose $\phi$ admits at least $N$ $\epsilon$-ramified branch points. Let $\psi_m:Y_m \to Y_{m-1}$ denote the natural projection of degree $\ell-m+1$. Then  the Riemann--Hurwitz contribution is at least:
\[\deg R_{\psi_m}=2\left(-\chi(Y_m)+(\ell-m+1)\chi(Y_{m-1})\right) \geqslant  N \lceil(1-\epsilon)\ell\rceil^{\uline{m-1}}.\] If, moreover, $N$ is chosen so that $N \leqslant 2(\ell-2)$,  then for every $m \neq 1, \ell-1$ we have \[ \gon (Y_m) \geqslant \frac{N}{2(\ell-m)}\lceil(1-\epsilon)\ell\rceil^{\uline{m-1}},\text{ and }\,  \gon (X_m) \geqslant \frac{N}{2m(\ell-m)} {{\lceil(1-\epsilon)\ell\rceil}\choose {m-1}}.\] 
\end{thm}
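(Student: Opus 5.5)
The plan has three steps: a local Riemann--Hurwitz count for the first inequality, then Proposition \ref{Castelnuovo} applied to $\psi_m$ for the bound on $\gon(Y_m)$, then a comparison along $Y_m\to X_m$ for the bound on $\gon(X_m)$.

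\textbf{Step 1: the Riemann--Hurwitz bound.} By \eqref{equ:RH}, applied to $\psi_m$ over each geometric point of $Y_{m-1}$,
\[\deg R_{\psi_m}=\sum_{Q}\sum_{\tilde Q\mapsto Q}(e_{\tilde Q}-1).\]
I would identify the fiber of $Y_m$ over a point of $X_0$ with ordered $m$-tuples of distinct elements of the fiber of $\phi$, as in \Cref{rem:identify}. Then $Y_m\to Y_{m-1}$ forgets the last coordinate.

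Fix an $\epsilon$-ramified branch point $p$ with branch cycle $\gamma=\gamma_p$. Let $U$ be its set of fixed points; by definition $|U|\geqslant u:=\lceil(1-\epsilon)\ell\rceil$. Consider points of $Y_{m-1}$ over $p$ given by $\langle\gamma\rangle$-orbits of $(m-1)$-tuples $(a_1,\dots,a_{m-1})$ with all $a_i\in U$. There are at least $u^{\uline{m-1}}$ such tuples, each forming its own orbit. The local monodromy of $\psi_m$ at such a point is $\gamma$ acting on the complement $\{1,\dots,\ell\}\setminus\{a_i\}$. Since all $a_i$ are fixed, this action has index $\ind(\gamma)\geqslant 1$.

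Summing over the $N$ $\epsilon$-ramified points gives $\deg R_{\psi_m}\geqslant N u^{\uline{m-1}}$. The stated equality with $-\chi(Y_m)+(\ell-m+1)\chi(Y_{m-1})$ is just \Cref{prop:curve-trivialities}\eqref{item:prop:RH}.

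\textbf{Step 2: the bound on $\gon(Y_m)$.} The monodromy group of $\psi_m$ is the point stabilizer $G\cap S_{\ell-m+1}$ acting on $\ell-m+1$ letters. This is $A_{\ell-m+1}$ or $S_{\ell-m+1}$, hence primitive for $m\neq \ell-1$ (and $m\neq\ell$), so $\psi_m$ is indecomposable. Proposition \ref{Castelnuovo} then gives
\[\gon(Y_m)\geqslant\min\left\{(\ell-m+1)\gon(Y_{m-1}),\ \frac{N u^{\uline{m-1}}}{2(\ell-m)}\right\}.\]
I would induct on $m$, with the inductive claim $\gon(Y_j)\geqslant \frac{N}{2(\ell-j)}u^{\uline{j-1}}$ for $2\leqslant j<m$. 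The step requires
\[(\ell-m+1)\frac{N u^{\uline{m-2}}}{2(\ell-m+1)}\geqslant \frac{N u^{\uline{m-1}}}{2(\ell-m)},\]
that is, $\ell-m\geqslant u-m+2$, i.e.\ $u\leqslant \ell-2$. This holds because $\gamma$ is nontrivial, so it moves at least two points.

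The base case is $m=2$, where $Y_1=X$. Here the first term in the minimum is $(\ell-1)\gon(X)\geqslant \ell-1$, while the second is $Nu/(2(\ell-2))$. Using $N\leqslant 2(\ell-2)$ and $u\leqslant\ell-1$, the second term is at most $\ell-1$, so the minimum equals the second term. This is where the hypothesis $N\leqslant 2(\ell-2)$ enters; the exclusion $m\neq1$ just avoids the degenerate base.

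\textbf{Step 3: the bound on $\gon(X_m)$.} The map $Y_m\to X_m$ has degree at most $m!$ (exactly $m!$ if $G=S_\ell$, and also for $A_\ell$ when $m\leqslant\ell-2$). Composing a gonal map of $X_m$ with it, as in \Cref{lem:gon-and-airr}, gives $\gon(X_m)\geqslant\gon(Y_m)/m!$. Since $u^{\uline{m-1}}/m!=\binom{u}{m-1}/m$, this yields the stated bound.

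I expect the main obstacle to be the local orbit bookkeeping in Step 1. One must make sure the fully fixed tuples give distinct points of $Y_{m-1}$. In the $A_\ell$ case the fiber of $Y_{m-1}$ corresponds to $G/(G\cap S_{\ell-m+1})$, which still equals the set of ordered $(m-1)$-tuples when $m-1\leqslant\ell-2$, so the bookkeeping goes through. The other delicate point is checking the inductive inequality $u\leqslant\ell-2$ at each step.
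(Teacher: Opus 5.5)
Your proposal is correct and follows essentially the same route as the paper. The paper likewise counts fixed $(m-1)$-tuples over $\epsilon$-ramified points in the Riemann--Hurwitz formula, then inducts on $m$ via Proposition~\ref{Castelnuovo} applied to $\psi_m$, with base case $m=2$ using $N\leqslant 2(\ell-2)$, and finally divides by $m!$ along $Y_m\to X_m$. Your explicit checks of the indecomposability of $\psi_m$, of the inductive inequality $u\leqslant \ell-2$, and of the $A_\ell$ orbit bookkeeping fill in details the paper leaves implicit.
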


\begin{proof}
    Let $\chi_m$ denote the Euler characteristic of $Y_m$. Our main goal is to estimate $\gon(X_m)$ using bounds on $\gon(Y_m)$. Since $X_m \simeq X_{\ell - m}$, in the following we assume $m \leqslant \ell/2$. We need to compute the Riemann--Hurwitz contribution of $\psi_m$. To do so, we interpret points on the curves $Y_m$ as ordered $m$-tuples of distinct points in fibers of $\phi$. Choose an $\eps$-ramified branch point $p \in X_0$, so that the fiber $\phi^{-1}(p)$ has at least $\lceil(1-\epsilon)\ell\rceil$ unramified points. Let $\gamma\in G$ be a branch cycle over $p$, which we view as acting on $\ell$ points. Since $\gamma$ fixes at least $\lceil(1-\epsilon)\ell\rceil \geqslant \ell/2$ points, the covering $Y_{m-1} \to X_0$ has at least $\lceil(1-\varepsilon)\ell\rceil^{\uline{m-1}}$ unramified points above $p$. Each of those points, when thought of as an ordered $m$-tuple, can be completed to a non-fixed $m$-tuple, and so by Riemann-Hurwitz  we can write
    \[-2\chi_m\geqslant -2(\ell-m+1)\chi_{m-1} + N \lceil(1-\epsilon)\ell\rceil^{\uline{m-1}},\]
    and therefore 
    \begin{equation}\label{equ:degRpsi}-2\left(\chi_m-(\ell-m+1)\chi_{m-1}\right) \geqslant N \lceil(1-\epsilon)\ell\rceil^{\uline{m-1}},\end{equation}
    as claimed.

    We now prove that $\gon(Y_m) \geqslant \frac{N}{2(\ell-m)}\lceil(1-\epsilon)\ell\rceil^{\uline{m-1}}$ by induction on $m \geqslant 2$.
    If $m=2$ we are asked to prove \[\gon(Y_2) \geqslant \frac{N}{2(\ell-2)}\lceil(1-\varepsilon)\ell\rceil.\] Note that by our assumption $N\leqslant 2(\ell-2)$ and by \Cref{assmpt:lepsilon}, the right hand side is smaller than $\ell-1$. Applying \Cref{Castelnuovo} and using our calculation \eqref{equ:degRpsi}, 
    
    we get 
    \[\gon(Y_2) \geqslant \min\left\{\ell-1,\frac{(\ell-1)
    \chi_1-\chi_2}{\ell-2}\right\}\geqslant \frac{N}{2(\ell-2)}\lceil(1-\epsilon)\ell\rceil,\] as desired.
    For higher $m$ we proceed similarly. First we note that $\deg \psi_m=\ell-m+1$ and so   the induction hypothesis implies: 
    \[\deg \psi_m\cdot  \gon(Y_{m-1}) \geqslant (\ell-m+1)  \frac{N}{2(\ell-m+1)}\lceil(1-\epsilon)\ell\rceil^{\uline{m-2}} \geqslant \frac{N}{2(\ell-m)}\lceil(1-\epsilon)\ell\rceil^{\uline{m-1}}.\] Applying \Cref{Castelnuovo} to the indecomposable covering $\psi_m$,  we get
    \[\gon(Y_m)\geqslant 
    \min\left\{\deg \psi_m\cdot  \gon(Y_{m-1}), \frac{(\ell-m+1)\chi_m-\chi_{m-1}}{\ell-m}\right\}\geqslant 
    \frac{N}{2(\ell-m)}\lceil(1-\epsilon)\ell\rceil^{\uline{m-1}}\]
as claimed.    

Finally, by \Cref{lem:gon-and-airr}, 
\[\gon(X_m)\geqslant \frac{\gon (Y_m)}{m!} \geqslant \frac{1}{m!} \frac{N}{2(\ell-m)} \lceil(1-\epsilon)\ell\rceil^{\uline{m-1}}= \frac{N}{2m(\ell-m)} {{\lceil(1-\epsilon)\ell\rceil}\choose {m-1}}. \qedhere
\]
\end{proof}

The functions which are lower bounds for gonalities $\gon (X_m)=\gon (X_{\ell-m})$ grow quickly with $m$, but start to decrease when $m$ is close to $\ell/2$. Since the lower bounds are always large near $\ell/2$, we use the following lemma for proving uniform lower bounds on gonalities.
\begin{lem}[Useful calculus]\label{lem:calculus}
    Suppose Assumption \ref{assmpt:lepsilon} holds, and $N$ is a positive integer. Then the function \[f(m)=\frac{N}{2m(\ell-m)} {{\lceil(1-\epsilon)\ell\rceil}\choose {m-1}}\]
    of an integer argument $m$ is unimodal on the interval $\left[2, \lfloor \ell/2 \rfloor \right]$:  it  increases up to some point in the interval, and then decreases.
\end{lem}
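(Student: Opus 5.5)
Write $L=\lceil(1-\epsilon)\ell\rceil$, so that $f(m)=\frac{N}{2m(\ell-m)}\binom{L}{m-1}$. The plan is to study the ratio of consecutive values,
\[
r(m)\coloneqq\frac{f(m+1)}{f(m)}=\frac{m(\ell-m)}{(m+1)(\ell-m-1)}\cdot\frac{L-m+1}{m},
\]
for $2\leqslant m<\lfloor \ell/2\rfloor$. Here we use $\binom{L}{m}/\binom{L}{m-1}=(L-m+1)/m$. Unimodality follows if the sign of $r(m)-1$ changes at most once, from positive to negative. For that it suffices to show that $r$ is non-increasing in $m$ on this range. The factor $N$ cancels, so the statement does not depend on $N$.

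We split $r(m)$ into two factors,
\[
r(m)=\frac{\ell-m}{\ell-m-1}\cdot\frac{L-m+1}{m+1}.
\]
The first factor equals $1+\frac{1}{\ell-m-1}$. It increases with $m$, but slowly, and it is at most $1+\frac{1}{\ell/2-1}$ on our range because $m\leqslant \ell/2-1$ there. The second factor $(L-m+1)/(m+1)$ is strictly decreasing, and it is decreasing at a much faster relative rate. The main step is to check that the product still does not increase, i.e.\ that $r(m+1)\leqslant r(m)$. After clearing denominators this becomes
\[
\frac{(\ell-m-1)^2}{(\ell-m)(\ell-m-2)}\leqslant \frac{(L-m+1)(m+2)}{(L-m)(m+1)}.
\]
The left side is $1+\frac{1}{(\ell-m)(\ell-m-2)}$. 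The right side is $\bigl(1+\frac{1}{L-m}\bigr)\bigl(1+\frac{1}{m+1}\bigr)\geqslant 1+\frac1{m+1}$. Since $m\leqslant \ell/2$, we have $(\ell-m)(\ell-m-2)\geqslant m(m-2)$. Whenever $m(m-2)\geqslant m+1$, which holds for $m\geqslant 4$, the inequality therefore holds. Note also that $L-m>0$ on the range, because Assumption \ref{assmpt:lepsilon} gives $L\geqslant \ell/2\geqslant m$.

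I expect the small values $m=2,3$ near the edge of the range to be the only real obstacle. There we compare $1/((\ell-m)(\ell-m-2))$ directly with $1/(m+1)$, using the range constraint $m+1\leqslant\lfloor\ell/2\rfloor$ (otherwise there is no ratio to compare). For example, at $m=2$ we need $(\ell-2)(\ell-4)\geqslant 3$, which holds since $\ell\geqslant 6$ whenever $\lfloor \ell/2\rfloor\geqslant 3$. The case $m=3$ is similar. Rather than monotonicity of $r$, a cruder sufficient route is also available: show directly that once $r(m)\leqslant 1$, also $r(m+1)\leqslant 1$. I would fall back on this if an edge case fails, since the $\ell$-dependent factor is at most $1+\frac{1}{\ell/2-1}$ and can only flip the sign once.

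Once $r$ is shown to be non-increasing, the set of $m$ with $r(m)\geqslant 1$ is an initial segment of $[2,\lfloor\ell/2\rfloor-1]$. Hence $f$ increases up to some point and then decreases, which is the claimed unimodality. The consequence used later is that $\min_{2\leqslant m\leqslant \ell/2} f(m)=\min\{f(2),f(\lfloor\ell/2\rfloor)\}$.
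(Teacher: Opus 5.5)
Your argument is correct, but it takes a different route from the paper's. The paper never shows that the consecutive ratio is monotone. It clears the positive denominator $m(\ell-m)$ in $f(m)/f(m-1)-1$ to get the quadratic $g(m)=(\ell-m+1)(\lceil(1-\epsilon)\ell\rceil-m+2)-m(\ell-m)$. This quadratic has positive leading coefficient and vertex at $(2\ell+\lceil(1-\epsilon)\ell\rceil+3)/4>\ell/2$. Hence $g$ is decreasing on the interval, and the sign of $f(m)/f(m-1)-1$ changes at most once, from positive to negative.

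You instead prove that $r(m)=f(m+1)/f(m)$ is itself non-increasing, i.e.\ that $f$ is log-concave on the range, via $1+\frac{1}{(\ell-m)(\ell-m-2)}\leqslant\bigl(1+\frac{1}{L-m}\bigr)\bigl(1+\frac{1}{m+1}\bigr)$. The paper's route is shorter: one vertex computation and no cases. Yours gives the stronger log-concavity statement, at the price of a slightly more delicate inequality.

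Your case split for $m=2,3$ is unnecessary. The comparison $r(m+1)\leqslant r(m)$ only arises when $m+2\leqslant\lfloor\ell/2\rfloor$. Then $\ell-m\geqslant m+4$, so $(\ell-m)(\ell-m-2)\geqslant(m+4)(m+2)>m+1$ for every such $m$. The same constraint gives $L-m\geqslant 2$, which is what your positivity claim actually needs; the bound $L\geqslant\ell/2\geqslant m$ alone only yields $L-m\geqslant 0$.
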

\begin{proof}
We will show that the function $f(m)/f(m-1) - 1$ is positive at $2$ and changes sign at most once on the interval $[2, \lfloor \ell/2 \rfloor]$. We compute
    \[\frac{f(m)}{f(m-1)}-1 = \frac{(\ell-m+1)(\lceil(1-\epsilon)\ell\rceil - m +2)}{m(\ell-m)} - 1.\] Removing the positive denominator, we need to show that the following function in $m$ is positive at $2$ and changes sign at most once on the interval: \[g(m)=(\ell-m+1)(\lceil(1-\epsilon)\ell\rceil-m+2)-m(\ell-m).\]

    The function $g(m)$ is a quadratic polynomial in $m$. The vertex of the parabola is located at the point $(2\ell+\lceil(1-\epsilon)\ell\rceil+3)/4>\ell/2$, and so, indeed, the function can change sign at most once on the interval $[2, \ell/2]$. Direct substitution also gives $g(2)>0$.  
\end{proof}

To analyze irreducibility of fibers of a covering $\phi: X \to X_0$ we need to control the invariant $\min \delta(X_m)$, and \Cref{thm:S-HIT_geom} gives us control over $\gon(X_m)$. These two invariants might differ by a factor of two (\Cref{lem:gon-and-airr}). Since $\gon (X_m)$ grows quickly with $m$, it is enough to understand $\min \delta (X_2)$ to understand $\min_m \min \delta(X_m)$. This is done in the next lemma.

\begin{lem}\label{lem:min-delta-X2}
Suppose we are in Setup \ref{main-setup} and Assumption \ref{assmpt:lepsilon} holds. Suppose $\phi$ has at least $N$ $\epsilon$-ramified points for some $N\leqslant 2(\ell-2)$. Then \[\min \delta (X_2) \geqslant \frac{N (1-\epsilon)}{4}.\]
\end{lem}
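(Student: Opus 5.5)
The plan is to combine the structural statement of Proposition~\ref{prop:arithm-gonality}(2) with Castelnuovo--Severi applied to the indecomposable map $\phi_2\colon X_2\to X_0$. Part (1) of that proposition alone is too weak: with Theorem~\ref{thm:S-HIT_geom} at $m=2$ it only gives $\min\delta(X_2)\geqslant \gon(X_2)/2\geqslant N\lceil(1-\epsilon)\ell\rceil/(8(\ell-2))$, roughly $N(1-\epsilon)/8$. So I will set $d=\min\delta(X_2)$ and take the degree $e$ covering $X_2\to Y$ with $e\mid d$, $\min\delta(Y)=d/e$ and $-\chi(Y)\leqslant \tfrac34 (d/e)^2+d/e-1$ supplied by Proposition~\ref{prop:arithm-gonality}(2). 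The goal is to show that every $d<N(1-\epsilon)/4$ leads to a contradiction.

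\emph{Step 1 (ramification input).} I first bound $\deg R_{\phi_2}$ from below. At an $\epsilon$-ramified point with branch cycle $\gamma$, there are $f\geqslant\lceil(1-\epsilon)\ell\rceil$ fixed points and $\ell-f\geqslant 2$ moved points. Every $2$-set consisting of one fixed and one moved point lies in a nontrivial $\gamma$-orbit. Hence $\ind(\gamma)$ on $2$-sets is at least $f(\ell-f)/2\geqslant \lceil(1-\epsilon)\ell\rceil$, and
\[\deg R_{\phi_2}\geqslant N\lceil(1-\epsilon)\ell\rceil.\]

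\emph{Step 2 (compare $Y$ and $X_0$).} Consider $(\phi_2,\pi)\colon X_2\to X_0\times Y$, where $\pi\colon X_2\to Y$ is the degree $e$ map. The stabilizer $G\cap(S_2\times S_{\ell-2})$ is maximal for $\ell\neq4$, so $\phi_2$ is indecomposable. Therefore, exactly as in the proof of Proposition~\ref{Castelnuovo}, either $\pi$ factors through $\phi_2$, or the map is birational onto its image.

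In the first case $e\geqslant\binom{\ell}{2}$, so $d\geqslant\binom{\ell}{2}$. This exceeds $N/4$ because $N\leqslant 2(\ell-2)$, and the claim follows.

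In the birational case I apply the symmetric Castelnuovo--Severi inequality for a curve in $X_0\times Y$ with projection degrees $\binom{\ell}{2}$ and $e$:
\[-\chi(X_2)\leqslant \tbinom{\ell}{2}(-\chi(X_0))+e(-\chi(Y))+\bigl(\tbinom{\ell}{2}-1\bigr)(e-1).\]
Subtracting the Riemann--Hurwitz identity $-\chi(X_2)=\binom{\ell}{2}(-\chi(X_0))+\deg R_{\phi_2}/2$ cancels the $X_0$ term. This leaves
\[\tfrac12 N\lceil(1-\epsilon)\ell\rceil\leqslant e\bigl(\tfrac34 (d/e)^2+d/e\bigr)+\tbinom{\ell}{2}(e-1).\]

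\emph{Step 3 (conclude; main obstacle).} If $e=1$, the last display gives $N(1-\epsilon)\ell/2\leqslant \tfrac34 d^2+d$. This is quadratic rather than linear in $d$, so it yields only $d\gtrsim\sqrt{N\ell}$. That suffices when $N\ll\ell$ but not in general, and it does not yet produce the clean constant $1/4$. If $e\geqslant 2$, the term $\binom{\ell}{2}(e-1)$ makes the inequality vacuous.

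This is where I expect the real work to be. My first attempt would be to run the argument one level up, on $Y_2$ instead of $X_2$: pull $Y$ back via $Y_2\to X_2$ and use the fibration $\psi_2\colon Y_2\to Y_1=X$ of degree $\ell-1$. Its ramification (Theorem~\ref{thm:S-HIT_geom}) is $\deg R_{\psi_2}\geqslant N\lceil(1-\epsilon)\ell\rceil$, while the cross term in Castelnuovo--Severi is only of size $(\ell-2)\cdot 2e$. This trades $\binom{\ell}{2}$ for $\ell-2$, and together with $N\leqslant 2(\ell-2)$ I would expect a linear bound of the form $d\geqslant N(1-\epsilon)/4$ after absorbing the factor $2$ from $Y_2\to X_2$.

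The remaining case split is whether $Y_2\to Y$ factors through $X$. If it does, Lemma~\ref{lem:gon-and-airr} applied to the intermediate curve should give $d\geqslant\min\delta(X)$ or $d\geqslant\ell/2$, both $\geqslant N(1-\epsilon)/4$ after a further case check. Making this factorization analysis rigorous, since $\psi_2$ is indecomposable but $Y_2\to X_2$ is not, is the step I expect to be the main obstacle.
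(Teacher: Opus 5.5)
\textbf{There is a genuine gap: the main argument over $X_0$ fails, and the argument over $Y_2$ that would work is only sketched.}

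Your main line, working with $\phi_2\colon X_2\to X_0$, cannot succeed, and the display in Step~2 is false. Converting the genus form $g(X_2)\leqslant \binom{\ell}{2}g(X_0)+e\,g(Y)+(\binom{\ell}{2}-1)(e-1)$ of Castelnuovo--Severi into Euler characteristics gives a cross term $\binom{\ell}{2}e$, the product of the two degrees, not $(\binom{\ell}{2}-1)(e-1)$. Compare the paper's $-\chi(Y_2)\leqslant -e\chi(Z)-(\ell-1)\chi(X)+e(\ell-1)$. As a sanity check, at $e=1$ (so $Y=X_2$) your display reads $\binom{\ell}{2}\chi(X_0)\leqslant 0$, which is false for $X_0=\mP^1$.

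With the correct term the inequality is vacuous for every $e\geqslant 2$, as you noticed. For $e=1$ Castelnuovo--Severi is not even needed: Riemann--Hurwitz gives only $-\chi(X_2)\geqslant -\binom{\ell}{2}+\frac12 N\lceil(1-\epsilon)\ell\rceil$ when $X_0=\mP^1$. This is negative as soon as $N\lceil(1-\epsilon)\ell\rceil<\ell(\ell-1)$. The ramification of $\phi_2$ is of order $N\ell$ and cannot compete with the degree $\binom{\ell}{2}$ of $\phi_2$. This is the defect described in the paper's introduction, and it is why the paper never works over $X_0$.

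Your other worry is misplaced. A quadratic inequality in $d$ is harmless here, because $N\leqslant 2(\ell-2)$ forces the relevant $d$ to satisfy $d\leqslant \ell-4$, where $\frac34 d^2<d\ell$. The paper's case $e=1$ is exactly such a check.

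What you call a ``first attempt'' at the end is essentially the paper's proof, but you leave it undone. The paper proceeds as follows:
\begin{itemize}
\item It reduces, via \Cref{lem:gon-and-airr}, to showing $\min\delta(Y_2)\geqslant N(1-\epsilon)/2$.
\item It applies \Cref{prop:arithm-gonality}(2) to $Y_2$ itself, obtaining $\psi\colon Y_2\to Z$ of degree $e\mid d$.
\item It combines $-\chi(Y_2)+(\ell-1)\chi(X)\geqslant N(1-\epsilon)\ell/2$ from \Cref{thm:S-HIT_geom} with Castelnuovo--Severi for $(\eta,\psi)\colon Y_2\to X\times Z$, whose cross term is only $e(\ell-1)$.
\item It finishes with an explicit case analysis:
\begin{itemize}
\item $e=1$: the quadratic check with $2\leqslant d\leqslant \ell-4$;
\item $e=d$: here $-\chi(Z)\leqslant 0$, giving $d\geqslant N(1-\epsilon)\ell/(2(\ell-1))$;
\item $2\leqslant e\leqslant d/2$: one checks the endpoints $e=2$ and $e=d/2$, where necessarily $\ell\geqslant 7$.
\end{itemize}
\end{itemize}
The linear bound you expect is exactly the content of these checks; it is not automatic.

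Conversely, the step you flag as the main obstacle is not one. Suppose $d=\min\delta(X_2)<N(1-\epsilon)/4$. Then your composite $Y_2\to X_2\to Y$ has degree $2e\leqslant 2d<(1-\epsilon)(\ell-2)<\ell-1$. So it cannot factor through the indecomposable degree-$(\ell-1)$ map $\eta\colon Y_2\to X$, and Castelnuovo--Severi applies directly. This reproduces the paper's inequalities, with the paper's $(d,e)$ replaced by your $(2d,2e)$.

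Your proposed fallback $d\geqslant\min\delta(X)$ would be useless in any case, since $X$ can be $\mP^1$. Also, $Y_2\to X_2$ has degree $2$ and is therefore indecomposable, contrary to what you wrote.
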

\begin{proof}
    By \Cref{lem:gon-and-airr}, it is enough to prove that $\min \delta (Y_2) \geqslant N (1-\epsilon)/2$. Note that for $
    \ell\leqslant 5$, one has $N(1-\eps)/4\leqslant (1/2)(\ell-2)(1-2/\ell)<1$ by Assumption \ref{assmpt:lepsilon},  
     
    so we may assume $\ell \geqslant 6$. For the same reason, we can assume $N(1-\epsilon)>4.$ Suppose $d=\min \delta (Y_2) < N(1-\epsilon)/2$. Then part $(2)$ of \Cref{prop:arithm-gonality} shows that there is an integer $e$ and a degree-$e$ covering $\psi: Y_2 \to Z$  such that $\min \delta(Z)=d/e$ and
        $-\chi(Z) \leqslant \frac{3}{4} \left(d/e\right)^2 + d/e-1$. We analyze this covering depending on the value of $e$, using the inequality \begin{equation}\label{equ:chiN}-\chi(Y_2)+(\ell-1)\chi(X) \geqslant  \frac{N(1-\epsilon)}{2}\ell\end{equation}
    of Theorem \ref{thm:S-HIT_geom}. 

    Suppose $e=1$, and so  $Y_2=Z$. Then the last two inequalities above give: \[\frac{3}{4} d^2 + d  - 1 \geqslant -
    \chi(Y_2) \geqslant -(\ell - 1) + \frac{N(1-\epsilon)}{2}\ell.\]
    Substituting $d=1$ gives an apparent contradiction, so we can assume that $d\geqslant 2$. Since $d< N(1-\epsilon)/2$ it suffices to check that the weaker inequality
    \[\frac{3}{4}d^2+d-1 \geqslant -(\ell-1)+d\ell\] cannot hold. Indeed, the quadratic polynomial $(3/4) d^2 +(1-\ell)d + (\ell-2)$ takes negative values when $d=2$ and $d=\ell-4$. But the largest possible value for $d$ is $\lfloor N(1-\eps)/2 \rfloor$, and  \[\lfloor N(1-\epsilon)/2\rfloor \leqslant \lfloor (1-\epsilon)(\ell-2) \rfloor\leqslant\left\lfloor(\ell-2)\left(1-\frac{2}{\ell}\right)\right\rfloor=\left\lfloor\ell-4+
    \frac{4}{\ell}\right\rfloor=\ell-4.\] Therefore the polynomial $(3/4) d^2 +(1-\ell)d + (\ell-2)$ takes a negative value for all $d$ in the interval $[2, \lfloor N(1-\epsilon)/2\rfloor]$, giving the desired contradiction.

    Suppose $e=d$. Then $\psi$ is a degree $d$ covering from $Y_2$ to a curve of genus $0$ or $1$. Since $d<\ell -1$, the map $\psi$ cannot factor through the map $\eta: Y_2 \to X$. Since the latter is indecomposable, we can apply the Castelnuovo--Severi inequality to the pair of maps $\psi, \eta$ to get
    \[-\chi(Y_2) \leqslant -(\ell-1)\chi(X) + d(\ell-1). \]
    Together with \eqref{equ:chiN} this gives:
    \[\frac{N (1-\epsilon)}{2} \ell \leqslant d(\ell - 1),\]
    which implies $d \geqslant N(1-\epsilon)/2$, violating the assumption.

    Therefore we can assume $2 \leqslant e \leqslant d/2$. Note that in this case $\ell \geqslant 7$, since otherwise $d< N(1-\epsilon)/2\leqslant (\ell-2)(1-\epsilon)<4$, and thus $d$ does not factor. In this case we can again apply Castelnuovo--Severi to the coverings $\psi$ and $\eta: Y_2 \to X$ which gives 
    \[-\chi(Y_2)\leqslant -e\chi(Z) -(\ell-1)\chi(X) + e(\ell-1).\]
    Rearranging and using \eqref{equ:chiN} gives
    \[\frac{N(1-\epsilon)}{2}\ell \leqslant (\ell-1)\chi(X)-\chi(Y_2)\leqslant e(\ell-1)-e\chi(Z)\leqslant e(\ell-1)+e\left(\frac{3}{4}(d/e)^2+(d/e)-1\right), \]
    and collecting $d$-dependent terms on one side gives
    \[\frac{3d^2}{4e}+d\geqslant \frac{N(1-\epsilon)}{2}\ell -e(\ell-2).\]
    The left hand side is monotone in $d$, and so it is enough to show that the inequality cannot hold for $d=\left\lfloor N(1-\epsilon)/2\right\rfloor \leqslant \ell-4.$
    Replacing $N(1-\epsilon)/2$ with $d$ on the right gives
    \[ \frac{3d^2}{4e}-(\ell-1)d+e(\ell-2)
\geqslant 0.\]

    The expression $3d^2/4e+e(\ell-2)$ is a monotone function of $e\in [2, d/2]$. Therefore we need to check the feasibility of the inequality for $e=2$ and $e=d/2$. Substituting $e=d/2$ gives 
   \[(3/2)d-(\ell-1)d+(\ell-2)d/2 \geqslant 0,\]
which does not hold for $\ell \geqslant 6$. Substituting $e=2$ gives
\[\frac{3d^2}{8}-(\ell-1)d+2(\ell-2)\geqslant 0, \]
which does not hold for $2\leqslant d\leqslant \ell-4$, as can be seen by verifying the inequality at the ends of the interval (using the inequality $\ell \geqslant 7$). This is a contradiction.
\end{proof}

We can now prove the main result of this section.

\begin{thm}\label{thm:S-HIT}    
    Suppose we are in Setup \ref{main-setup} with $\ell \geqslant 6$ and Assumption \ref{assmpt:lepsilon} holds.  Suppose $\phi$ has at least $N$ $\epsilon$-ramified points for some $N\leqslant 2(\ell-2)$. Then for all but finitely many points $x \in X_0$ of degree $d < \frac{N(1-\epsilon)}{4}$  the fiber $\phi^{-1}(x)$ viewed (as a scheme) over $k(x)$ is either irreducible or splits into two closed points of degrees $1$ and $\ell-1$.
 \end{thm}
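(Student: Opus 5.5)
We reduce to resolvent curves as in \Cref{sec:resolvent}. Let $x\in X_0$ be a point of degree $d$ outside the branch locus (finitely many exceptions). If $\phi^{-1}(x)$ is reducible over $k(x)$ and does not split as $1+(\ell-1)$, then the image of $\varphi_x$ preserves a subset of size $m$ with $2\leqslant m\leqslant \ell/2$. The splitting $1+(\ell-1)$ is excluded, and a fixed point together with further splitting yields another invariant set of size between $2$ and $\ell-2$. Hence $X_m$ has a $k(x)$-rational point above $x$, i.e.\ a closed point of degree at most $d$ mapping to $x$. Since each point of $X_m$ maps to a single $x$, it suffices to show that for every $2\leqslant m\leqslant \ell/2$ the curve $X_m$ has only finitely many points of degree at most $d$. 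In other words, we need $\min\delta(X_m)>d$ for all $d<N(1-\epsilon)/4$, i.e.\ $\min\delta(X_m)\geqslant N(1-\epsilon)/4$.

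For $m=2$ this is exactly \Cref{lem:min-delta-X2}. For $3\leqslant m\leqslant \lfloor \ell/2\rfloor$ we use \Cref{prop:arithm-gonality}(1), which gives $\min\delta(X_m)\geqslant \gon(X_m)/2$, together with \Cref{thm:S-HIT_geom}, which gives $\gon(X_m)\geqslant f(m)$ with $f$ as in \Cref{lem:calculus}. By the unimodality in \Cref{lem:calculus}, $\min_{3\leqslant m\leqslant \lfloor\ell/2\rfloor} f(m)$ is attained at $m=3$ or at $m=\lfloor \ell/2\rfloor$. So it remains to check $f(3)\geqslant N(1-\epsilon)/2$ and $f(\lfloor\ell/2\rfloor)\geqslant N(1-\epsilon)/2$.

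Write $c=\lceil(1-\epsilon)\ell\rceil$. Then $f(3)=\frac{N}{6(\ell-3)}\binom{c}{2}=\frac{Nc(c-1)}{12(\ell-3)}$. Assumption \ref{assmpt:lepsilon} with $\ell\geqslant 6$ gives $c\geqslant \ell/2+1$, so $(c-1)/(\ell-3)\geqslant (\ell/2)/(\ell-3)\geqslant 1/2$. This yields $f(3)\geqslant Nc/24$, which is too weak by a constant factor, so a more careful comparison is needed. The target is $c(c-1)/(12(\ell-3))\geqslant (1-\epsilon)/2$, i.e.\ $c(c-1)\geqslant 6(\ell-3)(1-\epsilon)$. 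Using $c\geqslant(1-\epsilon)\ell$ it suffices that $c-1\geqslant 6(\ell-3)/\ell$, roughly $c\geqslant 7$. That holds once $\ell\geqslant 12$, since $c\geqslant \ell/2+1$. The small cases $6\leqslant\ell\leqslant 11$ need separate treatment: either $N(1-\epsilon)/4\leqslant 1$ makes the statement vacuous, or a direct check works, possibly sharpened by applying \Cref{lem:gon-and-airr} via $Y_m\to X_m$ together with the argument of \Cref{lem:min-delta-X2}.

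At $m=\lfloor\ell/2\rfloor$ the binomial coefficient $\binom{c}{m-1}$ with $c\geqslant \ell/2+1\geqslant m+1$ is at least $\binom{m+1}{2}$, and it is huge for large $\ell$. So $f(\lfloor \ell/2\rfloor)\geqslant N(1-\epsilon)/2$ follows easily, with a finite check for small $\ell$. I expect the main obstacle to be these numerical verifications at $m=3$ and at the middle for small $\ell$. If they fail, the fallback is to run the argument of \Cref{lem:min-delta-X2}, i.e.\ \Cref{prop:arithm-gonality}(2) combined with Castelnuovo--Severi, directly on $Y_3$, or to use $\min\delta(X_m)\geqslant\min\delta(X_2)$-type comparisons through the correspondence between $m$-sets and pairs.
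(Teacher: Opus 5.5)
Your reduction and overall architecture are the same as the paper's. You reduce to $\min\delta(X_m)\geqslant N(1-\epsilon)/4$ for $2\leqslant m\leqslant\lfloor\ell/2\rfloor$ and quote \Cref{lem:min-delta-X2} for $m=2$. For $m\geqslant 3$ you combine part (1) of \Cref{prop:arithm-gonality}, \Cref{thm:S-HIT_geom} and the unimodality of \Cref{lem:calculus}. This leaves two endpoint inequalities: $\binom{c}{2}\geqslant 3(1-\epsilon)(\ell-3)$, and $\binom{c}{m-1}\geqslant (1-\epsilon)m(\ell-m)$ at $m=\lfloor\ell/2\rfloor$.

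\textbf{The gap.} Once the lemmas are cited, these two inequalities are the entire remaining content of the proof, and you leave them unproved. The escape routes you offer do not work.
\begin{itemize}
\item The ``vacuous for small $\ell$'' option fails. For $\ell=11$, $\epsilon=2/11$, $N=18$ (allowed by Assumption~\ref{assmpt:lepsilon}) one has $N(1-\epsilon)/4=81/22>3$.
\item The middle endpoint is not ``huge''. Since $\epsilon$ is not fixed, take $\ell$ even and $\epsilon=1/2-1/\ell$, which Assumption~\ref{assmpt:lepsilon} permits. Then $c=\ell/2+1$ and $m-1=c-2$, so $\binom{c}{m-1}=\binom{c}{2}=\ell(\ell+2)/8$. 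This equals the target $(1-\epsilon)m(\ell-m)$ exactly.
\item So there is no slack for a ``large $\ell$ plus finite check'' argument. Your lower bound $\binom{m+1}{2}$ also falls short when $\epsilon$ is small: for $\epsilon=1/3$ and even $\ell\geqslant 8$ it gives about $\ell^2/8$ against a target of $\ell^2/6$.
\end{itemize}

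\textbf{The fix.} Use both consequences of Assumption~\ref{assmpt:lepsilon} at once: $c\geqslant(1-\epsilon)\ell$ and (since $\ell\geqslant 6$) $c-1\geqslant \ell/2$. Together they give $\binom{c}{2}\geqslant (1-\epsilon)\ell^2/4$ uniformly in $\epsilon$.
\begin{itemize}
\item At $m=3$ this suffices because $\ell^2/4\geqslant 3(\ell-3)$ is equivalent to $(\ell-6)^2\geqslant 0$. Your own sufficient condition $c-1\geqslant 6(\ell-3)/\ell$ already follows from $c-1\geqslant\ell/2$ by the same identity, so no split at $\ell=12$ is needed.
\item At $m=\lfloor\ell/2\rfloor$ one has $2\leqslant m-1\leqslant c-2$. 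Hence $\binom{c}{m-1}\geqslant\binom{c}{2}\geqslant(1-\epsilon)\ell^2/4\geqslant(1-\epsilon)m(\ell-m)$.
\end{itemize}
This is essentially the paper's endpoint verification. With it the argument closes for all $\ell\geqslant 6$, and none of your fallbacks (via $Y_3$ or part (2) of \Cref{prop:arithm-gonality}) are needed.
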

 \begin{proof}
     
     As in the beginning of Section \ref{sec:resolvent},  the assertion will follow from the inequality \[\min \delta (X_m) \geqslant \frac{N (1-\epsilon)}{4}\] for $2 \leqslant m \leqslant \lfloor \ell/2 \rfloor$. The inequality holds for $m=2$ by \Cref{lem:min-delta-X2}. For $3 \leqslant m \leqslant \lfloor \ell/2 \rfloor$, we can use \Cref{prop:arithm-gonality} and Theorem \ref{thm:S-HIT_geom} to write 
     \[\min \delta (X_m) \geqslant \gon(X_m)/2 \geqslant \frac{N}{4m(\ell-m)}  {{\lceil(1-\epsilon)\ell\rceil}\choose {m-1}}.\]
     By \Cref{lem:calculus} it is enough to check the inequality \[\frac{N}{4m(\ell-m)}  {{\lceil(1-\epsilon)\ell\rceil}\choose {m-1}} \geqslant \frac{N(1-\epsilon)}{4}\] for $m=3$ and for $m=\lfloor \ell/2 \rfloor$.
     For $m=3$ we need to verify
     \[{{\lceil(1-\epsilon)\ell\rceil}\choose {2}} \geqslant (1-\epsilon)3(\ell-3).\]
     Since $\lceil(1-\epsilon)\ell\rceil \geqslant \ell/2$ it is enough to check that
     \[\ell(\ell-2) \geqslant 12(1-\epsilon)(\ell-3),  \]
     which holds for $\ell \geqslant 6$ since $\eps\geqslant 2/\ell$. 
     
     For $m=\lfloor \ell/2 \rfloor$, we get 
     \[{{\lceil(1-\epsilon)\ell\rceil} \choose {\lfloor \ell/2 \rfloor -1}} \geqslant (1-\epsilon) \lceil \frac{\ell}{2}\rceil \lfloor \frac{\ell}{2}\rfloor.\]
     Since $2 \leqslant \lfloor \ell/2 \rfloor  -1 \leqslant \lceil(1-\epsilon)\ell\rceil - 2$, one has ${\lceil(1-\eps)\ell\rceil \choose \lfloor \ell/2 \rfloor -1 } \geqslant {\lceil(1-\eps)\ell\rceil \choose 2}$ and $\lceil\ell/2\rceil\lfloor \ell/2\rfloor\leqslant \ell^2/4$, so that is enough to show that 
     \[{\lceil(1-\epsilon)\ell\rceil \choose 2} \geqslant (1-\epsilon)\frac{\ell^2}{4},\]
     which follows from 
     \[\lceil(1-\epsilon)\ell\rceil \geqslant \frac{\ell}{2}+1.\]
     The last inequality holds by our assumptions. 
 \end{proof}
We now collect a few useful corollaries of Theorem \ref{thm:S-HIT}.
\begin{cor}\label{cor:S-HIT-no-N}
    Suppose $\phi:X \to X_0$ is a degree $\ell\geqslant 6$ covering of curves with monodromy $A_\ell$ or $S_\ell$ and  at least $b$ branch points. Let $\epsilon \in [2/\ell, 1/2-1/\ell]$ be any constant. Then for all but finitely many points $x \in X_0$ of degree \[d< \frac{1-\epsilon}{4} \min \left(2(\ell-2), b-\frac{4}{\epsilon}\left(\chi(X_0)-\frac{\chi(X)}{\ell}\right)\right)\]  the fiber $\phi^{-1}(x)$ viewed (as a scheme) over $k(x)$ is either irreducible or splits into two closed points of degrees $1$ and $\ell-1$.
\end{cor}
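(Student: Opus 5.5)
The plan is to deduce the corollary directly from Theorem \ref{thm:S-HIT}. We choose the integer $N$ as large as the hypotheses allow and check that Assumption \ref{assmpt:lepsilon} holds for the given $\epsilon$.

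First, we verify Assumption \ref{assmpt:lepsilon} for $\epsilon\in[2/\ell,1/2-1/\ell]$ and $\ell\geqslant 6$. The condition $\epsilon\geqslant 2/\ell$ is part of the hypothesis. From $\epsilon\leqslant 1/2-1/\ell$ we get $(1-\epsilon)\ell\geqslant \ell/2+1$. Hence $\lceil(1-\epsilon)\ell\rceil\geqslant \ell/2+1\geqslant \ell/2$, which covers both clauses of the assumption, including the extra one required when $\ell\geqslant 6$.

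Next, we produce $\epsilon$-ramified points. Set
\[M\coloneqq b-\frac{4}{\epsilon}\left(\chi(X_0)-\frac{\chi(X)}{\ell}\right),\]
and let $N\coloneqq\min\{2(\ell-2),\lceil M\rceil\}$, or $N=\lfloor\cdot\rfloor$ as appropriate to get an integer. If $N\leqslant 0$, the degree range in the statement is empty and there is nothing to prove, so assume $N\geqslant 1$. Since $N\leqslant M$, the number of branch points satisfies
\[b\geqslant \frac{4}{\epsilon}\left(\frac{-\chi(X)}{\ell}+\chi(X_0)\right)+N.\]
Lemma \ref{lem:mystery} then yields at least $N$ $\epsilon$-ramified branch points. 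The bound $N\leqslant 2(\ell-2)$ holds by construction.

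Finally, Theorem \ref{thm:S-HIT} applies. It gives the desired conclusion, irreducibility or a splitting into points of degrees $1$ and $\ell-1$, for all but finitely many $x$ with $d<N(1-\epsilon)/4$.

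The only delicate point is integrality. Lemma \ref{lem:mystery} asks for $N\in\mathbb N$, while $M$ is only rational. Taking $N=\lfloor M\rfloor$ loses a little, since the statement uses $M$ itself. To avoid this, we note that the number of non-$\epsilon$-ramified branch points is an integer $\leqslant M'$, where $M'\coloneqq\frac{4}{\epsilon}(\chi(X_0)-\chi(X)/\ell)$. The number of $\epsilon$-ramified points is therefore an integer $\geqslant b-M'$, hence $\geqslant\lceil b-M'\rceil$. So we may take
\[N=\min\{2(\ell-2),\lceil b-M'\rceil\}\geqslant \min\{2(\ell-2),M\},\]
and any $d$ below $\frac{1-\epsilon}{4}\min\{2(\ell-2),M\}$ is then also below $N(1-\epsilon)/4$.
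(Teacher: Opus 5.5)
Your proof is correct and is the same as the paper's: combine Lemma~\ref{lem:mystery} with Theorem~\ref{thm:S-HIT}, capping $N$ at $2(\ell-2)$, after checking that $\epsilon\in[2/\ell,1/2-1/\ell]$ gives Assumption~\ref{assmpt:lepsilon}. Your integrality refinement is a detail the paper leaves implicit: the number of non-$\epsilon$-ramified branch points is an integer at most $\frac{4}{\epsilon}(\chi(X_0)-\chi(X)/\ell)$, so one may take $N=\min\{2(\ell-2),\lceil M\rceil\}$; this argument replaces the claim ``$N\leqslant M$'' in your middle paragraph, which holds only for the floor choice.
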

\begin{proof}
    This follows from combining  \Cref{thm:S-HIT} with the estimate on $N$ from \Cref{lem:mystery}. The requirement $N\leqslant 2(\ell-2)$ of \Cref{thm:S-HIT} necessitates the introduction of the $\min$ function in the inequality.
\end{proof}
Specializing even further, to high-degree rational functions on a fixed curve $X$, we get:
\begin{cor}\label{cor:irred-rat-funct}
    Suppose $\phi:X \to\mP^1_k$ is a degree $\ell\geqslant \max(6, 2g)$ rational function on a curve $X$ of genus $g$ with monodromy $A_\ell$ or $S_\ell$ and  at least $b$ branch points. Then for all but finitely many points $x \in X_0$ of degree  \[d<\min\{1/3(\ell-2), b/6-3\}\] the fiber $\phi^{-1}(x)$ viewed (as a scheme) over $k(x)$ is either irreducible or splits into two closed points of degrees $1$ and $\ell-1$.
\end{cor}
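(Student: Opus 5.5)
This is a direct specialization of \Cref{cor:S-HIT-no-N} with $X_0=\mP^1_k$ and the choice $\epsilon=1/3$. Then $\chi(X_0)=1$ and $\chi(X)=1-g$ (if $X$ is not geometrically integral, $\chi(X)$ only becomes more favorable after the reduction of \Cref{prop:curve-trivialities}). The error term in \Cref{cor:S-HIT-no-N} becomes
\[\frac{4}{\epsilon}\left(\chi(X_0)-\frac{\chi(X)}{\ell}\right)=12\left(1-\frac{1-g}{\ell}\right)=12\left(1+\frac{g-1}{\ell}\right).\]
Since $\ell\geqslant 2g$, we have $(g-1)/\ell<1/2$, so this term is less than $18$. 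Hence
\[\min\left(2(\ell-2),\ b-12\left(1+\frac{g-1}{\ell}\right)\right)\geqslant \min(2(\ell-2),\ b-18).\]
Multiplying by $(1-\epsilon)/4=1/6$, \Cref{cor:S-HIT-no-N} covers all degrees
\[d<\min\left\{\frac{\ell-2}{3},\ \frac{b}{6}-3\right\},\]
which is exactly the claimed range.

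It remains to check the hypotheses of \Cref{cor:S-HIT-no-N}. We need $\ell\geqslant 6$, which is assumed. We also need $\epsilon=1/3\in[2/\ell,\,1/2-1/\ell]$. The bound $2/\ell\leqslant 1/3$ holds for $\ell\geqslant 6$, and $1/3\leqslant 1/2-1/\ell$ is equivalent to $\ell\geqslant 6$. Implicitly, Assumption~\ref{assmpt:lepsilon} must also hold, and it does since $\lceil 2\ell/3\rceil\geqslant \ell/2+1$ for $\ell\geqslant 6$.

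The only step needing care is the $\chi$ bookkeeping when $X$ is not geometrically integral. In that case $\chi(X)=[L:k]\chi(X/L)$, and a genus convention has to be fixed. If $g$ denotes the geometric genus of a component, then $-\chi(X)\leqslant [L:k](g-1)$. The bound $(g-1)/\ell<1/2$ still goes through, because $\phi$ factors through $\Spec L$: each component maps with degree $\ell/[L:k]\geqslant 2g$. In practice monodromy $A_\ell$ or $S_\ell$ over $\mP^1_k$ for a curve over $k$ means $X$ is geometrically integral, so I would just note this and treat $\chi(X)=1-g$. All other steps are arithmetic.
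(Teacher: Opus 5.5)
Your proof is correct and is exactly the paper's argument: specialize \Cref{cor:S-HIT-no-N} to $X_0=\mP^1_k$ and $\epsilon=1/3$. Then use $-\chi(X)/\ell=(g-1)/\ell<1/2$ to bound the error term $\frac{4}{\epsilon}\bigl(\chi(X_0)-\chi(X)/\ell\bigr)$ by $18$. Your side discussion of non-geometrically-integral $X$ is unnecessary, and its claim $\ell/[L:k]\geqslant 2g$ does not follow from the hypotheses. In that case the geometric monodromy would be an intransitive normal subgroup of $A_\ell$ or $S_\ell$, hence trivial, so $\phi$ has no branch points and the statement is vacuous.
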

\begin{proof}
    This follows from \Cref{cor:S-HIT-no-N} with $\epsilon=1/3$ using the bound $-\chi(X)/\ell<1/2$. 
\end{proof}

\begin{rem}\label{b/7-abstract}
    Consider a rational function $\phi: X \to \mP^1_k$ of degree $\ell \gg g$. Then the Riemann--Hurwitz formula gives $2g-2=-2\ell + \deg R_\phi \geqslant -2\ell + b$, and so $b < 2\ell + 2g$. Applying \Cref{cor:S-HIT-no-N} with $\epsilon=1/3$ and noting that $b/7<(2/7) \ell + 2g<\ell/3 -2$, shows that the conclusion of \Cref{cor:irred-rat-funct} holds for $d<b/7-2$ (by using $g/\ell\approx 0$ in the formula of \Cref{cor:S-HIT-no-N}).
\end{rem}
For polynomials --- the example with which we began the paper --- one can obtain a more precise result, by taking into account the ramified cycle at infinity. Moreover, the monodromy assumption in this case is automatically satisfied.
\begin{cor}\label{cor:hilb-for-poly}
    Suppose $\phi\in k[x]$ is an indecomposable polynomial with $b \geqslant 12$ branch points in $\mathbb{A}^1$. Then for all but finitely many $t \in \overline{k}$ of degree \[d< b/4 - \sqrt{b/2} + 1/2\] the polynomial $\phi(x)-t$ is either irreducible or factors into a linear factor and a degree $\deg \phi-1$ factor over the field $k(t)$.  
\end{cor}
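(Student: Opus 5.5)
The plan is to apply \Cref{thm:S-HIT} directly to $\phi\colon\mP^1_k\to\mP^1_k$, with two improvements over the generic \Cref{cor:S-HIT-no-N}. First, the monodromy hypothesis comes for free. Second, the count of $\epsilon$-ramified points is sharper, because the point at infinity, being totally ramified, absorbs half of the Riemann--Hurwitz budget.

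\emph{Step 1 (monodromy and degree).} Let $\ell=\deg\phi$. Above $\infty$ the branch cycle is an $\ell$-cycle, of index $\ell-1$. Riemann--Hurwitz for $\mP^1\to\mP^1$ gives total contribution $2\ell-2$. Hence the finite branch points $p_1,\dots,p_b$ satisfy $\sum_i\ind(\gamma_{p_i})=\ell-1$, and in particular $\ell\geqslant b+1\geqslant 13$.

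Since $\phi$ is indecomposable, its geometric monodromy group is primitive and contains an $\ell$-cycle. By the classification of primitive monodromy groups of polynomials (Feit, M\"uller), the only possibilities other than $A_\ell,S_\ell$ are cyclic or dihedral groups (at most two finite branch points) and finitely many sporadic affine or almost simple cases, all with at most three finite branch points. Since $b\geqslant 12$, the geometric monodromy is $A_\ell$ or $S_\ell$. The arithmetic monodromy $G$ lies between it and its normalizer in $S_\ell$, so $G\in\{A_\ell,S_\ell\}$, and we are in Setup \ref{main-setup}. Citing this classification correctly is the one external input; I expect it to be the main point needing care, while the rest is bookkeeping.

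\emph{Step 2 (counting $\epsilon$-ramified points).} A finite branch point that is not $\epsilon$-ramified has at least $\epsilon\ell$ ramified points in its fiber, so its index is at least $\epsilon\ell/2$. If there are $m$ such points, then $m\epsilon\ell/2\leqslant \ell-1$, so $m<2/\epsilon$. Hence $\phi$ has at least $N:=b-\lfloor 2/\epsilon\rfloor$ $\epsilon$-ramified points. This is the analogue of \Cref{lem:mystery}, with $4/\epsilon$ improved to $2/\epsilon$ because $\infty$ already uses $\ell-1$ of the $2\ell-2$. The condition $N\leqslant b\leqslant \ell-1\leqslant 2(\ell-2)$ holds automatically.

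\emph{Step 3 (choice of $\epsilon$ and conclusion).} \Cref{thm:S-HIT} yields the conclusion for all but finitely many points of degree
\[d<\frac{N(1-\epsilon)}{4}, \qquad\text{where}\qquad \frac{N(1-\epsilon)}{4}\geqslant\frac{(1-\epsilon)(b-2/\epsilon)}{4}=\frac{b+2-\epsilon b-2/\epsilon}{4}.\]
The right-hand side is maximized at $\epsilon=\sqrt{2/b}$, where it equals exactly $b/4-\sqrt{b/2}+1/2$.

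It remains to check Assumption \ref{assmpt:lepsilon} for this $\epsilon$:
\begin{itemize}
\item $\epsilon\geqslant 2/\ell$, since $\sqrt{2/b}\geqslant 2/(b+1)$.
\item $\lceil(1-\epsilon)\ell\rceil\geqslant\ell/2+1$. Since $b\geqslant 12$ we have $\epsilon\leqslant 1/\sqrt6<0.41$, so $(1-\epsilon)\ell\geqslant 0.59\,\ell\geqslant \ell/2+1$ once $\ell\geqslant 13$, which holds by Step 1.
\end{itemize}
With $\ell\geqslant 6$ also satisfied, \Cref{thm:S-HIT} applies. Its conclusion, that the fiber over $t$ is irreducible or splits into points of degrees $1$ and $\ell-1$, translates into the stated factorization of $\phi(x)-t$ over $k(t)$.
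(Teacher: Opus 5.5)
Your proposal is correct and follows the paper's proof essentially step by step. It uses the same sharpened count $m<2/\epsilon$ of non-$\epsilon$-ramified finite branch points, the $\ell$-cycle at $\infty$ absorbing $\ell-1$ of the total $2\ell-2$, the same choice $\epsilon=\sqrt{2/b}$, the bound $\ell\geqslant b+1\geqslant 13$ to verify \Cref{assmpt:lepsilon}, and a direct application of \Cref{thm:S-HIT}. Your integer choice $N=b-\lfloor 2/\epsilon\rfloor$ and explicit check $N\leqslant 2(\ell-2)$ are slightly more careful than the paper.

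The one difference is where you get the $A_\ell$/$S_\ell$ monodromy: the paper cites Guralnick--Shareshian or the classification-free Bochert argument of \Cref{rem:Bochert}, while you invoke the Feit--M\"uller classification. Two small corrections to how you state it. Its exceptional list contains the infinite family $\mathrm{PSL}_d(q)\leqslant G\leqslant \mathrm{P\Gamma L}_d(q)$, not only finitely many cases; all exceptions still have at most three finite branch points, so your conclusion stands. Applying it over $\mathbb{C}$ also uses the Fried--MacRae fact that indecomposability of polynomials is preserved under base change in characteristic $0$.
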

\begin{rem}
    Since $b/4-\sqrt{b/2}+1/2\geqslant b/6-1$ this implies \Cref{prop:intro}.
\end{rem}
\begin{proof}
    Since $\phi$ is a polynomial and $b\geqslant 12$, the monodromy group of $\phi$ is either symmetric or alternating by \cite[Theorem 1.0.2]{GS}, 
     
    or Remark \ref{rem:Bochert} below (which does not rely on the classification of finite simple groups), and so we are in position to apply \Cref{thm:S-HIT}. To estimate $N$ we proceed similarly to \Cref{lem:mystery}, but take into account the totally ramified point at infinity. Let $\ell$ denote the degree of $\phi$. Let $m$ be the number of finite branch points of $\phi$ which are \emph{not} $\epsilon$-ramified. The Riemann--Hurwitz contribution over each of these points is at least $\eps\ell/2$ and that over the point at infinity is $\ell-1$. Thus the Riemann--Hurwitz formula yields:
    \[-2 = 2\chi(\mP^1) = \ell\chi(\mP^1)+\deg R_\phi\geqslant  -2\ell + (\ell-1) + m\epsilon \ell/2,\]
    giving $m \leqslant 2/\epsilon$. Therefore $\phi$ has at least $N=b-2/\epsilon$ $\epsilon$-ramified branch points.

    Take $\epsilon=(2/b)^{1/2}$. Then, noting that $\ell \geqslant b+1\geqslant 13$ by the Riemann--Hurwitz formula, the numerology of \Cref{assmpt:lepsilon} is satisfied. Applying \Cref{thm:S-HIT} we see that the conclusion holds for points of degree at most
    \[\frac{N(1-\epsilon)}{4}=\frac{(b-\sqrt{2b})(1-\sqrt{2/b})}{4}=\frac{b-2\sqrt{2b}+2}{4}.\]
\end{proof}

\begin{rem}\label{rem:Bochert}
In the setup of  \Cref{cor:hilb-for-poly}, we now explain how to prove that $G=\Mon(f)$ is $A_\ell$ or $S_\ell$ by elementary means. 
 
 Since $f$ is an indecomposable polynomial, $G$ is primitive and contains an $\ell$-cycle. Hence, either $G$ is doubly-transitive almost-simple or $f$ is the composition of $X^\ell$ or $T_\ell$ with linear polynomials, by classical theorems of Burnside, Chisini, Schur, and Ritt; see \cite[Theorem 2.1]{konig2024reducible}\footnote{Note that  \cite[Theorem 2.1]{konig2024reducible} indeed assumes that $f$ is indecomposable.}. Since $b\geqslant 12$, we deduce $G$ is doubly-transitive and hence of even order (e.g.\ since it contains an element swapping to $1,2$) and so contains an involution. We may therefore apply classical bounds on the minimal degree $m$ of $G$ due to Bochert \cite{bochert1892ueber} and their modern exposition \cite{schomburg2022bochert}; 
 
 Recall that the minimal degree $m$ of $G$ is the minimal number of elements moved by a nontrivial element of $G$. 
 If $G\neq A_\ell,S_\ell$, one has $m\geqslant 4$ by Jordan's theorem \cite[Theorem 3.3E]{dixon1996permutation}. \cite[Theorem 3.1]{schomburg2022bochert}\footnote{The inequality we use is obtained at the end of the proof of Theorem 3.1 and does not require the assumption $\ell\geqslant 38$.} then implies  $\ell\leqslant 4m+6/(m-3)$. 
 On the other hand, the index over each of the $b\geqslant 12$ finite branch points is at least $m/2$. 
 Hence, the Riemann--Hurwitz contribution $\deg R_f=2\ell-2$ is at least $(\ell-1)+b\cdot m/2\geqslant (\ell-1)+6m$, so that $\ell-1\geqslant 6m$. In total we get $4m+6/(m-3)\geqslant \ell\geqslant 6m+1$, contradicting that $m\geqslant 4$.
 
\end{rem}
\section{Highly branched covers and Vojta's inequality}\label{sec:Vojta}

The results of the previous section work well in the case of large $\ell$. However, it is also interesting to study the case of fixed degree $\ell$ maps with a growing number of ramification points; for instance, it is interesting to study the case of a large genus $g$ hyperelliptic curve $X$ and its degree two map $X \to X_0=\mP^1$. The case of hyperelliptic curves is completely analyzed in the following theorem of Vojta, obtained using analytic methods in his seminal work \cite{vojta1992generalization}. Vojta's theorem, and its generalizations, are proved in the literature under the assumption that $k$ is a number field (rather than a general finitely generated field). However, by a specialization argument that we present in \Cref{rem:number-fields}, these results hold more generally over finitely generated fields of characteristic zero.  

\begin{prop}[{\cite[Corollary 0.3]{vojta1992generalization}}]\label{vojta}
    Suppose $k$ is a number field, and $\phi: X \to \mP^1_k$ is a degree $\ell$ covering of curves. If the Euler characteristic of $X$ satisfies 
    \[-\chi(X) > \ell(d-1).\]
    then the set \[\{P \in |X|:\ \deg P = \deg \phi(P)=d\}\] is finite.
\end{prop}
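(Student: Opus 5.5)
The plan is to deduce the statement from Vojta's inequality for algebraic points of bounded degree \cite{vojta1992generalization}, combined with a discriminant estimate for the fields $k(\phi(P))$ and Northcott's theorem. Fix an ample divisor $A$ on $X$ and let $K$ be a canonical divisor, so that $\deg K=2g-2=-2\chi(X)$ (we may assume $X$ geometrically integral, since only finitely many points of $X$ are involved otherwise). For $P\in X(\kbar)$ write $d_a(P)=[k(P):k]^{-1}\log|D_{k(P)/\mQ}|$ for the logarithmic root discriminant. Vojta's theorem says that for every fixed $r$ and every $\varepsilon>0$, all $P$ with $[k(P):k]\leqslant r$ satisfy
\[h_K(P)\leqslant d_a(P)+\varepsilon h_A(P)+O_{r,\varepsilon}(1).\]
I would apply this with $r=d$ and with $A$ replaced by the divisor $\phi^*\mathcal{O}(1)$ of degree $\ell$. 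Then $h_A(P)=h(\phi(P))+O(1)$, and since heights on a curve are determined by degree up to $o(h_A)$, we get $h_K(P)=\frac{2g-2}{\ell}h(\phi(P))+o(h(\phi(P)))$.

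The next step is to bound $d_a(P)$ for the points in question. If $\deg P=\deg\phi(P)=d$, then $k(P)=k(\phi(P))$, since the inclusion $k(\phi(P))\subseteq k(P)$ is between fields of the same degree over $k$. The point $\phi(P)\in\mP^1$ of degree $d$ corresponds to a $k$-rational point of $\Sym^d\mP^1=\mP^d$, namely the coefficient vector of its minimal binary form $F$. Its height is $d\cdot h(\phi(P))+O(1)$. The field $k(\phi(P))$ is generated by a root of $F$, so its discriminant divides, up to bounded factors coming from $k$ and from clearing denominators, the discriminant of $F$. That discriminant is a form of degree $2d-2$ in the coefficients, which gives
\[d_a(P)\leqslant \frac{1}{d}(2d-2)\cdot d\,h(\phi(P))+O(1)=(2d-2)h(\phi(P))+O(1).\]
I expect this to be the step requiring the most care. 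One has to normalize heights consistently with Vojta's convention (heights relative to $k$), and handle the archimedean and non-integral contributions via the standard estimate $\log|\mathrm{Disc}(F)|\ll (2d-2)\log\|F\|$ after making $F$ integral with bounded content. This is classical, as in Silverman's discriminant–height inequality.

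Combining the two estimates gives
\[\Bigl(\frac{2g-2}{\ell}-(2d-2)-\varepsilon'\Bigr)h(\phi(P))\leqslant O(1).\]
The hypothesis $-\chi(X)>\ell(d-1)$ says exactly that $2g-2>\ell(2d-2)$, so we can choose $\varepsilon'$ small enough that the coefficient is positive. This bounds $h(\phi(P))$ uniformly. By Northcott's theorem there are then only finitely many points $\phi(P)$ of degree $d$ and bounded height, and each has finitely many preimages. Hence the set $\{P\in|X|:\deg P=\deg\phi(P)=d\}$ is finite.
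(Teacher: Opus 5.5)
Your argument is correct, and it is the standard derivation of \cite[Corollary 0.3]{vojta1992generalization} from Vojta's main inequality (the paper gives no proof and only cites it): $k(P)=k(\phi(P))$, the discriminant bound $d_a(\phi(P))\leqslant(2d-2)h(\phi(P))+O(1)$ for degree-$d$ points of $\mP^1$, the comparison $h_K=\frac{2g-2}{\ell}\,h\circ\phi+o(h\circ\phi)$, and Northcott. The one slip is your justification for assuming $X$ geometrically integral (``only finitely many points are involved otherwise'' is not an argument); instead pass to $X/L\to\mP^1_L$ with $L=H^0(X,\mathcal O_X)$ and $e=[L:k]$, where the relevant points have $L$-degree $d/e$ equal to that of their images, and the required hypothesis $-\chi(X)>\ell(d/e-1)$ follows from yours, as in \Cref{rem:song-tucker-geom-irr}.
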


This implies that for the hyperelliptic map $\phi: X \to \mP^1$ all but finitely many fibers $\phi^{-1}(P)$ are irreducible for $\deg P < (g+1)/2$. To work with more general coverings of curves, we will use the following generalization of Vojta's theorem due to Song and Tucker.
\begin{prop}[Proposition 2.3 of \cite{song2001arithmetic}+\Cref{rem:number-fields}]\label{song-tucker}
Suppose $k$ is a finitely generated field of characteristic $0$, and $\phi:X \to X_0$ is a degree $\ell$ covering of curves. If 
\[\ell \chi(X_0) - \chi(X)>d\ell,\]
then the set \[\{P \in |X|:\ \deg P = \deg \phi(P)=d\}\] is finite.
\end{prop}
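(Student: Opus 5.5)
The plan has two stages. First I would establish the statement for a number field $k$ by translating \cite[Proposition 2.3]{song2001arithmetic} into the paper's notation. Then I would extend it to an arbitrary finitely generated $k$ of characteristic $0$ by a specialization argument, which is the content of \Cref{rem:number-fields}.

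\emph{Number field case.} First reduce, via \Cref{prop:curve-trivialities}, to $X_0$ geometrically integral. Replacing $k$ by $H^0(X,\mathcal O_X)$ and $X$ by $X/L$ changes the degrees of points and $\chi(X)$ by the same factor $[L:k]$ on both sides of the hypothesis, so we may also take $X$ geometrically integral. The condition $\deg P=\deg\phi(P)$ says $k(P)=k(\phi(P))$, so $P$ and $\phi(P)$ have the same arithmetic discriminant $d_a$. Song--Tucker combine two inequalities:
\begin{itemize}
\item Vojta's inequality on $X$, a lower bound of the form $h_{K_X}(P)\leqslant d_a(P)+\varepsilon h(P)+O(1)$;
\item the elementary upper bound $d_a(Q)\leqslant (2g_0-2+2d)h(Q)+O(1)$ for degree $d$ points $Q$ on $X_0$, written with heights normalized by a degree one divisor on $X_0$.
\end{itemize}
Pull the second bound back along $\phi$ and use $K_X=\phi^*K_{X_0}+R_\phi$. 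The combination is contradictory for $h(P)\gg 0$ exactly when $-\chi(X)-\ell(-\chi(X_0))>d\ell$, which is the hypothesis $\ell\chi(X_0)-\chi(X)>d\ell$. Northcott then gives finiteness. I would check only that Song--Tucker's normalization (genera, $[k(P):k]$) matches this one; the rest is citation.

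\emph{Specialization.} Suppose instead that $k$ is finitely generated and the set $\mathcal T$ of such points is infinite. Points of $\mathcal T$ are $k$-points of a constructible subset $W\subset \Sym^d X$: those effective divisors that are a single Galois orbit and whose pushforward in $\Sym^d X_0$ is again reduced of degree $d$. Consider the image of $W(k)$ under the Abel--Jacobi map $\Sym^d X\to \Pic^d_X$. By Faltings' theorem on subvarieties of abelian varieties (valid over finitely generated fields), infinitude of $W(k)$ forces one of two things. Either $W$ meets a positive-dimensional fiber of the Abel--Jacobi map containing a $k$-point, which gives a $\mathbb P^r$-family of points; or the closure of the image contains a translate $x+B$ of a positive-dimensional abelian subvariety with $B(k)\cap(\text{image}-x)$ infinite.

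Next, spread $X,X_0,\phi,W,B,x$ over a finitely generated $\Z$-algebra $A$ with $\mathrm{Frac}(A)=k$. Choose a closed point $s$ of $\Spec A\otimes\mQ$ with residue field a number field $k_s$ such that three things hold:
\begin{itemize}
\item the fibers are smooth, so $\chi(X)$, $\chi(X_0)$, $\ell$ and $d$ are unchanged;
\item the specialization map $B(k)\to B_s(k_s)$ is injective, by N\'eron--Silverman specialization, or simply Hilbert irreducibility in the rational case;
\item the Galois-orbit and reducedness conditions defining $W$ persist for infinitely many specialized points, by Hilbert irreducibility on $W$, which is Hilbertian over $k_s$ along the relevant family.
\end{itemize}
This produces infinitely many points in the analogous set for $\phi_s$ over $k_s$, contradicting the number field case.

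\emph{Main obstacle.} I expect the hardest step to be this last one: ensuring that infinitely many specialized divisors remain \emph{irreducible} of degree $d$ with irreducible image, rather than splitting. My first attempt would be to take a Hilbert set inside the infinite Zariski-dense subset of $B(k)$ (or of $\mathbb P^r(k)$). If that is awkward, the alternative is to avoid Faltings entirely and redo Vojta's inequality over finitely generated fields using Moriwaki heights.
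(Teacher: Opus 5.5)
Your number-field step is fine (it is just the citation, as in the paper), and your overall architecture (Abel--Jacobi dichotomy, Faltings on $\Pic^d_X$, spreading out, specializing to a number field) matches the paper's. The gap is in your third specialization bullet, exactly where you suspected. First, irreducibility of $D$ over $k$ is not a Zariski-constructible condition on $\Sym^d X$, so $W$ as you define it cannot be spread out. The only algebraic condition available is reducedness of $\phi_*D$, i.e.\ $D$ lying in the open complement of $\Delta=(\phi^{(d)})^{-1}(\Delta_0)$, where $\Delta_0$ is the big diagonal. Second, and more seriously, no Hilbert irreducibility theorem keeps infinitely many specialized divisors irreducible. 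In the main case the divisors are parametrized by an open subset of an abelian coset, and abelian varieties never have the Hilbert property, by weak Mordell--Weil. The incidence cover can contain an unramified, isogeny-type piece, which is precisely the mechanism behind Examples~\ref{exam:weaker-vesionts} and~\ref{exam:tucker}. Nor can Hilbert irreducibility on the base $S$ choose one $s$ that preserves irreducibility at infinitely many points at once. So ``$W$ is Hilbertian along the relevant family'' and ``a Hilbert set inside $B(k)$'' are unsupported.

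The missing idea is that you never need irreducibility. The hypothesis $\ell\chi(X_0)-\chi(X)>d\ell$ is monotone in $d$, so the number-field theorem makes $\{P:\deg P=\deg\phi_s(P)\leqslant d\}$ finite. If $D=\sum P_i$ is a $k_s$-rational divisor with $\phi_{s*}D$ reduced, then each $P_i$ satisfies $\deg P_i=\deg\phi_s(P_i)$, so only finitely many such $D$ exist. Hence it suffices to produce infinitely many $k_s$-points of the open set $\Sym^d\mathcal X_s\setminus\Delta_s$; splitting of the specialized divisors is harmless. The paper fixes $P,Q\in U(k)$ with $P-Q$ nontorsion and uses Masser's theorem to choose $s$ with $\mathcal P_s$ nontorsion. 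The abelian coset through $\mathcal P_s$ with dense $k_s$-points then meets the open set $\{p:\psi_s(p)\notin\Delta_s\}$, which is nonempty because it contains $\mathcal P_s$, in infinitely many points. Your injectivity of $B(k)\to B_s(k_s)$ would not by itself keep the specialized points out of the bad closed locus; density is what is needed. The pencil case needs no specialization at all. A $k$-rational $D\notin\Delta$ moving in a pencil gives $\rho\colon X\to\mP^1$ of degree $e\leqslant d$ sharing no subcover with $\phi$, and Castelnuovo--Severi then gives $\ell\chi(X_0)-\chi(X)\leqslant e(\ell-1)<d\ell$, a contradiction.
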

\begin{rem}
   The bound in \Cref{vojta} is sharp: the examples are provided by $(\ell, d)$ curves $X \subset \mP^1_k \times E$ for an elliptic curve $E$ (more specifically, by smooth curves in the algebraic equivalence class of $dE+\ell \mP^1_k$: a combination of vertical and horizontal lines). Let $\pi_E :X \to E$, $\pi_{\mP^1}:E \to \mP^1$ denote the projections. By the adjunction formula, the genus of such a curve coincides with the bound $\ell(d-1)+1$ from \Cref{vojta}. By the potential Hilbert's irreducibility theorem for elliptic curves \cite[Theorem 2.5]{tucker2002irreducibility}, for some field extension $L/k$ the set $\{p\in E(L)\colon \deg \pi_E^{-1}(p)=\deg \pi_{\mP^1}(\pi_E^{-1}(p))=d\}$ is infinite so that the map $\pi_{\mP^1}$ does not satisfy the conclusion of \Cref{vojta}. A similar construction works for some $(\ell, d)$ and curves on $X_0\times E$, showing the  sharpness of \Cref{song-tucker}\footnote{Note that \cite[Corollary 2.1]{song2001arithmetic} contains a typo in the statement of Vojta's theorem: the inequality sign in the theorem has to be strict.}. 
\end{rem}
 
\begin{rem}\label{rem:song-tucker-geom-irr}
    \Cref{song-tucker} is stated in \cite{song2001arithmetic}, but we believe that geometric irreducibility is tacitly assumed in some proofs. Nevertheless, it is easy to see the statement of \Cref{song-tucker} for geometrically irreducible curves implies the statement for all curves (using \Cref{prop:curve-trivialities}). Namely, without loss of generality we can assume that $X_0$ is geometrically irreducible and let $L$ be the field of constants of  $X$. Applying the theorem to the covering $X/L \to (X_0)_L$ of degree $\ell/[L:k]$, and using \Cref{prop:curve-trivialities}, gives the conclusion for points of degree
    \[d_L<\frac{ \chi((X_0)_L)\cdot \ell/[L:k]-\chi(X/L)}{\ell/[L:k]}=\frac{\chi((X_0)_L)\cdot \ell/[L:k]-\chi(X)/[L:k]}{\ell/[L:k]}=\frac{\ell\chi(X_0)-\chi(X)}{\ell}.\]
    Since degree $d$ points on $X$ correspond to points of degree $d_L=d/[L:k]$  on $X/L$ the result follows.
\end{rem}

A direct application of \Cref{song-tucker} gives the following result.
\begin{prop}\label{thm:large-branch-main}
    Suppose $\phi:X \to X_0$ is a covering of curves over $k$ with monodromy group $G$ and $b$ branch points.  Let $n_{\max{}}$ denote the maximal index of a subgroup $H\sub G$, which itself is maximal among the subgroups with trivial core (in particular $n_{\max}\leqslant |G|$). Then for all but finitely many degree $d<b/(2n_{\max{}})$ points $P$ of $X_0$, the  image $\Im \varphi_P$ 
    contains a nontrivial normal subgroup $1 \neq N \lhd G$. 
\end{prop}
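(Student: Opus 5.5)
If $\Im\varphi_P$ contains no nontrivial normal subgroup of $G$, then it has trivial core, so it lies in some subgroup $H\leqslant G$ that is maximal among core-free subgroups. The plan is to show that each resolvent curve $X_H=\tilde X/H$ has only finitely many points of degree $d$ mapping to degree-$d$ points of $X_0$, and to apply \Cref{song-tucker} to the covering $\phi_H\colon X_H\to X_0$.

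First, up to conjugacy there are only finitely many such $H$, so it suffices to treat one $H$ at a time. For $P$ unbranched of degree $d$ with $\Im\varphi_P$ contained in a conjugate of $H$, the fiber $\phi_H^{-1}(P)$ contains a $k(P)$-rational point $Q$ (\Cref{sec:resolvent}). Then $\deg Q=\deg P=d$. Finitely many such $Q$ therefore give finitely many $P$, and the branch points are finite in number anyway.

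Second, I verify the hypothesis of \Cref{song-tucker} for $\phi_H$, which has degree $n=[G:H]\leqslant n_{\max}$. By the Riemann--Hurwitz formula \eqref{equ:RH},
\[n\chi(X_0)-\chi(X_H)=\tfrac12\deg R_{\phi_H}=\tfrac12\sum_P \deg P\cdot\ind_{G/H}(\gamma_P).\]
Since $H$ is core-free, $G$ acts faithfully on $G/H$. Hence every nontrivial branch cycle $\gamma_P$ acts nontrivially and has index at least $1$ on $G/H$. This gives $n\chi(X_0)-\chi(X_H)\geqslant b/2$, where $b$ counts branch points over $\mC$, so that $\deg P$ is accounted for. \Cref{song-tucker} then applies whenever $dn<b/2$, i.e.\ $d<b/(2n)$. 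This is implied by $d<b/(2n_{\max})$, since $n\leqslant n_{\max}$.

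The main points to handle carefully are two. The first is that $X_H$ may fail to be geometrically irreducible; this is handled by \Cref{rem:song-tucker-geom-irr}, and the index bound must then be computed on one geometric component with $b$ interpreted over $\mC$, as in \eqref{equ:RH}. The second is the bookkeeping of the maximal core-free subgroups, which is where the definition of $n_{\max}$ enters: one maximizes the index over exactly those $H$.
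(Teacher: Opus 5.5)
Your proposal is correct and follows essentially the same argument as the paper. Both reduce to a maximal core-free $H$, use faithfulness of $G$ on $G/H$ to get $\deg R_{\phi_H}\geqslant b$, and apply \Cref{song-tucker} to $\phi_H\colon X_H\to X_0$ using $\deg\phi_H\leqslant n_{\max}$.

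Your closing aside about computing the index bound ``on one geometric component'' is unnecessary and slightly off. A branch cycle can act trivially on a single orbit of the geometric monodromy group on $G/H$, so the per-branch-point bound should be applied to the whole fiber via \eqref{equ:RH}, exactly as in your main computation. \Cref{rem:song-tucker-geom-irr} then already handles the case where $X_H$ is not geometrically irreducible.
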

\begin{proof}
    Let $H\subset G$ be a subgroup maximal among subgroups with trivial core. It suffices to show that there are at most finitely many points $P$ of degree $d<b/2n_{\max}$ for which the Galois group of the fiber is contained in $H$. Let $\tilde{X}\to X$ be the Galois closure of $\phi$, and let $\phi_H:\tilde{X}/H \to X_0$ denote the covering corresponding to the subgroup $H$. Note that, by definition, $\deg \phi_H \leqslant n_{\max}.$ We need to show that the set $\{P \in |X_H|\colon \deg P =\deg \phi_H(P)=d\}$ is finite. For this we directly apply \Cref{song-tucker}. Since $H$ has no core, every branch point of $\phi$ is also a branch point of $\phi_H$. By Riemann--Hurwitz, we can write
    \[2\chi(X_H)=2\deg(\phi_H)\chi(X_0)-\deg R_{\phi_H}. \]
    Since $\deg R_{\phi_H}\geqslant b$ as above,  \Cref{song-tucker} implies the assertion  for points of degree at most \[\frac{\deg \phi_H \cdot \chi(X_0)-\chi(X_H)}{\deg \phi_H}=\frac{\deg R_{\phi_H}}{2\deg \phi_H}\geqslant\frac{b}{2n_{\max}}, \]
     as needed.
\end{proof}
In a similar way, we obtain the following result on the irreducibility of fibers.

\begin{thm}\label{thm:vojta-new}
    Suppose $\phi:X \to X_0$ is a covering of degree $\ell$ and monodromy $A_\ell$ or $S_\ell$. 
    Then the fiber of $\phi$ is irreducible over all but finitely many points of degree $d<b/(2\ell).$  
\end{thm}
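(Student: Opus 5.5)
The plan is to argue exactly as in \Cref{thm:large-branch-main}, but working with the resolvent curves $X_m$ from Setup \ref{main-setup} in place of a core-free maximal subgroup.

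Suppose $P$ is a point of degree $d$ off the branch locus, and suppose $\phi^{-1}(P)$ is reducible over $k(P)$. Then $\Im\varphi_P$ is intransitive, so it lies in a conjugate of $G\cap(S_m\times S_{\ell-m})$ for some $1\leqslant m\leqslant \ell/2$. Hence the resolvent $\phi_m\colon X_m\to X_0$ has a $k(P)$-rational point $Q$ above $P$, which gives $\deg Q=\deg\phi_m(Q)=d$. Since there are only finitely many $m$, it suffices to show, for each fixed $m$, that the set
\[\{Q\in|X_m|\colon \deg Q=\deg\phi_m(Q)=d\}\]
is finite whenever $d<b/(2\ell)$.

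For this I will apply \Cref{song-tucker} to $\phi_m$, which has degree $n_m=\binom{\ell}{m}$ (with the obvious modification if $G=A_\ell$ and the stabiliser differs; the index is still $\binom{\ell}{m}$ for $\ell\geqslant 3$). By Riemann--Hurwitz it is enough to show
\[\frac{\deg R_{\phi_m}}{2\binom{\ell}{m}}\geqslant \frac{b}{2\ell},\]
that is, $\deg R_{\phi_m}\geqslant b\binom{\ell}{m}/\ell=b\binom{\ell-1}{m-1}/m$. Since \eqref{equ:RH} expresses $\deg R_{\phi_m}$ as a sum over the branch points, I only need a pointwise bound: for every $1\neq\gamma\in G$,
\[\ind_{m\text{-sets}}(\gamma)\geqslant \binom{\ell}{m}/\ell .\]

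This combinatorial inequality is the main obstacle, and I would first reduce to $\gamma$ of prime order, since a power of $\gamma$ has index at most $\ind(\gamma)$. I would then attack it through fixed points. Writing $F$ for the number of $m$-sets fixed by $\gamma$, we have $\ind\geqslant(\binom{\ell}{m}-F)/2$, so it suffices to show $F\leqslant\binom{\ell}{m}(1-2/\ell)$. The worst case should be a transposition, and this is also the sharp case. A transposition fixes
\[F=\binom{\ell-2}{m}+\binom{\ell-2}{m-2}\]
$m$-sets, so it moves $2\binom{\ell-2}{m-1}$ of them, all in $2$-cycles, giving index exactly $\binom{\ell-2}{m-1}$. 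I then need $\binom{\ell-2}{m-1}\geqslant\binom{\ell}{m}/\ell=\binom{\ell-1}{m-1}/m$. This is equivalent to $m(\ell-m)\geqslant \ell-1$, which holds for $1\leqslant m\leqslant \ell-1$.

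For a general element moving at least $3$ points, or for several disjoint cycles, I expect the proportion of moved $m$-sets to be larger. I would verify this by counting the $m$-sets that are a union of $\gamma$-orbits and bounding the index from below by half the number of non-fixed $m$-sets. The case $m=1$ is immediate, since $\ind(\gamma)\geqslant 1$. Once the pointwise bound is in place, \Cref{song-tucker} gives finiteness for every $d<b/(2\ell)$, which completes the argument.
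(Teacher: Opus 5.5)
Your argument is the paper's. Both reduce to the resolvents $X_m$ for $1\leqslant m\leqslant \ell/2$, apply \Cref{song-tucker} to $\phi_m$, and bound $\deg R_{\phi_m}$ branch point by branch point. The paper gets the pointwise bound $\ind_{m\text{-sets}}(\gamma)\geqslant\binom{\ell-2}{m-1}$ from \cite[Lemma 2.0.19]{GS}, then finishes with $m(\ell-m)\geqslant \ell-1$, exactly as you do for a transposition.

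The one step you leave as an expectation is the bound for arbitrary $\gamma\neq 1$. It is the step the paper outsources, and it closes in one line; the reduction to prime order is unnecessary. Pick $a\neq b$ in a nontrivial cycle of $\gamma$ and let $\tau=(a\,b)$. A $\gamma$-invariant $m$-set is a union of $\gamma$-cycles, so it contains both or neither of $a,b$, and is therefore $\tau$-invariant. Hence $\gamma$ moves at least the $2\binom{\ell-2}{m-1}$ $m$-sets moved by $\tau$. Your estimate ``index at least half the number of moved points'' then gives $\ind_{m\text{-sets}}(\gamma)\geqslant\binom{\ell-2}{m-1}\geqslant\binom{\ell}{m}/\ell$. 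This recovers the cited Guralnick--Shareshian bound, so your proof becomes self-contained.
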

\begin{proof}
    As before, we need to estimate $\deg R_{\phi_m}$. Note that each of the branch points of $\phi$ is also a branch point for $\phi_m$. Moreover, by \cite[Lemma 2.0.19]{GS}, the index of any branch cycle in the action on $m$-sets is at least $\binom{\ell-2}{m-1}$. Therefore $\deg R_{\phi_m} \geqslant b\binom{\ell-2}{m-1}$ and applying \Cref{song-tucker} shows that the result holds for points of degree at most
    \[\frac{\deg R_{\phi_m}}{2\deg \phi_m} \geqslant \frac{b\binom{\ell-2}{m-1}}{2\binom{\ell}{m}}=\frac{bm(\ell-m)}{2\ell(\ell-1)}\geqslant \frac{b}{2\ell},\]
    as claimed. 
\end{proof}

Finally, \Cref{song-tucker} combined with the Burness--Guralnick index bounds can be used to obtain the following variant of \Cref{cor:BG}.

\begin{cor}\label{cor:BGST}
    Let $\phi:X\to X_0$ be an indecomposable cover of degree $n$,  monodromy group $G$, and $b$ branch points. Suppose it is \emph{not} the case that $A_\ell^t\leqslant G\leqslant S_\ell\wr S_t$, $\ell\geqslant 5$, $t\geqslant 1$ is of product type with respect to the $S_\ell$-action  on $m$-sets for   $1\leqslant m\leqslant \ell/2$.  Then the set 
    \[\{P \in |X|:\ \deg P = \deg \phi(P)< 3b/28\}\]
    is finite.
    
\end{cor}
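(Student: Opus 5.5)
The plan is to mirror the proof of \Cref{cor:BG}, with \Cref{song-tucker} taking the place of the gonality/minimum-density-degree route. Since $\phi$ is indecomposable, $G$ is primitive on the fiber. By hypothesis $G$ is not one of the exceptional product-type groups, so \cite[Theorem 7]{BG} gives $\ind(G)\geqslant 3n/14$.

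First, reduce to geometrically irreducible curves. As in \Cref{rem:song-tucker-geom-irr}, we may assume $X_0$ is geometrically irreducible. \Cref{song-tucker} is already formulated so that it applies even when $X$ is not geometrically integral, so nothing further is needed here.

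Second, bound the Riemann--Hurwitz contribution. Each of the $b$ branch points $P\in X_0(\mC)$ has a nontrivial branch cycle $\gamma_P\in G$, and so $\ind(\gamma_P)\geqslant \ind(G)\geqslant 3n/14$. Summing via \eqref{equ:RH},
\[\deg R_\phi\geqslant b\cdot \frac{3n}{14}.\]

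Third, apply \Cref{song-tucker}. Riemann--Hurwitz gives $n\chi(X_0)-\chi(X)=\deg R_\phi/2$. So the hypothesis $n\chi(X_0)-\chi(X)>dn$ of \Cref{song-tucker} holds as soon as $\deg R_\phi/(2n)>d$, and by the bound above this is implied by $d<3b/28$. For each such $d$ the set $\{P\in|X|:\deg P=\deg\phi(P)=d\}$ is therefore finite. The statement concerns only the finitely many degrees $d<3b/28$, so the union over these $d$ is still finite, which proves the claim.

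The only nontrivial input is the index bound $\ind(G)\geqslant 3n/14$ from Burness--Guralnick for primitive groups outside the product-type family. This bound was already invoked in \Cref{cor:BG}, and the present argument just feeds it into \Cref{song-tucker} instead of \Cref{prop:min-ind}. The one point to check is that the strict inequality in \Cref{song-tucker} is met: $d<3b/28\leqslant \deg R_\phi/(2n)$ gives $dn<\deg R_\phi/2=n\chi(X_0)-\chi(X)$, as required.
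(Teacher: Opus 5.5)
Your proposal is correct and is essentially the paper's own proof. The Burness--Guralnick bound $\ind(G)\geqslant 3n/14$ gives $\deg R_\phi\geqslant 3bn/14$, and \Cref{song-tucker} then yields finiteness for every $d<\deg R_\phi/(2n)$, which covers all $d<3b/28$; your remarks on the strict inequality and on taking the union over the finitely many relevant $d$ only spell out what the paper leaves implicit.
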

\begin{proof}
Assume that the action of $G$ is not one of the above product-type actions. Then  $\ind(G)\geqslant 3n/14$ by \cite[Theorem 7]{BG}. By \Cref{song-tucker}, the conclusion holds for points of degree $d$ less than
\[\frac{\deg R_\phi}{2n}\geqslant \frac{b\cdot \ind(G)}{2n}\geqslant 3b/28 , \]
as claimed. 
\end{proof}

\section{Preservation of Galois groups}\label{sec:Galois-groups}
In this section, we further consider the entire Galois group of fibers of maps $\phi:X\to X_0$  when $\Mon(\phi)=A_\ell$ or $S_\ell$. Recall that $\varphi_P:\Gal_{k(P)}\to S_\ell$ denotes the Galois action on the fiber above a point $P\in |X_0|_d$. Our main result is the following theorem.

\begin{thm}\label{thm:other-actions}
    Let $\phi:X\to X_0$ be a map of degree $\ell\geqslant 6$, with  monodromy group $G=A_\ell$ or $S_\ell$, and $b$ branch points. Suppose \Cref{assmpt:lepsilon} holds. Suppose at least $N \leqslant 2(\ell-2)$ branch points of $\phi$ are $\eps$-ramified.  
    Then the following hold:
    
    \begin{enumerate}
        \item\label{S-l-Galois-case} $\Im \varphi_P\cong A_\ell$ or $S_\ell$ for all but finitely many points $P$ of degree $d< \min\{N(1-\eps)/4,\lfloor b/16\rfloor + 1\}$ outside $\phi(|X|_d)$;
        
        \item\label{S-l-Galois-fixed-case} If we assume in addition $\ell\geqslant 7,$ $\eps\leqslant 1/2- 3/(2\ell),$ and $N\leqslant 2(\ell-4)$, then $\Im\ \varphi_P=A_{\ell-1}$ or $S_{\ell-1}$ for all but finitely many points $P \in \phi(|X|_d)$ of  degree $d<\min\{N(1-\eps)/4,\lfloor b/16\rfloor + 1\}$.         
    \end{enumerate}
 
\end{thm}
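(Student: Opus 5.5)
The plan is to reduce both statements to lower bounds on $\min\delta(X_H)$ for the resolvent curves $X_H=\tilde X/H$, as in \Cref{sec:resolvent}. Here $H$ runs over the maximal subgroups of $G$ not containing $A_\ell$ (resp.\ of $G_1=G\cap S_{\ell-1}$ not containing $A_{\ell-1}$). Finiteness of $|X_H|_d$ for every such $H$ will give that $\Im\varphi_P$ contains $A_\ell$ (resp.\ $A_{\ell-1}$) for all but finitely many $P$ of degree $d$. There are only finitely many conjugacy classes of $H$, so it suffices to treat each one separately.

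For (1), let $H\le G$ be maximal with $A_\ell\not\le H$. I split by the O'Nan--Scott type of the action of $G$ on $G/H$.

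\emph{Intransitive $H$.} Here $H=G\cap(S_m\times S_{\ell-m})$. For $m=1$ the points on $X_1=X$ are exactly the excluded set $\phi(|X|_d)$. For $2\le m\le\ell/2$ we have $\min\delta(X_m)\ge N(1-\eps)/4$, as in the proof of \Cref{thm:S-HIT}.

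\emph{Transitive $H$.} Then $G/H$ is a primitive action of $A_\ell$ or $S_\ell$ different from the action on $m$-sets; imprimitive $H$ still gives a primitive action on $G/H$ by maximality. The ``moreover'' part of \Cref{cor:BG} then applies to the indecomposable cover $X_H\to X_0$. It gives $\min\delta(X_H)\ge\min\{[G:H]\gon(X_0)/2,\,b/16\}$. Since $[G:H]\ge\ell\ge 6$ (indeed $\ge\binom{\ell}{2}/$something large) and $\gon(X_0)\ge 1$, the first term exceeds $N(1-\eps)/4$. So points of degree $d<\min\{N(1-\eps)/4,\lfloor b/16\rfloor+1\}$ are finite on $X_H$. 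Note that $d\le\lfloor b/16\rfloor$ means $d<b/16$ or $d=b/16$; the integer part handles the boundary, and strictness needs care, possibly using $\min\delta\ge\lceil\cdot\rceil$.

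\emph{Sub-$A_\ell$ fibers.} To upgrade ``contains $A_\ell$'' to ``$\cong A_\ell$ or $S_\ell$'', I observe that the only subgroups of $G$ containing $A_\ell$ are $A_\ell$ and $S_\ell$.

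For (2), let $P\in\phi(|X|_d)$, so that $\Im\varphi_P$ fixes a point and lies in $G_1\cong A_{\ell-1}$ or $S_{\ell-1}$. I work on $X$ itself: the cover $Y_2\to X$ is indecomposable with monodromy $G_1$ acting on $\ell-1$ points. I then apply the argument of (1) to the cover $X\to X_0$ restricted over $X$, i.e.\ to $Y_2\to X$ of degree $\ell'=\ell-1$. The $\eps$-ramified points of $\phi$ pull back to points of $X$ over which $Y_2\to X$ is $\eps'$-ramified. Each such point has at least $\lceil(1-\eps)\ell\rceil-1$ unramified preimages, which is why the hypothesis $\eps\le 1/2-3/(2\ell)$ is needed, so that Assumption \ref{assmpt:lepsilon} holds for $\ell-1$. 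There are at least $N$ of them (counted on $X(\mC)$), still with $N\le 2(\ell'-3)\le 2(\ell'-2)$.

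The branch points of $Y_2\to X$ number at least $b$. The resolvents $\tilde X/H'$ for $H'\le G_1$ then satisfy the analogous bounds over the base $X$. Here $\gon(X)\ge 1$, and \Cref{thm:S-HIT_geom} and \Cref{lem:min-delta-X2} apply with $\ell-1$. The subtlety is that a degree $d$ point $P$ of $X_0$ lifts to a degree $d$ point of $X$; by Lemma \ref{lem:gon-and-airr} it gives a degree $\le d$ point on $\tilde X/H'$, so the finiteness transfers. Fibers of $Y_2\to X$ that have a rational point correspond to $\Im\varphi_P$ fixing two points. This case is excluded by (1) applied with $m=2$ (the $X_2$ bound), and that is where $\ell\ge 7$ is used.

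The main obstacle I expect is the bookkeeping in (2). One must verify that the $\eps$-ramification and the count $N$ transfer to $Y_2\to X$, and that the monodromy of $Y_2\to X$ is really $G_1$ with base-curve terms harmless. One must also check that the \Cref{cor:BG} term $[G_1:H']\gon(X)/2$ still dominates.
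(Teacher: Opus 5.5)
Your treatment of intransitive subgroups and of transitive subgroups that are maximal in $G$ matches the paper. However, the reduction in your first paragraph has a gap when $G=S_\ell$. A proper subgroup not containing $A_\ell$ need not lie in any maximal subgroup of $S_\ell$ other than $A_\ell$. Take $M$ maximal in $A_\ell$ and self-normalizing in $S_\ell$, e.g.\ $\mathrm{PSL}_3(2)<A_7$ or $\mathrm{AGL}_3(2)<A_8$. An overgroup $K\not\leqslant A_\ell$ of $M$ with $K\neq S_\ell$ would satisfy $K\cap A_\ell=M$, hence $M\lhd K$, which is impossible.

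So a fiber with $\Im\varphi_P=M$ gives no point on any of your curves $X_H$, yet $\Im\varphi_P\not\supseteq A_\ell$. You cannot use $X_{A_\ell}$ instead, since it may have infinitely many points of every degree (\Cref{ex:Al-Sl}). If you add such $M$ to your list, your next step breaks. $G$ acts imprimitively on $G/M$, and $X_M\to X_0$ factors through the double cover $X_{A_\ell}\to X_0$, so \Cref{cor:BG} does not apply. Applying it to $X_M\to X_{A_\ell}$ also fails. Over an odd branch point the local monodromy there is $\gamma_p^2$, which may be trivial; if all branch cycles are transpositions, $X_M\to X_{A_\ell}$ is unramified.

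The missing step, which the paper supplies, is to bound the Riemann--Hurwitz contribution of the composite $\psi\colon X_M\to X_0$ directly. Then apply \Cref{rem:Cast-generalized}, whose minimal right factor is $X_M\to X_{A_\ell}$, of degree $n/2$ with $n=[G:M]$.

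\begin{itemize}
\item Over an odd branch point $p$, $X_{A_\ell}\to X_0$ is ramified, so every ramification index of $\psi$ above $p$ is even. Hence $\ind_{G/M}(\gamma_p)\geqslant n/2$.
\item Over an even branch point, both preimages in $X_{A_\ell}$ are branch points of the indecomposable cover $X_M\to X_{A_\ell}$. This cover has monodromy $A_\ell$ acting primitively, and not on $m$-sets because $M$ is transitive on $\{1,\dots,\ell\}$. Burness--Guralnick gives index at least $n/8$ at each preimage, so at least $n/4$ in total.
\end{itemize}

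Thus $\deg R_\psi\geqslant bn/4$ and
$\gon(X_M)\geqslant\min\{n/2,\,bn/(8(n-1))\}>\min\{N(1-\eps)/2,\,b/8\}$.
The factor $n/(n-1)>1$ also gives the strict inequality you flagged at $d=\lfloor b/16\rfloor$, in the maximal case as well.

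Part (2) reruns (1) for $Y_2\to X$, where $G_1$ may be $S_{\ell-1}$, so the same subcase must be handled there. Two counts also need fixing.

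\begin{itemize}
\item You use only $N$ points that are $\hat\eps$-ramified, with $\hat\eps=\eps\ell/(\ell-1)>\eps$. That yields only the threshold $N(1-\hat\eps)/4<N(1-\eps)/4$. In fact every unramified point of $X$ over an $\eps$-ramified point is an $\hat\eps$-ramified branch point of $Y_2\to X$, giving at least $\lceil(1-\eps)\ell\rceil N$ of them. Set $N'=\min\{2(\ell-3),\lceil(1-\eps)\ell\rceil N\}$; using $N\leqslant 2(\ell-4)$ one checks $N'(1-\hat\eps)\geqslant N(1-\eps)$.
\item $Y_2\to X$ need not have $b$ branch points. A fixed-point-free branch cycle whose cycles all have equal length, e.g.\ an $\ell$-cycle, contributes none. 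Use the count $\lceil(1-\eps)\ell\rceil N\geqslant N\ell(1-\eps)$ in place of $b$. The resulting term $\lfloor N\ell(1-\eps)/16\rfloor+1$ already exceeds $N(1-\eps)/4$.
\end{itemize}
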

Note that  if $\Mon_k(\phi) = S_\ell$, the theorem permits $\Im\varphi_P=A_\ell$, 
which is not the full image, for infinitely many degree $d$  points $P$. 
\begin{proof}[Proof of Theorem \ref{thm:other-actions}]  

We first prove part \eqref{S-l-Galois-case}. Suppose there are infinitely many degree $d$ points $P$ of $X_0$ such that $\Im\varphi_P$ is a given subgroup $H\leqslant S_\ell$ and $P \not \in \phi(|X|_d)$. Assuming to the contrary that $H\notin \{A_\ell,S_\ell\}$, we embed $H$ in a subgroup $M\neq A_\ell,S_\ell$ which is maximal among subgroups not equal to $A_\ell$ or $S_\ell$. In particular, $M$ is either a maximal subgroup of $S_\ell$ or a maximal subgroup of $A_\ell$ that is not contained in any other maximal subgroup of $G$. 
     
    Let $\Omega\coloneqq G/M$ and $n\coloneqq|\Omega|$, so that $n\geqslant \ell$. 
    Consider the curve $X_M\coloneqq\tilde{X}/M$, where $\tilde{X} \to X_0$ is the Galois closure of $\phi$, and let $\psi:X_M\to X_0$ be the natural projection.

    Case 1: $M$ is intransitive. Thus $M$ is the stabilizer  $G_k=(S_k\times S_{\ell-k})\cap G$ of a set of cardinality $k\geqslant 1$. 
     
    Since the points $P$ are not in $\phi(|X|_d)$, we may assume $2\leqslant k\leqslant \ell/2$, and since $N\leqslant 2\ell-2$,  Theorem \ref{thm:S-HIT} implies $d\geqslant \min\delta(X_M)\geqslant N(1-\eps)/4$.

    Case 2: $M$ is transitive.  Assume at first that $M$ is maximal in $G$. Then the action of $G$  on $\Omega$ is primitive and is not one of the product-type actions in Corollary \ref{cor:BG}, so that the corollary implies $d\geqslant \min\delta(X_M)\geqslant \min\{n/2,b/16\}$. Since $N<2\ell$, and $n\geqslant \ell$,  we have $n/2\geqslant \ell/2>N(1-\eps)/4$ and hence $\min\delta(X_M)\geqslant \min\{N(1-\eps)/4,b/16\}$ as needed. 
    
    Assume next that $M$ is not maximal in $G$, in which case  $G=S_\ell$ and $M$ is a maximal subgroup of $A_\ell$ contained properly in no subgroup of $S_\ell$ other than $A_\ell$. In this case $\gon(X_M)\geqslant \min\{n/2,\deg R_{\psi}/(2(n-1))\}$ by  \Cref{rem:Cast-generalized}. Write $b=b_o+b_e$, where $b_e$ (resp.\ $b_o$) is the number of  branch points $p$ with an even (resp.\ odd) branch cycle $\gamma_p\in G$. Setting $X_{A_\ell}\coloneqq \tilde X/A_\ell$, we see that the odd branch points are also branch points under  $X_{A_\ell}\to X_0$ and the even ones are not. Since $X_M\to X_0$ factors through $X_{A_\ell}\to X_0$, for each of the $b_o$ odd branch points $p$, all ramification indices above $p$ are divisible by $2$ (and in particular $\psi^{-1}(p)$ has no unramified points). Thus all orbits of $\gamma_p$ are of length $2$ or more, and so we have $\ind_\Omega(\gamma_p)\geqslant n/2$. 
    On the other hand, each of the $b_e$ even branch points $p$ has two preimages in $X_{A_\ell}$ which are branch points of $X_M\to X_{A_\ell}$. The covering $X_M\to X_{A_\ell}$ is an indecomposable covering of monodromy $A_\ell$ and degree $n/2$. The Burness--Guralnick bounds \cite[Theorem 7]{BG} imply that the index of every branch cycle of $X_M \to X_{A_\ell}$ is at least $n/8,$  and hence for our covering $\psi\colon X_M \to X_0$ we conclude that
    $\ind_\Omega(\gamma_p)\geqslant 2\cdot (n/8)=n/4.$ 
     In total, we get $\deg R_\psi\geqslant b_on/2 + b_en/4 \geqslant bn/4$ and hence $\gon(X_M)\geqslant \min\{n/2,bn/(8(n-1))\}$. 
    
     Since $n> \ell>N(1-\eps)/2$ (for groups $M$ of this form), this gives \[\gon(X_M)\geqslant \min\{n/2,bn/(8(n-1))\}>\min\{N(1-\eps)/2,b/8\},\] and hence $d\geqslant \min\delta(X_M)> \min\{N(1-\eps)/4,b/16\}$ by \Pref{prop:arithm-gonality}, yielding \eqref{S-l-Galois-case}.

     Finally, we prove part \eqref{S-l-Galois-fixed-case}  by reducing to part \eqref{S-l-Galois-case}. Let $p$ be an $\eps$-ramified branch point, $\gamma_p\in G$ the corresponding branch cycle,  and $Q\in \phi^{-1}(p)$ an unramified preimage. 
     Let $Y_2=\tilde X/\hat G_2$ be the quotient  by a two-point stabilizer  $\hat G_2\leqslant G$. Then a branch cycle over $Q$ for the map $\phi_2:Y_2\to X$ has the same orbits as $\gamma_p$ except for the removed fixed point $Q$ of $\gamma_p$. In particular, these are branch points of $\phi_2$ that  are $\hat\eps$-ramified, where $\hat \eps=\eps\ell/(\ell-1)$. 
     There are  at least $\lceil(1-\eps)\ell\rceil N$ such branch points in total. 
     Since $\eps\geqslant 2/\ell$, one has $\hat \eps\geqslant 2/(\ell-1)$. Moreover, since $\eps\leqslant 1/2-3/(2\ell)$, one has $\hat \eps\leqslant 1/2-1/(\ell-1)$. 
     As in addition $\ell-1\geqslant 6$, Assumption \ref{assmpt:lepsilon} holds for   the cover $\phi_2:Y_2\to X$ and the values $\hat \eps$ and $N'\coloneqq \min\{2(\ell-3),\lceil(1-\eps)\ell\rceil N\}$ substituted for $\eps,N$, respectively. 
Part \eqref{S-l-Galois-case} of this theorem applied to $\phi_2$ then shows that for all but finitely many points $P\in \phi(|X|_d)\setminus \phi_2(|Y_2|_d)$ of degree $d<\min\{N'(1-\hat \eps)/4,\lfloor N\ell(1-\eps)/16\rfloor+1\}$, one has $\Im\varphi_P=A_{\ell-1}$ or $S_{\ell-1}$. 
     Note that both 
     terms in the minimum are at least $N(1-\eps)/4$ for $\ell\geqslant 7$ since $N\leqslant 2(\ell-4)$ (the inequality $2(\ell-3)(1-\hat{\eps})\geqslant 2(\ell-4)(1-\epsilon)\geqslant N(1-\epsilon)$ is verified through a straightforward, but cumbersome, calculation). 
    
     Finally, note that  $\min\delta(Y_2)\geqslant\min\delta(X_2)\geqslant  N(1-\eps)/4$ by \Cref{lem:min-delta-X2} applied to $\phi$, and hence there are only finitely many  points in $|Y_2|_d$.  
\end{proof}

The proof of Theorem \ref{thm:other-actions} relies on \Cref{cor:BG}, and thus on the classification of finite simple groups. However, we now show that the classification needs only to be applied for groups of small size. To demonstrate this, we prove the main part of \Cref{thm:other-actions}  for $\ell\geqslant 7900$ without relying on the classification.

\begin{thm}\label{thm:ell-actions}
    Let $\phi:X\to X_0$ be a map of degree $\ell\geqslant 7900$, 
     monodromy group $G=A_\ell$ or $S_\ell$, and $b$ branch points.
     Suppose \Cref{assmpt:lepsilon} holds and at least $N\leqslant 2(\ell-2)$ branch points of $\phi$ are $\eps$-ramified. Then $\Im \varphi_P\cong A_\ell$ or $S_\ell$ for all but finitely many points $P$ of degree $d< \min\left\{N(1-\eps)/4,\lfloor b/16\rfloor + 1\right\}$ 
    outside $\phi(|X|_d)$.     
\end{thm}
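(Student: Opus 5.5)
We follow the proof of part \eqref{S-l-Galois-case} of Theorem \ref{thm:other-actions} and change only the step that invoked the Burness--Guralnick bound through \Cref{cor:BG}. Suppose infinitely many degree $d$ points $P\notin\phi(|X|_d)$ have $\Im\varphi_P$ contained in a subgroup $M\neq A_\ell,S_\ell$ that is maximal among such subgroups. Put $\Omega=G/M$, $n=|\Omega|$, $X_M=\tilde X/M$, and let $\psi\colon X_M\to X_0$ be the projection. The intransitive case is classification-free already: there $M=G\cap(S_k\times S_{\ell-k})$ with $2\leqslant k\leqslant \ell/2$, and Theorem \ref{thm:S-HIT} gives $\min\delta(X_M)\geqslant N(1-\eps)/4$. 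It therefore remains to treat transitive $M$ and to show $\min\delta(X_M)\geqslant\min\{N(1-\eps)/4,b/16\}$ without the classification.

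For transitive $M$ the plan has two parts. First, by classical classification-free bounds (Bochert, or Praeger--Saxl), a transitive subgroup $M\leqslant S_\ell$ not containing $A_\ell$ has order at most $4^\ell$, or the sharper Bochert bound for primitive groups, with the imprimitive case handled as a wreath-product subgroup. Hence $n=[G:M]\geqslant \ell!/(2\cdot 4^\ell)$ is enormous compared with $\ell$. This lets us bound $\min\delta(X_M)$ via \Cref{rem:Cast-generalized}, or \Cref{prop:min-ind} when $M$ is maximal in $G$: we have $\gon(X_M)\geqslant\min\{n\cdot\gon(X_0)\text{ or }n/2,\ \deg R_\psi/(2(n-1))\}$, and the first term exceeds $\ell\geqslant N(1-\eps)/2$. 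Second, we need $\deg R_\psi\geqslant bn/4$ roughly, i.e. every nontrivial $\gamma\in G$ has $\ind_\Omega(\gamma)\geqslant n/4$. The index equals $n-\orb_\Omega(\gamma)$, and by Burnside's lemma
\[
\orb_\Omega(\gamma)=\frac{1}{|\langle\gamma\rangle|}\sum_{j}\mathrm{fix}_\Omega(\gamma^j).
\]
So it suffices to bound fixed-point ratios, showing $\mathrm{fix}_\Omega(x)/n=|x^G\cap M|/|x^G|$ is small for every $x\neq 1$. For $x$ of support $s\geqslant 2$ we estimate $|x^G\cap M|\leqslant |M|\leqslant 4^\ell$, while $|x^G|\geqslant \binom{\ell}{2}$ only when $s$ is small. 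This is the delicate point.

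The main obstacle is elements of small support, such as transpositions or 3-cycles, whose classes are too small to beat the $4^\ell$ bound on $|M|$. For these I would use Jordan-type classical results in place of the size bound. A primitive $M$ containing an element of support $s$ with $\ell$ large relative to $s$ contains $A_\ell$; indeed, by Jordan/Bochert the minimal degree of a primitive $M\not\supseteq A_\ell$ is at least roughly $\sqrt{\ell}$ or $\ell/4$ via Bochert. Hence $x^G\cap M=\emptyset$ for $s$ below that threshold, and small-support $\gamma$ act fixed-point-freely on $\Omega$. For imprimitive $M=(S_a\wr S_{\ell/a})\cap G$ the fixed-point ratios are directly computable.

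For large-support $x$ we have $|x^G|\geqslant \ell!/(\text{centralizer})$, which is superexponential, so $\mathrm{fix}_\Omega(x)/n\leqslant 4^\ell/|x^G|$ is tiny. Summing over the powers $\gamma^j$ then gives $\orb_\Omega(\gamma)\leqslant n(1/|\gamma|+o(1))\leqslant 3n/4$, hence $\ind_\Omega(\gamma)\geqslant n/4$. The explicit threshold $\ell\geqslant 7900$ is where these crude estimates become strong enough uniformly. Finally, the conclusion $\min\delta(X_M)\geqslant\gon(X_M)/2$ by \Cref{prop:arithm-gonality} finishes as in Theorem \ref{thm:other-actions}.
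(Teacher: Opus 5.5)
\textbf{There is a genuine gap in the primitive case.} It sits in your claim that every nontrivial $x\in G$ has a small fixed-point ratio $|x^G\cap M|/|x^G|$ on $\Omega=G/M$. Your two tools do not overlap, and they leave a window of supports uncovered.

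\emph{The small-support tool.} Bochert's minimal-degree bound $\ell/4-1$ needs $M$ to be $2$-transitive. For uniprimitive $M$, the classical classification-free bound is only of order $\sqrt{\ell}$ (Babai's $(\sqrt{\ell}-1)/2$), as you half-acknowledge.

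\emph{The large-support tool.} An involution $x$ of support $s$ has $|x^G|\leqslant \ell^{s}$. So the estimate $|x^G\cap M|\leqslant |M|\leqslant 4^\ell$ gives nothing until $s\gtrsim \ell\log 4/\log\ell$. Even Babai's order bound $\exp(4\sqrt{\ell}\log^2\ell)$ only helps once $s\gtrsim 4\sqrt{\ell}\log\ell$.

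\emph{The consequence.} Take a branch cycle, or a power of one, that is an involution of support between roughly $\sqrt{\ell}/2$ and $\sqrt{\ell}\log\ell$. Nothing in your argument bounds its number of fixed points on $\Omega$, so $\ind_\Omega(\gamma)\geqslant n/4$ is not established. Closing exactly this window requires Liebeck--Saxl type minimal-degree bounds (of order $\ell/3$), which use the classification. That is the input behind \cite{BG} which the theorem is meant to avoid. Your attribution of the threshold $7900$ to ``crude estimates becoming strong enough'' is therefore unsupported.

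\emph{Secondary issues in the imprimitive case.} The $o(1)$ in your summation is false there. For blocks of size $\ell/2$, a transposition has fixed-point ratio $(\ell/2-1)/(\ell-1)\approx 1/2$, so its index is only barely above $n/4$; the exact computation is needed, not a heuristic. Also, the order bound $4^\ell$ fails for imprimitive $M$, though you only use $n\geqslant \ell$.

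\textbf{How the paper handles the primitive case.} It never estimates indices on $\Omega$ there. Since $\tilde X\to X_H$ has degree $\#H$, \Cref{lem:gon-and-airr} gives $\gon(X_H)\geqslant \gon(\tilde X)/\#H$. The Galois closure $\tilde X$ dominates $Y_{\lfloor \ell/2\rfloor}$, so \Cref{thm:S-HIT_geom} gives a factorial lower bound $\gon(\tilde X)\geqslant (N/\ell)\lfloor \ell/2\rfloor!$. Meanwhile Babai \cite{babai1981order} and Pyber \cite{pyber} give, classification-free, $\#H<\exp(4\sqrt{\ell}\log^2\ell)$ for $\ell\geqslant 5000$. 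Stirling's formula then yields $\gon(X_H)>N/2$ once $\ell\geqslant 7900$, and this comparison is the true source of the threshold. An index bound is used only for imprimitive $H$, via the partition action, where $\ind(x)\geqslant n_t/4$ from \cite[Theorem 7]{BG} has a classification-free proof.
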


\begin{proof}

Suppose that there are infinitely many points $P$ with a given Galois image $H\coloneqq \Im\varphi_P\leqslant S_\ell$. If $H$ is intransitive in the standard $S_\ell$-action, 
it is contained in the stabilizer $G_k=G\cap (S_k\times S_{\ell-k})$ of a set of cardinality $k\geqslant 1$. It then follows from Theorem \ref{thm:S-HIT} that $H\leqslant G_1\leqslant G\cap S_{\ell-1}$ fixes a point, so that $P\in \phi(|X|_d)$.  

Henceforth assume that $H$ is transitive. Letting  $\tilde X$ be the Galois closure of $\phi$, we claim that    $X_H\coloneq \tilde X/H$ has gonality  $\gon(X_H)>\min\{(1-\eps)N/2,b/8\}$, so that the conclusion follows from  Proposition  \ref{prop:arithm-gonality}. 
If  $H$ is imprimitive, it is contained in the stabilizer $P_t=G\cap (S_{\ell/t}\wr S_t)$ of a partition with $t$-parts and hence it suffices to bound the gonality of $X_{P_t}$ for $2\leqslant t\leqslant \ell/2$ by Lemma \ref{lem:gon-and-airr}. 
Let $f_t: X_{P_t}\to X_0$ be the natural quotient map, and $n_t$ its degree. (Note that $n_t>\ell>N/2$). 
Then  $\ind(x)\geqslant n_t/4$ for every nontrivial branch cycle $x\in G$, by \cite[Thm.\ 7]{BG}\footnote{The theorem is applied only for the action on partitions, in which case its proof does not apply the classification.}. 
Applying Proposition \ref{Castelnuovo} to $f_t$, we get
$$
\gon(X_{P_t})\geqslant 
\min\left\{n_t\cdot \gon(X_0),\frac{\deg R_{f_t}}{2(n_t-1)}\right\} > 
\min\left\{\frac{N}{2},\frac{1}{2n_t}\cdot \frac{n_tb}{4}\right\}=\min\left\{\frac{N}{2},\frac{b}{8}\right\}.$$

Henceforth assume that $H$ is a proper primitive subgroup of $S_\ell$ other than $A_\ell$. 
Then  $\#H<\exp(4\sqrt{\ell}\log^2\ell)$ for $\ell\geqslant 5000$ by \cite[Theorem 0.1]{babai1981order} and \cite[Theorem A]{pyber}\footnote{For $\ell\geqslant 5000$ the bound in \cite{pyber} is smaller than that in \cite{babai1981order}.}. 
On the other hand $\gon(X_H)\geqslant \gon(\tilde X)/\#H$ by Lemma \ref{lem:gon-and-airr}. 
The gonality of $\tilde X$ is at least the gonality of $Y_{m}=\tilde X/S_{\ell-m}$ for any $m\leqslant \ell/2$. For $m=\ell/2$, since $\eps<1/2$, 
  \Cref{thm:S-HIT_geom} gives: 
$$    \gon(Y_m)\geqslant \frac{N}{\ell}\bigl((1-\eps)\ell\bigr)^{\underline{\lfloor\ell/2\rfloor-1}}\geqslant \frac{N}{\ell}\bigl(\ell/2\bigr)^{\underline{\lfloor \ell/2\rfloor-1}} \geqslant \frac{N}{\ell} \lfloor\ell/2\rfloor !\ \ .$$
   It is easy to verify, by Stirling's formula, that for large $\ell$ the right hand side (RHS) is at least $(N/2)\cdot \exp(4\sqrt{\ell}\log^2\ell)$. Using the explicit bound $n!>(n/e)^n (2\pi n)^{
  1/2}$ and a numerical verification for low degrees shows that one can take $\ell \geqslant 7900$.

Therefore, in all cases, when $\ell\geqslant 7900$, we have $\gon(X_H)>\min\{(1-\eps)N/2,b/8\}$, as needed.  \qedhere
\end{proof}
\begin{cor}\label{cor:preserving-Galois-groups}
    Let $\phi:X\to \mP^1_k$ be a genus $g$ covering of degree $\ell\geqslant 3g$, monodromy $A_\ell$ or $S_\ell$, and $b\geqslant 26$ branch points. 
    Then $\Im\varphi_P\cong A_\ell$ or $S_\ell$ for all but finitely many  points $P$ of degree $d\leqslant \min\{b/16,(\ell-4)/3\}$  outside $\phi(|X|_d)$. If, moreover, $\ell\geqslant 9$, then the Galois group of the fiber is $A_{\ell-1}$ or $S_{\ell-1}$  for all but finitely many points $P \in \phi(|X|_d)$ of degree $d\leqslant \min\{b/16, (\ell-4)/3\}$.
\end{cor}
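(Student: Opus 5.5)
This follows from \Cref{thm:other-actions} once we choose $\eps$ and produce a lower bound $N$ for the number of $\eps$-ramified branch points, exactly as \Cref{cor:irred-rat-funct} was deduced from \Cref{thm:S-HIT}.

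\emph{Choice of $\eps$.} Take $\eps=1/3$. For $\ell\geqslant 9$ this satisfies \Cref{assmpt:lepsilon}:
\begin{itemize}
\item $\eps\geqslant 2/\ell$;
\item $\lceil 2\ell/3\rceil\geqslant \ell/2+1$ for $\ell\geqslant 6$;
\item $\eps\leqslant 1/2-3/(2\ell)$ holds iff $\ell\geqslant 9$, which is why part \eqref{S-l-Galois-fixed-case} needs $\ell\geqslant 9$.
\end{itemize}
We also need $\ell\geqslant 6$ for part \eqref{S-l-Galois-case}. Since $b\leqslant \deg R_\phi=2\ell+2g-2\leqslant 2\ell+2\ell/3$, the hypothesis $b\geqslant 26$ forces $\ell\geqslant 10$. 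So all numerical assumptions hold.

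\emph{Counting $\eps$-ramified points.} Apply \Cref{lem:mystery} with $X_0=\mP^1$, so $\chi(X_0)=2$ and $-\chi(X)=g-1$. This gives at least
\[
b-12\left(\frac{g-1}{\ell}+2\right) \;>\; b-12\cdot\tfrac{1}{3}-24 \;=\; b-28
\]
such points, using $g/\ell\leqslant 1/3$. A sharper accounting, splitting the Riemann--Hurwitz budget $2\ell+2g-2$ between non-$\eps$-ramified fibers (each contributing at least $\eps\ell/2$) and the rest, gives a better constant. Indeed, at most $(2\ell+2g-2)/(\ell/6)<16+4=20$ fibers are not $\eps$-ramified.

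Set
\[
N:=\min\{2(\ell-4),\ \#\{\eps\text{-ramified points}\}\}.
\]
Then $N(1-\eps)/4=N/6$, which is at least $(\ell-4)/3$ in the first case and roughly $(b-c)/6$ in the second. The theorem's bound is $\min\{N/6,\ \lfloor b/16\rfloor+1\}$, and we must show it exceeds $\min\{b/16,(\ell-4)/3\}$. Using $d\leqslant b/16<\lfloor b/16\rfloor+1$, this reduces to checking
\[
(b-c)/6\geqslant b/16 \quad\text{whenever the first minimum is attained at that term,}
\]
which needs $10b\geqslant 16c$. This is where $b\geqslant 26$ must enter.

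\emph{Main obstacle.} The hardest step is this arithmetic: getting the constant $c$ small enough (about $c\leqslant 16$) that $b\geqslant 26$ suffices. I would first try the refined count that treats the $b$ branch points directly. Each non-$\eps$-ramified fiber contributes at least $\ell/6$ to $\deg R_\phi$, and every other branch point contributes at least $1$. This gives
\[
m\cdot\frac{\ell}{6}+(b-m)\leqslant 2\ell+2g-2\leqslant \tfrac{8}{3}\ell,
\]
so $m\leqslant 16$ roughly. With $c=16$, the condition $10b\geqslant 16\cdot 16$ is $b\geqslant 25.6$, which matches the threshold $26$. If the strict inequalities require it, adjust by rounding (using $\lceil 2\ell/3\rceil$ and that $m$ is an integer). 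Then invoke parts \eqref{S-l-Galois-case} and \eqref{S-l-Galois-fixed-case} of \Cref{thm:other-actions}, noting $N\leqslant 2(\ell-4)$ as required for part \eqref{S-l-Galois-fixed-case}.
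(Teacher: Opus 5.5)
Your final argument is the paper's: take $\eps=1/3$, use Riemann--Hurwitz to get at least $b-16$ $\eps$-ramified branch points so that $N=\min\{b-16,2(\ell-4)\}$ is admissible, and apply \Cref{thm:other-actions}, with $b\geqslant 26$ entering exactly through $(b-16)/6>b/16$ (your bounds $b\leqslant 8\ell/3$ and $\ell\geqslant 10$ also give $b/16<(\ell-4)/3$, which supplies the strict inequality the theorem needs when $N=2(\ell-4)$). The ``main obstacle'' comes only from two slips, and the refined count is unnecessary: in the paper's convention $\chi(\mP^1)=\chi(\mathcal{O}_{\mP^1})=1$, not $2$, and $(2\ell+2g-2)/(\ell/6)=12+12(g-1)/\ell<16$, not $20$, so \Cref{lem:mystery} already bounds the number of non-$\eps$-ramified branch points by $15$.
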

\begin{proof}
    We apply \Cref{thm:other-actions} with $\eps=1/3$. By \Cref{lem:mystery}, we can take $N=\min\{b-16, 2(\ell-4)\}$. Substituting the values shows that both conclusions hold for 
    \[d<\min\{b/6-8/3, \lfloor b/16 \rfloor+1, (\ell-4)/3\},\]
    which follows from our assumption 
    $d \leqslant \min\{b/16, (\ell-4)/3\}$ (since $b\geqslant 26$).
    Finally, note that we do not need the extra assumption $\ell \geqslant 7$, since for small $\ell$ the conclusion of this corollary is null. Note that in the second claim, the condition $\ell \geqslant 9$ implies the inequality $1/3=\eps \leqslant 1/2-3/(2\ell)$. 
\end{proof}
\section{General Indecomposable maps}\label{section:general-groups}
In this section we focus on primitive coverings of monodromy group $G$; since we have already obtained results for $G=A_\ell, S_\ell$, here we focus on the remaining cases.
The following proposition uses the bounds of Burness--Guralnick. It excludes certain groups of product type which are considered later on in this section. 
\begin{prop}\label{prop:monodromy}
        Let $\tilde \phi:\tilde X\to X_0$ be a Galois covering with monodromy group $G$ and $b$ branch points. If there is an  infinite set $\mathcal S$ of points $P \in |X_0|$ of degree $d<3b/28$ with $\Im\, \tilde\varphi_P\lneq G$, then one of the following holds: 
    \begin{enumerate}
    \item\label{case:quotient} there is a quotient $\Gamma=G/N$ by $1\neq N\lhd G$ for which the induced map $\tilde\psi:\tilde X/N\to X_0$ satisfies: there are  
    infinitely many points $P\in \mathcal S$ such that $\Im\, \tilde\psi_P\lneq \Gamma$;
  
    \item\label{case:altsym} $G=A_\ell$ or $S_\ell$ and, for some $1\leqslant j\leqslant \ell/2$,  the image $\Im\, \tilde \varphi_P$ is contained in a set stabilizer $S_j\times S_{\ell-j}$ for infinitely many $P\in \mathcal S$; 
    \item\label{case:wreath}  $A_\ell^t\leqslant G\leqslant S_\ell\wr S_t$ is primitive of product-type (non-basic) for $\ell\geqslant 5$, $t>1$, and for some $1\leqslant j\leqslant \ell/2$, $\Im\, \tilde \varphi_P$ is contained in the stabilizer $(S_j\times S_{\ell-j})\wr S_t$ of the product-type action with respect to the $S_\ell$-action on $j$-sets. 
   
    \end{enumerate}
\end{prop}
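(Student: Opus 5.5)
We argue by contradiction, assuming neither \eqref{case:quotient} nor \eqref{case:altsym} nor \eqref{case:wreath} holds. First we fix a single proper subgroup $H\lneq G$ that occurs as $\Im\,\tilde\varphi_P$ for infinitely many $P\in\mathcal S$. This is possible because $G$ has only finitely many subgroups up to conjugacy, so the pigeonhole principle applies. Enlarge $H$ to a maximal subgroup $M$ of $G$. By the key property of resolvent curves from \Cref{sec:resolvent}, the curve $X_M=\tilde X/M$ then has a $k(P)$-rational point above each such $P$. Each such point has degree at most $d$ over $k$, and its image under $X_M\to X_0$ is $P$. Hence $X_M$ has infinitely many points of degree $\leqslant d<3b/28$, and so $\min\delta(X_M)<3b/28$ (for some degree $\leqslant d$).

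Next we split according to the core $\mathrm{core}_G(M)=\bigcap_g M^g$.

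If $N:=\mathrm{core}_G(M)\neq 1$, consider the quotient $\Gamma=G/N$ and the map $\tilde\psi:\tilde X/N\to X_0$. For every $P$ above, the image $\Im\,\tilde\psi_P$ is the image of $\Im\,\tilde\varphi_P\leqslant M$ in $\Gamma$. This image lies in $M/N\lneq\Gamma$. So case \eqref{case:quotient} holds, a contradiction.

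If $\mathrm{core}_G(M)=1$, then $G$ acts faithfully and primitively on $\Omega=G/M$, with $n=[G:M]$. The covering $\psi:X_M\to X_0$ is therefore indecomposable with monodromy group $G$ in this action. Because the action is faithful, every branch point of $\tilde\phi$ is a branch point of $\psi$, so $\psi$ also has $b$ branch points. Now apply \Cref{cor:BGST}, or equivalently \Cref{cor:BG}, to $\psi$.
\begin{itemize}
\item If the action of $G$ on $\Omega$ is not of the excluded product type, \Cref{cor:BGST} applies. It says there are only finitely many $Q\in|X_M|$ with $\deg Q=\deg\psi(Q)<3b/28$.
\item To use this, note that the rational points we produced above $P$ have residue field exactly $k(P)$. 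Indeed, a point of degree $1$ over $k(P)$ in the fiber over $P$ has residue field $k(P)$. Thus $\deg Q=\deg\psi(Q)=d$, and we get infinitely many such $Q$. This is a contradiction.
\end{itemize}

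Otherwise $A_\ell^t\leqslant G\leqslant S_\ell\wr S_t$ in product action with respect to the action on $j$-sets, and $M$ is the point stabilizer of that action. For $t=1$ this means $G\in\{A_\ell,S_\ell\}$ and $M=G\cap(S_j\times S_{\ell-j})$, which is case \eqref{case:altsym}. For $t>1$, $M=G\cap\big((S_j\times S_{\ell-j})\wr S_t\big)$, which is case \eqref{case:wreath}; primitivity comes from the primitivity of $\Omega$.

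The main subtlety is checking the hypotheses of \Cref{cor:BGST}. Specifically, we need that:
\begin{itemize}
\item the branch point count $b$ of $\psi$ equals that of $\tilde\phi$, which is why faithfulness of the action is needed;
\item the degree-matching condition $\deg Q=\deg\psi(Q)$ holds for the rational points supplied by the resolvent argument.
\end{itemize}
Compared with these two checks, the choice of $M$ and the core dichotomy are routine.
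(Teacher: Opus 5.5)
Your proof is correct and is essentially the paper's argument: you pigeonhole to a single image occurring infinitely often, enlarge it to a maximal subgroup $M$, get case (1) when $\mathrm{core}_G(M)\neq 1$ (equivalently, when the Galois closure $\tilde X/\mathrm{core}_G(M)$ of $\tilde X/M\to X_0$ is a proper subcover of $\tilde\phi$), and otherwise apply \Cref{cor:BGST} to the faithful primitive action on $G/M$. Your explicit checks that $\psi$ has the same $b$ branch points and that the resolvent points satisfy $\deg Q=\deg\psi(Q)=d$ fill in steps the paper leaves implicit. One small correction: \Cref{cor:BG} is not equivalent here, since going through gonality and \Cref{prop:arithm-gonality} only yields the range $d<3b/56$; it is the Song--Tucker route of \Cref{cor:BGST} that gives the stated bound $3b/28$.
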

\begin{proof}
 
    By the assumption on $\mathcal S$, there is a (conjugacy class of) a proper subgroup $D<G$ occurring as $\Im\,\varphi_P$ at infinitely many points $P\in |X_0|_d$. Let $M$ be a maximal subgroup $M$ of $G$ containing $D$. The  points $P$  are in the image $\phi_M(|Y|_d)$ of the induced map $\phi_M:Y\to X_0$ from $Y=\tilde X/M$. 
    In particular, $|Y|_d$ is infinite. 
    Since $M$ is maximal, $\phi_M$ is indecomposable. 
    If the Galois closure of $\phi_M$ is a proper subcover of $\tilde \phi$ then (1) holds. 
    
    Thus, assume that the Galois closure of $\phi_M$ is $\tilde \phi$, so that $\Mon(\phi_M)\cong G$ as abstract groups. 
        If  $\Mon(\phi_M)$ is not one of the groups in (2) and (3), 
    then there are only finitely many degree $d$ points in $\phi_M(|Y|_d)$ by Corollary \ref{cor:BGST}, contradicting that $\mathcal S\subseteq |X_0|_d$ is infinite. 
\end{proof}
Note that if option \eqref{case:quotient} occurs, then the proposition can be reapplied iteratively to the quotient (upon changing $b$ appropriately). Option \eqref{case:altsym} is addressed by Theorems \ref{thm:S-HIT}, \ref{thm:other-actions}. 

Option \eqref{case:wreath} is considered in Proposition \ref{prop:product-type} below. We first note:
\begin{lem}\label{lem:eps-prod}
    Let $\phi$ be a map of monodromy group $A_\ell^t\leqslant G\leqslant S_\ell\wr S_t$ of product-type with respect to the degree $m={\ell\choose j}$ action of $S_\ell$ on $j$-sets for $\ell\geqslant 5$, $1\leqslant j\leqslant \ell/2$, and $t\geqslant 2$. 
    Then for $\eps < 2/5$, the branch cycle of every $\eps$-ramified point is contained in $S_\ell^t$. 
\end{lem}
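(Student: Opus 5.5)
We argue by contraposition. Let $\gamma \in G\leqslant S_\ell\wr S_t$ be a branch cycle whose image $\sigma\in S_t$ is nontrivial, and show that $\gamma$ then has few fixed points in the degree $n=m^t$ product action on $\Delta^t$, where $\Delta$ is the set of $j$-subsets of $\{1,\ldots,\ell\}$. Concretely, we show that the number of fixed points of $\gamma$ is at most $n\cdot c$ with $c\leqslant 3/5$. Then fewer than $(1-\eps)n$ points of the fiber are unramified for every $\eps<2/5$, so the point is not $\eps$-ramified. The fixed points of $\gamma$ are exactly the unramified points in the fiber, which is why this suffices.

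\textbf{Counting fixed points.} Write $\gamma=(g_1,\ldots,g_t)\sigma$. A point $(\delta_1,\ldots,\delta_t)\in\Delta^t$ is fixed by $\gamma$ if and only if the coordinates satisfy $\delta_{\sigma(i)}=g_{\sigma(i)}\delta_i$, up to the convention for the semidirect product. Decompose $\sigma$ into cycles. For a cycle $(i_1\,\ldots\, i_r)$ of length $r\geqslant 2$, the fixed-point condition determines $\delta_{i_2},\ldots,\delta_{i_r}$ from $\delta_{i_1}$. It also forces $\delta_{i_1}$ to be fixed by the cycle product $g_{i_r}\cdots g_{i_1}$. Hence this cycle contributes at most $m$ choices, compared with $m^r$ choices for the free coordinates. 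Coordinates fixed by $\sigma$ contribute at most $m$ choices each. Therefore
\[
\#\mathrm{Fix}(\gamma)\leqslant m^{\,t-(r-1)}\leqslant m^{t-1}=n/m,
\]
where $r\geqslant 2$ is the length of any nontrivial cycle of $\sigma$.

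\textbf{Conclusion.} Since $\ell\geqslant 5$ and $j\geqslant 1$, we have $m=\binom{\ell}{j}\geqslant 5$. So the fraction of fixed points is at most $1/m\leqslant 1/5<3/5$. The $\eps$-ramified condition requires at least $(1-\eps)n>3n/5$ unramified points, which gives a contradiction. It follows that $\sigma=1$, that is, $\gamma\in S_\ell^t$.

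The only delicate point is setting up the wreath product action conventions correctly so that the cycle condition comes out right. The bound is far from tight, so the conventions do not affect the estimate. In fact the argument gives the conclusion for any $\eps<4/5$; the stated $2/5$ is comfortably covered.
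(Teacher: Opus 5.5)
Your argument is correct and is essentially the paper's. The paper likewise bounds the number of fixed points by $m^{\orb(\pi(x))}\leqslant m^{t-1}$ when $\pi(x)\neq 1$; it cites Guralnick--Thompson (9.1) for this bound, whereas your cycle-by-cycle count proves it directly. The paper then passes through the index, $\ind(x)\geqslant (m^t-f(x))/2$, which loses a factor of $2$ and gives exactly the threshold $2/5$; your direct comparison of fixed points instead gives the stronger threshold $\eps<4/5$.
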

\begin{proof}
    Let $x\in G$ be a  branch cycle of $\phi$ over an $\eps$-ramified point and $\pi:G\to S_t$ be the natural projection. 
    Then $f(x)\leqslant m^{\orb(\pi(x))}$ by \cite[(9.1)]{GT}, where $\orb(\pi(x))$ is the number of orbits of $\pi(x)$ on $\{1,\ldots,t\}$. 
    Since orbits of $x$ that are not counted by $f(x)$ are of length at least $2$, we have $\ind(x)\geqslant (m^t-m^{\orb(\pi(x))})/2$. 
    Suppose $\pi(x)\neq 1$, then  $m\geqslant \ell\geqslant 5$, and so we have     
    \[2/5>\eps \geqslant \frac{\ind(x)}{m^t} \geqslant \frac{m^t-m^{t-1}}{2m^t}=\frac{1}{2}\bigl(1-\frac{1}{m}\bigr)\geqslant \frac{1}{2}\bigl(1-\frac{1}{\ell}\bigr)\geqslant 2/5,\] contradiction. 
\end{proof}

\begin{thm}
    \label{prop:product-type}
Let $\tilde \phi: \tilde X\to X_0$ be
a Galois covering with monodromy group $A_\ell^t\leqslant G\leqslant S_\ell\wr S_t$ for $t\geqslant 2$ and $\ell> 5$. 
Suppose $\phi_j:X_j\to X_0$ is an indecomposable degree $m^t$ subcover of $\tilde \phi$  of product-type action with respect to the $S_\ell$-action on $j$-sets, where $m={\ell \choose j}$  and $1\leqslant j\leqslant \ell/2$. 
Assume that the number of  $\eps$-ramified branch points of $\phi_1$, for some $2/\ell<\eps<2/5$, is at least $N\in\mathbb{N}$, and set $N'\coloneqq (1-\eps)N/2$. We estimate the gonalities of $X_j$ from below, with cases $j=1,2$ requiring special attention. For $X_1$, the following piecewise-linear  bound in  $N'$ holds:
\[\gon(X_1) \geqslant \begin{dcases} \frac{\ell^{t-1}}{(\ell-1)t}N', &\text{ for }N'\leqslant \frac{\ell-1}{(t-1)!};\\
\frac{\ell^{t-1}}{t!}, &\text{ for } \frac{\ell-1}{(t-1)!}\leqslant N' \leqslant \frac{\ell^{t-1}}{(t-1)(t-1)!}; \\
\frac{t-1}{t}N', &\text{ for } \frac{\ell^{t-1}}{(t-1)(t-1)!}\leqslant N' \leqslant \frac{\ell^t}{t-1};\\
\frac{\ell^t}{t}, &\text{ for } \frac{\ell^t}{t-1}\leqslant N'.  
\end{dcases}\]
For the curve $X_2$, we have \[\gon(X_2)  \geqslant    \min\left\{\Bigl(\frac{\ell-1}{2}\Bigr)^t \gon(X_1),\frac{N'}{2}\Bigl(\frac{\ell+1}{2}\Bigr)^{t-1}\right\}.\]
Finally, for all $j\geqslant 3$ we have
\[\gon(X_j) \geqslant
\min\left\{
\frac{1}{j!}\Bigl(\frac{(\ell-2)^{\underline{j-2}}}{2}\Bigr)^t\gon(X_2),\frac{N}{2}\Bigl(\frac{1}{j(\ell-j+1)}{(1-\eps)\ell\choose j-1}\Bigr)^t\right\}.\]
\end{thm}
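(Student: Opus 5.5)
The plan is to imitate the proof of \Cref{thm:S-HIT_geom}: replace each $X_j$ by a tower of curves whose successive steps are indecomposable with monodromy $A_\ell$ or $S_\ell$. At each step, \Cref{Castelnuovo} gives either "degree times gonality of the previous curve" or a Riemann--Hurwitz term, which we bound from below using the $\eps$-ramified points. Let $\pi\colon G\to S_t$ be the projection, $K=G\cap S_\ell^t$, and $V=\tilde X/K$, a covering of $X_0$ of degree $\#\pi(G)\leqslant t!$. By \Cref{lem:eps-prod} (we use $\eps<2/5$), every $\eps$-ramified branch cycle $x$ of $\phi_1$ lies in $S_\ell^t$. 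Hence $V\to X_0$ is unramified over these $N$ points, which therefore give at least $N\cdot\#\pi(G)$ points of $V$, and over each of them the $t$ coordinates $x_1,\dots,x_t\in S_\ell$ of the branch cycle act on $\{1,\dots,\ell\}$. Since $x$ fixes at least $(1-\eps)\ell^t$ tuples, $\prod_i f(x_i)\geqslant(1-\eps)\ell^t$, so each $x_i$ has many fixed points and at least one is nontrivial. Splitting the nontrivial coordinates evenly over the $t$ slots, this is where the factor $N'=(1-\eps)N/2$ enters.

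For $X_1$, consider the chain $V=V_0\leftarrow V_1\leftarrow\cdots\leftarrow V_t$, where $V_i=\tilde X/(K\cap (S_{\ell-1}^i\times S_\ell^{t-i}))$; here we fix one coordinate at a time. Each step $V_i\to V_{i-1}$ has degree $\ell$ with monodromy containing $A_\ell$, so it is indecomposable. Its Riemann--Hurwitz contribution is computed exactly as in \eqref{equ:degRpsi}: over each $\eps$-ramified point, the branch cycles on the fiber of $V_{i-1}$ contribute about $\ell^{i-1}$ unramified points, and each of these is completed nontrivially in the $i$-th coordinate. Inductively, \Cref{Castelnuovo} then gives
\[
\gon(V_i)\geqslant\min\{\ell\,\gon(V_{i-1}),\ c\,N'\ell^{i-1}\cdot\#\pi(G)\},
\]
with $\gon(V_0)\geqslant 1$. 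Since $V_t$ covers $X_1$ with degree $\#\pi(G)\leqslant t!$, \Cref{lem:gon-and-airr} gives $\gon(X_1)\geqslant \gon(V_t)/t!$. We would also compare directly with \Cref{rem:Cast-generalized} applied to $X_1\to X_0$, whose smallest right composition factor has degree at least $\ell$ (or $\ell^t$ by primitivity). Optimizing which branch of the minimum is active should produce exactly the four linear regimes in the statement; the breakpoints $(\ell-1)/(t-1)!$, $\ell^{t-1}/((t-1)(t-1)!)$ and $\ell^t/(t-1)$ are where the competing expressions cross. I expect this bookkeeping to be the main obstacle. It requires the exact count of fibres of $V_{i-1}$ above $\eps$-ramified points, and the right combination of the ``trivial'' term $\ell^{t-1}/t!$ (gonality inherited from $V$) with the Castelnuovo terms. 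I would first try taking the better of the tower bound and the direct bound at each $i$, and then read off the piecewise function.

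For $X_2$ and $X_j$ with $j\geqslant 3$, we pass to ordered tuples coordinatewise, as in Setup \ref{main-setup}. Let $Y^{(j)}=\tilde X/(K\cap S_{\ell-j}^t)$; this is the product-type analogue of $Y_j$ and covers $X_j$ with degree at most $(j!)^t\cdot t!$. The tower $Y^{(j)}\to Y^{(j-1)}$ can again be refined into $t$ indecomposable steps of degree $\ell-j+1$. The $\eps$-ramified points contribute $N\bigl((1-\eps)\ell\bigr)^{\underline{j-1}}$ per coordinate, just as in \Cref{thm:S-HIT_geom}. The term $\tfrac12\lceil(1-\eps)\ell\rceil$ replaced by $(\ell\pm1)/2$ for $j=2$ comes from choosing $\eps<2/5$ so that more than half of each coordinate is fixed. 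Applying \Cref{Castelnuovo} at each step yields a minimum of ``degree times previous gonality'' and the ramification term. Dividing by $j!$ per coordinate via \Cref{lem:gon-and-airr} gives the stated bounds. Here the factor $\bigl((\ell-2)^{\underline{j-2}}/2\bigr)^t/j!$ multiplying $\gon(X_2)$ is the product of the step degrees, with the $t!$ absorbed because both $X_j$ and $X_2$ are compared through the same $V$-level curves.
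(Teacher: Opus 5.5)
Your proposal has a genuine gap for $X_1$: the tools you list provably cannot give regimes 3 and 4. It also loses a factor $\#\pi(G)$ for $X_2$ and $X_j$, which is not absorbed.

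\textbf{The bound for $X_1$.} Write $H=\pi(G)$. Your tower $V_0\leftarrow\cdots\leftarrow V_t$ is exactly the tower of Lemma~\ref{lem:gon(X_1)}. Done carefully, it gives only the first two regimes: $\gon(X_1)\geqslant\min\{\ell^{t-1}\gon(X_0)/\#H,\ \ell^{t-1}N'/((\ell-1)t)\}$. Over an $\eps$-ramified point, only $\#H/t$ of the points of $V_0$ are guaranteed to have a branch cycle with nontrivial $i$-th coordinate. That is where the $1/t$ comes from; the $1/2$ in $N'$ is the $2(\deg-1)$ of Castelnuovo--Severi, not a splitting over slots. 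The regimes $\frac{t-1}{t}N'$ and $\ell^t/t$ are out of reach of your tools, so this is not a bookkeeping issue.
\begin{itemize}
\item The ``degree times gonality'' branch of your tower never certifies more than $\ell^t\gon(V_0)/\#H$. Here $V_0$ may be rational and $\#H$ may be $t!$.
\item Applying Lemma~\ref{rem:Cast-generalized} directly to $\phi_1$ gives only about $N/(2\ell)$. An $\eps$-ramified branch cycle may be a transposition in a single coordinate, which has index only $\ell^{t-1}$ on $\Delta^t$.
\end{itemize}
The missing ingredient is the second tower of Lemma~\ref{lem:gon(X_1)b}, which loses only a factor $t$ rather than $\#H$ when descending to $X_1$.
\begin{itemize}
\item Start from the degree-$t$ cover $\tilde X/(G\cap(S_\ell^t\cdot S_{t-1}))\to X_0$, the quotient by the preimage of a point stabilizer $H_1\leqslant H$.
\item Fix coordinates blockwise along the orbits $O_1,\dots,O_m$ of $H_1$ on $\{1,\dots,t\}$, ordered by size.
\item Each step has degree $\ell^{\#O_i}$, is indecomposable by Remark~\ref{rem:indec}, and has Riemann--Hurwitz contribution at least $N(1-\eps)\#O_i\,\ell^{-1+\sum_{j\leqslant i}\#O_j}$.
\item The top curve $\tilde X/(G\cap(S_{\ell-1}^t\cdot S_{t-1}))$ has degree $t$ over $X_1$.
\end{itemize}
Since $x\ell^{-x}$ is decreasing and $\#O_m\leqslant t-1$, this yields $\gon(X_1)\geqslant\min\{\ell^t\gon(X_0)/t,\ (t-1)N'/t\}$. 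The four-piece bound is the better of the two towers' bounds on each range of $N'$.

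\textbf{The bounds for $X_2$ and $X_j$, $j\geqslant3$.} The same $\#H$ loss breaks this part. Your curve $Y^{(j)}=\tilde X/(K\cap S_{\ell-j}^t)$ has degree $(j!)^t\#H$ over $X_j$. So the first term of the minimum acquires an extra factor $1/\#H$. For $j=2$ you get $(\ell-1)^t\gon(X_1)/(2^t\#H)$ instead of $((\ell-1)/2)^t\gon(X_1)$. Nothing absorbs this factor. In that branch you only know $\gon(Y^{(j-1)})\geqslant\gon(X_{j-1})$, and gonality does not multiply up the degree-$\#H$ cover $Y^{(j-1)}\to X_{j-1}$.

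The fix is to keep the $S_t$-part. Set $Y_j=\tilde X/(G\cap(S_{\ell-j}\wr S_t))$, the stabilizer of a diagonal $t$-tuple of ordered $j$-tuples. Then:
\begin{itemize}
\item $Y_1=X_1$, and $Y_j\to X_j$ has degree at most $(j!)^t$.
\item $Y_j\to Y_{j-1}$ has degree $(\ell-j+1)^t$ and is itself indecomposable by Remark~\ref{rem:indec}.
\end{itemize}
So Proposition~\ref{Castelnuovo} applies to $Y_j\to Y_{j-1}$ in a single step, with no refinement into $t$ steps.

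Finally, the $(\ell+1)/2$ for $j=2$ does not come from more than half of each coordinate being fixed. It comes from choosing a coordinate $\iota$ with $0<f_\iota<\ell$, using $f_\iota(\ell-f_\iota)\geqslant\ell-1$, and then $\ell^2/(\ell-1)\geqslant\ell+1$.
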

Recall that $X_{\ell-j}\cong X_j$, so that it indeed suffices to consider $j\leqslant \ell/2$. 
    
The theorem follows from the following four lemmas.
Throughout the proof we use the following notation and setup in addition to the assumptions of \Cref{prop:product-type}. 
\subsection*{Setup}\hypertarget{setup}{}  Set $\Delta=\{1,\ldots,\ell\}$.     Let $H$ denote the image of $G$ in $S_t$ and $K=G\cap S_\ell^t$ the kernel of the projection $G\to H$, so that we have the following commutative diagram.
\begin{center}
\begin{tikzcd}
1 \arrow[r] & K \arrow[r] \arrow[d, hook] & G \arrow[r] \arrow[d, hook] & H \arrow[r] \arrow[d, hook] & 1 \\
1 \arrow[r] &  (S_\ell)^t \arrow[r]       & S_\ell \wr S_t \arrow[r]    & S_t \arrow[r]               & 1
\end{tikzcd}
\end{center}
We view $(S_\ell)^t$, and thus $K$, as acting on $\Delta^t$, while the action of $S_t$ permutes the entries of $\sigma=(\sigma_1, \dots, \sigma_t)\in (S_\ell)^t$. Note that $H$ does not act on $K$, since the upper row does not have to be a semidirect product, but $H$ does naturally act on the $(S_\ell)^t$-conjugacy classes of elements of $K$.

Recall, as in \Cref{Section:permutations-and-RH}, that the primitivity of $G$ implies that $H$ is transitive. let $H_1\coloneqq H\cap S_{t-1}$ be a point stabilizer in the action on $\{1,\ldots,t\}$, and set $h\coloneqq\#H$ and $h_1\coloneqq \#H_1$.  
 Let $Y_j\coloneqq\tilde X/(G\cap (S_{\ell-j}\wr S_t))$, so that the natural projection $Y_j\to X_j$ is of degree at most $(j!)^t$. 
 Since $X_j\cong X_{\ell-j}$ we shall assume $j\leqslant \ell/2$. 
 Let $\pi_j:Y_j\to Y_{j-1}$ be the natural projection of degree $(\ell+j-1)^t$. It is indecomposable by Remark \ref{rem:indec}. Throughout,  we let  $P\in X_0$ be an $\eps$-ramified branch point of $\phi_1$ with branch cycle $x$. Since $\eps<2/5$, we have  $x=(x_1,\ldots,x_t)\in  K\leq S_\ell^t$ by Lemma \ref{lem:eps-prod}. Let $f_i$ be the number of fixed points of $x_i$ on $\{1,\ldots,\ell\}$ for  $i=1,\ldots, t$, so that the number $\prod_{i=1}^tf_i$ of fixed points of $x$ is  at least $(1-\eps)\ell^t$. 

\begin{rem}\label{rem:subsets}  
    We further deduce that $\prod_{i\in S}f_i\geqslant (1-\eps)\ell^{\#S}$ for every subset $S\subseteq \{1,\ldots,t\}$. 
    \end{rem}
    
\begin{lem}\label{lem:k>2}
    For $j\geqslant 3$ one has: 
        $$\gon(X_j)\geqslant \min\Bigl\{\frac{1}{j!}\Bigl(\frac{(\ell-2)^{\underline{j-2}}}{2}\Bigr)^t\gon(X_2),\frac{N}{2}\Bigl(\frac{1}{j(\ell-j+1)}{(1-\eps)\ell\choose j-1}\Bigr)^t\Bigr\}.$$  
 \end{lem}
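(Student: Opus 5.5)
The plan is to mimic the tower argument of \Cref{thm:S-HIT_geom}, now in the product-type setting. We induct along the tower $Y_j\to Y_{j-1}\to\cdots\to Y_2$ using \Cref{Castelnuovo} for the indecomposable maps $\pi_j$, and at the end pass from $Y_j$ down to $X_j$ via \Cref{lem:gon-and-airr}: since $Y_j\to X_j$ has degree at most $(j!)^t$, we get $\gon(X_j)\geqslant \gon(Y_j)/(j!)^t$. To start the induction we also need $\gon(Y_2)\geqslant \gon(X_2)$, which holds because $Y_2$ covers $X_2$. The target is therefore a bound of the form
\[\gon(Y_j)\geqslant \min\Bigl\{\prod_{i=3}^{j}\deg\pi_i\cdot\gon(Y_2),\ \frac{\deg R_{\pi_j}}{2(\deg\pi_j-1)}\Bigr\},\]
iterated. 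At each step we must check that the first branch of the minimum is at least the desired final quantity. The product of degrees $\prod_{i=3}^j(\ell-i+1)^t=((\ell-2)^{\underline{j-2}})^t$ then matches the first term of the lemma after dividing by $(j!)^t$. The extra factor $2^{-t}$ and the fact that only $1/j!$ rather than $1/(j!)^t$ appears suggest that I should divide only by $j!$, using $\deg(Y_j\to X_j)$ more carefully, and absorb the slack into the $2^{-t}$. I expect to reconcile these constants, possibly by comparing with $X_j$ more directly.

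The key computation is a lower bound for $\deg R_{\pi_j}$, obtained by counting contributions over the $N$ $\eps$-ramified points $P$ with branch cycles $x=(x_1,\dots,x_t)\in K$ (by \Cref{lem:eps-prod}). Points of $Y_{j-1}$ over $P$ correspond to $t$-tuples of ordered $(j-1)$-tuples in $\Delta$. Take those where every coordinate tuple consists of fixed points of $x_i$; there are $\prod_i f_i^{\underline{j-1}}$ of them. For each such point, the fiber of $\pi_j$ consists of $t$-tuples of choices of a $j$-th element, and $x$ acts on it as $x$ acts on $\prod_i(\Delta\setminus\{\text{chosen}\})$. Its index is at least (number of non-fixed points)$/2=(\prod_i(\ell-j+1)-\prod_i(f_i-j+1))/2$. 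Using $\prod f_i\geqslant(1-\eps)\ell^t$ and \Cref{rem:subsets}, I will bound $\prod_i f_i^{\underline{j-1}}\geqslant\bigl(((1-\eps)\ell)^{\underline{j-1}}\bigr)^t$ roughly, coordinatewise. I also need the non-fixed count to be at least about $(\ell-j+1)^{t}\cdot\eps$-type positive quantity; alternatively, crudely bound the index by $\geqslant 1$ per point, or by $\geqslant(\ell-j+1)^{t-1}$. Dividing by $2(\deg\pi_j-1)<2(\ell-j+1)^t$ and then by $(j!)^t$ should produce $\frac N2\bigl(\frac{1}{j(\ell-j+1)}\binom{(1-\eps)\ell}{j-1}\bigr)^t$. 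Here $\binom{(1-\eps)\ell}{j-1}(j-1)!\cdot\frac{1}{j!}=\binom{(1-\eps)\ell}{j-1}/j$ supplies the $1/j$, and the $1/(\ell-j+1)$ comes from the degree denominator, with the index lower bound contributing at least a factor matched by the leftover $N/2$.

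The main obstacle is the index estimate for $x$ on the fiber of $\pi_j$ over the chosen fixed tuples. Since $x$ need not move anything in some coordinates (some $x_i$ may be trivial), the non-fixed count could be small. I would handle this by noting that $x\neq 1$, so some $x_i$ moves at least $2$ points, and the non-fixed set then has size at least $2\prod_{i'\neq i}(\ell-j+1)$. This is enough to beat the $(\ell-j+1)^t$ denominator up to a factor $(\ell-j+1)^{-1}$, which is exactly the shape appearing in the statement. A final monotonicity check, as in \Cref{lem:calculus}, confirms that the inductive first branch dominates the stated minimum.
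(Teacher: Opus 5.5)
Your approach is the paper's: induct up the tower $Y_j\to Y_{j-1}$ with \Cref{Castelnuovo}, count ramification of $\pi_j$ over the $\prod_i f_i^{\underline{j-1}}\geqslant\bigl(((1-\eps)\ell)^{\underline{j-1}}\bigr)^t$ points of $Y_{j-1}$ above each $\eps$-ramified $P$ that come from tuples of $x$-fixed points, then descend to $X_j$. The second term works.

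\begin{itemize}
\item[$\circ$] \emph{Index estimate.} No refined estimate is needed; the paper uses only that each such point contributes at least $1$, since some $x_i\neq 1$ moves a point outside the chosen fixed tuple. This gives $\deg R_{\pi_j}\geqslant N\bigl(((1-\eps)\ell)^{\underline{j-1}}\bigr)^t$. Dividing by $2(\deg\pi_j-1)<2(\ell-j+1)^t$ and by $(j!)^t$ gives exactly $\frac N2\bigl(\frac{1}{j(\ell-j+1)}\binom{(1-\eps)\ell}{j-1}\bigr)^t$.
\item[$\circ$] \emph{Your stronger bound.} Your estimate $\ind\geqslant(\ell-j+1)^{t-1}$ is also correct and stronger. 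It loses one factor $(\ell-j+1)$, not the $t$ factors in the statement, so it is not ``exactly the shape'' of the lemma.
\item[$\circ$] \emph{The deferred check.} The step you attribute to ``a monotonicity check as in \Cref{lem:calculus}'' has nothing to do with that lemma. Write $C_j=\frac N2\bigl(((1-\eps)\ell)^{\underline{j-1}}/(\ell-j+1)\bigr)^t$ for the new Castelnuovo term. You need $(\ell-j+1)^t C_{j-1}\geqslant C_j$. This reduces to $(\ell-j+1)^2\geqslant(\ell-j+2)((1-\eps)\ell-j+2)$, which holds because $(1-\eps)\ell\leqslant\ell-2$ makes the right side at most $(\ell-j+1)^2-1$.
\end{itemize}

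The genuine gap is the first term, and the repair you suggest cannot work.

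\begin{itemize}
\item[$\circ$] \emph{The degree is exactly $(j!)^t$.} Since $A_\ell^t\leqslant G$ and $A_\ell$ is $j$-transitive ($j\leqslant\ell-2$), $G$ is transitive on $t$-tuples of ordered $j$-tuples and on $t$-tuples of $j$-sets. Hence $\deg(Y_j\to X_j)=(\ell^{\underline{j}})^t/\binom{\ell}{j}^t=(j!)^t$ exactly, and there is no way to divide by only $j!$.
\item[$\circ$] \emph{The factor $2^{-t}$ is not a loss to trade.} In the paper it is pure slack carried in the induction hypothesis, which is trivially true at $j=2$.
\item[$\circ$] \emph{What the argument gives.} It yields $\gon(X_j)\geqslant\min\bigl\{\bigl((\ell-2)^{\underline{j-2}}/j!\bigr)^t\gon(X_2),\ \dots\bigr\}$. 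This implies the printed first term only if $(j!)^{t-1}\leqslant 2^t$, which fails for every $j\geqslant 3$, $t\geqslant 2$.
\item[$\circ$] \emph{The paper has the same slip.} Its proof also divides by $(j!)^t$ (``$\gon(X_j)\geqslant\gon(Y_j)/(j!)^t$'') and then prints $\frac{1}{j!}$ in the first term, which does not follow.
\end{itemize}

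So do not try to reconcile constants toward the printed statement. State and prove the version with first term $\bigl(\frac{(\ell-2)^{\underline{j-2}}}{j!}\bigr)^t\gon(X_2)$, or the weaker $\bigl(\frac{(\ell-2)^{\underline{j-2}}}{2\,j!}\bigr)^t\gon(X_2)$. With that correction and the comparison above written out, your argument coincides with the paper's.
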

  \begin{proof}
{We will apply the Castelnuovo--Severi inequality (Proposition \ref{Castelnuovo}) to  $\pi_j$ and a map $Y_j\to \mP^1_k$ of minimal degree. }
We claim inductively that: 
$$\gon(Y_j)\geqslant  \min\Bigl\{\Bigl(\frac{(\ell-2)^{\underline{j-2}}}{2}\Bigr)^t\gon(Y_2), \frac{N}{2}\Bigl(\frac{((1-\eps)\ell)^{\underline{j-1}}}{\ell-j+1}\Bigr)^t\Bigr\}.  $$

     Let $P\in X_0$ be a point with branch cycle $x$ as in the \hyperlink{setup}{Setup}. 
     The fibers of the map $Y_{j-1}\to X_0$ then correspond to the orbits of $x$ on ordered $(j-1)$-tuples. Under this identification, $\pi_j$ sends the orbit of an ordered $j$-tuple to the orbit of its first $j-1$ entries. 
    
    The number of unramified preimages under $Y_{j-1}\to X_0$ is therefore at least 
    $f_1^{\underline{j-1}}\cdots f_t^{\underline{j-1}}$. Since $f_i\geqslant (1-\eps)\ell$ by \Cref{rem:subsets}, this number is at least $f_1^{\underline{j-1}}\cdots f_t^{\underline{j-1}}\geqslant ((1-\eps)\ell)^{\underline{j-1}})^t$. 
    Thus, when considering the orbit of a $j$-tuple whose first $j-1$ entries are fixed by $x$ and the last is not, we get that the Riemann--Hurwitz contribution  over preimages of $P$ in $Y_{j-1}$ is at least $((1-\eps)\ell)^{\underline{j-1}})^t$. Since there are $N$ such points,  $\deg R_{\pi_j}$ is at least 
    $N((1-\eps)\ell)^{\underline{j-1}})^t$.

    Thus, the Castelnuovo--Severi bound from Proposition \ref{Castelnuovo} and the induction hypothesis give:
     \begin{align*}
     \gon(Y_j)  &\geqslant \min \left\{(\ell-j+1)^t\gon(Y_{j-1}), \frac{\deg R_{\pi_j}}{2(\deg\pi_j -1)} \right\}    \\
      &\geqslant \min\Bigl\{\Bigl(\frac{(\ell-2)^{\underline{j-2}}}{2}\Bigr)^t\gon(Y_2),(\ell-j+1)^t\cdot \frac{N}{2}\Bigl(\frac{(1-\eps)\ell)^{\underline{j-2}}}{\ell-j+2}\Bigr)^t,   
     \frac{N}{2}\Bigl(\frac{((1-\eps)\ell)^{\underline{j-1}}}{\ell-j+1}\Bigr)^t\Bigr\}.    \end{align*}
     Since $\eps\geqslant 2/\ell$ and $j\leqslant \ell/2$, the bases of the $t$-exponents satisfy: 
     \begin{equation*}
     (\ell-j+1)\frac{((1-\eps)\ell)^{\underline{j-2}}}{\ell-j+2} > \frac{((1-\eps)\ell)^{\underline{j-1}}}{\ell-j+1}.    
     \end{equation*} 
     and hence the second term in the minimum 
     can be removed, completing the induction.
     
    Since $\gon(X_j)\geqslant\gon(Y_j)/\#H\geqslant \gon(Y_j)/(j!)^t$ and $\gon(Y_2)\geqslant \gon(X_2)$, this gives
    \begin{align*}
        \gon(X_j) & \geqslant \min\left\{\frac{1}{j!}\Bigl(\frac{(\ell-2)^{\underline{j-2}}}{2}\Bigr)^t\gon(Y_2), 
        \frac{N}{2}\Bigl(\frac{{((1-\eps)\ell)}^{\underline{j-1}}}{j!(\ell-j+1)}\Bigr)^t \right\} \\
    & = \min\left\{\frac{1}{j!}\Bigl(\frac{(\ell-2)^{\underline{j-2}}}{2}\Bigr)^t\gon(X_2), 
    \frac{N}{2}\Bigl(\frac{1}{j(\ell-j+1)}{(1-\eps)\ell\choose j-1}\Bigr)^t\right\}. \qedhere
    \end{align*} 
    \end{proof}

    For $j=2$, we carry out a more careful analysis:
    
    \begin{lem}\label{lem:k=2}

\[\gon(X_2) \geqslant         
    \min\left\{\Bigl(\frac{\ell-1}{2}\Bigr)^t\gon(X_1),\frac{N'}{2}\Bigl(\frac{\ell+1}{2}\Bigr)^{t-1}\right\}.\]     
    \end{lem}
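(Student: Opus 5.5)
We follow the template of Lemma \ref{lem:k>2}, but we pass through the intermediate curve $Y_2$ more carefully, because a crude count of fixed pairs loses too much when $j=2$. Consider the tower $Y_2\xrightarrow{\pi_2}Y_1\to X_0$, where $Y_1=X_1$ (the stabilizer of $S_{\ell-1}\wr S_t$ is the point stabilizer of the product action), and $\pi_2$ is indecomposable of degree $(\ell-1)^t$ by Remark \ref{rem:indec}. Applying Proposition \ref{Castelnuovo} to $\pi_2$ gives
\[\gon(Y_2)\geqslant \min\Bigl\{(\ell-1)^t\gon(X_1),\ \frac{\deg R_{\pi_2}}{2((\ell-1)^t-1)}\Bigr\}.\]
Since $Y_2\to X_2$ has degree $2^t$ (it is the quotient by the product of the $S_2$'s), Lemma \ref{lem:gon-and-airr} gives $\gon(X_2)\geqslant \gon(Y_2)/2^t$. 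The first term then becomes $\bigl(\frac{\ell-1}{2}\bigr)^t\gon(X_1)$, which is exactly the first term of the claim. So the whole task is to show
\[\deg R_{\pi_2}\geqslant N'(\ell+1)^{t-1}\bigl((\ell-1)^t-1\bigr)\approx N'\bigl((\ell+1)(\ell-1)\bigr)^{t-1}(\ell-1).\]
After dividing by $2\cdot 2^t$, this yields the second term $\frac{N'}{2}\bigl(\frac{\ell+1}{2}\bigr)^{t-1}$, up to the harmless bookkeeping of $(\ell-1)^t-1$ versus $(\ell-1)^t$.

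To bound $\deg R_{\pi_2}$, fix one of the $N$ $\eps$-ramified points $P$ with branch cycle $x=(x_1,\dots,x_t)\in K$, using Lemma \ref{lem:eps-prod}. Identify points of $Y_1$ over $P$ with $x$-orbits on $\Delta^t$, and points of $Y_2$ with $x$-orbits on pairs $(u,v)$ of $t$-tuples with $u_i\neq v_i$ for every $i$. Take $u$ fixed by $x$; there are $\prod f_i\geqslant (1-\eps)\ell^t$ such $u$. The fiber of $\pi_2$ above $u$ is the set of $v$ with $v_i\neq u_i$ for all $i$, and the local index there is $\ind$ of $x$ acting on $\prod_i(\Delta\setminus\{u_i\})$. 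That product action has $\prod_i(f_i-1)$ fixed points among $\prod_i(\ell-1)$ points, so
\[\ind\geqslant \tfrac12\Bigl(\prod_i(\ell-1)-\prod_i(f_i-1)\Bigr).\]
Summing over fixed $u$ and over the $N$ points $P$ gives a bound in terms of $\prod f_i$ and $\prod(f_i-1)$. The plan is then to minimize the expression $\prod f_i\bigl((\ell-1)^t-\prod(f_i-1)\bigr)$ subject to the constraints $\prod f_i\geqslant(1-\eps)\ell^t$ and $x\neq1$, so that some $f_i\leqslant \ell-2$. A single nontrivial coordinate with $f_i=\ell-2$ (a transposition) is the worst case. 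In that case $\prod_i(\ell-1)-\prod_i(f_i-1)=(\ell-1)^{t-1}$, and the number of fixed $u$ is $(\ell-2)\ell^{t-1}$.

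This worst-case count alone only gives roughly $(\ell-2)\ell^{t-1}(\ell-1)^{t-1}/2$, which is of order $\ell^{2t-2}$. The target needs order $(\ell^2-1)^{t-1}\ell$, so we are short by a factor of about $\ell$. The missing contribution must come from unfixed $u$ whose orbits are unramified in $Y_1$ but whose fibers still ramify, for example $u$ in a 2-cycle of $x_i$ while other coordinates are fixed. I therefore expect the main obstacle to be the careful counting of ramification of $\pi_2$ over all points of $Y_1$ above $P$, not just the fixed ones. I would split $\Delta^t$ according to which coordinates of $u$ are fixed by $x_i$, compute the contribution coordinatewise as a product of per-coordinate factors (roughly $(\ell+1)$ per trivial coordinate after combining), and use Remark \ref{rem:subsets} together with $\eps<2/5$ to absorb the loss into the factor $(1-\eps)/2$ defining $N'$. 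If this exact optimization is too delicate, I would fall back on convexity in $\log f_i$ to reduce to the case where all nontrivial coordinates are equal, and then check that case by hand.
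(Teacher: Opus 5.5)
Your setup is the paper's: Proposition~\ref{Castelnuovo} applied to the indecomposable map $\pi_2\colon Y_2\to Y_1=X_1$ of degree $(\ell-1)^t$, then $\gon(X_2)\geqslant\gon(Y_2)/2^t$. The ramification of $\pi_2$ over an $\eps$-ramified point $P$ is read off from the action of $x$ on $\prod_i(\Delta\setminus\{u_i\})$ for the $\prod_i f_i$ tuples $u$ fixed by $x$. The gap is that you never bound $\deg R_{\pi_2}$. You abandon the fixed-$u$ count because of arithmetic slips. For $x=(\tau,1,\dots,1)$ with $\tau$ a transposition, each fixed $u$ has $2(\ell-1)^{t-1}$ moved points above it, not $(\ell-1)^{t-1}$. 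So the fixed-$u$ total is $(\ell-2)\ell^{t-1}(\ell-1)^{t-1}$, which has order $\ell^{2t-1}$, not $\ell^{2t-2}$; that is the same order as your target. The paper's proof uses only this fixed-$u$ contribution: every $v$ whose $\iota$-th coordinate is moved by $x_\iota$ is moved, and then $f_\iota(\ell-f_\iota)\geqslant\ell-1$ and Remark~\ref{rem:subsets} finish the count. Your proposed remedy, extra ramification above non-fixed $u$, is empty in exactly this extremal case. A non-fixed $u$ lies in a $2$-cycle, and the ramification of $\pi_2$ above its orbit is governed by $x^2=1$. Indeed, the contribution of $\pi_2$ over $P$ is exactly $(\ell-1)^t\orb_{\Delta^t}(x)-\orb_{\Omega_2}(x)=(\ell-2)\ell^{t-1}(\ell-1)^{t-1}$, where $\Omega_2$ is your set of pairs $(u,v)$.

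This exact value also shows your target cannot be met. First, dividing $N'(\ell+1)^{t-1}((\ell-1)^t-1)$ by $2((\ell-1)^t-1)\cdot 2^t$ gives only $\frac{N'}{4}(\frac{\ell+1}{2})^{t-1}$, half the stated term. Second, even that target fails for $t$ large relative to $\ell$. For $\ell=6$, $t=7$, $\eps=0.34$ and transposition-type points it asks for about $3.03\cdot10^9$ per point, while the true value is $4\cdot 30^6\approx 2.92\cdot 10^9$.

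What the fixed-$u$ count honestly gives is at least $\prod_i f_i\cdot(\ell-1)^{t-1}\geqslant(1-\eps)\ell^t(\ell-1)^{t-1}$ per $\eps$-ramified point, hence
\[\gon(X_2)\geqslant\min\Bigl\{\Bigl(\frac{\ell-1}{2}\Bigr)^t\gon(X_1),\ \frac{N'\ell^t}{2^t(\ell-1)}\Bigr\}.\]
This implies the statement for $t=2$, since $\ell^2/(\ell-1)>\ell+1$. For $t\geqslant 3$ it is weaker than $\frac{N'}{2}(\frac{\ell+1}{2})^{t-1}$. By the transposition example, no lower bound for $\deg R_{\pi_2}$ in terms of $N$ can reach the stated term via Proposition~\ref{Castelnuovo} when $t\geqslant3$ and $\eps$ is close to $2/\ell$.

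The paper's printed computation reaches $(\ell+1)^{t-1}$ only through two slips. It counts $(\ell-f_\iota)\ell^{t-1}$ moved points in the fiber over a fixed $u$, but that fiber has only $(\ell-1)^t$ elements, and the number with moved $\iota$-th coordinate is $(\ell-f_\iota)(\ell-1)^{t-1}$. It also omits the factor $2$ in the denominator $2(\deg\pi_2-1)$. The weaker bound displayed above is still ample for the later use of this lemma in Corollary~\ref{cor:product-type-arithmetic}.
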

    \begin{proof}
    We use the above \hyperlink{setup}{Setup}. Let $\iota$ be an index such that $f_\iota<\ell$. 
    Then the number of fixed points of $x$ is at least $\prod_if_i$, while the number of points moved is at least $(\ell-f_\iota)\ell^{t-1}$.

 Thus, the Riemann--Hurwitz contribution to $Y_2\to Y_1$ over preimages of $P$ is at least $\frac{1}{2}(\ell-f_\iota)\ell^{t-1} \prod_{i=1}^tf_i$. 
    Since $f_\iota$ are integers, we have $f_\iota(\ell-f_\iota)\geqslant \ell-1$; in addition $\prod_{i \neq \iota} f_i\geqslant (1-\eps)\ell^{t-1}$, and since the number of $\eps$-ramified points is at least $N$,  we get the lower bound:
    $$ \deg R_{\pi_k}\geqslant N \ell^{t-1}\cdot \frac{(\ell-f_\iota)f_\iota}{2} \prod_{i\neq \iota}f_i\geqslant \frac{N}{2}\ell^{t-1}(\ell-1)(1-\eps)\ell^{t-1}=N'(\ell-1)\ell^{2t-2}.$$
       
    Thus, applying Proposition \ref{Castelnuovo} (Castelnuovo--Severi) to $\pi_2$, we have 
    \begin{align*}
        \gon(X_2)\geqslant \frac{\gon(Y_2)}{2^t} & \geqslant  \min\left\{\frac{(\ell-1)^{t}}{2^t}\gon(Y_1), \frac{N'(\ell-1)\ell^{2t-2}}{2^{t}(\ell-1)^t}\right\} \\ 
        & \geqslant \min\left\{\Bigl(\frac{\ell-1}{2}\Bigr)^t\gon(X_1),\frac{N'}{2}\Bigl(\frac{\ell+1}{2}\Bigr)^{t-1}\right\}.
    \end{align*}  
    
    \end{proof}
We now bound $\gon(X_1)$:
\begin{lem}\label{lem:gon(X_1)}
   
    \begin{align*}
    \gon(X_1) & \geqslant  \min\left\{\frac{\ell^{t-1}}{t!}\gon(X_0),\frac{N'\ell^{t-1}}{(\ell-1)t}\right\}.
    \end{align*}
\end{lem}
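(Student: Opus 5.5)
The plan is to avoid working with $X_1$ directly and instead go up to a curve on which the $t$ coordinates are separated. Put $W\coloneqq\tilde X/K$, so that $W\to X_0$ is Galois with group $H$ and degree $h\leqslant t!$. For $0\leqslant i\leqslant t$ put
\[Q_i\coloneqq \tilde X/\bigl(K\cap (S_{\ell-1}^{\,i}\times S_\ell^{\,t-i})\bigr),\]
so that $Q_0=W$ and each step $Q_i\to Q_{i-1}$ has degree at most $\ell$. The curve $Q_t$ covers $X_1$, and the degree of $Q_t\to X_1$ is
\[[\,G\cap(S_{\ell-1}\wr S_t):K\cap S_{\ell-1}^t\,]\leqslant h .\]
By \Cref{lem:gon-and-airr} it then suffices to bound $\gon(Q_t)$ from below. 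Dividing that bound by $h$ should produce the two terms of the claimed minimum. The target for $Q_t$ is roughly
\[\gon(Q_t)\geqslant \min\Bigl\{\ell^{t-1}\gon(X_0),\ \frac{h\,N'\ell^{t-1}}{(\ell-1)\,t}\Bigr\},\]
possibly up to harmless constant adjustments that the piecewise bound in \Cref{prop:product-type} absorbs.

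\textbf{Step 1: the tower is indecomposable.} Since $A_\ell^t\leqslant K$, the monodromy of $Q_i\to Q_{i-1}$ contains $A_\ell$ acting on the $i$-th coordinate. Hence this map has degree exactly $\ell$ and is indecomposable, because its monodromy is $2$-transitive. So \Cref{Castelnuovo} applies at each step and gives
\[\gon(Q_i)\geqslant\min\Bigl\{\ell\,\gon(Q_{i-1}),\ \frac{\deg R_{Q_i/Q_{i-1}}}{2(\ell-1)}\Bigr\}.\]

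\textbf{Step 2: ramification in each step.} Let $P$ be one of the $N$ $\eps$-ramified points, with branch cycle $x=(x_1,\dots,x_t)$. Since $\eps<2/5$, \Cref{lem:eps-prod} gives $x\in K$, so $W\to X_0$ is unramified over $P$ and $P$ has $h$ preimages in $W$. Over each such preimage, the $H$-translates permute the coordinates. Consequently, in some step of the tower the moved coordinate is one with $f_\iota<\ell$, while the coordinates already cut down contribute their fixed points. By \Cref{rem:subsets}, the number of unramified points of $Q_{i-1}$ over $P$ is at least $(1-\eps)\ell^{i-1}$ times the number of points of $W$ over $P$. Each of them picks up at least $(\ell-f_\iota)/2\geqslant 1/2$ in the next step. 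Using $f_\iota(\ell-f_\iota)\geqslant\ell-1$ as in \Cref{lem:k=2}, and averaging over the $t$ coordinates (each coordinate is the ramified one in at least a $1/t$ share of the steps, since $H$ is transitive), I expect
\[\sum_i \frac{\deg R_{Q_i/Q_{i-1}}}{\ell^{\,t-i}}\ \gtrsim\ \frac{h\,N(1-\eps)\,\ell^{t-1}}{t},\]
with the $(1-\eps)$ factor and the factor $1/2$ combining into $N'$.

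\textbf{Step 3: induction and the main obstacle.} Run the Castelnuovo–Severi recursion of Step 1 inductively up the tower, exactly as in the proof of \Cref{thm:S-HIT_geom}. The first alternative accumulates to $\ell^{t}\gon(W)\geqslant\ell^t\gon(X_0)$. The second alternative gives, in the worst step, the term $N'\ell^{t-1}h/((\ell-1)t)$. Dividing by $h$ then yields the two terms of the lemma; the $t!$ in the first term comes from $h\leqslant t!$. The main obstacle is Step 2: ensuring that the ramification is not lost at the steps where the branch cycle happens to act trivially on the coordinate being cut. If the naive per-step count fails, I would instead use \Cref{rem:Cast-generalized} directly on the composite $Q_t\to W$. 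In that case I need to show that its minimal right composition factor has degree at least $\ell^{t-1}$ (or at least $\ell$, accepting a weaker exponent that is still compatible with the claimed piecewise bound), and that its total ramification over the $h$ points above each $\eps$-ramified $P$ is at least $h(\ell-1)(1-\eps)\ell^{2t-2}/2$, as in the computation of \Cref{lem:k=2}.
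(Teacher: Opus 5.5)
Your tower $Q_i$ is exactly the paper's $Z_i$, and the outer frame is the paper's: Castelnuovo--Severi at each degree-$\ell$ step (which you correctly justify via the $2$-transitive $A_\ell$ on the $i$-th coordinate), then dividing by $h$ and using $h/h_1=t$. The gap is Step 2, which you yourself flag as the obstacle. Unrolling the recursion gives
\[\gon(Q_t)\geqslant\min\Bigl\{\ell^{t}\gon(W),\ \min_{1\leqslant i\leqslant t}\frac{\ell^{t-i}\deg R_{Q_i/Q_{i-1}}}{2(\ell-1)}\Bigr\}.\]
So you need a lower bound on each term $\ell^{t-i}\deg R_{Q_i/Q_{i-1}}$ separately. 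An averaged bound on a sum over $i$, as in your display (which also has the weight $\ell^{t-i}$ inverted), only controls the largest term. Moreover, not every unramified point of $Q_{i-1}$ over $P$ picks up ramification at the next step: only those over points $Q\in W$ whose branch cycle $y$ has $y_i\neq1$ do.

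The missing idea is to fix the step $i$ and vary $Q$. The $h$ points of $W$ over $P$ are permuted simply transitively by $H$. The branch cycle at $g(Q_0)$ is $gxg^{-1}$, whose nontrivial coordinates are those of $x$ moved by the image $\sigma\in H$ of $g$. If $x_\iota\neq1$, the $h_1=h/t$ elements $\sigma$ with $\sigma(\iota)=i$ give at least $h_1$ points $Q$ with $y_i\neq1$, for every $i$. Over each such $Q$, \Cref{rem:subsets} gives at least $(1-\eps)\ell^{i-1}$ unramified points of $Q_{i-1}$, since the fixed-point counts of the $y_j$ are a permutation of the $f_j$. Each of these is a branch point of $Q_i\to Q_{i-1}$ with contribution $\ind(y_i)\geqslant1$. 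Use this integer bound rather than $(\ell-f_\iota)/2\geqslant1/2$, which loses a factor $2$ against the stated constant; the inequality $f_\iota(\ell-f_\iota)\geqslant\ell-1$ plays no role here. Hence $\deg R_{Q_i/Q_{i-1}}\geqslant Nh_1(1-\eps)\ell^{i-1}$ for every $i$. Every term of the inner minimum is then at least $N'h_1\ell^{t-1}/(\ell-1)$, and dividing by $h$ gives the lemma.

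Your fallback would not rescue the argument. The composite $Q_t\to W$ has the right factor $Q_t\to Q_{t-1}$ of degree $\ell$. So the first alternative of \Cref{rem:Cast-generalized} only supplies a factor $\ell$ in place of $\ell^{t-1}$. That is too weak for the stated first term when $t\geqslant3$, and for the middle case $\ell^{t-1}/t!$ of \Cref{prop:product-type}. The proposed total ramification $h(\ell-1)(1-\eps)\ell^{2t-2}/2$ over an $\eps$-ramified point is also impossible. Over each of the $h$ points of $W$, the fiber of $Q_t\to W$ has only $\ell^t$ points, so the contribution there is less than $\ell^t$. The factor $\ell^{2t-2}$ in \Cref{lem:k=2} comes from summing over the many unramified points of $Y_1$ above $P$, each carrying a large fiber of $Y_2\to Y_1$; there is no such extra layer over $W$.

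Once Step 2 is repaired, keeping the step $i=1$ in the recursion, as you do, gives $\ell^t\gon(W)/h$ in the first term. That is at least a factor $\ell$ better than the paper, which only uses $\gon(Z_1)\geqslant\gon(X_0)$.
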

\begin{proof}
We use the above \hyperlink{setup}{Setup}. Let $Z_0=\tilde X/K$ and $Z_t=\tilde X/(K\cap S_{\ell-1}^t)$, so the projection $Z_t\to Z_0$ is of degree $\ell^{t}$. The map $Z_t\to Z_0$ factors through the sequence of degree $\ell$ maps $\varpi_i:Z_i\to Z_{i-1}$, $i=1\ldots,t$, where $Z_i=\tilde X/K\cap (S_{\ell-1}^i\times S_\ell^{t-i})$. 

We next bound the Riemann--Hutwitz contribution  $\deg R_{\varpi_i}$. As in the setup, let $P$ be an $\eps$-ramified branch point of $\phi_1$ with branch cycle $x=(x_1,\ldots,x_t)\in K$. Hence, it has $h$ preimages $Q$ in $Z_0$. 
The regular action of $H$ is equivalent to its action on these preimages. Moreover, if $x$ is the branch cycle over $Q_0$ and $Q=h(Q_0)$ for $h\in H$, then\footnote{This standard observation is also made in \cite[Lemma 4.1]{GN} and \cite[\S 2.7]{NZ2}.} the branch cycle over $Q$ is conjugate to $gxg^{-1}$  for any lift $g\in G$ of $h$. 
Since this conjugation action permutes the $t$ coordinates in $S_\ell^t$,
there are at least $h_1$ such points $Q$ whose branch cycle $y\in K$ has a nontrivial $i$-th coordinate. 
Note that the points in $Z_i$ over such $Q$ are in correspondence with orbits of $y$ on $\Delta^i$.
Since $y$ is conjugate to $x$, it has at least $(1-\eps)\ell^{i-1}$ fixed points on $\Delta^{i-1}$ by Remark \ref{rem:subsets}. Orbits of $y$ of length $\geq 2$ on $\Delta^{i}$ that project to a fixed point in the action on $\Delta^{i-1}$ then correspond to ramification points of $Z_i\to Z_0$ whose images in $Z_{i-1}$ are unramified over $Z_0$. 
Thus, there are at least $(1-\eps)\ell^{i-1}$ points of $Z_{i-1}$ over $Q$ which are branch points of $\varpi_i$. Thus, in total 
  $\varpi_i$ has at least $N\cdot h_1\cdot (1-\eps)\ell^{i-1}$ branch points, and hence  $\deg R_{\varpi_i}$ is at least that amount. 

We next inductively claim that $\gon(Z_i)\geqslant \min\{\ell^{i-1}\gon(X_0),N'h_1\ell^{i-1}/(\ell-1)\}$ for $i=2,\ldots,t$. 
For $i=2$, the above estimates give: 
\begin{align*}
    \gon(Z_2)\geqslant \min\left\{\ell\gon(Z_1),\frac{\deg R_{\varpi_2}}{2(\ell-1)}\right\}\geqslant \min\left\{\ell\gon(X_0),\frac{N'h_1\ell}{\ell-1}\right\}. 
\end{align*}
Assuming the claim for $Z_{i-1}$, we similarly get by induction: 
\begin{align*}
    \gon(Z_i)\geqslant \min\left\{\ell\gon(Z_{i-1}),\frac{\deg R_{\varpi_i}}{2(\ell-1)}\right\}\geqslant \min\left\{\ell^{i-1}\gon(X_0),\frac{N'h_1\ell^{i-1}}{\ell-1}\right\}, 
\end{align*}
as claimed. As $\gon(X_1)\geqslant \gon(Z_t)/h$ by \Cref{lem:gon-and-airr} and $h/h_1=t$ (as $H$ is transitive of degree $t$), we have:
\begin{align*}
    \gon(X_1)\geqslant 
    \min\left\{\frac{\ell^{t-1}}{h}\gon(X_0),\frac{N'h_1\ell^{t-1}}{(\ell-1)h} \right\} 
    = \min\left\{\frac{\ell^{t-1}}{h}\gon(X_0),\frac{N'\ell^{t-1}}{(\ell-1)t}\right\}.
\end{align*}
The lower bound on $\gon(X_1)$ now follows as $h\leqslant t!$. 
\end{proof}

We next bound $\gon(X_1)$ also in cases where $t$ is allowed to be large in comparison to $\ell$: 
\begin{lem}\label{lem:gon(X_1)b} 
    $$\gon(X_1)\geqslant \min\left\{\frac{\ell^t}{t}\gon(X_0), \frac{N'(t-1)}{t}\right\}.$$
    
\end{lem}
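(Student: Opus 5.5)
We want $\gon(X_1)\geqslant \min\{\frac{\ell^t}{t}\gon(X_0), \frac{N'(t-1)}{t}\}$. Unlike \Cref{lem:gon(X_1)}, which pays a factor $h\leqslant t!$ for passing from $Z_t$ down to $X_1$, the plan is to apply the Castelnuovo--Severi bound of \Cref{Castelnuovo} \emph{directly} to the indecomposable covering $\phi_1\colon X_1\to X_0$ of degree $n=\ell^t$; indecomposability comes from the primitivity of the product action, see \Cref{rem:indec}. This gives
\[\gon(X_1)\geqslant \min\Bigl\{\ell^t\gon(X_0),\ \frac{\deg R_{\phi_1}}{2(\ell^t-1)}\Bigr\}.\]
The first term already dominates $\frac{\ell^t}{t}\gon(X_0)$. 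So the whole task is to show
\[\deg R_{\phi_1}\geqslant \frac{2(\ell^t-1)(t-1)}{t}N',\]
and for this it suffices that each of the $N$ $\eps$-ramified points contributes at least $(1-\eps)(\ell^t-1)(t-1)/t$.

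Fix an $\eps$-ramified point $P$ with branch cycle $x=(x_1,\dots,x_t)\in K$, using \Cref{lem:eps-prod}. The contribution of $P$ is $\ind(x)$ in the action on $\Delta^t$. Let $\iota$ be a coordinate with $x_\iota\neq 1$, and set $f_i$ as in the \hyperlink{setup}{Setup}. The points of $\Delta^t$ moved by $x$ include all tuples whose $\iota$-th entry is moved by $x_\iota$. There are at least $(\ell-f_\iota)\ell^{t-1}\geqslant 2\ell^{t-1}$ of them, since a nontrivial permutation moves at least two points. Because moved points lie in orbits of length at least $2$, this gives $\ind(x)\geqslant \ell^{t-1}$.

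This lower bound is probably too weak: we need roughly $(1-\eps)\ell^t(t-1)/t$, which is of order $\ell^t$. So I would instead count more carefully, using the fact that $G$-conjugates of $x$ permute coordinates.

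\textbf{Alternative (likely the actual route).} Mimic \Cref{lem:gon(X_1)}, but stop at the intermediate cover $Z_0=\tilde X/K$ and use the tower $X_1\times_{X_0}Z_0$ more economically. Over $Z_0$, the cover $Z_t\to Z_0$ has degree $\ell^t$ and is a product of the $t$ coordinate covers. Above each of the $h$ preimages $Q$ of $P$, every coordinate $i$ with $y_i\neq1$ contributes at least $(1-\eps)\ell^{t-1}$ ramification (\Cref{rem:subsets}). Summing over the $h$ preimages and over the at least $h_1$ conjugates per coordinate gives $\deg R_{Z_t\to Z_0}\geqslant N h(1-\eps)\ell^{t-1}$, up to a factor coming from counting coordinates. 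Then apply \Cref{rem:Cast-generalized} to $Z_t\to Z_0$, or to the composite $Z_t\to X_0$, with denominator $2(\ell^t-1)$. This should yield
\[\gon(Z_t)\geqslant \min\bigl\{\ell^{?}\gon(Z_0),\ N'h\,\ell^{t-1}\cdot t/(\ell^t-1)\bigr\},\]
and dividing by $h$ to return to $X_1$ gives a bound linear in $N'$ with no $1/t!$ loss.

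The $(t-1)/t$ factor should come from \Cref{rem:Cast-generalized}: the minimal right composition factor of $Z_t\to X_0$ through $X_1$ has degree at least $\ell^t/t$ or so, which produces $\frac{\ell^t}{t}\gon(X_0)$. The ratio $\frac{t}{\ell^t-1}\cdot\ell^{t-1}$ would be adjusted to $\frac{t-1}{t}$ by a cruder but uniform estimate.

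\textbf{Main obstacle.} The hard step is getting an index bound $\ind(x)\gtrsim (1-\eps)\ell^t(t-1)/t$, or the equivalent bound after averaging over conjugates. I expect the argument to need an averaging over the $H$-conjugates of $x$: in total they move coordinates in at least a $1/t$-fraction of configurations. I would first try the direct count on $\Delta^t$, using that the points fixed by $x$ number $\prod f_i\leqslant f_\iota\ell^{t-1}$ while $\prod f_i\geqslant(1-\eps)\ell^t$. I would then fall back on the tower argument above if the direct count is insufficient.
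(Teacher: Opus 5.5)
\textbf{There is a genuine gap: neither of your two routes can reach a bound $c\,N'$ with $c$ of order $1$.} The problem is structural, not bookkeeping.

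\emph{Direct route.} An $\eps$-ramified point contributes $\ind(x)$ to $\deg R_{\phi_1}$. This is strictly less than $\eps\ell^t$, and it can be of order $\ell^{t-1}$. For example, take $x$ with one coordinate a $3$-cycle and all other coordinates trivial. Then $x\in A_\ell^t\leqslant G$, it is $\eps$-ramified once $\eps\geqslant 3/\ell$, and $\ind(x)=2\ell^{t-1}$. Since $\ind$ is a conjugacy invariant, averaging over $H$-conjugates changes nothing. So the index bound $\ind(x)\gtrsim(1-\eps)\ell^t(t-1)/t$, which you call the main obstacle, is false. Castelnuovo--Severi applied to $\phi_1$ gives only something of order $N/\ell$.

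\emph{Route over $Z_0=\tilde X/K$.} This has the same defect. Over each $\eps$-ramified point, $\deg R_{Z_t\to Z_0}$ is $h$ times the same index. After dividing by $2(\ell^t-1)$ and then by $h$, you are back at order $N/\ell$. The factor $t\ell^{t-1}/(\ell^t-1)\approx t/\ell$ cannot be ``adjusted'' up to $(t-1)/t$: you would need a larger lower bound, not a cruder one. The other term fails as well. The map $Z_t\to X_0$ has right composition factors $Z_t\to Z_{t-1}$ of degree $\ell$ and $Z_t\to X_1$ of degree $h$. So the $e$ of Lemma~\ref{rem:Cast-generalized} is at most $\min\{\ell,h\}$, nowhere near $\ell^t/t$. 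Moreover, dividing by $h$ is exactly the $h\leqslant t!$ loss of Lemma~\ref{lem:gon(X_1)} that this lemma exists to avoid.

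\textbf{The missing idea.} You need to iterate Castelnuovo--Severi along a tower in which each layer's ramification is multiplied by the number of unramified points already below it, and to reach $X_1$ through a cover of degree only $t$.
\begin{itemize}
\item Take the base $Y_0'=\tilde X/(G\cap(S_\ell^t\cdot S_{t-1}))$, of degree $t$ over $X_0$.
\item Take the top $Y_t'=\tilde X/(G\cap(S_{\ell-1}^t\cdot S_{t-1}))$, which maps to $X_1$ with degree $[H:H_1]=t$.
\item Order the $H_1$-orbits on $\{1,\ldots,t\}$ as $O_1=\{1\},O_2,\ldots,O_m$, with sizes $1=o_1\leqslant\cdots\leqslant o_m\leqslant t-1$. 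Let $Z_i$ correspond to a point stabilizer on $\Delta^{O_1\cup\cdots\cup O_i}$.
\item Each $\varpi_i\colon Z_i\to Z_{i-1}$ is indecomposable of degree $\ell^{o_i}$ by Remark~\ref{rem:indec}.
\end{itemize}
Over each $\eps$-ramified point, at least $o_i$ of its $t$ preimages in $Z_0$ have a branch cycle $y$ that is nontrivial on $O_i$. By Remark~\ref{rem:subsets}, $y$ has at least $(1-\eps)\ell^{\sum_{j<i}o_j}$ fixed points on the lower coordinates, and above each of them $\varpi_i$ picks up at least $\ell^{o_i-1}$. Hence
\[\frac{\deg R_{\varpi_i}}{2(\ell^{o_i}-1)}\geqslant N'o_i\,\ell^{-1+\sum_{j<i}o_j}.\]
Now induct with Proposition~\ref{Castelnuovo}. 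The $N'$-part of $\ell^{o_i}$ times the bound for $Z_{i-1}$ dominates the new term, since $x\ell^{-x}$ is decreasing and $o_{i-1}\leqslant o_i$. This gives
\[\gon(Y_t')\geqslant\min\{\ell^t\gon(Y_0'),\,N'o_m\ell^{t-1-o_m}\},\qquad o_m\ell^{t-1-o_m}\geqslant t-1.\]
Dividing by $t$ and using $\gon(Y_0')\geqslant\gon(X_0)$ yields the lemma.
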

\begin{proof}
We apply an argument similar to that in Lemma \ref{lem:gon(X_1)} for a factorization, through a tower of covers, of the map from  $Y_t' \coloneqq \tilde X/G\cap (S_{\ell-1}^t\cdot S_{t-1})$ to $Y_0'\coloneqq \tilde X/G\cap (S_\ell^t\cdot S_{t-1})$.
Let $O_1, \ldots, O_m$ be the orbits of $H_1$  
on $T\coloneqq \{1,\ldots,t\}$, ordered so that 
  $o_1\leqslant \cdots\leqslant o_{m}$ where $o_i\coloneqq \#O_i$.  In particular, $o_1=1$. 
Let $Z_i$ be the quotient of $\tilde X$ by the point stabilizer in the action of $G\cap (S_\ell^t\cdot S_{t-1})$ on $\Delta^{\bigcup_{j=1}^iO_j}$ for $i=0,\ldots,m$. 
In particular, we have $Z_0=Y_0'$ 
and $Z_m=Y_t'$. 
Let $\varpi_i:Z_i\to Z_{i-1}$ be the natural projection and note that $\varpi_i$ is indecomposable for $i=1,\ldots,m$ by Remark \ref{rem:indec}.

We claim inductively on $1\leqslant i\leqslant m$ that 
$$\gon(Z_i)\geqslant \min\left\{\ell^{\sum_{j=1}^io_j}\gon(Z_0), N'o_i\ell^{-1+\sum_{j=1}^{i-1}o_j}\right\}.$$
For $i=1$, since the Riemann--Hurwitz contribution over each of the $N$ $\eps$-ramified points is at least  $(1-\eps)\ell$, we have  $\gon(Z_1)\geqslant \min\{\ell\gon(Z_0),N(1-\eps)\}$ by   \Cref{Castelnuovo}. 

Henceforth assume the lower bound on $\gon(Z_{i-1})$ for $i\geqslant 2$. To bound the Riemann--Hurwitz contribution $\deg R_{\varpi_i}$, 
note that there are at least $N$ $\eps$-ramified points $P$ with branch cycles $x\in K$. 
Each such point $P$ has $t$ preimages $Q$ in $Z_0$ and the action of $H$ on these is equivalent to its action on $\{1,\ldots,t\}$. 
If $Q_0$ is a point of $Z_0$ with branch cycle $x$ and $Q=\sigma(Q_0)$ for $\sigma\in G$, then the branch cycle over $Q$ is conjugate to $\sigma x\sigma^{-1}$ in the preimage $K.H_1\leq G$ of $H_1$. As $x\neq 1$, we deduce there are at least $o_i$ such places $Q$ with branch cycle $y$ admitting at least one nontrivial coordinate in $O_i$. 
Each such $y$ moves one of the $O_i$ coordinates and has at least $(1-\eps)\ell^{(-1+\sum o_j)}$ fixed points on the rest of the coordinates in  $\cup_j O_j$ by Remark \ref{rem:subsets}, where $j$ ranges over $1,\ldots,i$. 
Thus  $\deg R_{\varpi_i}$ is at least $No_i(1-\eps)\ell^{(-1+\sum_{j=1}^i o_j)}=2N'o_i\ell^{(-1+\sum_{j=1}^i o_j)}$.  

Proposition \ref{Castelnuovo} and the induction hypothesis therefore give: 
\begin{align*}
    \gon(Z_i) & \geqslant \min\left\{\ell^{o_i}\ell^{\sum_{j=1}^{i-1}o_j}\gon(Z_0), \ell^{o_i}N'o_{i-1}\ell^{-1+\sum_{j=1}^{i-2}o_j}, 
    \frac{2N'o_{i}\ell^{-1+\sum_{j=1}^{i}{o_j}}}{2(\ell^{o_i}-1)} \right\} \\ 
    & \geqslant \min\left\{\ell^{\sum_{j=1}^{i}o_j}\gon(Z_0), N'o_{i-1}\ell^{-1-o_{i-1}+\sum_{j=1}^{i}{o_j}}, N'o_i\ell^{-1-o_i+\sum_{j=1}^{i}{o_j}}\right\}. 
\end{align*}
For $\ell\geqslant 5$, since the function $x\ell^{-x}$ is decreasing for all $x\geqslant 1$, and $o_i\geqslant o_{i-1}$, the third term is at most the second so that the induction claim holds.

Since in addition  $\gon(X_1)\geqslant\gon(Y_t')/t$ by \Cref{lem:gon-and-airr}, and $o_m\leqslant t-1$,  and since $x\ell^{-x}$ is decreasing, we have:
$$ \gon(X_1)\geqslant \min\left\{\frac{\ell^t}{t}\gon(Z_0),\frac{N'o_m\ell^{t-o_m-1}}{t}\right\}\geqslant \min\left\{\frac{\ell^t}{t}\gon(Z_0),N'\frac{t-1}{t}\right\}.$$
The claim now follows since $\gon(Z_0)\geqslant \gon(X_0)$ by  \Cref{lem:gon-and-airr}. 
\end{proof}
\begin{proof}[Proof of \Cref{prop:product-type}]
    The bounds on $\gon(X_k)$ for $k\geqslant 2$ are given in Lemmas \ref{lem:k>2} and \ref{lem:k=2}. The bounds on $\gon(X_1)$ are obtained by comparing the bounds from Lemmas \ref{lem:gon(X_1)} and \ref{lem:gon(X_1)b} on the corresponding intervals. 
\end{proof}

Finally, we apply the proposition to study preservation of Galois groups of product-type $A_\ell^t\leqslant G\leqslant S_\ell\wr S_t$ under specialization. To simplify notation, we no longer follow the above \hyperlink{setup}{Setup}. 
Similarly to Theorem \ref{thm:Galois-groups} for  
$S_\ell$ where the minimal normal subgroup $A_\ell$ was avoided (as a case of its own), here  we avoid considering subgroups that contain the minimal normal subgroup $A_\ell^t$. Moreover, we avoid subgroups $H$ whose image  under  $\pi:S_\ell\wr S_t\to S_2\wr S_t$ is smaller than the image $\pi(G)$ of $G$. For such groups the theorem can be applied to a smaller group  of product-type (namely, either to $\pi^{-1}(\pi(H))$ if $H$ is transitive on $\{1,\ldots,t\}$, or to its actions on orbits on $\{1,\ldots,t\}$). 
\begin{cor}\label{cor:product-type-arithmetic}
Suppose $\phi: X_1 \to X_0$ is a map of monodromy   $A_\ell^t\leqslant G\leqslant S_\ell\wr S_t$ of product-type in $t$-tuples from $\{1,\ldots,\ell\}$ for $\ell> 5$ and $t\geqslant 2$.  
Suppose $\phi$ has at least $b$ branch points and  at least $N$ $\eps$-ramified points for some $2/\ell<\eps<2/5$. 
Let $\pi:S_\ell\wr S_t\to S_2\wr S_t$ be the natural quotient modulo $A_\ell^t$. Then for all but finitely many  points $P$  of degree $d<\min\{(1-\eps)N/8,3b/56,\ell^t/(2t)\}$ such that $\pi(\Im\varphi_P)=\pi(G)$, one has  $\Im\varphi_P=G$.   
\end{cor}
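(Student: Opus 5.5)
We argue by contradiction, in the same way as the proof of \Cref{thm:other-actions}. Suppose infinitely many points $P$ of degree $d$ below the stated bound satisfy $\pi(\Im\varphi_P)=\pi(G)$ but $\Im\varphi_P\lneq G$. Since there are finitely many subgroups, we may fix a proper subgroup $D$ occurring as $\Im\varphi_P$ for infinitely many of them, and choose a maximal subgroup $M<G$ containing $D$. The resolvent curve $X_M=\tilde X/M$ then has infinitely many points of degree $d$, so $\min\delta(X_M)\leqslant d$. Because $M$ is maximal, the covering $X_M\to X_0$ is indecomposable. It therefore suffices to prove that $\min\delta(X_M)\geqslant\min\{(1-\eps)N/8,\,3b/56,\,\ell^t/(2t)\}$ for every maximal $M$ with $\pi(M)=\pi(G)$ (recall $\pi(D)=\pi(G)$).

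\emph{Step 1 (normal subgroups).} Let $N_M$ be the core of $M$. If $N_M\neq 1$, then $N_M$ contains a minimal normal subgroup of $G$. Since $G$ is primitive of product type with socle $A_\ell^t$, this forces $A_\ell^t\leqslant M$. In that case $M=\pi^{-1}(\pi(M))\cap G$ would equal $G$, because $\pi(M)=\pi(G)$. This contradicts properness, so $M$ is core-free. The Galois closure of $X_M\to X_0$ is then all of $\tilde X$, and $\Mon(X_M\to X_0)\cong G$ acts primitively on $G/M$.

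\emph{Step 2 (Burness--Guralnick case).} If this primitive action is not of product type with respect to an action on $j$-sets, \Cref{cor:BG} gives $\min\delta(X_M)\geqslant\min\{\tfrac n2\gon(X_0),3b/56\}$, where $n=[G:M]$. Here $n\geqslant\ell^t$ is the minimal degree of a faithful primitive action, and $t\leqslant t!$, so $n/2\geqslant \ell^t/(2t)$. This yields the required bound.

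\emph{Step 3 (product-type case).} Otherwise $M$ is conjugate to the stabilizer $G\cap\bigl((S_j\times S_{\ell-j})\wr S_t\bigr)$ in the product action on $j$-sets, so $X_M\cong X_j$ as in \Cref{prop:product-type}. The case $j=1$ must be excluded: the point stabilizer $G\cap(S_{\ell-1}\wr S_t)$ corresponds to $P\in\phi(|X_1|_d)$, and this is presumably permitted, or else handled because $\pi$ of a point stabilizer is $\pi(G)$. I would follow the statement literally, treating $P$ with a rational point in the fiber as the $j=1$ case. For that case we need $\min\delta(X_1)$, which is where the bound $\ell^t/(2t)$ enters: by \Cref{prop:product-type} with $\gon(X_0)\geqslant1$, $\gon(X_1)$ is at least roughly $\min\{N'\ell^{t-1}/((\ell-1)t),\ldots,\ell^t/t\}$. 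Comparing the piecewise bound with $2\min\{N'/2,\ell^t/(2t)\}$ (using $N'=(1-\eps)N/2$) and then halving via \Cref{prop:arithm-gonality}(1) should give $\min\delta(X_1)\geqslant\min\{(1-\eps)N/8,\ell^t/(2t)\}$. For $j=2$ and $j\geqslant3$, \Cref{lem:k=2} and \Cref{lem:k>2} give bounds of the form $\min\{c\cdot\gon(X_{j-1}),\tfrac{N'}{2}(\cdots)^{t-1}\}$ whose multipliers are at least $1$ when $\ell>5$, so these are dominated by the $X_1$ bound.

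\emph{Main obstacle.} The hardest step is the numerical comparison in the middle ranges of the piecewise bound for $\gon(X_1)$. There, $\ell^{t-1}/t!$ may be smaller than $N'/2$, and one must check that it is still at least $\min\{N'/2,\ell^t/t\}$ after accounting for the factor $\ell^{t-1}/((\ell-1)t)\geqslant 1$ when $t\geqslant2$, $\ell>5$. I would first check $t=2$ directly, then argue that $\ell^{t-1}/t!$ exceeds $N'/2$ throughout the second interval, using $N'\leqslant \ell^{t-1}/((t-1)(t-1)!)$.
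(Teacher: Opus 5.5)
This is essentially the paper's argument: you pass to a maximal $M\supseteq\Im\varphi_P$ with $\pi(M)=\pi(G)$, note $M\not\supseteq A_\ell^t$ so $M$ is core-free, use the Burness--Guralnick bounds to force $M$ to be a product-type stabilizer of $j$-sets, and conclude with the gonality bounds of \Cref{prop:product-type} and \Cref{prop:arithm-gonality}. The only real difference is that the paper cites \Cref{prop:monodromy} (i.e.\ \Cref{cor:BGST}) rather than \Cref{cor:BG}, which spares it your auxiliary claim $n\geqslant\ell^t$; two clarifications and a simplification follow.

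First, $j=1$ is genuinely included, not excluded. Since $A_\ell^t$ is transitive on $t$-tuples, every such stabilizer $M_j$ satisfies $G=A_\ell^tM_j$ and hence $\pi(M_j)=\pi(G)$.

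Second, your ``main obstacle'' is not one. \Cref{lem:gon(X_1)b} alone gives $\gon(X_1)\geqslant\min\{\ell^t/t,(t-1)N'/t\}\geqslant\min\{\ell^t/t,N'/2\}$ for $t\geqslant 2$, which is exactly what you need. Two slips in that part: your stated target $2\min\{N'/2,\ell^t/(2t)\}$ has the $N'$-term off by a factor $2$, and $\ell^{t-1}/((\ell-1)t)\geqslant 1$ fails at $t=2$, though $\geqslant 1/2$ suffices.
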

\begin{proof}
    Assume  to the contrary that there are infinitely many degree $d$ points $P$  of $X_0$ such that $\Im\varphi_P$ is some proper subgroup $H\lneq G$ satisfying $\pi(H)=\pi(G)$.
    Thus,  $H$ embeds in a maximal subgroup $M\lneq G$ such that $\pi(M)=\pi(G)$. In particular,  $M$ does not contain $\ker\pi = A_\ell^t$.  Set $n\coloneqq[G:M]\geqslant \ell$ and $N'=N(1-\eps)$. 

    As $d<3b/56$,   $M$ has to be a stabilizer in the degree ${\ell \choose j}^t$ product-type action of $G$ on $t$-tuples of $j$-sets by Proposition  \ref{prop:monodromy} applied to the cover $\tilde X/M\to\mP^1_k$. 
    The gonality of $\tilde X/M$ and hence that of $\tilde X/H$ is then at least the lower bound on $\gon(X_j)$ given in  \Cref{prop:product-type}. By \Cref{lem:gon(X_1)b}, we have $\gon(X_1)\geqslant \min\{(t-1)N'/t, \ell^t/t\}$; a direct comparison with other lower bounds from \Cref{prop:product-type} shows that the same lower bound holds for $\gon(X_j)$ for $j\geqslant 2$. As $t\geqslant 2$, we get $\gon(\tilde X/H)\geqslant \min\{N'/2, \ell^t/t\}$ and hence $d\geqslant \min\{N'/4,\ell^t/(2t)\}$ by Proposition \ref{prop:arithm-gonality}, contradicting our assumption on $d$. \qedhere    
\end{proof}

\appendix
\section{Fibers above symmetric points}\label{appendix}

Our goal is to supplement the results of the paper by showing that the Galois structure of fibers above high-degree points is arbitrary. This can be proved in greater generality for covers of varieties. We start by discussing this higher-dimensional setup.

Suppose $X$ is a connected scheme, $\bar{y}$ is a geometric point of $X$, and $\pi_G:\tilde{X} \to X$ is a finite \'etale Galois $G$-covering. Such a covering corresponds to a morphism  $\pi_1^\et(X, \bar{y}) \twoheadrightarrow G$. For every closed point $x \in |X|$ and a geometric point $\bar{x}$ above $x$, choose a path from $\bar{x}$ to $\bar{y}$ and consider the composition $\phi_x: \pi_1^{\et}(\Spec k(x), \bar{x}) \to \pi_1^{\et}(X, \bar{x}) \xrightarrow{\sim} \pi^{\et}_1(X, \bar{y}) \twoheadrightarrow G$. The image $H_x = \mathrm{im}\ \phi_x$ of this map describes the Galois action on the fiber above $x$; changing the path from $\bar{x}$ to $\bar{y}$ changes $H_x$ by a conjugation, and so every closed point $x \in |X|$ defines a conjugacy class of a subgroup $H_x \subset G$. We consider the case when $X$ is an arbitrary smooth variety over a number field $k$ (a variety is a separated scheme of finite type over a field). In this setting the set of closed points $|X|$ is too complicated to talk about distribution of Galois groups in the analytic sense; our main result is that every conjugacy class appears above some point $x \in |X|$, and that, moreover, we can require $k(x)/k$ to be a degree $n$, $S_n$-extension for all  $n$ divisible by a sufficiently large integer.  

Recall that the \emph{index} $i(X)$ of a variety $X/k$ is the greatest common divisor of the degrees $\deg P$ of closed points $P \in |X|$. A degree $n$ separable field extension $K/k$ is called an $S_n$-extension if the Galois group of the Galois closure of $K/k$ is the symmetric group $S_n$; we call degree $n$ closed point $P$ on a variety $X/k$ an \emph{$S_n$-point} if $k(x)/k$ is an $S_n$-extension.

\begin{thm}\label{main theorem}
	Suppose $k$ is a finitely generated field of characteristic $0$, and $X/k$ is a smooth quasi-projective variety of dimension at least $1$. Suppose $\tilde{X} \to X$ is a finite \'etale Galois covering with Galois group $G$ such that $\tilde{X}$ is geometrically irreducible. Fix a subgroup $H \subset G$. Then there exists a constant $N$ such that for any finite extension $L/k$ and for any $n>N$ which is divisible by $i(X)[G:H]$, there exist infinitely many degree $n$ $S_n$-points $x \in |X_L|$ such that $H_x$ is conjugate to $H$. 
\end{thm}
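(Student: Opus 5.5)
The plan is to pass to the intermediate cover $X_H\coloneqq \tilde X/H\to X$, which is finite étale of degree $[G:H]$ with $\tilde X\to X_H$ Galois with group $H$. The point is to find closed points $y\in|(X_H)_L|$ of degree $n$ with three properties: the fiber of $\tilde X\to X_H$ over $y$ is irreducible, so $H_y=H$; the image $x$ of $y$ in $X_L$ has the same degree $n$, so $k(x)=k(y)$; and $k(y)/L$ is an $S_n$-extension. Given such $y$, the decomposition group at $x$ is $H_y$, viewed inside $G$ via $\pi_1(X_H)\subset\pi_1(X)$. Hence $H_x$ is conjugate to $H$.

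\textbf{Step 1 (reduction to curves).} By Bertini and the irreducibility of $\tilde X$, I would choose a smooth geometrically irreducible curve $C\subset X_H$ over $k$ whose preimage $\tilde C\subset\tilde X$ is geometrically irreducible. The restriction $\tilde C\to C$ is then an $H$-cover, and $C\to X$ is generically injective, so all but finitely many closed points of $C$ map to points of the same degree. The degrees of closed points on $X_H$ are divisible by $i(X)$, and I would pick $C$ so that its index divides $i(X)[G:H]$; this is plausible because the pullback of a degree $e$ point of $X$ has total degree $e[G:H]$ on $X_H$. For a finite extension $L/k$, the base change $\tilde C_L$ stays geometrically irreducible.

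\textbf{Step 2 (degree $n$ $S_n$-points with irreducible fiber).} On $C_L$, fix a divisor $D$ of degree $r$ with $r\mid i(X)[G:H]$ and $r$ a multiple of $i(C_L)$. For $n\gg 0$ with $r\mid n$, Riemann--Roch shows that the complete linear system $|nD/r|$, in the $\mathcal{O}_C(nD/r)$ sense, is base point free and gives maps $C_L\to\mP^1_L$ of degree $n$. The generic member of a suitable pencil has Galois group $S_n$, using standard results on generic pencils or monodromy of linear systems in characteristic zero. Equivalently, I would consider the morphism $\mathrm{Sym}^n C_L\to \Pic^n$, whose fibers over the class of $nD/r$ are projective spaces $\mP^{n-g}$. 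The degree $n$ points are the rational points of $\mathrm{Sym}^n C_L$ outside the images of $\mathrm{Sym}^a\times\mathrm{Sym}^{n-a}$. The condition that the fiber in $\tilde C$ is irreducible with group $H$ is the condition that the point lifts to a rational point of the induced cover of $\mathrm{Sym}^n C$ obtained from $H\wr S_n$, or more precisely from the cover $\tilde C^n/\Delta\to C^n/S_n$, with Galois group $H\wr S_n$ restricted to the projective space fiber. Hilbert irreducibility on $\mP^{n-g}(L)$, which is Hilbertian since $L$ is finitely generated, then gives infinitely many $L$-points whose specialized Galois group is the full $H\wr S_n$. A full group $H\wr S_n$ exactly encodes two things: the residue field is an $S_n$-extension, and the fiber in $\tilde C$ above the corresponding point is a single point with Galois group $H$. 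Finally, I would remove the finitely many points where the degree drops under $C\to X$; since these are finitely many, no Zariski-dense set is lost.

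\textbf{Main obstacle.} The key step is to show that the cover of $\mP^{n-g}\subset\mathrm{Sym}^nC_L$ obtained by pulling back $\tilde C^{(n)}$ is geometrically irreducible with monodromy all of $H\wr S_n$ for large $n$. The plan is to use that for large degree the linear system is very ample enough that the universal divisor over $\mP^{n-g}$ has monodromy containing $S_n$, by a standard Lefschetz-type argument with a simple tangency. The $H^n$ part would come from $\tilde C$ being geometrically irreducible: the pulled-back divisor family is a fiber product of copies of $\tilde C\to C$ along a fully symmetric family. This is the technical core; the constant $N$ arises from the Riemann--Roch and base-point-freeness thresholds together with the divisibility conditions.
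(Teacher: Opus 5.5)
Your overall plan matches the paper's: a Bertini--Lefschetz reduction to a curve through points realizing the index, a complete linear system $|E|\cong\mP^{n-g}$ of large degree, $S_n$-monodromy of the universal divisor, and Hilbert irreducibility on $|E|$. However, two steps fail as you state them.

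\textbf{Degree preservation.} When $\dim X=1$ (which the theorem allows, and to which the paper reduces), your curve must be $C=X_H$ itself. Then $C\to X$ has degree $[G:H]$, so for $H\neq G$ it is not generically injective. ``Discard the finitely many points where the degree drops'' therefore has no justification. The missing idea is the paper's primitivity argument. An $S_n$-extension $k(y)/L$ has no intermediate fields, since $S_{n-1}$ is maximal in $S_n$. Hence $k(x)\subseteq k(y)$ is either $L$ or $k(y)$. The case $k(x)=L$ is impossible once $n>[G:H]$, because $y$ would then be a point of the degree-$[G:H]$ fiber over $x$. This works in every dimension, so generic injectivity is never needed, and you may take the curve in $X$, as the paper does. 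Relatedly, if $X$ is not proper you must compactify $C$ before using $\Sym^n C$ and Riemann--Roch. The cover $\tilde C\to\bar C$ may then ramify at the boundary, which your divisors must avoid.

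\textbf{Full wreath-product monodromy.} Your declared technical core, that the induced cover of $|E|$ has monodromy all of $H\wr S_n$, is false in general. Take $H$ abelian and $\tilde C\to C$ \'etale over a proper $C$. Composing the monodromy with the sum map $H\wr S_n\to H$ records the holonomy of $\tilde C\to C$ along the $1$-cycle on $C$ swept out by the moving divisor. Equivalently, this $H$-cover is pulled back from $\Pic^n_C$ via Abel--Jacobi. The cycle's class comes from the loop's image in $\pi_1(\Sym^n C)\cong H_1(C,\Z)$, which is trivial because the loop is contractible in $|E|\cong\mP^{n-g}$. So the monodromy lies in a subgroup of index $|H|$. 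Your heuristic of a ``fiber product of copies of $\tilde C\to C$ along a fully symmetric family'' breaks exactly because all divisors in $|E|$ are linearly equivalent.

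You do not need the full wreath product, only that the monodromy surjects onto $S_n$ and is transitive on the $n|H|$ points over a divisor. Transitivity is immediate: the incidence variety $\{(\tilde c,D):\pi(\tilde c)\in D\}\subset\tilde C\times|E|$ is a $\mP^{n-g-1}$-bundle over the geometrically irreducible $\tilde C$ once $n>2g$. Apply Hilbert irreducibility to the tower (incidence over $\tilde C$) $\to$ (incidence over $C$) $\to|E|$. This yields infinitely many $D$ that are single degree-$n$ $S_n$-points with irreducible fiber in $\tilde C$, hence $H_y=H$. This is precisely the paper's argument with $I_G\to I_H\to(\mP^{|E|})^\vee$.
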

\begin{rem}
	In this theorem we may replace $X$ with an open subset that still contains a degree $i(X)$ point. In particular, one can replace the quasi-projective assumption with the notion of ``FA-scheme'' in the sense of \cite[Section 2.2]{gabber2013index}.
\end{rem}
\begin{rem}
	Theorem \ref{main theorem} is related to a result of Poonen \cite[Theorem 1]{poonen2001points}, which implies that, in the notation of our theorem, there is a closed point $x \in |X|$ with $H_x = \{e\}$ (but does not give control over the field extension $k(x)/k$.) 
\end{rem}
\begin{rem}
The case $H=G$ of the theorem is also known to hold, for any Hilbertian field $k$. In the language of field arithmetic (field stability) this was proved by many authors; see \cite[Theorem 18.9.3]{fried2005field}. This appendix extends the result to an arbitrary subgroup $H \subset G$.
\end{rem}

This result can be applied to a covering of moduli spaces to build objects with prescribed Galois actions. As an example, we show how to construct abelian varieties with specified level $N$ structures.
\begin{thm}\label{level-structures}
	Suppose $k$ is a number field that contains $N$-th roots of unity, $g\geqslant 1$ is an integer, and $H \subset \mathrm{Sp}(2g, \Z/N\Z)$ is a subgroup of index $d$. Then there exists a constant $M=M(H)$ such that if $n>M$ is an integer divisible by $d$, then there exists a degree $n$, $S_n$-extension $K/k$ and a $g$-dimensional abelian variety $A/K$ such that the image of the Galois action on the $N$-torsion points $\Gal_K \to \mathrm{Sp}(A[N])$ is conjugate to $H$.
\end{thm}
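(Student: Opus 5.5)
The plan is to deduce \Cref{level-structures} from \Cref{main theorem}, applied to a fine moduli space of principally polarized abelian varieties with level structure. Fix an auxiliary level $M_0\geqslant 3$ coprime to $N$ so that the moduli problem is representable. Let $X=\mathcal A_{g,M_0}$ be the moduli scheme of principally polarized $g$-dimensional abelian varieties with full level $M_0$ structure; it is smooth and quasi-projective over $k$ after adjoining $M_0$-th roots of unity. To avoid that base change, one could instead take a fine moduli space with a coarser rigidifying structure. Over $X$ sits the universal abelian scheme $\calA\to X$, and its $N$-torsion $\calA[N]$ is a finite \'etale group scheme. The cover $\tilde X\to X$ parametrizing symplectic trivializations $\calA[N]\simeq (\Z/N\Z)^{2g}$ is finite \'etale Galois with group $G=\mathrm{Sp}(2g,\Z/N\Z)$. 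Here the symplectic condition uses the Weil pairing valued in $\mu_N$, identified with $\Z/N\Z$ because $\mu_N\subset k$. For a closed point $x$, the group $H_x$ is then exactly the image of $\Gal_{k(x)}$ acting on $A_x[N]$.

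The key steps, in order, are the following.

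\textbf{Geometric irreducibility of $\tilde X$.} The variety $\tilde X$ is a moduli space of abelian varieties with full level $NM_0$-type structure and fixed Weil-pairing value. Its geometric connected components are indexed by the value of the Weil pairing, that is, by primitive roots of unity. Requiring the trivialization to be symplectic with respect to the fixed identification $\mu_N\simeq\Z/N\Z$ picks out a single component. Geometric irreducibility then follows from the irreducibility of the complex Siegel modular variety, i.e., connectedness of $\Gamma(NM_0)\backslash\mathfrak H_g$, together with surjectivity of $\mathrm{Sp}(2g,\Z)\to\mathrm{Sp}(2g,\Z/N\Z)$. The same surjectivity shows that the monodromy is all of $G$ rather than a subgroup. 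The $M_0$-level structure must be handled the same way, so I would work over $k(\mu_{M_0})$, or replace $\tilde X$ by the cover over $\mathcal A_{g,M_0}$ after base change.

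\textbf{Controlling the index $i(X)$.} We need $n$ divisible only by $d=[G:H]$, whereas \Cref{main theorem} requires divisibility by $i(X)[G:H]$. So I would show $i(X)=1$ by exhibiting a $k$-rational point of $X$: for instance, a product of elliptic curves over $k$ with all $M_0$-torsion rational. Such a point exists after enlarging $k$. If we must stay over $k$, choose $M_0$ (or a different rigidification, e.g.\ the stack replaced by an open fine subscheme) such that a $k$-point exists. This is the step I expect to be the main obstacle, together with the base-field bookkeeping: the extension $k(\mu_{M_0})/k$ must not spoil the $S_n$-property or the degree. \Cref{main theorem} is stated for any finite $L/k$, but the extension we obtain must be $S_n$ over the original $k$. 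I would first try to handle this by noting that \Cref{main theorem} applied over $k'=k(\mu_{M_0})$ gives $S_n$-extensions of $k'$. These are not obviously $S_n$ over $k$, so instead I would apply the appendix method directly to a $k$-variety. Concretely, I would take $X$ to be an open subscheme of a fine moduli space defined over $k$, such as a Hilbert-scheme-type parameter space of Jacobians or of abelian varieties with explicit equations. Alternatively, I would use the remark allowing $X$ to be replaced by an open subset, taking $X$ to be an open subset of a family over a rational base that carries a $k$-point with index $1$.

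\textbf{Conclusion.} With $X$, $\tilde X$, $G$ as above, the fixed $H$, and $i(X)=1$, \Cref{main theorem} with $L=k$ provides a constant $M(H)$. For every $n>M(H)$ divisible by $d$ it yields an $S_n$-point $x$ with $H_x$ conjugate to $H$. Then $K=k(x)$ and $A=\calA_x$ give the required $g$-dimensional abelian variety over $K$, whose Galois image on $A[N]$ is conjugate to $H$.
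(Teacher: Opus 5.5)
Your overall strategy is the paper's: apply \Cref{main theorem} to the torsor of symplectic trivializations of $\calA[N]$ over a base of index $1$. The gap is in the base. The one you commit to, $\mathcal A_{g,M_0}$, does not satisfy the hypotheses, and the step you flag as the main obstacle is never resolved.

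Concretely, a point of $\mathcal A_{g,M_0}$ with residue field $L$ carries a symplectic full level-$M_0$ structure. Its Weil pairing then produces a primitive $M_0$-th root of unity in $L$. Hence every connected component of $\mathcal A_{g,M_0}$ over $k$ has constant field containing $k(\mu_{M_0})$, with three consequences:
\begin{itemize}
\item if $\mu_{M_0}\not\subset k$, then $X$ is not geometrically connected, so $\tilde X$ is not geometrically irreducible;
\item every closed point has degree divisible by $[k(\mu_{M_0}):k]$, so $i(X)>1$;
\item your product of elliptic curves with rational $M_0$-torsion cannot exist over $k$.
\end{itemize}
This is the typical case: for $k=\mQ(\mu_N)$ the only roots of unity are $\mu_{\mathrm{lcm}(2,N)}$, so no $M_0\geqslant 3$ coprime to $N$ has $\mu_{M_0}\subset k$. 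Passing to $k(\mu_{M_0})$ gives $S_n$-extensions of the wrong field, as you noticed. Your closing alternatives remain a list of possibilities rather than a construction.

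The missing observation is that no moduli interpretation or rigidification is needed. \Cref{main theorem} only requires an abelian scheme $\calA\to X$ over a smooth, geometrically irreducible, quasi-projective $X/k$ with three properties:
\begin{itemize}
\item $X$ has a $k$-rational point, which gives $i(X)=1$;
\item the \emph{geometric} monodromy on $\calA[N]$ surjects onto $\mathrm{Sp}(2g,\Z/N\Z)$;
\item $\mu_N\subset k$, so the arithmetic monodromy lies in $\mathrm{Sp}(2g,\Z/N\Z)$.
\end{itemize}
The last two together force the arithmetic monodromy to be all of $\mathrm{Sp}(2g,\Z/N\Z)$. So the frame torsor is a geometrically irreducible $\mathrm{Sp}(2g,\Z/N\Z)$-cover, and you can conclude as in your last paragraph.

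The paper takes the Jacobian of the universal curve, with full monodromy from Deligne--Mumford. Concretely, for $g\geqslant 2$ one can use the universal curve over the Hilbert scheme of tricanonically embedded smooth genus-$g$ curves. Its $\pi_1$ surjects onto the mapping class group, which surjects onto $\mathrm{Sp}(2g,\Z)$. For $g=1$ a Weierstrass family over the complement of the discriminant in $\mathbb{A}^2_k$ works.

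Your suggestion of ``a parameter space of Jacobians'' points this way. However, you never verify surjective geometric mod-$N$ monodromy for such a family, and your Siegel-variety argument only covers $\mathcal A_{g,M_0}$. That verification, or a citation for it, is the piece you need to supply.
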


We first reduce the proof of Theorem \ref{main theorem} to the case of curves by combining Bertini's and Lefschetz's theorems.

\begin{lem}\label{reduce-to-curves}
	In the setting of Theorem \ref{main theorem}, let $D \subset X$ be a closed subscheme which is a union of closed points $D=\bigcup_{i=1}^s P_i$ such that $\gcd(\deg P_i)=i(X)$. Then there exists a smooth geometrically integral curve $Z \subset X$ with $D \subset Z$ such that for a point $z \in (Z \setminus D)(\mC)$ the natural map $\pi_1((Z\setminus D)(\mC), z) \to \pi_1((X\setminus D)(\mC), z)$ is surjective. 
\end{lem}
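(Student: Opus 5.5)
The plan is to combine a Bertini-type theorem for the embedding with the Lefschetz hyperplane theorem for fundamental groups, applied to a suitable projective compactification. Because $X$ is quasi-projective, we first fix a locally closed embedding $X \hookrightarrow \mP^M_k$ and let $\oline{X}$ be its closure. The finite set $D$ consists of closed points of the smooth variety $X$. When $\dim X = 1$ there is nothing to prove: take $Z = X$, which is geometrically integral because $\tilde X$ is geometrically irreducible and surjects onto $X$. So we may assume $\dim X \geqslant 2$. We then aim to cut $X$ down to a curve by intersecting with $\dim X - 1$ hypersurfaces of high degree, each required to contain $D$, and to carry out the construction over $k$ itself so that $Z$ is defined over $k$.

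The construction proceeds one dimension at a time. Consider the linear system $\Lambda_e$ of degree-$e$ hypersurfaces in $\mP^M_k$ passing through $D$. For $e \gg 0$, this system separates points and tangent vectors away from $D$. Moreover, by choosing hypersurfaces that vanish to order exactly one along a chosen smooth direction at each $P_i$, we can arrange smoothness at the points of $D$ as well. The first step is to show that a general member $V \in \Lambda_e$ satisfies three conditions: $V \cap X$ is smooth (including at $D$), geometrically irreducible when $\dim X \geqslant 2$, and the map $\pi_1((V\cap X \setminus D)(\mC)) \to \pi_1((X\setminus D)(\mC))$ is surjective. Smoothness and irreducibility come from Bertini's theorem applied to the linear system with base locus $D$; this is the version in Kleiman–Altman, or Poonen's Bertini with prescribed behaviour at finitely many points, combined with the classical statement in characteristic zero. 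Surjectivity on $\pi_1$ comes from the Lefschetz theorem for quasi-projective varieties, in the form of Goresky–MacPherson or Hamm–Lê. That theorem says that for a generic hyperplane section of a smooth quasi-projective $U$ of dimension $\geqslant 2$, the induced map on $\pi_1$ is surjective. We apply it to $U = X \setminus D$ embedded via the $e$-uple Veronese embedding, in which hypersurfaces of degree $e$ through $D$ become hyperplanes.

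The main obstacle is this last point. Hypersurfaces constrained to pass through $D$ are \emph{not} generic hyperplane sections of $U = X\setminus D$, since they are tangent to the special configuration $D$. To handle this, I would first blow up $X$ at $D$, or equivalently work with the embedding given by $\mathcal{O}(e)\otimes \mathcal{I}_D$, which is very ample on $\mathrm{Bl}_D \oline X$ for $e\gg0$. In that embedding the hypersurfaces through $D$ correspond to all hyperplanes, and $U$ is an open subset of $\mathrm{Bl}_D X$ that misses the exceptional divisor. The Goresky–MacPherson Lefschetz theorem then gives $\pi_1$-surjectivity for a Zariski-dense open set of hyperplanes. Since these conditions are all open and $k$ is infinite, we can choose members defined over $k$.

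Iterating this step $\dim X - 1$ times produces the curve $Z$, which contains $D$, is smooth, and is geometrically integral. At each stage the map on $\pi_1$ of the complement of $D$ is surjective, and composing these surjections gives surjectivity of $\pi_1((Z\setminus D)(\mC), z) \to \pi_1((X\setminus D)(\mC), z)$. The gcd condition on the degrees of the $P_i$ is not used in this lemma; it only records that $Z$ inherits index $i(X)$.
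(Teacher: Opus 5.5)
Your proposal is correct and takes essentially the same route as the paper: cut down by general high-degree hypersurfaces through $D$, get smoothness and geometric irreducibility from the Kleiman--Altman Bertini theorem, and get $\pi_1$-surjectivity from a Lefschetz-type theorem. The only difference is how the constrained hypersurfaces are made generic: the paper re-embeds by a Veronese map so that $\Span D\cap X=D$, realizes $X'\setminus D$ as the preimage of a general hyperplane under the projection $\pi_D$ from $\Span D$, and applies Deligne's Lefschetz lemma for morphisms; you instead resolve the same linear system $|\mathcal{O}(e)\otimes\mathcal{I}_D|$ by blowing up $D$ and apply Goresky--MacPherson to $X\setminus D$ in the resulting embedding.
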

\begin{proof}
	We prove the statement by induction on $\dim X$. The base case $\dim X = 1$ is immediate. If $\dim X>1$, we embed $X$ into a projective space $\mP^r$ and consider the intersection of $X$ with a general degree $n$ hypersurface $H \subset \mP^r$ passing through $D$. For $n$ large enough, this intersection $X'=X \cap H$ is smooth and geometrically irreducible by a suitable version of Bertini's theorem (see, for example, \cite[Theorems 1 and 7]{kleiman1979bertini}). The surjectivity on fundamental groups (for sufficiently large $n$) follows from a version of Lefschetz's theorem, as we now explain. After replacing $X \to \mP^r$ with a high-degree Veronese embedding, we can assume that the span $\Span D$ of the points of $D$ intersects $X$ only at $D$, and that the projection $\pi_D$ from $\Span D$ has $\dim X$-dimensional image. In this language, the variety $X'\setminus D$ is a preimage of a hyperplane under $\pi_D$, and for $x \in X'\setminus D$ the surjectivity of $\pi_1((X'\setminus D)(\mC),x) \to \pi_1((X\setminus D)(\mC),x)$ follows from Lefschetz theorem in the form of \cite[Lemma 1.4]{deligne1981groupe}.
\end{proof} 
\begin{rem}
	Since the varieties are assumed to be smooth, if $\dim X > 1$ we have a natural isomorphism $\pi_1((X\setminus D)(\mC),z) \xrightarrow{\sim} \pi_1(X(\mC),z).$
\end{rem}

\begin{proof}[Proof of Theorem \ref{main theorem}]
Fix a collection of distinct closed points $P_1, \dots, P_s \in |X|$ such that $\gcd(\deg P_i)=i(X)$, and denote by $D$ the union $D=\bigcup_i P_i$. Let $Z$ be the smooth curve from Lemma \ref{reduce-to-curves}. Since the map $\pi_1((Z\setminus D)(\mC), z) \to \pi_1((X\setminus D)(\mC), z)$ is surjective, the covering $\tilde{X} \to X$ remains a geometrically connected Galois $G$-covering when pulled back to $Z$. Therefore it suffices to consider the case $\dim X =1$. In this case, after compactifying, we can assume that $\tilde{X}$ and $X$ are both smooth proper curves, $\pi_G: \tilde{X} \to X$ is a geometrically irreducible $G$-covering, and $D \subset X$ is a divisor which does not intersect the branch locus of $\pi_G$. 

Let $\pi_H:X_H \to X$ be the intermediate covering of $\pi_G$ corresponding to $H$, so that $\pi_{G/H}:\tilde{X} \to X_H$ is an $H$-covering. Another way of phrasing the theorem is that there are infinitely many degree $n$, $S_n$-points $x \in |X_L|$ and $L(x)$-rational points $x_H \in \pi_H^{-1}(x)$ such that $\pi_{G/H}^{-1}(x_H)$ is an irreducible scheme. Consider the collection $\calS$ of divisors of the form $E=m_1 \pi_H^{-1}(P_1) + \dots + m_s \pi_H^{-1}(P_s)$ for nonnegative integers $m_1, \dots, m_s$. Let $m$ be a constant satisfying the following conditions:
\begin{enumerate}
	\item $m > [G:H]=\deg \pi_H$;
	\item  $m>2 g(X_H)$, so that any divisor on $X_H$ of degree larger than $m$ is very ample;
	\item any integer larger than $m$ and divisible by $i(X)[G:H]$ is the degree of a divisor from $\calS$ ($m$ is larger than the Frobenius number of the semigroup of degrees of divisors from $\calS$).
\end{enumerate}
Note that $m$ can be chosen independently of $L.$ We claim that any $n>m$ and divisible by $i(X)[G:H]$ satisfies the conditions of the theorem. Fix an $n>m$ divisible by $i(X)[G:H]$ and choose a divisor $E \in \calS$ of degree $n$. Consider the embedding $X_H \subset \mP^r$ given by the complete linear system $|E|$. 
 
 To simplify notation, for the remainder of the proof all varieties are considered after a base change to $L$.
 Consider the correspondences $I_G$, $I_H$ that parameterize incidences between points $x$ on $\tilde{X}$ and $X_H$ and hyperplanes $h$ in $\mP^{|E|}$: $I_G \subset \tilde{X} \times \left(\mP^{|E|}\right)^\vee$ and $I_H \subset X_H \times \left(\mP^{|E|}\right)^\vee$, given by
 \[I_G=\{(x, h)\colon \pi_{G/H}(x) \in h\}\] and \[I_H=\{(x, h)\colon x \in h\}.\]
 The variety $I_G$ is irreducible, since the projection $I_G \to \tilde{X}$ is as a proper map with irreducible equidimensional fibers (the fibers are projective spaces of dimension $\dim |E|-1$). The covering $I_H \to (\mP^{|E|})^\vee$ has degree $n$ and monodromy group $S_n$ (see, for example,  \cite[Lemma, Chapter III, page 111]{ACGH}). Therefore, by Hilbert's irreducibility theorem applied to the (factored) covering $I_G \to I_H \to \left(\mP^{|E|}\right)^\vee$, a general hyperplane section $x_H = h \cap X_H$ is a degree $n$, $S_n$-point on $X_H$, and moreover, since $I_G$ is irreducible, the fiber $\pi_{G/H}^{-1}(x_H)$ is irreducible as well (see \cite[Chapter 9]{serre1989lectures} for Hilbert's theorem in a geometric form). Finally, consider the image $x \coloneq \pi_H(x_H)$. The field extension $L(x_H)/L$ has no intermediate subextensions, and so either $L(x)=L$, or $L(x)=L(x_H)$. If $x$ is a rational point, then the degree of $x_H \subset \pi_H^{-1}(x)$ is at most $[G:H]$, contradicting the assumption $n\geqslant m > [G:H]$. Therefore $x$ is the degree $n$, $S_n$-point on $X$ we seek. 
\end{proof}

\begin{proof}[Proof of Theorem \ref{level-structures}]
Consider any abelian scheme $\calA \to X$ over a smooth base $X$ equipped with a geometric point $\bar{x}$ above a rational point $x \in X(k)$ such that the action of the fundamental group $\bar{\rho}_N\colon \pi_1^{\et}(X_{\kbar}, \bar{x}) \to \mathrm{Sp}(2g, \Z/N\Z)$ on the $N$-torsion is surjective. There are many different such families for which the monodromy is known to be surjective, for instance it is true for the Jacobian of the universal curve (see \cite[Section 5.12]{deligne1969irreducibility}). Since $k$ is assumed to have $N$-th roots of unity, the arithmetic monodromy --- image of $\rho_N:\pi_1^{\et}(X, x) \to \GL(A[N])$ --- is contained in $\mathrm{Sp}(A[N])$. Since $\bar{\rho}_N$ is surjective, so is the arithmetic monodromy $\rho_N$. Therefore the covering $X_N \to X$ corresponding to the kernel of $\rho_N$ is a geometrically irreducible Galois \'etale $\mathrm{Sp}(2g, \Z/N\Z)$-covering of algebraic varieties. Applying Theorem \ref{main theorem} to this covering gives the result.  
\end{proof}
\section{An extension of a theorem of Song and Tucker}\label{appendix:lemma}

In this appendix we explain how to extend \cite[Proposition 2.3]{song2001arithmetic} from a number field to a finitely generated field of characteristic zero.
\begin{prop}[Proposition 2.3 of \cite{song2001arithmetic}]\label{song-tucker-vintage}
Suppose $k$ is a number field, and $\phi:X \to X_0$ is a degree $\ell$ covering of curves. If 
\[\ell \chi(X_0) - \chi(X)>d\ell,\]
then the set \[\{P \in |X|:\ \deg P = \deg \phi(P)=d\}\] is finite.
\end{prop}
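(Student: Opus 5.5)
The approach is Vojta's arithmetic discriminant method, in the normalized form used by Song--Tucker. The guiding idea is that if $P\in|X|$ has $\deg P=\deg\phi(P)=d$, then $k(P)=k(Q)$ for $Q\coloneqq\phi(P)$. Hence the logarithmic discriminant $d(P)\coloneqq \frac{1}{[k(P):k]}\log|D_{k(P)/\mQ}|$ equals $d(Q)$. We can therefore bound $d(P)$ from above using only data on $X_0$, and from below using Vojta's inequality on $X$. The hypothesis $\ell\chi(X_0)-\chi(X)>d\ell$ is exactly what makes these two bounds incompatible unless the height of $P$ is bounded. Northcott then gives finiteness.

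First, as in \Cref{rem:song-tucker-geom-irr}, reduce to $X$ and $X_0$ geometrically integral, by passing to fields of constants and using \Cref{prop:curve-trivialities}. Fix an ample divisor $A_0$ of degree $1$ on $X_0$ (over $\kbar$; a $\mQ$-divisor is enough), and set $A=\phi^*A_0$, of degree $\ell$.

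\emph{Lower bound.} Vojta's inequality \cite{vojta1992generalization} on $X$, for algebraic points of bounded degree, gives for every $\varepsilon>0$
\[h_{K_X}(P)\leqslant d(P)+\varepsilon h_A(P)+O(1).\]
\emph{Upper bound.} For a point $Q$ of degree $d$ on $X_0$ I would use the ``arithmetic Riemann--Hurwitz'' type bound
\[d(Q)\leqslant h_{K_{X_0}}(Q)+2d\,h_{A_0}(Q)+O(1).\]
To obtain it, realize $Q$ inside a degree $d$ effective divisor and compare with $\Sym^d X_0$, or, more concretely, bound the discriminant of $k(Q)$ by the discriminant of the degree $d$ polynomial cutting out $Q$ under a suitable projection. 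The $K_{X_0}$ term arises by choosing the projection compatibly with the canonical class.

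\emph{Combining.} By Riemann--Hurwitz, $K_X\sim\phi^*K_{X_0}+R_\phi$, and functoriality of heights gives $h_{K_{X_0}}(Q)=h_{\phi^*K_{X_0}}(P)+O(1)$. Substituting $d(P)=d(Q)$ yields
\[h_{R_\phi-(2d+\varepsilon)A}(P)\leqslant O(1).\]
The divisor $R_\phi-(2d+\varepsilon)A$ has degree $2(\ell\chi(X_0)-\chi(X))-(2d+\varepsilon)\ell$, which is positive for small $\varepsilon$ by hypothesis. On a curve, positive degree means ample, so $h_A(P)$ is bounded on the set in question. Since all such points have degree $d$ over the number field $k$, Northcott's theorem shows the set is finite.

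The main obstacle is the upper bound on $d(Q)$ with the precise constant $2d$ in front of $h_{A_0}$, together with the correct canonical term. This is exactly the step where Song--Tucker refine Vojta, and where the strictness and sharpness of the numerology, compared with the $(\ell,d)$ examples on $X_0\times E$, are decided. I would first try the route through $\Sym^d X_0$, using the standard bound that the discriminant of a degree $d$ point is controlled by $(2d-2)$ times its height on $\mP^1$ under a degree-one-ish projection. I expect the bookkeeping of the $\chi(X_0)$ term to require the most care.
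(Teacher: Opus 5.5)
Your reduction is sound. Since $k(P)=k(Q)$, you combine Vojta's inequality on $X$, the relation $K_X\sim\phi^*K_{X_0}+R_\phi$, and an upper bound $d(Q)\leqslant h_{K_{X_0}}(Q)+(2d+\varepsilon)h_{A_0}(Q)+O(1)$ for degree-$d$ points of $X_0$. The condition $\deg R_\phi=2(\ell\chi(X_0)-\chi(X))>2d\ell$ then bounds the height. This is the shape of Vojta's Corollary 0.3, which the statement generalizes; the paper does not reprove the statement and simply cites Song--Tucker.

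The gap is that the upper bound on $d(Q)$ is the entire content beyond Vojta's theorem, and the mechanism you propose for it fails. Take a fixed projection $\pi\colon X_0\to\mP^1$ of degree $n$. The polynomial-discriminant bound only yields $d(Q)=d(\pi(Q))\leqslant(2d-2)h(\pi(Q))+O(1)=(2d-2)n\,h_{A_0}(Q)+O\bigl(\sqrt{h_{A_0}(Q)}+1\bigr)$, and only for those $Q$ with $\deg\pi(Q)=d$. The target, up to $o(h)$, is $(2g_0-2+2d)h_{A_0}(Q)$, where $g_0$ is the genus of $X_0$. When $g_0\geqslant1$ there is no degree-one projection, so $n\geqslant2$, and then $(2d-2)n>2g_0-2+2d$ as soon as $d>g_0+1$. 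No choice of projection produces the $K_{X_0}$ term either. So this route proves only a strictly weaker finiteness criterion than the one stated.

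What is missing is an input that sees the positivity of the N\'eron--Tate height on $\Pic^0(X_0)$. The standard one goes as follows.
\begin{itemize}
\item Let $\mathcal D$ be the closure of $Q$ in a regular model $\mathcal X_0$ over $\mathcal O_k$. Arithmetic adjunction gives $\log N_{k/\mQ}\mathfrak d_{k(Q)/k}\leqslant(\mathcal D,\mathcal D+\omega)+O(1)$ for Arakelov intersection numbers. This holds because the field discriminant divides that of the order $\mathcal O_{\mathcal D}$, and the archimedean Green's function terms are bounded below.
\item Write $\mathcal D=d\mathcal A_0+\Phi+\mathcal E$, with $\Phi$ a vertical $\mQ$-divisor and $\mathcal E$ orthogonal to every fibral component. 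Then $(\mathcal D,\mathcal D)=\mathcal E^2+\Phi^2+2d(\mathcal D,\mathcal A_0)-d^2\mathcal A_0^2$.
\item Here $\mathcal E^2\leqslant0$ by the Faltings--Hriljac Hodge index theorem, and $\Phi^2\leqslant0$ by Zariski's lemma. Dividing by $[k(Q):\mQ]$ gives exactly $d(Q)\leqslant h_{K_{X_0}}(Q)+2d\,h_{A_0}(Q)+O(1)$.
\end{itemize}

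If you want to keep your $\Sym^d X_0$ picture, the same mechanism appears there. Up to $O(1)$, $d\cdot d(Q)$ is bounded by the height of the point $[D]$, with $D$ the Galois orbit of $Q$, with respect to the big diagonal. The class of the big diagonal is $2\bigl((d+g_0-1)x-\theta\bigr)$. The $-\theta$ contribution is bounded above, since $\theta$ is pulled back from an ample class on the Jacobian. That term is precisely what a projection to $\mP^1$ cannot capture. Without one of these arguments, the key step of your proof remains unproved.
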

\begin{lem}\label{rem:number-fields}
     \Cref{song-tucker-vintage} holds over any finitely generated field $K$ of characteristic zero (in place of $k$).
\end{lem}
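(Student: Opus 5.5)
The plan is a standard specialization argument: spread the data over a base of finite type over a number field, and specialize at a suitable closed point, using Hilbert irreducibility to keep the infinite set of points infinite and of the right degrees. We may assume $X_0$ is geometrically integral over $K$, and by \Cref{rem:song-tucker-geom-irr}, $X$ as well. Suppose, for contradiction, that $\ell\chi(X_0)-\chi(X)>d\ell$ but the set $\mathcal{S}=\{P\in|X|: \deg P=\deg\phi(P)=d\}$ is infinite.

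Write $K$ as the function field of a smooth geometrically integral variety $B$ over a number field $k_0$, where $k_0$ is the algebraic closure of $\mQ$ in $K$. After shrinking $B$, spread out $\phi$ to a finite morphism $\Phi:\calX\to\calX_0$ of smooth proper curves over $B$ with geometrically integral fibers. Shrink $B$ further so that the degree $\ell$ and the Euler characteristics of the fibers are constant; this is possible because the fibers form smooth proper families, so the genus is locally constant.

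The goal is to choose finitely many points of $\mathcal{S}$, more than any bound the number-field result allows, and carry them to a single fiber. Points of $\mathcal{S}$ correspond to closed subschemes $\calP\subset\calX$, each finite of degree $d$ over its closure in $B$ and mapping to a degree-$d$ subscheme of $\calX_0$; over a dense open of $B$ these are finite étale of degree $d$. A cleaner way to say this is that a point $P\in\mathcal{S}$ with $\deg\phi(P)=d$ gives a $K$-point of $\Sym^d X$ whose image in $\Sym^d X_0$ is irreducible of degree $d$.

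\textbf{Main obstacle.} The number-field version only gives finiteness, not a uniform bound, so it is not enough to specialize finitely many points: we must show the specialization is infinite at some fiber. I would handle this by taking the Zariski closure $W\subset\Sym^d X$ of the infinite set $\mathcal{S}$ (viewed as $K$-points). Choose a positive-dimensional irreducible component $W$ containing infinitely many of these points, with $W(K)$ Zariski dense in $W$. Spread $W$ out to $\mathcal{W}\to B$. By a Hilbert-irreducibility and Néron-type specialization argument, there are closed points $b\in B$ with number-field residue field $k(b)$ at which the specialization $\mathcal{W}_b(k(b))$ is still Zariski dense in $\mathcal{W}_b$. A cheap way to do this is to choose a curve $C\subset W$ through the dense points with infinitely many $K$-points, for example an irreducible curve in the Zariski closure.

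On the open locus of $\Sym^d$ where the corresponding degree-$d$ divisor is a single reduced irreducible point mapping to an irreducible degree-$d$ point, Hilbert's irreducibility theorem over $k(b)$, which is thin-set avoidance, guarantees infinitely many specialized points remain points $P_b$ of $\calX_b$ with $\deg P_b=\deg\Phi(P_b)=d$. This contradicts \Cref{song-tucker-vintage} applied to $\Phi_b$ over $k(b)$, since $\ell\chi(\calX_{0,b})-\chi(\calX_b)$ equals $\ell\chi(X_0)-\chi(X)>d\ell$.

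The delicate step is the density of specializations. If density of $W(K)$ is hard to transfer directly, the fallback is to use that, by Faltings/Vojta-type structure results, the infinitude of $\mathcal{S}$ forces $W$ to be a translate of a positive-dimensional abelian subvariety, or to contain a rational curve. Either structure specializes well: a rational curve specializes to a rational curve with dense points, and an abelian subvariety by Lang–Néron and the Silverman specialization theorem keeps positive rank at many $b$.
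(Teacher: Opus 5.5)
There is a genuine gap at exactly the step you call the main obstacle. The claim that some closed point $b$ has $\mathcal W_b(k(b))$ Zariski dense in $\mathcal W_b$ does not follow from Hilbert irreducibility or Néron specialization for an arbitrary $W$ with dense $K$-points. Each point of $\calS$ specializes well only over its own dense open subset of $B$, and infinitely many such opens need not share a closed point; Néron's theorem only controls specialization of Mordell--Weil groups. Your ``cheap'' curve $C\subset W$ through infinitely many points of $\calS$ need not exist either: if their Abel--Jacobi images are dense in a simple abelian surface in $\Pic^d_X$, Mordell--Lang rules it out.

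Your fallback is the right idea and is essentially the paper's proof, but it must be set up via the Abel--Jacobi map $\Sym^dX\to\Pic^d_X$, not as a structure statement about $W$. There are two cases.
\begin{enumerate}
\item Some $D\in(\Sym^dX\setminus\Delta)(K)$ moves in a pencil. Then no specialization is needed. The resulting $\rho\colon X\to\mP^1_K$ has degree $e\leqslant d$ and $\rho^{-1}(\infty)\subset D$. Reducedness of $\phi_*D$ prevents $\phi$ and $\rho$ from sharing a subcover, so Castelnuovo--Severi (\Cref{Castelnuovo}) gives $\ell\chi(X_0)-\chi(X)\leqslant e(\ell-1)<d\ell$, a contradiction.
\item Abel--Jacobi is injective on these points. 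Faltings gives a positive-rank abelian coset $A$ in the closure of the image, and injectivity gives an open $U\subset A$ with a morphism $\psi\colon U\to\Sym^dX$. You spread out $U$, $\psi$, and a point $P\in U(K)$ with $\psi(P)\notin\Delta$ and $P-Q$ nontorsion. What must survive specialization is that this particular point stays nontorsion, which is Masser's theorem. It is not enough that the rank stays positive, since only the coset through $\calP_b$ is known to meet the locus $\psi_b\notin\Delta_b$.
\end{enumerate}

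Your last step is also wrong as written. Being a single irreducible closed point is not an open condition on $\Sym^d$; it is the complement of a thin set. Hilbert irreducibility is also unavailable on $\mathcal W_b$. In the case that actually arises, $\mathcal W_b$ is an open piece of an abelian coset, which lacks the Hilbert property: its rational points may all lie in the image of an isogeny (see \Cref{exam:weaker-vesionts} and \Cref{exam:tucker}).

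The repair is to drop irreducibility altogether. The condition $\psi_b(p)\notin\Delta_b$ is Zariski open and holds at $\calP_b$. Density of rational points on the specialized coset through $\calP_b$ therefore gives infinitely many reduced divisors $D$ on $\calX_b$ with $(\Phi_b)_*D$ reduced of degree $d$. Each closed point $P$ of such a $D$ satisfies $\deg P=\deg\Phi_b(P)\leqslant d$, and infinitely many such $D$ involve infinitely many such $P$. The hypothesis $\ell\chi(X_0)-\chi(X)>d\ell$ also holds with $d$ replaced by any $d'\leqslant d$. Applying the number-field statement for $d'=1,\ldots,d$ then leaves only finitely many such $P$, which is the contradiction you want.
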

\begin{proof}
Suppose $\phi:X \to X_0$ is a degree $\ell$ covering of curves over $K$, the inequality $\ell \chi(X_0)-\chi(X)>d\ell$ holds, and yet the set $\calS_d=\{P \in |X|\colon \deg P = \deg \phi(P)=d\}$ is infinite. We will derive a contradiction with \Cref{song-tucker-vintage} by spreading out and specializing to a number field. See \cite[Section 3.2]{poonen2023rational} for an introduction to spreading out. Note that, as in \Cref{rem:song-tucker-geom-irr}, the curves $X, X_0$ can be assumed to be geometrically integral. We first reinterpret geometrically the infinitude of $\calS_d$. Consider the symmetric powers $\Sym^d X$ and $\Sym^d X_0$ and the induced map $\phi^{(d)}:\Sym^d X \to \Sym^d X_0$; both $\Sym^d X$ and $\Sym^d X_0$ are smooth proper geometrically integral varieties over $K$ (in particular, $\phi^{(d)}$ is a proper map). Let $\Delta_0\subset \Sym^d X_0$ denote the big diagonal, so that the complement $\Sym^d X_0 \setminus \Delta_0$ can be thought of as parameterizing unordered $d$-tuples of distinct points in $X_0$. Let $\Delta=(\phi^{(d)})^{-1}(\Delta_0)$.  The infinitude of $\calS_d$ implies that the set $\calS=(\Sym^d X\setminus\Delta) (K)$ is infinite. Consider the Abel--Jacobi map $\pi_{\mathrm{AJ}}\colon \Sym^d X \to \Pic^d_X $. The fibers of $\pi_{\mathrm{AJ}}$ above rational points are either pointless, or isomorphic to a projective space. Thus either a point of $\calS$, when viewed as an effective divisor, moves in a pencil, or the map $\pi_{\mathrm{AJ}}\colon \calS \to \Pic^d_X(K)$ is injective. We will consider these two cases separately.

{\bf Case 1:} There exists an effective divisor $D\in (\Sym^d X \setminus \Delta)(K)$ which moves in a pencil. Then there exists a map $\rho : X \to \mP^1_k$ of degree $e\leqslant d$ and such that the fiber $\rho^{-1}(\infty)$ is a subset of $D$. Since $D \notin \Delta$, this means that the maps $\rho$ and $\phi$ do not factor through a shared subcover, or equivalently, the morphism $(\phi, \rho): X \to X_0 \times \mP^1_{k}$ is birational onto its image. This contradicts the Castelnuovo--Severi inequality (\Cref{Castelnuovo}.)

{\bf Case 2:} The map  $\pi_{\mathrm{AJ}}\colon \calS \to \Pic^d_X (K)$ is injective. In this case, by Faltings' theorem, there is a coset $A \subset \Pic^d_X$ of an abelian variety of positive rank such that $A$ belongs to the Zariski closure $\pi_{\mathrm{AJ}}(\calS)$. Since fibers above points of $\pi_{\mathrm{AJ}}(\calS)$ are single points, there is an open subset $U\subset A$ and an injective morphism $U \to \Sym^d X$. Fix a pair of rational points $P, Q \in U(K)$ such that $P-Q$ is nontorsion and view $A$ as an abelian variety with origin at $Q$. 

Now spread out $X, X_0, A, U, \phi, P$; this gives an irreducible scheme $S$, whose function field is $K$ (so $S$ is a variety over a number field), and relative curves (smooth proper morphisms of relative dimension $1$) $\calX, \calX_0/ S$, an abelian scheme $\calA/S$ and an open subscheme $\calU\subset \calA$ which surjects onto $S$, a finite morphism $\varphi:\calX \to \calX_0$, over $S$, a section $\calP: S \to \calU$, and a morphism $\psi$ from $\calU$ to the relative symmetric power $\Sym^d_S X$ (so that the restrictions of $\calX, \calX_0, \calA, \calU, \varphi, \calP$ to the generic fiber are $X, X_0, A, U, \phi, P$). Let $\varDelta_0$ denote the relative big diagonal of $\calX_0$ and let $\varDelta$ denote its preimage in $\Sym^d_S \calX$. We can demand two additional properties, by shrinking $S$ further, if necessary. First, we can assume that for every point $s \in S$ the induced map on the fiber $\psi_s: \calU_s\to \Sym^d \calX_s$ is not constant. Secondly, we can assume that $\psi_s(\calP_s)$ does not belong to $\Delta$. 

By Masser's theorem on specialization of subgroups \cite[Main Theorem]{masser1989specializations}, since $P-Q$ is nontorsion, we can find a closed point $s\in S$ such that $\calP_s \in \calA_s$ is not a torsion point (in fact this holds for ``most'' $s \in S$ in a suitable sense). Now we specialize everything to this point $s$. Consider the curve $\calX_s$ over the number field $k=k(s)$. Let $B \subset \calA_s$ denote an abelian coset which contains $\calP_s$ and has dense rational points. Consider the set $\calS'=\{p \in B(k)\colon \psi_s(p) \notin \Delta_s\}$; it is infinite since rational points are dense in $B$ and the set $\calS'$ contains $\calP_s$. Points in $\psi_s(\calU_s\cap\calS')$ correspond to degree $d$ divisors $D$ on $\calX_s$ which consist of distinct points, and such that $\varphi_s(D)$ has degree $d$ and consists of distinct points. Consider now the degree $\ell$ covering of curves $\varphi_s:\calX_s \to (\calX_0)_s$ over the number field $k(s)$. The Euler characteristics are preserved under specialization, and so $\ell \chi((\calX_0)_s)-\chi(\calX_s)>d\ell$; at the same time we have just shown that the set $\calS'_{\leqslant d}=\{P \in |\calX_s|\colon \deg P = \deg \varphi_s(P)\leqslant d\}$ is infinite. This contradicts \Cref{song-tucker-vintage}.
\end{proof}

\bibliographystyle{amsalpha}
\bibliography{Bibliography.bib}

@article{abramovich1991abelian,
  title={Abelian varieties and curves in $ {W}_{d}({C})$},
  author={Abramovich, Dan and Harris, Joe},
  journal={Compositio Mathematica},
  volume={78},
  number={2},
  pages={227--238},
  year={1991}
}

@book{poonen2023rational,
  title={Rational points on varieties},
  author={Poonen, Bjorn},
  series={Graduate Studies in Mathematics},
  volume={186},
  year={2023},
  publisher={American Mathematical Society}
}

@article{poonen2001points,
  title={Points having the same residue field as their image under a morphism},
  author={Poonen, Bjorn},
  journal={Journal of Algebra},
  volume={243},
  number={1},
  pages={224--227},
  year={2001},
  publisher={Elsevier Science}
}

@article{kleiman1979bertini,
  title={Bertini theorems for hypersurface sections containing a subscheme},
  author={Kleiman, Steven L and Altman, Allen B},
  journal={Communications in Algebra},
  volume={7},
  number={8},
  pages={775--790},
  year={1979},
  publisher={Taylor \& Francis}
}

@article{bilu2026values,
  title={Values of algebraic functions at {L}iouville numbers},
  author={Bilu, Yuri and Marques, Diego},
  journal={arXiv:2604.08818},
  year={2026}
}

@misc{lmfdb,
  shorthand    = {LMFDB},
  author       = {The {LMFDB Collaboration}},
  title        = {The {L}-functions and modular forms database},
  howpublished = {\url{https://www.lmfdb.org}},
  year         = {2026},
  note         = {[Online; accessed 19 February 2026]},
}

@article{fehm2025hilbert,
  title={Hilbert properties of varieties},
  author={Fehm, Arno and Javanpeykar, Ariyan},
  journal={arXiv:2511.18431},
  year={2025}
}

@article{kani1984castelnuovo,
  title={On {C}astelnuovo's equivalence defect},
  author={Kani, Ernst},
  journal={Journal f{\"u}r die reine und angewandte Mathematik},
  volume={346},
  pages={24--70},
  year={1984}
}

@article{NZ2,
  title={Monodromy groups of product type},
  author={Neftin, Danny and Zieve, Michael E},
  journal={arXiv:2403.17168},
  year={2024}
}

@article{corvaja2017hilbert,
  title={On the {H}ilbert property and the fundamental group of algebraic varieties},
  author={Corvaja, Pietro and Zannier, Umberto},
  journal={Mathematische Zeitschrift},
  volume={286},
  number={1},
  pages={579--602},
  year={2017},
  publisher={Springer}
}

@article{corvaja2022distribution,
  title={On the distribution of rational points on ramified covers of abelian varieties},
  author={Corvaja, Pietro and Demeio, Julian Lawrence and Javanpeykar, Ariyan and Lombardo, Davide and Zannier, Umberto},
  journal={Compositio Mathematica},
  volume={158},
  number={11},
  pages={2109--2155},
  year={2022},
  publisher={London Mathematical Society}
}

@article{debarre1993abelian,
  title={Abelian varieties in ${W}^r_d({C})$ and points of bounded degree on algebraic curves},
  author={Debarre, Olivier and Fahlaoui, Rachid},
  journal={Compositio Mathematica},
  volume={88},
  number={3},
  pages={235--249},
  year={1993}
}

@article{frey1994curves,
  title={Curves with infinitely many points of fixed degree},
  author={Frey, Gerhard},
  journal={Israel Journal of Mathematics},
  volume={85},
  pages={79--83},
  year={1994},
  publisher={Springer}
}

@article{konig2024reducible,
  title={Reducible fibers of polynomial maps},
  author={K{\"o}nig, Joachim and Neftin, Danny},
  journal={International Mathematics Research Notices},
  volume={2024},
  number={6},
  pages={5373--5402},
  year={2024},
  publisher={Oxford University Press}
}

@article{dixon1996permutation,
  title={Permutation Groups},
  author={Dixon, John D and Mortimer, Brian},
  journal={Graduate Texts in Mathematics},
  volume={163},
  year={1996},
  publisher={Springer New York}
}

@article{neftin2024monodromy,
  title={Monodromy groups of indecomposable coverings of bounded genus},
  author={Neftin, Danny and Zieve, Michael E},
  journal={arXiv:2403.17167},
  year={2024}
}

@article{BKN,
  title={The {D}avenport--{L}ewis--{S}chinzel problem on the reducibility of $ f (X)-g (Y) $},
  author={Behajaina, Angelot and K{\"o}nig, Joachim and Neftin, Danny},
  journal={arXiv:2603.27728},
  year={2026}
}

@article{kadets2025subspace,
  title={Subspace configurations and low degree points on curves},
  author={Kadets, Borys and Vogt, Isabel},
  journal={Advances in Mathematics},
  volume={460},
  pages={110021},
  year={2025},
  publisher={Elsevier}
}

@article{hilbert1892ueber,
  title={{\"U}ber die {I}rreducibilit{\"a}t ganzer rationaler {F}unctionen mit ganzzahligen {C}oefficienten},
  author={Hilbert, David},
  journal={Journal f{\"u}r die reine und angewandte {M}athematik},
  volume={110},
  pages={104--129},
  year={1892}
}

@article{bochert1892ueber,
  title={{\"U}ber die {C}lasse der transitiven {S}ubstitutionengruppen},
  author={Bochert, Alfred},
  journal={Mathematische Annalen},
  volume={40},
  number={2},
  pages={176--193},
  year={1892},
  publisher={Springer-Verlag Berlin/Heidelberg}
}

@article{schomburg2022bochert,
  title={Bochert's results on the minimal degree of multiply transitive permutation groups},
  author={Schomburg, Bernd},
  journal={arXiv:2209.12049},
  year={2022}
}

@article{viray2024isolated,
  title={Isolated and parameterized points on curves},
  author={Viray, Bianca and Vogt, Isabel},
  journal={Essential Number Theory},
  volume={5},
  number={1},
  pages={1--47},
  year={2026},
  publisher={Mathematical Sciences Publishers}
}

@article {MN,
    AUTHOR = {Monderer, Tali and Neftin, Danny},
     TITLE = {Symmetric {G}alois groups under specialization},
   JOURNAL = {Israel J. Math.},
  FJOURNAL = {Israel Journal of Mathematics},
    VOLUME = {248},
      YEAR = {2022},
    NUMBER = {1},
     PAGES = {201--227},
      ISSN = {0021-2172,1565-8511},
   MRCLASS = {11R32 (11R09 12F12 14H30)},
  MRNUMBER = {4429280},
MRREVIEWER = {J\"urgen\ Kl\"uners},
       DOI = {10.1007/s11856-022-2302-x},
       URL = {https://doi-org.technion.idm.oclc.org/10.1007/s11856-022-2302-x},
}

@article{poonen2007gonality,
  title={Gonality of modular curves in characteristic p},
  author={Poonen, Bjorn},
  journal={Mathematical research letters},
  volume={14},
  number={4},
  pages={691--701},
  year={2007},
  publisher={International Press}
}

@misc{stacks-project,
    shorthand    = {Stacks},
    author       = {The {Stacks Project Authors}},
    title        = {\textit{Stacks Project}},
    howpublished = {\url{https://stacks.math.columbia.edu}},
    year         = {2018},
  }

@misc{fried86,
    author       = {Fried, Michael D.},
    title        = {Applications of the classification of finite simple groups to monodromy, part ii: {D}avenport and {H}ilbert--{S}iegel problems.},
    note = {preprint},
    year         = {1986},
  }

@article{bary2016hilbertian,
  title={Hilbertian fields and {G}alois representations.},
  author={Bary-Soroker, Lior and Fehm, Arno and Wiese, Gabor},
  journal={Journal f{\"u}r die Reine und Angewandte Mathematik},
  volume={2016},
  number={712},
  year={2016}
}

@article{gabber2013index,
  title={The index of an algebraic variety},
  author={Gabber, Ofer and Liu, Qing and Lorenzini, Dino},
  journal={Inventiones mathematicae},
  volume={192},
  number={3},
  pages={567--626},
  year={2013},
  publisher={Springer}
}

@article{fried2005field,
  title={Field Arithmetic},
  author={Fried, Michael D and Jarden, Moshe},
  journal={A Series of Modern Surveys in Mathematics (11)},
  year={2005},
  publisher={Berlin, Heidelberg: Springer-Verlag Berlin Heidelberg}
}

@article{debes1999integral,
  title={Integral specialization of families of rational functions},
  author={D{\`e}bes, Pierre and Fried, Michael D},
  journal={Pacific Journal of Mathematics},
  volume={190},
  number={1},
  pages={45--85},
  year={1999},
  publisher={Mathematical Sciences Publishers}
}

@incollection{deligne1981groupe,
  title={Le groupe fondamental du compl{\'e}ment d'une courbe plane n'ayant que des points doubles ordinaires est ab{\'e}lien [d'apr{\`e}s {W}. {F}ulton]},
  author={Deligne, Pierre},
  booktitle={S{\'e}minaire Bourbaki vol. 1979/80 Expos{\'e}s 543--560},
  pages={1--10},
  year={2006},
  publisher={Springer}
}

@book{serre1989lectures,
  title={Lectures on the {M}ordell--{W}eil theorem},
  author={Serre, Jean-Pierre and Brown, Martin and Waldschmidt, Michel},
  year={1989},
  publisher={Springer}
}

@book {ACGH,
    AUTHOR = {Arbarello, E. and Cornalba, M. and Griffiths, P. A. and
              Harris, J.},
     TITLE = {Geometry of algebraic curves. {V}ol. {I}},
    SERIES = {Grundlehren der mathematischen Wissenschaften [Fundamental
              Principles of Mathematical Sciences]},
    VOLUME = {267},
 PUBLISHER = {Springer-Verlag, New York},
      YEAR = {1985},
     PAGES = {xvi+386},
      ISBN = {0-387-90997-4},
   MRCLASS = {14Hxx (14-02)},
  MRNUMBER = {770932},
MRREVIEWER = {Werner Kleinert},
       DOI = {10.1007/978-1-4757-5323-3},
       URL = {https://doi.org/10.1007/978-1-4757-5323-3},
}

@article{van2026points,
  title={Points of low degree on curves over function fields},
  author={van Schaick, Si{\`e}na},
  journal={arXiv:2604.02975},
  year={2026}
}

@article{conrad2006chow,
  title={Chow's ${K}/k$-image and ${K}/k$-trace, and the {L}ang-{N}{\'e}ron theorem},
  author={Conrad, Brian},
  journal={Enseignement Math{\'e}matique},
  volume={52},
  number={1/2},
  pages={37},
  year={2006},
  publisher={SWETS \& ZEITLINGER}
}

@article{lang1959rational,
  title={Rational points of abelian varieties over function fields},
  author={Lang, Serge and N{\'e}ron, Andr{\'e}},
  journal={American Journal of Mathematics},
  volume={81},
  number={1},
  pages={95--118},
  year={1959},
  publisher={JSTOR}
}

@article{deligne1969irreducibility,
  title={The irreducibility of the space of curves of given genus},
  author={Deligne, Pierre and Mumford, David},
  journal={Publications Math{\'e}matiques de l'IHES},
  volume={36},
  pages={75--109},
  year={1969}
}

@article {Mul4,
    AUTHOR = {Müller, P.},
     TITLE = {Hilbert's irreducibility theorem for prime degree and general
              polynomials},
   JOURNAL = {Israel J. Math.},
  FJOURNAL = {Israel Journal of Mathematics},
    VOLUME = {109},
      YEAR = {1999},
     PAGES = {319--337},
      ISSN = {0021-2172,1565-8511},
   MRCLASS = {12E25},
MRREVIEWER = {N.\ Sankaran},
       DOI = {10.1007/BF02775041},
       URL = {https://doi.org/10.1007/BF02775041},
}

@incollection {Mul2,
    AUTHOR = {Müller, P.},
     TITLE = {Primitive monodromy groups of polynomials},
 BOOKTITLE = {Recent developments in the inverse {G}alois problem
              ({S}eattle, {WA}, 1993)},
    SERIES = {Contemp. Math.},
    VOLUME = {186},
     PAGES = {385--401},
 PUBLISHER = {Amer. Math. Soc., Providence, RI},
      YEAR = {1995},
      ISBN = {0-8218-0299-2},
   MRCLASS = {20B15 (11C08 12F10 20B20)},
MRREVIEWER = {Robert\ M.\ Guralnick},
       DOI = {10.1090/conm/186/02193},
       URL = {https://doi.org/10.1090/conm/186/02193},
}

@incollection {GN,
    AUTHOR = {Guralnick, Robert M. and Neubauer, Michael G.},
     TITLE = {Monodromy groups of branched coverings: the generic case},
 BOOKTITLE = {Recent developments in the inverse {G}alois problem
              ({S}eattle, {WA}, 1993)},
    SERIES = {Contemp. Math.},
    VOLUME = {186},
     PAGES = {325--352},
 PUBLISHER = {Amer. Math. Soc., Providence, RI},
      YEAR = {1995},
      ISBN = {0-8218-0299-2},
   MRCLASS = {20B25 (14H30 30F99)},
  MRNUMBER = {1352281},
MRREVIEWER = {Martin\ W.\ Liebeck},
       DOI = {10.1090/conm/186/02190},
       URL = {https://doi-org.technion.idm.oclc.org/10.1090/conm/186/02190},
}

@unpublished{DR25,
  author       = {Derickx, M. and Rawson, J.},
  title        = {Functions on curves with infinitely many split fibres},
  note         = {Work in preparation},
  year         = {2026},
}

@inproceedings{muller2002finiteness,
  title={Finiteness results for {H}ilbert's irreducibility theorem},
  author={M{\"u}ller, Peter},
  booktitle={Annales de l'institut {F}ourier},
  volume={52},
  number={4},
  pages={983--1015},
  year={2002}
}

@article{ellenberg2012expander,
  title={Expander graphs, gonality, and variation of Galois representations},
  author={Ellenberg, Jordan S and Hall, Chris and Kowalski, Emmanuel},
  journal={Duke Mathematical Journal},
  volume={161},
  number={7},
  pages={1233--1275},
  year={2012},
  publisher={Duke University Press}
}

@article{masser1989specializations,
  title={Specializations of finitely generated subgroups of abelian varieties},
  author={Masser, D.W.},
  journal={Transactions of the American Mathematical Society},
  volume={311},
  number={1},
  pages={413--424},
  year={1989}
}

@article{tucker2002irreducibility,
  title={Irreducibility, {B}rill--{N}oether loci, and {V}ojta’s inequality},
  author={Tucker, Thomas},
  journal={Transactions of the American Mathematical Society},
  volume={354},
  number={8},
  pages={3011--3029},
  year={2002},
  note={With an Appendix by Olivier Debarre}
}

@article {VdW,
    AUTHOR = {Bhargava, Manjul},
     TITLE = {Galois groups of random integer polynomials and van der
              {W}aerden's conjecture},
   JOURNAL = {Ann. of Math. (2)},
  FJOURNAL = {Annals of Mathematics. Second Series},
    VOLUME = {201},
      YEAR = {2025},
    NUMBER = {2},
     PAGES = {339--377},
      ISSN = {0003-486X,1939-8980},
   MRCLASS = {11R32 (11C08 11N35 11R45 20B15)},
  MRNUMBER = {4878222},
       DOI = {10.4007/annals.2025.201.2.1},
       URL = {https://doi.org/10.4007/annals.2025.201.2.1},
}

@book {Cameron,
    AUTHOR = {Cameron, Peter J.},
     TITLE = {Permutation groups},
    SERIES = {London Mathematical Society Student Texts},
    VOLUME = {45},
 PUBLISHER = {Cambridge University Press, Cambridge},
      YEAR = {1999},
     PAGES = {x+220},
      ISBN = {0-521-65302-9; 0-521-65378-9},
   MRCLASS = {20B40},
  MRNUMBER = {1721031},
MRREVIEWER = {Miguel\ \'A.\ Borges-Trenard},
       DOI = {10.1017/CBO9780511623677},
       URL = {https://doi.org/10.1017/CBO9780511623677},
}

@article{babai1981order,
  title={On the order of uniprimitive permutation groups},
  author={Babai, L{\'a}szl{\'o}},
  journal={Annals of Mathematics},
  volume={113},
  number={3},
  pages={553--568},
  year={1981},
  publisher={JSTOR}
}

@article {pyber,
    AUTHOR = {Pyber, L.},
     TITLE = {On the orders of doubly transitive permutation groups,
              elementary estimates},
   JOURNAL = {J. Combin. Theory Ser. A},
  FJOURNAL = {Journal of Combinatorial Theory. Series A},
    VOLUME = {62},
      YEAR = {1993},
    NUMBER = {2},
     PAGES = {361--366},
      ISSN = {0097-3165,1096-0899},
   MRCLASS = {20B20},
  MRNUMBER = {1207742},
MRREVIEWER = {Cheryl\ E.\ Praeger},
       DOI = {10.1016/0097-3165(93)90053-B},
       URL = {https://doi.org/10.1016/0097-3165(93)90053-B},
}

@article {GS,
    AUTHOR = {Guralnick, Robert M. and Shareshian, John},
     TITLE = {Symmetric and alternating groups as monodromy groups of
              {R}iemann surfaces. {I}. {G}eneric covers and covers with many
              branch points},
      NOTE = {With an appendix by Guralnick and R. Stafford},
   JOURNAL = {Mem. Amer. Math. Soc.},
  FJOURNAL = {Memoirs of the American Mathematical Society},
    VOLUME = {189},
      YEAR = {2007},
    NUMBER = {886},
     PAGES = {vi+128},
      ISSN = {0065-9266,1947-6221},
   MRCLASS = {14H30 (14H55 30F20)},
  MRNUMBER = {2343794},
MRREVIEWER = {Andrei\ B.\ Bogatyr\"ev},
       DOI = {10.1090/memo/0886},
       URL = {https://doi.org/10.1090/memo/0886},
}

@article {BG,
    AUTHOR = {Burness, Timothy C. and Guralnick, Robert M.},
     TITLE = {Fixed point ratios for finite primitive groups and
              applications},
   JOURNAL = {Adv. Math.},
  FJOURNAL = {Advances in Mathematics},
    VOLUME = {411},
      YEAR = {2022},
     PAGES = {Paper No. 108778, 90},
      ISSN = {0001-8708,1090-2082},
   MRCLASS = {20B15 (20D05 20P05)},
  MRNUMBER = {4512400},
MRREVIEWER = {Attila\ Mar\'oti},
       DOI = {10.1016/j.aim.2022.108778},
       URL = {https://doi.org/10.1016/j.aim.2022.108778},
}

@article {GT,
    AUTHOR = {Guralnick, Robert M. and Thompson, John G.},
     TITLE = {Finite groups of genus zero},
   JOURNAL = {J. Algebra},
  FJOURNAL = {Journal of Algebra},
    VOLUME = {131},
      YEAR = {1990},
    NUMBER = {1},
     PAGES = {303--341},
      ISSN = {0021-8693,1090-266X},
   MRCLASS = {20B25 (12F99 30F10)},
  MRNUMBER = {1055011},
MRREVIEWER = {Martin\ W.\ Liebeck},
       DOI = {10.1016/0021-8693(90)90178-Q},
       URL = {https://doi.org/10.1016/0021-8693(90)90178-Q},
}

@phdthesis{PHDTali,
author = {Monderer, T.},
title = {Reducibility, Specialization
and Related Low Genus Phenomena},
school = {Technion, Israel Institute of Technology},
year = {2024},
type = {Ph.{D}. thesis},
address = {Haifa, Israel},
}

@article{vojta1992generalization,
  title={A generalization of theorems of {F}altings and {T}hue-{S}iegel-{R}oth-{W}irsing},
  author={Vojta, Paul},
  journal={Journal of the American Mathematical Society},
  volume={5},
  number={4},
  pages={763--804},
  year={1992},
  publisher={JSTOR}
}

@article{song2001arithmetic,
  title={Arithmetic discriminants and morphisms of curves},
  author={Song, Xiangjun and Tucker, Thomas},
  journal={Transactions of the American Mathematical Society},
  volume={353},
  number={5},
  pages={1921--1936},
  year={2001}
}
\end{document}